\documentclass[oneside,reqno]{amsart}

\usepackage[english]{babel}
\usepackage{enumitem}      
\usepackage{amssymb}
\usepackage{hyperref}
\usepackage{cleveref} % \Cref
\usepackage{geometry}
\usepackage{tikz}
\usepackage{textcomp}
\usepackage{mathtools}
\usepackage{verbatim}
\usepackage{tabularx}
\newcolumntype{C}[1]{>{\centering}m{#1}} 
\usepackage{hhline}
\usepackage{longtable}

\usepackage{csquotes}
\usepackage[style=alphabetic, url=false,  isbn=false, maxnames=4,  eprint=false, giveninits=true, doi=false]{biblatex} %,
\addbibresource{bib.bib}

% ---------- theorems, ... ----------
\newtheorem{theorem}{Theorem}[section]
\newtheorem{lemma}[theorem]{Lemma}
\newtheorem{proposition}[theorem]{Proposition}
\newtheorem{corollary}[theorem]{Corollary}

\theoremstyle{definition}
\newtheorem{definition}[theorem]{Definition}

\theoremstyle{remark}
\newtheorem{remark}[theorem]{Remark}
\newtheorem{example}[theorem]{Example}

% ---------------commands------------------
\newcommand{\Rep}{\ensuremath{\mathop{\mathrm{Rep}}}}
\newcommand{\uRep}{\ensuremath{\mathop{\mathrm{\underline{Rep}}}}}
\newcommand{\Homm}{\ensuremath{\mathrm{Hom}}}
\newcommand{\End}{\ensuremath{\mathrm{End}}}
\newcommand{\tr}{\ensuremath{\mathrm{tr}}}
\newcommand{\id}{\ensuremath{\mathrm{id}}}
\newcommand{\ot}{\ensuremath{\otimes}}
\newcommand{\NN}{\mathbb N}
\newcommand{\ZZ}{\mathbb Z}

\newcommand{\CC}{\mathbb C}

\newcommand{\cC}{\mathcal C}
\renewcommand\L{\mathcal{L}}

\newcommand{\QR}[2]{\raisebox{1ex}{\ensuremath{#1}}\ensuremath{\mkern-3mu}\big/\ensuremath{\mkern-3mu}\raisebox{-1ex}{\ensuremath{#2}}}
\newcommand\RepCt{{\uRep(\cC,t)}}
\newcommand{\Projk}{{\operatorname{Proj}_{\cC}(k)}}
\newcommand\cP{\mathcal{P}}
\newcommand\ProjC{\operatorname{Proj}_\cC}
\newcommand\Projl{\operatorname{Proj}_\cC(l)}

\newcommand\Irr{\operatorname{Irr}}

\newcommand\even{{\text{even}}}

%------------------Partitions-------------------
%%Macro package for typeseting set partitions
%%%%%%%%%%%%%%%%%%%%%%%%%%%%%%%%%%%%%%%%%%%%%

%First, also as an example, we define shortcuts for some short partitions using word representation.

%%%Partitions with lower points only

%%k=1

\def\La{\LPartition{0.6:1}{}}

%%k=2

\def\Laa{\LPartition{}{0.6:1,2}}

%k=3

%k=4

%%%Partitions with upper points only

%%k=1

\def\Ua{\UPartition{0.4:1}{}}

%%k=2

\def\Uaa{\UPartition{}{0.4:1,2}}

%k=3

%k=4

%%%Partitions with same amount of upper and lower points

%%k=1, l=1

\def\Paa{\Partition{
\Pline (1,0) (1,1)
}}

\def\Pab{\Partition{
\Psingletons 0to0.3:1
\Psingletons 1to0.7:1
}}

%k=4

\def\Paaaa{\Partition{
\Pblock 0to0.3:1,2
\Pblock 1to0.7:1,2
\Pline (1.5,0.3) (1.5,0.7)
}}
\def\Paaab{\Partition{
\Pblock 1to0.7:1,2
\Psingletons 0to0.3:2
\Pline (1,0) (1,0.7)
}}

\def\Paabb{\Partition{
\Pblock 0to0.3:1,2
\Pblock 1to0.7:1,2
}}

\def\Pabab{\Partition{
\Pline (1,0) (2, 1)
\Pline (1,1) (2, 0)
}}

\def\Pabcb{\Partition{
\Psingletons 0to0.3:1
\Psingletons 1to0.7:2
\Pline (2,0) (1,1)
}}

%k=6

\def\Pabcabc{\Partition{
\Pline (1,0) (3, 1)
\Pline (2,0) (2, 1)
\Pline (3,0) (1, 1)
}}

%%%Some aliases

\def\singleton{{\mathord{\uparrow}}}
\def\upsingleton{{\mathord{\downarrow}}}

%%%%%%%%%%%%%%%%%%%%%%%%%%%%%%%%%%%%%%%%%%%%%%%%%%%%
%%The macros

\def\Partition#1{%
\ifmmode
\mathchoice{
\,\PartitionA{0.25em}{0.7em}{1em}{#1}\,%
}{
\,\PartitionA{0.25em}{0.7em}{1em}{#1}\,%
}{
\,\PartitionA{0.175em}{0.55em}{0.7em}{#1}\,%
}{
\,\PartitionA{0.125em}{0.4em}{0.5em}{#1}\,%
}\else%
\thinspace\PartitionA{0.25em}{0.7em}{1em}{#1}\thinspace%
\fi%
}

\def\BigPartition#1{%
\ifmmode
\mathchoice{
\,\PartitionA{1em}{0.9em}{2.5em}{#1}\,%
}{
\,\PartitionA{0.75em}{0.7em}{2em}{#1}\,%
}{
\,\PartitionA{0.25em}{0.5em}{1em}{#1}\,%
}{
\,\PartitionA{0.20em}{0.4em}{0.8em}{#1}\,%
}
\else%
\thinspace\PartitionA{0.75em}{0.7em}{2em}{#1}\thinspace%
\fi%
}

\def\PartitionA#1#2#3#4{%
\pgfpicture
\pgfsetbaseline{#1}
\pgfsetxvec{\pgfpoint{#2}{0em}}
\pgfsetyvec{\pgfpoint{0em}{#3}}
#4
\endpgfpicture
}

\def\fihere#1\fi{\fi#1}

\def\Pline(#1,#2)#3(#4,#5){
\pgfpathmoveto{\pgfpointxy{#1}{#2}}
\pgfpathlineto{\pgfpointxy{#4}{#5}}
\pgfusepath{stroke}
}

\def\Pcurveto(#1,#2)(#3,#4)(#5,#6)(#7,#8){
\pgfpathmoveto{\pgfpointxy{#1}{#2}}
\pgfcurveto{\pgfpointxy{#3}{#4}}{\pgfpointxy{#5}{#6}}{\pgfpointxy{#7}{#8}}
\pgfusepath{stroke}
}

\def\Psingletons #1to#2:#3 {\PsingletonsA{#1}{#2}#3,,}
\def\PsingletonsA#1#2#3,{\ifx,#3,\else
\pgfpathmoveto{\pgfpointxy{#3}{#1}}
\pgfpathlineto{\pgfpointxy{#3}{#2}}
\pgfusepath{stroke}
\fihere\PsingletonsA{#1}{#2}\fi}

\def\Pblock #1to#2:#3 {\PblockA{#1}{#2}#3,,}
\def\PblockA#1#2#3,#4,{\ifx,#4,\else
\pgfpathmoveto{\pgfpointxy{#3}{#1}}
\pgfpathlineto{\pgfpointxy{#3}{#2}}
\pgfpathlineto{\pgfpointxy{#4}{#2}}
\pgfpathlineto{\pgfpointxy{#4}{#1}}
\pgfusepath{stroke}
\fihere\PblockA{#1}{#2}#4,\fi}

\def\Ptext(#1,#2)#3{%
\pgftext[at={\pgfpointxy{#1}{#2}}]{#3}
}

\def\Ppoint #1 #2:#3 {\PpointA{#1}{#2}#3,,}
\def\PpointA#1#2#3,{\ifx,#3,\else #2{#3}{#1}\fihere\PpointA{#1}{#2}\fi}

\def\Lsingletons #1#2 {%
	\ifcat\noexpand#1\noexpand\ \Ppoint0 #1#2 \else\Psingletons 0to#1#2 \fi
}
\def\Usingletons #1#2 {%
	\ifcat\noexpand#1\noexpand\ \Ppoint1 #1#2 \else\Psingletons 1to#1#2 \fi
}

\def\LPartition#1#2{\Partition{
\LPartitionB#2;;
\ifx,#1,\else\LPartitionA#1;;\fi
}}

\def\LPartitionA#1;{\ifx;#1;\else
\Lsingletons #1 
\fihere\LPartitionA\fi}

\def\LPartitionB#1;{\ifx;#1;\else
\Pblock 0to#1 
\fihere\LPartitionB\fi}

\def\UPartition#1#2{\Partition{
\UPartitionB#2;;
\ifx,#1,\else\UPartitionA#1;; \fi
}}

\def\UPartitionA#1;{\ifx;#1;\else
\Usingletons #1 
\fihere\UPartitionA\fi}

\def\UPartitionB#1;{\ifx;#1;\else
\Pblock 1to#1 
\fihere\UPartitionB\fi}

\def\Pcurveto(#1,#2)(#3,#4)(#5,#6)(#7,#8){
\pgfpathmoveto{\pgfpointxy{#1}{#2}}
\pgfcurveto{\pgfpointxy{#3}{#4}}{\pgfpointxy{#5}{#6}}{\pgfpointxy{#7}{#8}}
\pgfusepath{stroke}
}
%|=\Paa,\sqcap=\Laa, \sqcup=\Uaa, \upaarow=\singleton
\newcommand{\cross}{\Pabab}
\newcommand{\bbar}{\Pab}
\newcommand{\vierpart}{\Paaaa}
\newcommand{\twoblocks}{\Paabb}
\newcommand{\starpart}{\Pabcabc}
\newcommand{\legpart}{\Pabcb}
\newcommand{\twoblockspart}{\Partition{\Pblock 0 to 0.25:1,2 \Pblock 1 to 0.75:2,3 \Pline (3,0) (1,1)}}

\newcommand{\primarypart}{\Partition{\Pblock 0 to 0.25:1,2 \Pblock 1 to 0.75:2,3 \Pline (3,0) (1,1) \Pline (1.5,0.25) (2.5,0.75)}}

\newcommand{\JWiTikz}{\scalebox{0.8}{\begin{tikzpicture}
\coordinate [label=left:{\scalebox{1.25}{$e_k=$}}](O) at (0,0);
\draw (0.3,1) -- (0.3,0.5);
\draw (0.3,-1) -- (0.3,-0.5);
\coordinate [label=right:{$\cdots$}](O) at (0.6,0.7);
\draw (0,-0.5) -- (0,0.5) -- (2,0.5) -- (2,-0.5) -- (0,-0.5);
\coordinate [label=right:{\scalebox{1.25}{$e_{k-1}$}}](O) at (0.5,0);
\draw (2.3,1) -- (2.3,-1);
\draw (1.7,1) -- (1.7,0.5);
\draw (1.7,-1) -- (1.7,-0.5);
\coordinate [label=right:{$\cdots$}](O) at (0.6,-0.7);
\coordinate [label=right:{\scalebox{1.25}{$-~a_k$}}](O) at (2.5,0);
\draw (4.3,2) -- (4.3,1.5);
\draw (5.7,2) -- (5.7,1.5);
\coordinate [label=right:{$\cdots$}](O) at (4.6,1.7);
\draw (4,0.5) -- (4,1.5) -- (6,1.5) -- (6,0.5) -- (4,0.5);
\coordinate [label=right:{\scalebox{1.25}{$e_{k-1}$}}](O) at (4.5,1);
\draw (4.3,0.5) -- (4.3,-0.5);
\draw (5.1,0.5) -- (5.1,-0.5);
\coordinate [label=right:{$\cdots$}](O) at (4.3,0);
\draw (5.7,0.5) -- (5.7,0.3) to [bend right=90] (6.3,0.3) -- (6.3,2);
\draw (5.7,-0.5) -- (5.7,-0.3) to [bend left=90] (6.3,-0.3) -- (6.3,-2);
\draw (4,-0.5) -- (4,-1.5) -- (6,-1.5) -- (6,-0.5) -- (4,-0.5);
\coordinate [label=right:{\scalebox{1.25}{$e_{k-1}$}}](O) at (4.5,-1);
\draw (4.3,-2) -- (4.3,-1.5);
\draw (5.7,-2) -- (5.7,-1.5);
\coordinate [label=right:{$\cdots$}](O) at (4.6,-1.7);
\coordinate [label=right:{\scalebox{1.25}{$\in NC_2(k,k).$}}](O) at (6.5,0);
\end{tikzpicture}}}

\begin{document}

\hyphenation{theo-re-ti-cal group-theo-re-ti-cal
semi-sim-ple al-geb-ras di-men-sions sim-ple ob-jects
equi-va-lent}

\title[Interpolating Partition Categories]{Semisimplicity and Indecomposable Objects in Interpolating Partition Categories}

\author{Johannes Flake}
\address{Algebra and Representation Theory,
RWTH Aachen University,
Pontdriesch 10--16,
52062 Aachen, Germany}
\email{flake@art.rwth-aachen.de}

\author{Laura Maa\ss{}en}
\address{Lehrstuhl f\"ur Algebra und Zahlentheorie,
RWTH Aachen University,
Pontdriesch 10--16,
52062 Aachen, Germany}
\email{laura.maassen@rwth-aachen.de}

% MSC classes:
% 	18D10  	Monoidal categories (= multiplicative categories), symmetric monoidal categories, braided categories
% 	20G42  	Quantum groups (quantized function algebras) and their representations
% 	05E10  	Combinatorial aspects of representation theory
%
% arxiv subject:
% Representation Theory (math.RT); Category Theory (math.CT); Operator Algebras (math.OA)
% exclude: Quantum Algebra (math.QA)
%
\subjclass[2020]{18M05, 20G42, 05E10}
\keywords{partition categories, easy quantum groups, Deligne categories, interpolation categories, semisimple categories, indecomposable objects}

\begin{abstract} 

We study Karoubian tensor categories which interpolate representation categories of families of so-called easy quantum groups in the same sense in which Deligne's interpolation categories $\uRep(S_t)$ interpolate the representation categories of the symmetric groups. As such categories can be described using a graphical calculus of partitions, we call them interpolating partition categories. They include $\uRep(S_t)$ as a special case and can generally be viewed as subcategories of the latter. Focusing on semisimplicity and descriptions of the indecomposable objects, we prove uniform generalisations of results known for special cases, including $\uRep(S_t)$ or Temperley--Lieb categories. In particular, we identify those values of the interpolation parameter which correspond to semisimple and non-semisimple categories, respectively, for all so-called group-theoretical easy quantum groups. A crucial ingredient is an abstract analysis of certain subobject lattices developed by Knop, which we adapt to categories of partitions. We go on to prove a parametrisation of the indecomposable objects in all interpolating partition categories for non-zero interpolation parameters via a system of finite groups which we associate to any partition category, and which we also use to describe the associated graded rings of the Grothendieck rings of these interpolation categories. 
%As an application, we parametrise the indecomposables for interpolated hyperoctahedral partition categories and for their non-crossing versions.
\end{abstract}

\maketitle

\addtocontents{toc}{\protect\setcounter{tocdepth}{1}}
\tableofcontents

\section{Introduction}
In this article, we link the representation theory of easy quantum groups with interpolating categories of the kind studied by Deligne. This provides many new examples for the latter theory. Each of these examples is a subcategory of one of Deligne's categories $\uRep(S_t)$ with the same objects, but restricted morphism spaces. We will start reviewing some background material on (easy) quantum groups in order to put our results into context.
%However, in the course of the paper, we will be mostly using the combinatorial aspects of this theory and leave the quantum group aspects aside.

\medskip

There are various settings in which the term quantum group is used. Originally, quantum groups were introduced by Drinfeld \cite{Di87} and Jimbo \cite{Ji85} as Hopf algebra deformations of the universal enveloping algebras of semisimple Lie algebras. In this article we consider topological quantum groups in the sense of Woronowicz \cite{Wo87}. A compact matrix quantum group is a deformation of the algebra of continuous complex-valued functions on a compact matrix group. In such a non-commutative setting, Woronowicz proved a Tannaka--Krein type result \cite{Wo88} showing that any compact matrix quantum group can be fully recovered from its representation category. 

This was the starting point for Banica and Speicher \cite{BS09} to introduce (orthogonal) easy quantum groups. These form a subclass of compact matrix quantum groups, which can be built up from sets of partitions closed under certain operations, called categories of partitions (we should warn the reader, however, that despite their name, the latter are not thought of as categories themselves, but as purely combinatorial structures; see \Cref{ssec::categories-of-partitions}). For any category of partitions $\cC$ and any non-negative integer $n$, Banica and Speicher defined a monoidal category whose composition is given by a relatively simple graphical calculus. In the present article, we denote these categories $\uRep(\cC,n)$, $n\in\NN_0$. An easy quantum group is then a compact matrix quantum group whose representation category is the image of some category $\uRep(\cC,n)$ under a certain fiber functor. An example of an easy quantum group is the $n$-th symmetric group $S_n$ induced by the category of all partitions $\cC=P$. An honest quantum group example, where the underlying algebra is non-commutative, is Wang's \cite{Wa98} free symmetric quantum group $S_n^+$ induced by the category of all non-crossing partitions. In 2016, Raum and Weber \cite{RW16} completed the classification of all categories of partitions, and we will use this classification throughout the paper. 

\medskip

In \cite{De07}, Deligne introduced and studied categories $\uRep(S_t)$ interpolating the representation categories of all symmetric groups. Deligne's categories depend on a complex interpolation parameter $t$, they are always Karoubian (pseudo-abelian) and monoidal. However, for $t\not\in\NN_0$, they turn out to be semisimple, while for $t\in\NN_0$, they are not. Instead, there is a unique semisimple quotient category, the semisimplification in the sense of Barrett--Westbury (\cite{BW99}, see also \cite{EO18}). Its defining tensor ideal is formed by all negligible morphisms, that is, morphisms whose compositions with other morphisms have trace $0$ whenever they are endomorphisms. The semisimplification of $\uRep(S_t)$ in the case $t=n\in\NN_0$ is equivalent to $\Rep(S_t)$, the ordinary category of representations of the $n$-th symmetric group, whose finitely many irreducible objects have a well-known parametrisation by a finite set of Young diagrams, depending on $n$. This description extends to a parametrisation of the indecomposable objects in $\uRep(S_t)$ by Young diagrams of arbitrary size, independent of $t$ (see \cite[Thm.~3.7]{CO11}).

An intriguing feature of Deligne's categories is their combinatorial definition via set partitions, which looks very much like the calculus used for easy quantum groups. In fact, we have $\uRep(S_n)=\uRep(P,n)$ for $n\in \NN_0$, and more generally, the categories $\Rep(\cC,n)$ can always be regarded as subcategories of $\uRep(S_n)$. Hence, it is natural to consider interpolation categories $\uRep(\cC,t)$ such that $\uRep(S_t)$ is recovered as a special case for $\cC=P$. The definition of such interpolation categories can be found in \cite{Fr17}. However, they have never been studied systematically within the framework of Deligne's interpolating categories, and we intend to initiate such an endeavour.

\medskip

In particular, we want to study the semisimplicity and the indecomposable objects in such interpolating partition categories. The table in \Cref{fig:table-intro} summarises some known results about special cases, together with some results obtained in this paper which are new to our knowledge (more examples are considered in the last section, \Cref{sec-examples}).

\begin{figure}[ht]
    \centering
    \caption{Special cases of interpolating partition categories. Indecomposable objects are computed for seven more interpolating partition categories in \Cref{sec-examples}.}
    \label{fig:table-intro}
\begin{longtable}{|C{2.4cm}||C{1.5cm}|C{3cm}|C{2.5cm}|C{3.1cm}|}
\hline
$\mathcal{C}$ & $\uRep(\mathcal{C},t)$ & Non-semisimple & Indecomposable objects up to isomorphism & Reference \tabularnewline
\hhline{=====}
$P=$ \\all partitions & $\uRep(S_t)$ & $t\in \NN_0$ & Young diagrams of arbitrary size & \cite[Thm.~3.7]{CO11}\tabularnewline
\hline
$P_2=$ \\partitions with block size two & $\uRep(O_t)$ & $t\in \ZZ$ & Young diagrams of arbitrary size & \cite[Thm.~3.5]{CH17} \tabularnewline
\hline
$P_\even=$ partitions with even block size & $\uRep(H_t)$ & $t\in \NN_0$ & bipartitions of arbitrary size & Thm.~\ref{thm-grouptheo-semisimple}, Prop.~\ref{thm::indecomp_obj_Hn} \tabularnewline
\hline
$NC=$ non-crossing partitions & $\uRep(S_t^+)$ & $t=2\cdot \text{cos}(j\pi/{l} )$, \\ $l\in \NN_{\geq 2}, j\in \NN_{\leq l-1}$ & modified Jones--Wenzl idempotents & Lem.~\ref{lem::St+_semisimple}, Lem.~\ref{lem::indecomposables_St+} \tabularnewline
\hline
$NC_2=$ non-crossing partitions with block size two & $\uRep(O_t^+)$ & $t=4\cdot \text{cos}(j\pi/{l})^2$, \\ $l\in \NN_{\geq 2}, j\in \NN_{\leq l-1}$& Jones--Wenzl idempotents & \cite[Cor.~3.2,~Thm.~3.3]{GW02}, \\ \cite[Thm.~4.0.8]{Ch14} \tabularnewline
\hline
$NC_\even=$ non-crossing partitions with even block size & $\uRep(H_t^+)$ & $t=4\cdot \text{cos}(j\pi/{l})^2$, \\ $l\in \NN_{\geq 2}, j\in \NN_{\leq l-1}$& finite binary sequences of arbitrary length & Lem.~\ref{lem::Ht+_semisimple}, Prop.~\ref{thm::indecomp_obj_Hn} \tabularnewline
\hline
\end{longtable}
\end{figure}

More systematically, it turns out that many results on the semisimplicity and indecomposable objects can be derived for general interpolating partition categories $\RepCt$. We find that, as semisimplicity can be encoded in polynomial conditions, such categories will be semisimple for generic values of the deformation parameter $t$, that is, for all values outside a set of algebraic complex numbers depending on $\cC$. We recall these special values for $t$ for several known special cases and use the concept of a positive $*$-operation on a category to provide a criterion for recognising semisimplification functors, before proving a general result for so-called \emph{group-theoretical} categories of partitions, an uncountable family covering all but countably many cases of categories of partitions in the classification of \cite{RW16}. These categories are closed under a certain coarsening operation for partitions, see \Cref{ssec::categories-of-partitions} and \Cref{lem::common_coarsening}. 

\begin{theorem}[\Cref{thm-grouptheo-semisimple}] \label{thm::main_thm_1}
Let $\cC$ be a any group-theoretical category of partitions. Then $\uRep(\cC,t)$ is semisimple if and only if $t\not\in\NN_0$.
\end{theorem}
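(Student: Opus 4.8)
The plan is to prove the two implications by separate means: non-semisimplicity at non-negative integers will be exhibited concretely through a trace-preserving fibre functor, while semisimplicity away from $\NN_0$ will be extracted from the Gram determinants of the morphism spaces via Knop's lattice method.

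First I would dispose of the direction $t\in\NN_0\Rightarrow\uRep(\cC,t)$ not semisimple. Fix $n\in\NN_0$. Sending each partition $p\in\cC$ to its associated linear map $T_p$ on tensor powers of $\CC^n$ defines a symmetric monoidal functor $\Phi_n\colon\uRep(\cC,n)\to\mathrm{Vec}_\CC$ with $[k]\mapsto(\CC^n)^{\otimes k}$; since the loop value of $\uRep(\cC,n)$ is $n=\dim\CC^n$, this functor preserves the categorical trace, i.e.\ $\tr_{\uRep(\cC,n)}(f)=\tr(T_f)$. Consequently any $f\in\End([k])$ with $T_f=0$ is negligible, as $\tr_{\uRep(\cC,n)}(f\circ g)=\tr(T_f T_g)=0$ for all $g$, and a semisimple category admits no nonzero negligible endomorphisms. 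It therefore suffices to produce, for every $n$, some $k$ with $\Phi_n$ not injective on $\End([k])$. Here I would use the Raum--Weber classification: a group-theoretical $\cC$ contains the crossing partition, hence all of $P_2$ (pair partitions are generated by caps, cups and the crossing), so $\dim\End_{\uRep(\cC,n)}([k])=\#\cC(k,k)\geq(2k-1)!!$. As $\dim\mathrm{im}\,\Phi_n\leq\dim\End((\CC^n)^{\otimes k})=n^{2k}$, the double factorial outgrows $n^{2k}$ and forces $\ker\Phi_n|_{\End([k])}\neq0$ once $k$ is large; this yields a nonzero negligible morphism and rules out semisimplicity.

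For the converse, $t\notin\NN_0\Rightarrow$ semisimple, I would invoke the standard criterion that such an interpolation category (Karoubian by construction) is semisimple precisely when the trace (Gram) forms on all of its morphism spaces are non-degenerate, equivalently when its ideal of negligible morphisms vanishes. The naive attempt to inherit this from $\uRep(S_t)$ fails, since restricting a non-degenerate symmetric form to the subspace spanned by the partitions of $\cC$ may well become degenerate; one must instead compute the Gram determinant of $\uRep(\cC,t)$ intrinsically. This is exactly what Knop's analysis of the subobject lattice provides, once adapted to categories of partitions: it expresses the Gram determinant as a product $\prod_S d_S(t)^{m_S}$ over the lattice elements $S$, the multiplicities $m_S$ arising from a M\"obius-function computation and each $d_S$ a polynomial in $t$. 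Semisimplicity at $t$ is then equivalent to the non-vanishing of all $d_S(t)$, so the theorem reduces to showing that, for group-theoretical $\cC$, every $d_S$ has its roots contained in $\NN_0$; combined with the first part this gives the asserted equivalence.

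The main obstacle is precisely this last point: verifying that Knop's hypotheses hold for group-theoretical partition categories and that the polynomials $d_S(t)$ acquire no roots outside $\NN_0$. Here the group-theoretical structure is essential---the subobject lattice is governed by the invariant subgroup of $\ZZ_2^{*\infty}$ attached to $\cC$ by the classification, and I would have to trace through the lattice maps (the admissible ways of merging blocks and deleting points within $\cC$) to read off that each $d_S(t)$ factors as a product of linear terms $(t-i)$ with $i\in\NN_0$. It is the presence of blocks of size larger than two, forced by the group-theoretical condition, that confines the roots to non-negative integers, in contrast with the pair-partition category $P_2$ giving $\uRep(O_t)$, whose Gram determinant also vanishes at negative integers. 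A subsidiary check is that the lattice-theoretic Gram determinant of Knop agrees, up to a nonzero scalar, with the categorical trace form whose non-degeneracy governs semisimplicity.
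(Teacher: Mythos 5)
Your first step contains a genuine error. Being group-theoretical means $\primarypart\in\cC$; it does \emph{not} imply $\cross\in\cC$. By the classification recalled in Section 2, the categories containing $\cross$ are exactly six, and among them only $P$, $P_{even}$ and $\langle\cross,\singleton\ot\singleton,\vierpart\rangle$ are group-theoretical, whereas there are uncountably many group-theoretical categories; for instance, the half-liberated category $\langle\starpart,\vierpart\rangle$ is group-theoretical with $\cross\notin\cC$. Consequently your lower bound $\#\cC(k,k)\geq(2k-1)!!$ is unfounded, and with it the dimension count that is supposed to produce a kernel element of the fibre functor. The count can be repaired, but only by using the property that actually characterises group-theoretical categories (\Cref{lem::common_coarsening}): $\cC$ is closed under common coarsenings, so all coarsenings of $\Laa^{\ot k}$ lie in $\cC(0,2k)$; these are in bijection with set partitions of the $k$ blocks, giving at least $B_k$ morphisms ($B_k$ the $k$-th Bell number), and $B_k$ does outgrow $n^{2k}$. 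As written, however, this direction fails.

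Your second step follows the paper's route but stops exactly where the content of the theorem lies: you name the verification that the lattice factors $d_S(t)$ have all their roots in $\NN_0$ as ``the main obstacle'' and do not overcome it. The operative mechanism is again \Cref{lem::common_coarsening}: closure under coarsenings implies that for every $p\in\cC(k)$ the lower interval $\{q\in\cC(k)\mid q\leq p\}$ is isomorphic, as a poset, to the \emph{full} partition lattice $P(\#p)$, so each factor in Knop's determinant formula (\Cref{lem::det_semilattice}) equals the quantity $w_{\emptyset\hookrightarrow\underline{\#p}}$ computed in $\uRep(S_t)$, independently of $\cC$ (\Cref{lem::determinant_in_we}). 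The multiplicativity $w_{e\bar{e}}=w_e w_{\bar{e}}$ (\Cref{lem::we_multiplicative}) and the M\"obius value $-1$ on covering pairs (\Cref{lem::cover}) then give $w_{\emptyset\hookrightarrow\underline{l}}=\prod_{j=0}^{l-1}(t-j)$, which confines the roots to $\NN_0$; and since $\#p$ is unbounded as $k$ grows, the same computation shows the determinant vanishes at every $t\in\NN_0$, so a complete second step would render your first step superfluous. Your heuristic that blocks of size larger than two confine the roots is not the actual reason (note $P_2$ is not group-theoretical, so it is not a counterexample one needs to evade structurally); what matters is the interval structure of the coarsening order. Without this identification, your outline reduces the theorem to its hardest step rather than proving it.
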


In particular, this recovers and generalises known results for $\uRep(S_t)$ as well as for the interpolation categories for the hyperoctahederal groups, $\uRep(H_t)$. To prove this general result, we observe that for group-theoretical categories of partitions, certain lattices of subobjects are, in fact, sublattices of the corresponding lattices of $\uRep(S_t)$. This enables us to apply techniques developed by Knop \cite{Kn07} originally to study generalisations of $\uRep(S_t)$, which involve a concise analysis of the mentioned sublattices, and which we carry out for arbitrary categories of partitions.

\medskip

We go on to derive a general parametrisation scheme of the indecomposable objects in interpolating partition categories. Since we are working in the context of Karoubian categories, the study of indecomposables amounts to an analysis of primitive idempotents in endomorphism algebras, which in our case are the algebras spanned by partitions with a fixed number of upper and lower points. Using suitable filtrations on these endomorphism algebras, we show that their idempotents and, hence, the indecomposables in question are parametrised by the irreducible complex representations of certain finite-groups, which we associate to a distinguished set of so-called projective partitions (\Cref{def::projPart}), extending the work of Freslon and Weber \cite{FW16}. Hence, up to the representation theory of certain finite groups, all indecomposable objects can be found by determining the set of projective partitions in a given partition category. This yields a general description of the indecomposable objects for all categories of partitions. 

Given a category of partitions $\cC$, we construct a set of projective partitions $\cP$ with an equivalence relation $\sim$ (\Cref{def::P}, \Cref{def::equivprojpart}), certain finite groups $S(p)$ associated to each projective partition $p$ (\Cref{def::GroupsSp}), and an indecomposable object $\L_p(V)$ in $\RepCt$ for each irreducible $S(p)$-module $V$ (\Cref{def::LpV}).

\begin{theorem}[\Cref{thm::indecompsable_obj_by_A_k}] \label{thm::main_thm_2}
Let $\cC$ be a category of partitions and let $t$ be a non-zero complex number. Then the mapping $V\mapsto \L_p(V)$ induces a bijection between the collection of all isomorphism classes of irreducible complex representations of all finite groups $S(p)$ for $[p]\in\mathcal{P}/{\sim}$ and the isomorphism classes of indecomposable objects in $\uRep(\cC,t)$.
\end{theorem}

In particular, this is an analogue and, in fact, a generalisation of the parametrisation of the indecomposables by Young diagrams of arbitrary size for $\uRep(S_t)$ as explained above. From the knowledge of all indecomposables in $\RepCt$ we derive a description of the associated graded ring of the Grothendieck ring, using a suitable filtration, for all $\cC$ (\Cref{Grothendieck-ring}).
%, as well as first results on the indecomposables in the semisimplification $\widehat{\RepCt}$ for group-theoretical $\cC$ and $t\in\NN_0$.

%In case $t=0$, a part of our classification results remain valid. For instance, we explain that the indecomposables are parametrised by the conjugacy classes of primitive idempotents in certain finite-dimensional quotient algebras of the underlying partition algebras (\Cref{thm::indecomp_obj}). We expect a version of \Cref{thm::main_thm_2} to hold also in this case. \td{Da die Einleitung sehr lang ist (sagt Moritz), wollen wir den Abschnitt vielleicht weglassen? K\"onnten wir dann stattdessen einfach noch nach dem Satz anmerken?}

Beyond that, we apply our general results to obtain a concrete parametrisation of the indecomposable objects in $\RepCt$ for all categories of partitions $\cC$ which contain the partition $\cross$, and also for categories $\cC$ of non-crossing partitions. Moreover, we show that our results generalise the known description of indecomposables by Jones--Wenzl idempotents for the Temperley--Lieb categories $\uRep(O^+_t)$ (\Cref{prop::Indec_Otp}), which we relate to the interpolation categories for non-crossing partitions $\uRep(S^+_t)$ by constructing a suitable monoidal equivalence (\Cref{lem::equivalence_St+_Ot+}, \Cref{lem::indecomposables_St+}).

%\begin{proposition}[\Cref{indecomposables_H}, \Cref{indecomposables_Hp}] \label{prop::main_prop}
%Assume $t\neq 0$. Then the non-zero indecomposable objects in $\uRep(H_t)$ are in bijection with bipartitions of arbitrary size, and the non-zero indecomposables in $\uRep(H^+_t)$ are in bijection with finite binary sequences of arbitrary length. 
%\end{proposition}

It will be interesting to convert the general result of \Cref{thm::main_thm_2} to concrete parametrisations for more families of partition categories.
Beyond that, it seems intriguing to study semisimplicity and indecomposable objects in interpolation categories of unitary easy quantum groups (see \cite{TW17}), corresponding to a calculus of two-colored partitions, or of linear categories of partitions (see \cite{GW19}), whose generators are not necessarily partitions, but more generally, linear combinations thereof. Eventually, such an analysis can be undertaken for the generalisations of partition categories described in \cite{MR19}, whose morphisms involve finite graphs.

\medskip

\textbf{Structure of this paper.} In Section 2, we recall the definition and classification of categories of partitions and introduce the interpolating categories $\uRep (\cC,t)$. In Section 3, we provide some general results on the semisimplicity of these categories and recall explicit computations for several known special cases. Moreover, we determine all parameters $t$ for which $\uRep(\cC,t)$ is semisimple in the case that $\cC$ is group-theoretical. We start Section 4 with some general results on indecomposable objects in $\uRep (\cC,t)$ before deriving an explicit description of the indecomposables using projective partitions, as well as results on its Grothendieck ring and semisimplifications coming from interpolating partition categories. In Section 5 we apply our general scheme to various special cases, including to $\uRep(H_t)$ and $\uRep(H_t^+)$, and to the well-studied example of Temperley--Lieb categories.

\medskip

\newcommand\ouracknowledgments{Both authors thank Gerhard Hi\ss{} for his encouragement to link the theory of Deligne's interpolating categories to easy quantum groups. We are also very grateful to Moritz Weber for many useful discussions and his constant support, to Pavel Etingof and Thorsten Heidersdorf for invaluable feedback on an earlier version of this paper, in particular, for pointing out to us inaccuracies in that version, and to Amaury Freslon for feedback on an earlier version of this paper and for the fruitful idea to consider projective partitions. We thank the referees for their extemely useful feedback. 
The research of both authors was funded by the Deutsche Forschungsgemeinschaft (DFG, German Research Foundation) -- Project-ID 286237555 -- TRR 195, and this article contributes to the project ``I.13. -- Computational classification of orthogonal quantum groups''. The second author is also supported by the Scholarship for Doctoral Students of the RWTH Aachen University. This article is part of the second author's PhD thesis supervised by Gerhard Hi\ss{} and Moritz Weber.} % defines \ouracknowledgments
\textbf{Acknowledgements.} \ouracknowledgments 

\section{Interpolating partition categories}
In this section, we introduce interpolating partition categories. To this end, we start by recalling the theory of categories of partitions, including their classification. At the end of the section, we explain how interpolating partition categories interpolate the representation categories of the corresponding easy quantum groups.

\subsection{Categories of partitions}  \label{ssec::categories-of-partitions}
For the following definitions and examples, we refer to the initial article \cite{BS09}. For any $k,l\in \NN_0$ we denote by $P(k,l)$ the set of partitions of $\{ 1,\ldots ,k,1',\ldots ,l' \}$ into disjoint, non-empty subsets. These subsets are called the \emph{blocks} of a partition $p\in P(k,l)$ and we denote their number by $\#p$. We can picture every partition $p\in P(k,l)$ as a diagram with $k$ upper and $l$ lower points arranged on two parallel straight lines, where all points in the same block of $p$ are connected by strings which are entirely contained in the region between the two lines. For instance, the partition $\{ \{1\},\{1'\},\{2,4,2',4'\},\{3,3'\},\{5,6\},\{5',7'\},\{6'\} \} \in P(6,7)$ can be pictured as follows:
\begin{align*}
\BigPartition{
\Pblock 0 to 0.75:4,8
\Pblock 0 to 0.4:10,14
\Psingletons 0 to 0.25:2,12
\Psingletons 0 to 0.65:6
\Pblock 1 to 0.75:4,8
\Pblock 1 to 0.75:10,12
\Psingletons 1 to 0.75:2
\Psingletons 1 to 0.85:6
\Pcurveto (6,0.85)(6.3,0.85)(6.3,0.65)(6,0.65)
\Ptext(2,1.3){1}
\Ptext(4,1.3){2}
\Ptext(6,1.3){3}
\Ptext(8,1.3){4}
\Ptext(10,1.3){5}
\Ptext(12,1.3){6}
\Ptext(2,-0.3){1'}
\Ptext(4,-0.3){2'}
\Ptext(6,-0.3){3'}
\Ptext(8,-0.3){4'}
\Ptext(10,-0.3){5'}
\Ptext(12,-0.3){6'}
\Ptext(14,-0.3){7'}}
\end{align*}

%\begin{align*}
%\begin{matrix}
%1 & 2 &  & k & \\
%\bullet & \bullet & \ldots & \bullet &\\
% & & p & & &\\
%\bullet & \bullet & \quad \ldots & &\bullet \\
%1' & 2' &  & &l' \\
%\end{matrix}
%\end{align*}

Note that only the connected components of a diagram representing a partition are unique, not the diagram itself. However, all arguments and constructions involving partitions and diagrams representing them will be independent of the choice of a diagram. In the following we will repeatedly consider the following special partitions: 
\begin{alignat*}{4}
    &\singleton &&=\{\{1'\}\} \in P(0,1),                 && \cross &&=\{\{1,2'\},\{2,1'\}\} \in P(2,2), \\
    &\Paa &&=\{\{1,1'\}\} \in P(1,1),                     &&\legpart &&=\{\{1,2'\},\{2\},\{1'\}\}\in P(2,2),\\
    &\bbar &&=\{\{1\},\{1'\}\}\in P(1,1),                 && \vierpart &&=\{\{1,2,1',2'\}\}\in P(2,2),\\
    &\Uaa &&=\{\{1,2\}\}\in P(2,0),                       &&\twoblocks &&=\{\{1,2\},\{1',2'\}\}\in P(2,2),\\
    &\Laa &&=\{\{1',2'\}\}\in P(0,2), \qquad \qquad       &&\twoblockspart &&=\{\{1,3'\},\{2,3\},\{1',2'\}\}\in P(3,3).
\end{alignat*}

    A \emph{category of partitions} $\cC$ (\cite[Def.~2.2]{BS09}) is a collection of subsets $\cC(k,l) \subseteq P(k,l),k,l\in \NN_0$, containing the partitions $\Uaa \in P(2,0)$ and $\Paa \in P(1,1)$, which is closed under the following operations:
\begin{enumerate}[label=$\bullet$]
\item The \emph{tensor product} $p\otimes q \in P(k+k',l+l')$ is the horizontal concatenation of two partitions $p\in P(k,l)$ and $q\in P(k',l')$.
\item The \emph{involution} $p^* \in P(l,k)$ is obtained by turning a partition $p\in P(k,l)$ upside-down.
\item Let $p\in P(k,l)$ and $q\in P(l,m)$. Then we can consider the vertical concatenation of the partitions $p$ and $q$. We may obtain connected components, called \emph{loops}, which are neither connected to upper nor to lower points. We denote their number by $\ell(q,p)$. The \emph{composition} $q\cdot p \in P(k,m)$ of $p$ and $q$ is the vertical concatenation, where we remove all loops.
\end{enumerate}

\begin{remark} 
Note that a category of partitions is despite its name not a category, but rather a combinatorial datum which eventually will be associated to a category as customary in the literature on compact matrix quantum groups. We will explain the correspondence between combinatorial datum and actual category in \Cref{ssec::interpol_part_cat} (see \Cref{rem::identify-category}).
\end{remark}

\begin{example} $\Laa \ot \Paa \ot \Uaa = \twoblockspart$, $(\Laa)^*=\Uaa$, $\Paa\cdot\Paa = \Paa$, $\Laa\cdot\Uaa=\twoblocks$, $\vierpart\cdot \vierpart=\vierpart$, $\twoblocks\cdot \twoblocks=\twoblocks$.
\end{example}

For any subset $E\subseteq P=\bigsqcup_{k,l} P(k,l)$ we denote by $\langle E \rangle$ the category of partitions generated by $E$, which is obtained by taking the closure of $E \cup \{ \Laa, \Paa \}$ under tensor products, involution and composition.

\begin{example}
We will study the following examples throughout the paper, see for instance \cite[Thm.~2.6,~Thm.~3.14]{BS09} and \cite[Sec.~2]{We13}.
\begin{itemize}[label=$\star$]
    \item The category of all partitions $P$ is obviously a category of partitions and we have $P=\langle \cross, \singleton, \vierpart \rangle$.
    \item The category of partitions $P_\even:=\langle \cross, \vierpart \rangle$ consists of the partitions which have only blocks of even size.
    \item The category of partitions $P_2:=\langle \cross \rangle$ consists of those partitions which have only blocks of size two.
    \item The category of partitions $NC:=\langle \singleton, \vierpart \rangle$ consists of all non-crossing partitions, i.e.~partitions with at least one representing diagram where no strings cross each other. Note that this can also be characterised in terms of the set partitions only, without considering representing diagrams. 
    \item The category of partitions $NC_\even:=\langle \vierpart \rangle$ consists of the non-crossing partitions which have only blocks of even size.
    \item The category of partitions $NC_2$ consists of those non-crossing partitions which have only blocks of size two; it is the minimal category of partitions in the sense that it is generated by $\emptyset\subset P$.
\end{itemize}
\end{example}

In 2016, Raum and Weber \cite[Thm.~4.2]{RW16} classified all categories of partitions and we briefly summarise their results. All categories of partitions fall into one of the following cases:
\begin{itemize}
    \item The categories of partitions $\cC$ with $\cross \in \cC$ are exactly 
          \[ P, P_\even, P_2, \langle \cross, \singleton\ot \singleton, \vierpart \rangle, \langle \cross, \singleton \rangle, \langle \cross, \singleton\ot \singleton \rangle,\]
          see \cite[Prop.~2.3, Thm.~2.8]{BS09}.
    \item The categories of partitions $\cC$ which contain only non-crossing partitions are exactly 
          \[ NC, NC_\even, NC_2, \langle \singleton\ot \singleton, \vierpart \rangle, \langle \singleton\ot \singleton \rangle, \langle \legpart \rangle, \langle \singleton \rangle,\]
          see \cite[Thm.~3.13,Thm.~3.16]{BS09} and \cite[Thm.~2.9]{We13}. Note that $\langle \cross, \singleton\ot \singleton \rangle = \langle \cross, \legpart \rangle$, which explains why the number of non-crossing partition categories is one more than the number of partition categories containing $\cross$.
    \item The categories of partitions $\cC$ with $\cross \notin \cC$ and $\starpart \in \cC$ are exactly 
          \[ \langle \starpart \rangle, \langle \starpart, \singleton \ot \singleton \rangle, \langle \starpart, \vierpart \rangle,  \langle \starpart, \vierpart,h_s \rangle,s\in \NN,\]
          where $\starpart$ denotes the partition $\{\{1,3'\},\{2,2'\},\{3,1'\}\}\in P(3,3)$ and $h_s$ denotes the partition $\{\{1,3,5,\ldots,2s-1\},\{2,4,6,\ldots,2s\}\}\in P(2s,0)$, see \cite[Thm.~3.14]{We13}. This are the so called \emph{half-liberated categories}.
    \item The categories of partitions containing 
          \[ \primarypart = \{\{1,2,2',3'\},\{3,1'\}\} \in P(3,3) \]
          are called \emph{group-theoretical}. The notion ``group-theoretical'' was introduced in \cite{RW15} and refers to the fact that the partitions of a group-theoretical category of partitions can be set in correspondence to the words of a normal subgroup of a free product of copies of $\ZZ_2$ which is invariant under a certain semigroup action. Raum and Weber also showed that there are uncountably many such group-theoretical categories of partitions, see \cite[Thm.~5.6]{RW15}.
    \item The categories of partitions $\cC$ with $\vierpart \in \cC$, $\singleton \ot \singleton \notin \cC$ and $\primarypart \notin \cC$ are exactly those generated by the element
          \begin{center} \begin{tikzpicture}
            \coordinate [label=left:{$\pi_k=$}](O) at (0,0.5);
            \coordinate [label=above:{$\ldots$}](A2) at (0.75,0);
            \coordinate [label=above:{$\ldots$}](A7) at (2.55,0);
            \coordinate [label=above:{$\ldots$}](A7) at (4.35,0);
            \coordinate [label=above:{$\ldots$}](A7) at (6.15,0);
            \coordinate [label=below:{$1'$}](A1) at (0,0);
            \coordinate (A2) at (0.3,0);
            \coordinate (A3) at (1.2,0);
            \coordinate [label=below:{$k'$}](A4) at (1.5,0);
            \coordinate (A5) at (1.8,0);
            \coordinate (A6) at (2.1,0);
            \coordinate (A7) at (3,0);
            \coordinate [label=below:{$2k'$}](A8) at (3.3,0);
            \coordinate (A9) at (3.6,0);
            \coordinate (A10) at (3.9,0);
            \coordinate (A11) at (4.8,0);
            \coordinate [label=below:{$3k'$}](A12) at (5.1,0);
            \coordinate (A13) at (5.4,0);
            \coordinate (A14) at (5.7,0);
            \coordinate (A15) at (6.6,0);
            \coordinate [label=below:{$4k'$}](A16) at (6.9,0);
            
            \draw (A1) -- (0,1.2) -- (6.9,1.2) -- (A16);
            \draw (A8) -- (3.3,1.2);
            \draw (A9) -- (3.6,1.2);
            \draw (A2) -- (0.3,0.9) -- (3,0.9) -- (A7);
            \draw (A10) -- (3.9,0.9) -- (6.6,0.9) -- (A15);
            \draw (3,0.9) to [bend left] (3.9,0.9);
            \draw (A3) -- (1.2,0.6) -- (2.1,0.6) -- (A6);
            \draw (A11) -- (4.8,0.6) -- (5.7,0.6) -- (A14);
            \draw (2.1,0.6) -- (2.7,0.6) to [bend left] (4.2,0.6) -- (4.8,0.6);
            \draw (A4) -- (1.5,0.3) -- (1.8,0.3) -- (A5);
            \draw (A12) -- (5.1,0.3) -- (5.4,0.3) -- (A13);
            \draw (1.8,0.3) -- (1.9,0.3) to [bend left] (2.3,0.3) -- (2.7,0.3) to [bend left] (4.2,0.3) -- (4.6,0.3) to [bend left] (5,0.3) -- (5.1,0.3);
          \end{tikzpicture}\end{center}
          for some $k\in \NN$ and $\langle \pi_k \mid k\in \NN \rangle$, see \cite[Thm.~3.9, Thm.~4.1]{RW16}. 
\end{itemize}
These cases are pairwise distinct except that $\langle \pi_1 \rangle = \langle \vierpart \rangle = NC_\even$ and the categories 
\[P,P_\even,\langle \cross, \singleton\ot \singleton, \vierpart \rangle,\langle \starpart, \vierpart \rangle,  \langle \starpart, \vierpart,h_s \rangle,s\in \NN,\] 
are also group-theoretical.

\begin{remark} \label{rem::uparrow_in_C}
Note that the only categories of partitions $\cC$ with $\singleton \in \cC$ are 
\[ P, NC, \langle \cross, \singleton \rangle, \langle \singleton \rangle. \]
\end{remark}

\begin{proof}
It follows from the classification that any category of partitions $\cC$ which is not one of these four is generated by partitions whose sum of upper and lower points is even. It follows that the sum of upper and lower points is even for any partition in $\cC$ and hence $\singleton \notin \cC$.  
\end{proof}

\subsection{Interpolating partition categories} \label{ssec::interpol_part_cat}
We refer for instance to \cite{EGNO15} and \cite{NT13} for the terminology in this subsection. The following natural definition may be deduced from Banica--Speicher's definition of easy quantum groups in \cite{BS09}. It may also be found in \cite[Def.~6.1.2]{Fr17}. Recall that $\ell(q,p)$ is the number of loops in the diagram obtained by stacking $p$ and $q$ for all $p\in P(k,l), q\in P(l,m)$. Let us denote the non-negative integer numbers by $\NN_0$.
\begin{definition}[Interpolating partition categories]
For any category of partitions $\cC$ and $t\in \CC$ the category $\uRep_0(\cC,t)$ has:
\begin{alignat*}{2}
&\text{Objects:} &&[k], k\in \NN_0 ,\\
&\text{Morphisms:} &&\Homm([k],[l])= \CC \cC(k,l),\\
&\text{Composition:} \quad &&\Homm([l],[m]) \times \Homm([k],[l]) \to \Homm([k],[m]), \\
& &&(q,p)\mapsto qp := t^{\ell(q,p)}~ q\cdot p \text{ for all } p\in \cC(k,l), q\in \cC(l,m)
\end{alignat*}
The \emph{interpolating partition category} $\uRep(\cC,t)$ is the Karoubi envelope or (pseudo-abelian completion) of $\uRep_0(\cC,t)$, that is, the idempotent completion of the additive completion.
\end{definition}

\begin{example}\label{ex::uRep}
By definition, $\uRep(P_2,t)=\uRep(O_t)$, the category interpolating the representation categories of the orthogonal groups $\Rep(O_n)$ introduced by Deligne in 1990 \cite{De90} and $\uRep(P,t)= \uRep(S_t)$, the category interpolating the representation categories of the symmetric groups $\Rep(S_n)$ introduced by Deligne in 2007 \cite{De07}. 
\end{example}

The tensor product of partitions turns $\uRep(\cC,t)$ into a (strict) monoidal category with unit object $\mathbf{1}=[0]$. Moreover, we can define duals in $\uRep(\cC,t)$ as follows. Any object is self-dual, i.e.~for any $k\in \NN_0$ the dual object of $[k]$ is given by $[k]^{\vee} := [k]$, and the (co)evaluation maps are\\
\ \\
\begin{minipage}[t]{0.45\textwidth}\begin{center}\begin{tikzpicture}
\coordinate [label=right:{ev$_{k}:[k]^{\vee} \ot [k]\to \mathbf{1}$ given by}](O) at (-1.2,1.5);
\coordinate [label=right:{ev$_k=$}](O) at (-1.2,0.6);
\coordinate [label=right:{$\in P(2k,0)$,}](O) at (2.7,0.6);
\coordinate [label=below:{$\ldots$}](O) at (0.45,1);
\coordinate [label=below:{$\ldots$}](O) at (2.25,1);
\coordinate (A1) at (0,1);
\coordinate (A2) at (0.9,1);
\coordinate (A3) at (1.2,1);
\coordinate (A4) at (1.5,1);
\coordinate (A5) at (1.8,1);
\coordinate (A6) at (2.7,1);

\fill (A1) circle (2pt);
\fill (A2) circle (2pt);
\fill (A3) circle (2pt);
\fill (A4) circle (2pt);
\fill (A5) circle (2pt);
\fill (A6) circle (2pt);

\draw (A1) -- (0,0.25) -- (2.7,0.25) -- (A6);
\draw (A2) -- (0.9,0.5) -- (1.8,0.5) -- (A5);
\draw (A3) -- (1.2,0.75) -- (1.5,0.75) -- (A4);
\end{tikzpicture}\end{center}\end{minipage}
\begin{minipage}[t]{0.45\textwidth}\begin{center}\begin{tikzpicture}
\coordinate [label=right:{coev$_{k}:\mathbf{1}\to [k]^{\vee} \ot [k]$ given by}](O) at (-1.6,0.7);
\coordinate [label=right:{coev$_k=$}](O) at (-1.6,-0.2);
\coordinate [label=right:{$\in P(0,2k)$.}](O) at (2.7,-0.2);
\coordinate [label=above:{$\ldots$}](A2) at (0.45,-0.5);
\coordinate [label=above:{$\ldots$}](A7) at (2.25,-0.5);
\coordinate (A1) at (0,-0.5);
\coordinate (A2) at (0.9,-0.5);
\coordinate (A3) at (1.2,-0.5);
\coordinate (A4) at (1.5,-0.5);
\coordinate (A5) at (1.8,-0.5);
\coordinate (A6) at (2.7,-0.5);

\fill (A1) circle (2pt);
\fill (A2) circle (2pt);
\fill (A3) circle (2pt);
\fill (A4) circle (2pt);
\fill (A5) circle (2pt);
\fill (A6) circle (2pt);

\draw (A1) -- (0,0.25) -- (2.7,0.25) -- (A6);
\draw (A2) -- (0.9,0) -- (1.8,0) -- (A5);
\draw (A3) -- (1.2,-0.25) -- (1.5,-0.25) -- (A4);
\end{tikzpicture}\end{center}\end{minipage}

The categorical left and right trace, induced by the dual structure, coincide and are given by
\begin{equation} \label{eq::trace}
\scalebox{.8}{ \begin{tikzpicture}
\coordinate [label=left:{\scalebox{1.25}{$\tr(p)=$ ev$_{k} \circ (p\ot \id_{[k]})~ \circ$ coev$_{k}=$}}](O) at (0,0);
\coordinate [label=left:{\scalebox{1.25}{$p$}}](O) at (1,0);
\coordinate [label=right:{\scalebox{1.25}{$\in \End([0])\cong \CC$}}](O) at (3.5,0);
\coordinate (A1) at (0,0.5);
\coordinate (A2) at (1.5,0.5);
\coordinate (A3) at (0,-0.5);
\coordinate (A4) at (1.5,-0.5);

\fill (A1) circle (2.5pt);
\fill (A2) circle (2.5pt);
\fill (A3) circle (2.5pt);
\fill (A4) circle (2.5pt);

\draw[dashed] (A1) -- (A2) -- (A4) -- (A3) -- (A1);
\draw (2,0.5) -- (2,-0.5);
\draw[gray] (2.5,0.5) -- (2.5,-0.5);
\draw[gray] (3,0.5) -- (3,-0.5);
\draw (3.5,0.5) -- (3.5,-0.5);
\draw (A1) to [bend left=90] (3.5,0.5);
\draw (A2) to [bend left=90] (2,0.5);
\draw (A3) to [bend right=90] (3.5,-0.5);
\draw (A4) to [bend right=90] (2,-0.5);
\draw[gray] (0.5,0.5) to [bend left=90] (3,0.5);
\draw[gray] (1,0.5) to [bend left=90] (2.5,0.5);
\draw[gray] (0.5,-0.5) to [bend right=90] (3,-0.5);
\draw[gray] (1,-0.5) to [bend right=90] (2.5,-0.5);
\end{tikzpicture} }
\end{equation}
for any $p\in \cC(k,k)$. Hence $\uRep(\mathcal{C},t)$ is a pivotal category with coinciding left and right traces. Note that we defined the evaluation and coevaluation maps slightly differently than Deligne, insofar as the $i$-th point is paired with the $(2k+1-i)$-th point, not with the $k+i$-th point in the above diagrams. 

Furthermore, the category $\uRep(\mathcal{C},t)$ is a so-called \emph{$*$-category}. This is a $\CC$-linear monoidal category with a contravariant involutive antilinear monoidal endofunctor $*$ which is the identity on objects, see for instance \cite[Sec.~2.1]{Mue}. The endofunctor $*$ is called the \emph{$*$-operation} and for $\uRep(\mathcal{C},t)$ it is given by the involution (i.e.~horizontal reflection) of partitions. 

\begin{remark} \label{rem::identify-category}
Note that we can naturally identify any category of partitions $\cC$ with the monoidal $^*$-category $\uRep_0(\cC,1)$, which explains the naming. We will also view $\cC$ as a subset of morphisms in $\RepCt$ for any $t$.
\end{remark}

%Moreover, note that the involution of partitions $p\mapsto p^*$ induces an antilinear contravariant functor on $\uRep(\cC,t)$. ; however, this does not necessarily turn $\uRep(\cC,t)$ into a C*-tensor category (see \cite{NT13}).

\subsection{Interpolating partition categories and easy quantum groups}\label{subsec::easyQG}
Categories of partitions have initially been introduced by Banica and Speicher to define easy quantum groups. In this subsection, we recall their definition and explain how interpolating partition categories interpolate the representation categories of the corresponding easy quantum groups. For the rest of this article, however, we will only work with the interpolating partition categories themselves and no knowledge of easy quantum groups is required.

Let us start by briefly recalling the theory of compact matrix quantum groups. A \emph{compact matrix quantum group} is a triple $G=(A,u,n)$ of a C*-algebra $A$, an integer $n\in \NN_0$ and a matrix $u\in A^{n\times n}$ such that the elements $\{ u_{ij} \mid 1\leq i,j \leq n \}$ generate $A$, the matrix $u=(u_{ij})$ is unitary and its transpose is invertible and the map $\Delta :A \to A \ot A,~u_{ij}\mapsto \sum_{k=1}^n u_{ik} \ot u_{kj}$ is a *-homomorphism, see \cite{Wo87}. %[Def.~1.1.]
A \emph{representation of $G$ of dimension $m$} (for some $m\geq0$) is a matrix $v\in A^{m\times m}$  with $\Delta(v_{ij})=\sum_{k=1}^m v_{ik}\otimes v_{kj}$ (technically, this corresponds to a corepresentation of a Hopf *-algebra). A morphism between two representations $v\in A^{m\times m}$ and $v'\in A^{m'\times m'}$ is a linear map $T:\CC^m\to\CC^{m'}$ with $Tv = v'T$. In particular, the matrix $u\in A^{n\times n}$ is a representation of $G$, the so-called \emph{fundamental representation}. 

In 1988, Woronowicz proved a Tannaka--Krein type result \cite{Wo88} %[Thm. 1.2.,1.3.]
for compact matrix quantum groups showing that any compact matrix quantum group $G$ is uniquely determined by its representation category $\Rep(G)$, i.e.~the category of finite-dimensional, unitary rep\-resentation, (for more details see for instance \cite[Sec.~4]{We17}). In 2009, Banica and Speicher \cite[Def.~1.7]{BS09} defined for any category of partitions $\cC$ and $n\in \NN_0$ a fiber functor into the category of finite-dimensional Hilbert spaces
\begin{align*}
    &T: \uRep(\cC,n) \to \text{Hilb}_f \text{ with } \\
    &T([k])=(\CC^n)^{\ot k} \text{ for any } k\in \NN_0 \text{ and } \\
    &T_p:=T(p)\in \Homm((\CC^n)^{\ot k},(\CC^n)^{\ot l}) \text{ for any } p\in \cC(k,l).
\end{align*} 
A compact matrix quantum group $G=(A,u,n)$ is called \emph{(orthogonal) easy quantum group} if there exists a category of partitions $\cC$ such that $\Homm_{\Rep(G)} (u^{\ot k}, u^{\ot l})=\text{span}_{\CC} \{ T_p \mid p\in \cC(k,l)\}$. Tannaka--Krein duality implies that, for any category of partitions $\cC$ and $n\in \NN_0$, there exists an easy quantum group $(A,u,n)$ with $\Homm_{\Rep(G_n(\cC))} (u^{\ot k}, u^{\ot l})=\text{span}_{\CC} \{ T_p \mid p\in \cC(k,l)\}$. In general, this quantum group is only unique up to so-called similarity of compact matrix quantum groups (for more details see for instance \cite{RW15}). We denote any of these similar easy quantum groups by $G_n(\cC)$, and we note that by construction the category $\Rep(G_n(\cC))$ is unique.

\begin{example} \label{ex::std_ex}
The easy quantum group $G_n(P)$ is the triple $(C(S_n),u,n)$ where $C(S_n)$ is the set of complex-valued continuous functions over the symmetric group $S_n$ (regarded as a matrix group) and $u$ is the matrix of coordinate functions. Similarly, $G_n(P_\even)$ corresponds to the hyperoctahedral group $H_n=S_2 \wr S_n$ and  $G_n(P_2)$ corresponds to the orthogonal group $O_n$. This fits together with \Cref{ex::uRep} and based on that notation we denote $\uRep(H_t):=\uRep(P_\even,t)$.

The easy quantum groups $S_n^+=G_n(NC)$, $H_n^+=G_n(NC_\even)$ and $O_n^+=G_n(NC_2)$ are called free symmetric quantum groups, free hyperoctahedral quantum groups and free orthogonal quantum groups, respectively, and we denote $\uRep(S_t^+):=\uRep(NC,t)$, $\uRep(H_t^+):=\uRep(NC_\even,t)$ and $\uRep(O_t^+):=\uRep(NC_2,t)$.
\end{example}

%The definition of easy quantum groups implies that, 
For any category of partitions $\cC$ the canonical functor $\uRep(\cC,n) \to \Rep(G_n(\cC))$ is surjective on objects and morphisms (for $\cC=P$ compare with \cite[Prop.~3.19]{CO11}). In the following section we will discuss that $\Rep(G_n(\cC))$ is even equivalent to the unique semisimple quotient of $\uRep(\cC,n)$.

\begin{lemma} \label{lem::fiber-functor}
Let $\cC$ be a category of partitions, $n\in \NN_0$ and consider the easy quantum group $G_n(\cC)=(A,u,n)$. Then the functor $$\mathcal{G}:\uRep(\cC,n) \to \Rep(G_n(\cC)),~ [k]\mapsto u^{\ot k},~ p\mapsto \mathcal{F}(p)$$ is full and essentially surjective.
\end{lemma}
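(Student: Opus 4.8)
The plan is to show that the functor $\mathcal{G}\colon \uRep(\cC,n) \to \Rep(G_n(\cC))$ is full and essentially surjective by reducing both claims to the corresponding facts about the fiber functor $\mathcal{F}$ and the universal property of the Karoubi envelope. First I would recall that $\uRep(\cC,n)$ is, by definition, the idempotent completion of the additive completion of $\uRep_0(\cC,n)$, and that $\mathcal{G}$ is the functor induced by sending $[k]\mapsto u^{\ot k}$ and $p\mapsto \mathcal{F}(p)$. The starting observation is the one already made in the text just before the lemma: the canonical functor is surjective on objects and morphisms on the level of the uncompleted categories, because by the very definition of an easy quantum group we have $\Homm_{\Rep(G_n(\cC))}(u^{\ot k},u^{\ot l}) = \mathrm{span}_{\CC}\{\mathcal{F}(p)\mid p\in\cC(k,l)\}$, so every morphism between tensor powers of $u$ is a linear combination of the $\mathcal{F}(p)$, i.e. lies in the image of $\mathcal{G}$.

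For \emph{fullness}, I would argue as follows. The key point is that $\Rep(G_n(\cC))$ is itself a Karoubian (pseudo-abelian) category: as the representation category of a compact matrix quantum group, it is semisimple and in particular closed under direct sums and under splitting of idempotents. A general object of $\uRep(\cC,n)$ is a pair $([k],e)$ (or a finite direct sum of such) with $e$ an idempotent endomorphism, and a morphism between $([k],e)$ and $([l],f)$ is a morphism $g\colon [k]\to[l]$ in the additive completion with $fge = g$. Its image under $\mathcal{G}$ lives between the images $\mathrm{im}(\mathcal{F}(f))$ and $\mathrm{im}(\mathcal{F}(e))$ in $\Rep(G_n(\cC))$. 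Given any morphism $\phi$ in $\Rep(G_n(\cC))$ between these images, I would precompose and postcompose with the canonical inclusions and projections onto the direct summands cut out by the idempotents to obtain a morphism $u^{\ot k}\to u^{\ot l}$ in $\Rep(G_n(\cC))$; this lifts to $\uRep_0(\cC,n)$ by the surjectivity on Hom-spaces recalled above, and sandwiching the lift between $e$ and $f$ produces a genuine morphism of the summands that $\mathcal{G}$ sends to $\phi$. The universal property of the idempotent completion guarantees that this construction is compatible and that no morphisms are lost.

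For \emph{essential surjectivity}, the main input is that $\Rep(G_n(\cC))$ is generated, as a Karoubian tensor category, by the fundamental corepresentation $u$: every finite-dimensional corepresentation of $G_n(\cC)$ is, up to isomorphism, a direct summand of a finite direct sum of tensor powers $u^{\ot k}$. This is standard Tannaka--Krein theory for compact matrix quantum groups (each irreducible appears in some $u^{\ot k}$, by Woronowicz's reconstruction). Any such summand is cut out by an idempotent endomorphism of $\bigoplus u^{\ot k_i}$, and by fullness that idempotent comes from an idempotent in $\uRep(\cC,n)$, whose image under $\mathcal{G}$ is therefore isomorphic to the given corepresentation. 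Since $\uRep(\cC,n)$ is idempotent-complete, the corresponding object $([k],e)$ already exists in it, proving essential surjectivity.

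The step I expect to be the main obstacle is pinning down fullness precisely on idempotent-cut summands: one must check carefully that conjugating a lifted morphism $u^{\ot k}\to u^{\ot l}$ by the idempotents $e$ and $f$ reproduces exactly the prescribed morphism between the images $\mathrm{im}(\mathcal{F}(e))$ and $\mathrm{im}(\mathcal{F}(f))$, rather than merely some morphism restricting to it, and that this is well-defined independently of the choice of lift. This is a formal but slightly delicate diagram chase through the universal properties of the additive and idempotent completions; everything else follows from the defining equality of Hom-spaces for easy quantum groups together with the semisimplicity of $\Rep(G_n(\cC))$.
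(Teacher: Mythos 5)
The paper gives no proof of this lemma at all: it is stated as an immediate consequence of the definition of easy quantum groups, the only justification being the remark directly above it that the canonical functor is surjective on objects and morphisms (with a pointer to \cite{CO11} for $\cC=P$). Your argument is the natural fleshing-out of exactly that remark --- fullness from the defining equality $\Homm_{\Rep(G_n(\cC))}(u^{\ot k},u^{\ot l})=\mathrm{span}_\CC\{\mathcal{F}(p)\mid p\in\cC(k,l)\}$ together with the Karoubi-envelope formalism, and essential surjectivity from the Tannaka--Krein fact that every finite-dimensional corepresentation is a summand of a direct sum of tensor powers of $u$ (which holds here because $u$ is self-conjugate for orthogonal easy quantum groups) --- so in approach you agree with what the paper intends, and your fullness argument is correct as written.

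One step does need repair. In the essential surjectivity part you assert that ``by fullness that idempotent comes from an idempotent in $\uRep(\cC,n)$''. Fullness only produces \emph{some} preimage $x$ of the projection $P\in\End\bigl(\bigoplus_i u^{\ot k_i}\bigr)$, and a preimage of an idempotent need not be idempotent, so as literally stated this step is unjustified. What saves it is that $\mathcal{G}$ restricts to a surjective homomorphism of \emph{finite-dimensional} algebras $\End_{\uRep(\cC,n)}\bigl(\bigoplus_i [k_i]\bigr)\to\End\bigl(\bigoplus_i u^{\ot k_i}\bigr)$ (the spaces $\CC\cC(k,l)$ are finite-dimensional), and idempotents lift along surjections of finite-dimensional algebras: for instance, pass to the commutative subalgebra $\CC[x]$, which is a product of local algebras $\CC[T]/((T-\lambda_i)^{m_i})$, and observe that idempotents of any quotient of such an algebra are characteristic functions of subsets of the spectrum, hence lift exactly. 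With that standard fact inserted, your proof is complete. Conversely, the step you flagged as the main obstacle --- fullness on idempotent-cut summands --- is unproblematic: given $\phi:\mathrm{im}\,\mathcal{G}(e)\to\mathrm{im}\,\mathcal{G}(f)$, extend it by the splittings to $\tilde\phi:u^{\ot k}\to u^{\ot l}$ satisfying $\tilde\phi=\mathcal{G}(f)\tilde\phi\,\mathcal{G}(e)$, take \emph{any} lift $g$ of $\tilde\phi$, and note that $\mathcal{G}(fge)=\mathcal{G}(f)\tilde\phi\,\mathcal{G}(e)=\tilde\phi$ independently of the choice of $g$; no well-definedness issue arises.
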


\begin{proof} The definition of easy quantum groups implies that $\mathcal{G}$ is a full functor. Since any irreducible representation of a compact matrix quantum group $G=(A,u,n)$ is contained in some tensor power of the fundamental representation $u$ (see for instance \cite{Timmermann}), $\mathcal{G}$ is also essentially surjective.
\end{proof}

\section{Semisimplicity for interpolating partition categories}
In this section we analyse the categories $\RepCt$ with respect to semisimplicity. We will recall the general theory of semisimplicity, give a criterion for recognising semisimplification functors for Karoubian tensor categories, and consider the categories from \Cref{ex::std_ex} on a case-by-case basis, before following a generic approach due to Knop to analyse $\RepCt$ for all group-theoretical categories of partitions $\cC$. To prove semisimplicity we use a reduction argument which shows that it suffices to check whether certain Gram determinants vanish.

\subsection{Semisimplicity for Karoubian tensor categories} \label{semisimplicity-karoubian} %%%%%%%%%%%%%%%%%%%%%%%%%%%%%%%%%%%%%%%

By construction, the category $\uRep(\cC,t)$ is Karoubian (i.e., pseudo-abelian), but in general, it is not abelian. However, we can construct a unique semisimple (and hence, abelian) quotient category from it, the \emph{semisimplification} $\widehat{\uRep(\cC,t)}$. Let us recall some definitions and general results on this idea, for more details see \cite{EO18} or \cite{AK}. 
We call a pivotal category over a field $k$  \emph{spherical} if its left and right traces coincide (but note that spherical shall not imply a category is abelian in our context). The \emph{dimension} of an object in such a category is defined as the trace of its identity morphism.

\begin{definition} Let $\mathcal{R}$ be a spherical category over a field $k$. A morphism $f:X \to Y$ in $\mathcal{R}$ is called \emph{negligible} if $\tr(f\circ g)=0$ for all morphisms $g:Y\to X$ in $\mathcal{R}$. We denote by $\mathcal{N}$ the set of all negligible morphisms in $\mathcal{R}$.
\end{definition}

\begin{remark}
 The set of all negligible morphisms $\mathcal{N}$ is a tensor ideal and the quotient category $\mathcal{R}/\mathcal{N}$ is again a spherical category with $\tr (f + \mathcal{N}) = \tr (f)$ for any endomorphism $f$ in $\mathcal{R}$.
\end{remark}

It is well-known that there are no non-zero negligible morphisms in a semisimple category (see for instance \cite[Cor.~7.1.7]{AK}).

\begin{lemma}[{\cite[Thm.~2.6]{EO18}}] \label{rem::negl_morphisms}
Let $k$ be an algebraically closed field. Let $\mathcal{R}$ be a spherical Karoubian tensor category over $k$ such that all morphism spaces are finite-dimensional and the trace of any nilpotent endomorphism is zero.
%Let $\mathcal{R}$ be a $k$-linear Karoubian spherical category such that the trace of any nilpotent endomorphism is zero and all morphism spaces are finite-dimensional.
Then the quotient category 
\[ \widehat{\mathcal{R}} := \QR{\mathcal{R}}{\mathcal{N}} \]
is a semisimple category, the \emph{semisimplification of $\mathcal{R}$}, whose simple objects correspond to the indecomposable objects of $\mathcal{R}$ of non-zero dimension.
\end{lemma}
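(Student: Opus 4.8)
The plan is to reduce the statement to a case analysis on the indecomposable objects of $\mathcal{R}$, using the Krull--Schmidt theorem together with the hypothesis that nilpotent endomorphisms have vanishing trace. First I would record that, since $\mathcal{R}$ is Karoubian with finite-dimensional morphism spaces, every object is a finite direct sum of indecomposables (Krull--Schmidt), and an object $X$ is indecomposable precisely when $\End(X)$ is a finite-dimensional local $k$-algebra. For such $X$ the Jacobson radical $J_X$ is the unique maximal two-sided ideal, it is nilpotent, and because $k$ is algebraically closed the residue algebra $\End(X)/J_X$ is $k$. Recall also that the categorical dimension is $\dim X = \tr(\id_X)$.

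The key computation is to identify the negligible endomorphisms of an indecomposable $X$. Since $\mathcal{N}$ is a tensor ideal, $\mathcal{N}(X,X)$ is a two-sided ideal of $\End(X)$. I would first show $J_X \subseteq \mathcal{N}(X,X)$: for $f \in J_X$ and any $g \in \End(X)$ the product $f\circ g$ again lies in the nilpotent ideal $J_X$, so $\tr(f\circ g)=0$ by hypothesis. Writing an arbitrary $g \in \End(X)$ as $g = \lambda\,\id_X + r$ with $\lambda \in k$ and $r \in J_X$ gives $\tr(g)=\lambda\dim X$ by $k$-linearity and $\tr(r)=0$. Hence if $\dim X = 0$ then $\tr$ vanishes identically on $\End(X)$, so $\id_X$ is negligible and $X$ becomes a zero object in $\widehat{\mathcal{R}}$; whereas if $\dim X \neq 0$ then any $f \notin J_X$ is invertible with $\tr(f\circ f^{-1})=\dim X \neq 0$, so $f$ is not negligible, giving $\mathcal{N}(X,X)=J_X$ and therefore $\End_{\widehat{\mathcal{R}}}(X) = \End(X)/J_X = k$.

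Next I would use that, by the very definition of $\mathcal{N}$, the trace pairing $\Homm_{\widehat{\mathcal{R}}}(X,Y)\times\Homm_{\widehat{\mathcal{R}}}(Y,X)\to k$, $(f,g)\mapsto\tr(g\circ f)$, is non-degenerate, in order to prove a Schur-type orthogonality: if $X,Y$ are indecomposable of non-zero dimension and non-isomorphic in $\widehat{\mathcal{R}}$, then $\Homm_{\widehat{\mathcal{R}}}(X,Y)=0$. Indeed, a non-zero class $\bar f$ would admit, by non-degeneracy, some $\bar g$ with $\bar g\circ \bar f \in \End_{\widehat{\mathcal{R}}}(X)=k\,\id_X$ a non-zero scalar $\lambda\,\id_X$; setting $\bar h = \lambda^{-1}\bar g$ gives $\bar h\circ\bar f = \id_X$, so $\bar f$ is a split monomorphism. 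Then $\bar f\circ\bar h \in \End_{\widehat{\mathcal{R}}}(Y)=k$ is a non-zero idempotent, hence equals $\id_Y$, forcing $\bar f$ to be an isomorphism --- contradicting $X\not\cong Y$.

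Finally I would assemble semisimplicity. An arbitrary object $Z$ decomposes in $\mathcal{R}$ as $\bigoplus_i X_i$ with $X_i$ indecomposable; applying the quotient functor, the summands of dimension $0$ vanish and the rest become objects $L_i$ with $\End_{\widehat{\mathcal{R}}}(L_i)=k$ that are pairwise orthogonal unless isomorphic. This exhibits every object of $\widehat{\mathcal{R}}$ as a finite direct sum of simple objects with endomorphism algebra $k$, so $\widehat{\mathcal{R}}$ is semisimple and its simple objects are exactly the images of the indecomposables of $\mathcal{R}$ of non-zero dimension. The main obstacle, and the step where the hypotheses really bite, is the identification of $\mathcal{N}(X,X)$ in the second paragraph: it is precisely the vanishing of traces on nilpotents together with algebraic closedness that collapses the local endomorphism ring onto its residue field $k$ and cleanly separates the zero-dimensional indecomposables (which die in the quotient) from the rest (which become simple).
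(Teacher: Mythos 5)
The paper itself gives no proof of this lemma --- it is imported verbatim from \cite[Thm.~2.6]{EO18} --- so your proposal has to be judged against the standard argument, which it largely reproduces: Krull--Schmidt from Karoubianness and finite-dimensional Hom-spaces, the inclusion $J_X\subseteq\mathcal{N}(X,X)$ because composing into the nilpotent radical keeps morphisms nilpotent, the dichotomy $\dim X=0$ (so $X$ dies in $\widehat{\mathcal{R}}$) versus $\dim X\neq 0$ (so $\mathcal{N}(X,X)=J_X$ and $\End_{\widehat{\mathcal{R}}}(X)=k$), and the assembly of semisimplicity from the image of a Krull--Schmidt decomposition. All of these steps are correct as written.

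The gap is in the final clause of the lemma. Your orthogonality statement is conditioned on $X\not\cong Y$ \emph{in} $\widehat{\mathcal{R}}$, so your proof never rules out that two non-isomorphic indecomposables of non-zero dimension in $\mathcal{R}$ collapse to the same simple object of $\widehat{\mathcal{R}}$: you show that the simples are, as a class, exactly the images of the non-zero-dimensional indecomposables, but not that this correspondence is a bijection on isomorphism classes. That bijection is precisely what the lemma is cited for later in the paper (in the subsection on semisimplification it is invoked to say that the quotient functor induces a bijection between isomorphism classes of indecomposables of non-zero dimension in $\RepCt$ and isomorphism classes of non-zero indecomposables in $\widehat{\RepCt}$), so it cannot be omitted. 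The repair is short and is how Etingof--Ostrik argue: for indecomposable $X\not\cong Y$ in $\mathcal{R}$ and any $f\colon X\to Y$, $g\colon Y\to X$, the composite $g\circ f$ cannot be invertible (otherwise $f(g f)^{-1}g$ would be a non-zero idempotent in $\End(Y)$, which splits since $\mathcal{R}$ is Karoubian, exhibiting the non-zero object $X$ as a direct summand of the indecomposable $Y$, whence $X\cong Y$); so $g\circ f$ lies in $J_X$, is nilpotent, and $\tr(g\circ f)=0$. Thus all of $\Homm(X,Y)$ is negligible, i.e. $\Homm_{\widehat{\mathcal{R}}}(X,Y)=0$ already when $X\not\cong Y$ in $\mathcal{R}$; this yields your orthogonality and the missing injectivity in one stroke, replacing the pairing/split-mono argument by a more direct one that never has to pass through the quotient. (Alternatively, keep your argument and add: if $\bar f$ is an isomorphism in $\widehat{\mathcal{R}}$ with lift $f$ and $g$ lifts $\bar f^{-1}$, then $g f\in \id_X+J_X$ is invertible in $\End(X)$, so $f$ is a split monomorphism in $\mathcal{R}$, and indecomposability of $Y$ forces $f$ to be an isomorphism.)
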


To use this result for interpolation categories $\RepCt$, we observe: 

\begin{lemma} \label{lem-trace-nilpotent}
For any category of partitions $\cC$ and $t\in\CC$, the trace of any nilpotent endomorphism in $\uRep(\cC,t)$ is zero.
\end{lemma}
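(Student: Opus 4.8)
The plan is to reduce the claim, in each of two regimes for $t$, to a situation where the categorical trace is manifestly a (linear combination of) honest matrix traces, on which nilpotent elements automatically have vanishing trace. The two structural facts I would use throughout are that the categorical trace is preserved by any $\CC$-linear pivotal functor, and that nilpotency of an endomorphism is preserved by any functor (since functors preserve composition and the zero morphism). This lets me avoid working inside $\uRep(\cC,t)$ itself and argue instead in two better-behaved target categories.

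First I would record the relevant functors. The inclusion of categories of partitions $\cC \subseteq P$ induces a $\CC$-linear monoidal functor $\iota\colon \uRep(\cC,t) \to \uRep(S_t)$ which is the identity on objects and the inclusion $\CC\cC(k,l) \hookrightarrow \CC P(k,l)$ on morphism spaces, extended functorially to the Karoubi envelopes. Since the (co)evaluation partitions lie already in the minimal category $NC_2 \subseteq \cC$ and are sent by $\iota$ to the corresponding (co)evaluations of $\uRep(S_t)$, the functor $\iota$ respects the pivotal structure, so $\tr(\iota(f)) = \tr(f)$ for every endomorphism $f$, and $\iota$ carries nilpotents to nilpotents. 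Likewise, for $t = n \in \NN_0$ the fiber functor $\mathcal F\colon \uRep(\cC,n) \to \mathrm{Hilb}_f$ of \Cref{lem::fiber-functor} is $\CC$-linear and pivotal into finite-dimensional Hilbert spaces, in which the categorical trace is the ordinary trace of a linear map; hence $\tr(f) = \mathrm{Tr}(\mathcal F(f))$ for every endomorphism $f$.

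Then I would split into cases according to $t$. If $t \notin \NN_0$, Deligne's theorem gives that $\uRep(S_t)$ is semisimple, so for any object $X$ there the algebra $\End(X)$ is a finite product of matrix algebras over $\CC$ and the categorical trace on it is a linear combination of the matrix traces of the factors. A nilpotent element is nilpotent in each factor, so its matrix trace vanishes in each, whence its categorical trace is $0$. Applying $\iota$ to a nilpotent $f$ in $\uRep(\cC,t)$ produces such a nilpotent endomorphism, and therefore $\tr(f) = \tr(\iota(f)) = 0$. If instead $t = n \in \NN_0$, then for nilpotent $f$ the linear map $\mathcal F(f)$ is nilpotent and thus traceless, so $\tr(f) = \mathrm{Tr}(\mathcal F(f)) = 0$.

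The main obstacle to be careful about is exactly that, unlike Deligne's $\uRep(S_t)$, a general $\uRep(\cC,t)$ is typically \emph{not} semisimple even for non-integer $t$ (this non-semisimplicity at non-integer parameters is one of the very phenomena analysed later in the paper). Consequently one cannot run the semisimplicity argument inside $\uRep(\cC,t)$; the crucial point is that the ambient category $\uRep(S_t)$ \emph{is} semisimple for all $t \notin \NN_0$, together with the fact that $\iota$ preserves traces and composition. The one technical verification needed to make this rigorous is that $\iota$ and the fiber functor genuinely respect the pivotal structure, i.e.\ that they send the (co)evaluation partitions to the standard (co)evaluations of the target category; this is immediate from the diagrammatic definitions of these morphisms.
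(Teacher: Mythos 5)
Your proof is correct, but it takes a genuinely different route from the paper's. The paper argues uniformly in $t$: a nilpotent endomorphism $f$ of $\uRep(\cC,t)$ is first regarded as a nilpotent endomorphism of $\uRep(P,t)=\uRep(S_t)$, and then pushed into the semisimplification $\widehat{\uRep(S_t)}=\uRep(S_t)/\mathcal{N}$, which by \cite{CO11} is semisimple for \emph{every} $t$, including $t\in\NN_0$; since the quotient by negligible morphisms preserves traces and nilpotency by construction, the trace vanishes there and hence in $\uRep(\cC,t)$. You avoid the semisimplification altogether at the cost of a case split: for $t\notin\NN_0$ you stop at $\uRep(S_t)$ itself, invoking Deligne's semisimplicity theorem \cite{De07}, while for $t=n\in\NN_0$ you bypass $\uRep(S_t)$ entirely and push $f$ through the Banica--Speicher fiber functor $\mathcal{F}\colon\uRep(\cC,n)\to \mathrm{Hilb}_f$ of \Cref{lem::fiber-functor}, where a nilpotent endomorphism is literally a nilpotent linear map and hence traceless. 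Both arguments share the same skeleton --- transport $f$ along a composition- and trace-preserving functor into a category where nilpotents visibly have trace zero, which in particular handles arbitrary objects of the Karoubi envelope correctly (your matrix-algebra argument in the semisimple case rightly avoids the trap of looking at diagonal entries of a nilpotent matrix of morphisms, which need not be nilpotent). What your version buys is a completely elementary integer case: once one checks, as you indicate, that $\mathcal{F}$ sends the nested (co)evaluation partitions to (co)evaluations for the self-duality of $(\CC^n)^{\ot k}$, the claim is plain linear algebra, with no input from \cite{CO11} at all. The price is a case distinction and reliance on two separate external inputs (Deligne's theorem and the fiber functor) where the paper needs only the single Comes--Ostrik result; the paper's version is shorter and uniform in $t$, but leans on the nontrivial fact that the quotient of $\uRep(S_t)$ by negligibles remains semisimple precisely at the degenerate parameters $t\in\NN_0$.
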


\begin{proof} Let $f$ be a nilpotent endomorphism in $\uRep(\cC,t)$. Then $f$ is also a nilpotent endomorphism in $\uRep(P,t)$. By \cite[Thm.~3.24, Cor.~5.23]{CO11}, $\widehat{\uRep(P,t)}=\uRep(P,t)/\mathcal{N}$ is a semisimple category. Since the trace of any nilpotent endomorphism in a semisimple category is zero, we have $\tr_{\uRep(\cC,t)}(f)=\tr_{\uRep(P,t)}(f)=\tr_{\widehat{\uRep(P,t)}} (f + \mathcal{N})=0$.
\end{proof}

Combining the previous two lemmas, we obtain:
\begin{lemma} \label{cor-criterion}
Let $\cC$ be a category of partitions and $t\in\CC$. The category $\uRep(\cC,t)$ is semisimple if and only if all negligible morphisms are trivial.
\end{lemma}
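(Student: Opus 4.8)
The plan is to obtain the statement by feeding $\uRep(\cC,t)$ into \Cref{rem::negl_morphisms} and then reading off the equivalence from the resulting semisimplification. The role of \Cref{lem-trace-nilpotent} is to supply the one hypothesis of \Cref{rem::negl_morphisms} that is not formal, after which semisimplicity of $\uRep(\cC,t)$ itself becomes equivalent to the vanishing of $\mathcal{N}$. First I would verify the hypotheses of \Cref{rem::negl_morphisms} for $\mathcal{R}:=\uRep(\cC,t)$ over $k=\CC$: the category is $\CC$-linear and Karoubian by construction, and it is pivotal with coinciding left and right traces as recorded above when the duality was introduced. Its morphism spaces are finite-dimensional, since on $\uRep_0(\cC,t)$ they are spanned by the finite sets $\cC(k,l)\subseteq P(k,l)$, and passing to the additive and idempotent completions only produces subspaces of finite direct sums of these. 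Finally, \Cref{lem-trace-nilpotent} gives that nilpotent endomorphisms have trace zero. Hence \Cref{rem::negl_morphisms} applies and $\widehat{\uRep(\cC,t)}=\uRep(\cC,t)/\mathcal{N}$ is semisimple.

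\textbf{Backward implication.} If $\mathcal{N}=0$, then the quotient functor $\uRep(\cC,t)\to\widehat{\uRep(\cC,t)}$ is the identity on objects and an isomorphism on every morphism space, so $\uRep(\cC,t)\cong\widehat{\uRep(\cC,t)}$ is semisimple.

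\textbf{Forward implication.} This is where the content lies. Assume $\uRep(\cC,t)$ is semisimple and let $f\colon X\to Y$ be a nonzero morphism; I would show $f\notin\mathcal{N}$. Decomposing $X$ and $Y$ into simple summands and using that $\CC$ is algebraically closed (so $\End$ of a simple object is $\CC$ by Schur's lemma), there exist a simple object $S$, a summand inclusion $a\colon S\to X$ and a summand projection $b\colon Y\to S$ with $bfa=\lambda\,\id_S$ for some $\lambda\neq 0$; otherwise $f$ would vanish. Putting $g:=ab\colon Y\to X$ and using cyclicity of the trace gives $\tr(fg)=\tr(fab)=\tr(bfa)=\lambda\dim(S)$, so $f$ is not negligible provided $\dim(S)\neq 0$, and thus $\mathcal{N}=0$.

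\textbf{Main obstacle.} The whole forward direction therefore reduces to the key point that in the semisimple, rigid category $\uRep(\cC,t)$ no simple object has dimension zero. I would prove this from the duality structure. The unit $\mathbf{1}=[0]$ is simple, since $\End(\mathbf{1})\cong\CC$. For a nonzero simple $S$ the coevaluation $\mathrm{coev}_S$ is nonzero by the snake relations and spans the one-dimensional space $\Homm(\mathbf{1},S\ot S^{\vee})\cong\End(S)=\CC$; in a semisimple category $\mathbf{1}$ hence occurs with multiplicity one in $S\ot S^{\vee}$, so $\mathrm{coev}_S$ and $\mathrm{ev}_S$ agree, up to nonzero scalars, with the inclusion and projection of this copy of $\mathbf{1}$. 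Composing them expresses $\dim(S)\,\id_{\mathbf{1}}$ as a nonzero scalar multiple of $\id_{\mathbf{1}}$, so $\dim(S)\neq 0$. This rules out zero-dimensional simples and completes the forward direction.
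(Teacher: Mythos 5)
Your proposal is correct and follows the same skeleton as the paper: the paper's entire proof is the observation that \Cref{lem-trace-nilpotent} supplies the one non-formal hypothesis of \Cref{rem::negl_morphisms}, after which the backward implication (if $\mathcal{N}=0$ then $\uRep(\cC,t)=\widehat{\uRep(\cC,t)}$ is semisimple) is immediate. Where you go beyond the paper is the forward implication: the paper treats ``semisimple implies all negligible morphisms are trivial'' as immediate (it invokes the same fact again, without proof, for $\Rep(G_n(\cC))$ in the proof of \Cref{prop::fiber-functor}), whereas you correctly isolate the actual content --- that in a semisimple pivotal category with simple unit no simple object has categorical dimension zero --- and prove it by the standard multiplicity-one argument: $\mathrm{coev}_S$ and the evaluation-type map out of $S\ot S^\vee$ are nonzero by the snake relations, they span the one-dimensional spaces $\Homm(\mathbf{1},S\ot S^\vee)$ and $\Homm(S\ot S^\vee,\mathbf{1})$, hence their composite is a nonzero multiple of $\id_{\mathbf{1}}$. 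This is exactly the standard fact underlying the paper's (and Etingof--Ostrik's) usage, so your write-up is a more complete version of the paper's argument rather than a genuinely different route. One small imprecision: $\mathrm{ev}_S$ is a map $S^\vee\ot S\to\mathbf{1}$, so the map you pair with $\mathrm{coev}_S\colon\mathbf{1}\to S\ot S^\vee$ should be the right evaluation (equivalently, $\mathrm{ev}$ twisted by the pivotal isomorphism); in $\uRep(\cC,t)$ this is harmless because all objects are self-dual and the left and right traces coincide, but it is worth stating.
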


\begin{proof} \Cref{rem::negl_morphisms} and \Cref{lem-trace-nilpotent} imply the ``if'' part. The ``only if''-part follows from \cite[Cor.~7.1.7]{AK}.
\end{proof}

This abstract argument can be made practical by realising that the existence of negligible endomorphisms is detected by the determinants of certain Gram matrices.

\begin{definition}[{\cite[Def.~4.2]{BC07}}] For any category of partitions $\cC$, we introduce the short-hand notation $\cC(k)=\cC(0,k)$, denoting the partitions in $\cC$ with no upper points. The \emph{Gram matrices} are given by
$$
 G^{(k)} := (t^{\ell(p^*,q)})_{p,q\in\cC(k)} 
 \quad\text{for all }k\in \NN_0.
$$
\end{definition}

Notice that the entries of the Gram matrix are just the traces of the compositions $p^* q$. 

\begin{example} The following table features the entries of the Gram matrix $G^{(2)}$ for $\uRep(S_t)$:
\[
\begin{tabular}{c|cc}
 & $\Laa$ & $\singleton\singleton$ \\
 \hline
 $\Laa$  & $t$ & $t$  \\
 $\singleton\singleton$ & $t$ & $t^2$ 
\end{tabular}
\]
Its determinant is $t^2(t-1)$.

Note that the Gram matrices explained here differ from those computed in \cite[Ex.~3.14]{CO11}, which use the ``usual'' trace form in the finite-dimensional endomorphism algebras.
\end{example}

\begin{proposition} \label{lem::semisimple_endo} \label{lem::semisimple_determinant_general}
 Let $t\in \CC$ and let $\cC$ be a category of partitions. Then $\RepCt$ is semisimple if and only if it satisfies $\det(G^{(k)})\neq 0$ for all $k\in\NN$.
\end{proposition}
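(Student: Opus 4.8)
The plan is to combine \Cref{cor-criterion}, which says $\RepCt$ is semisimple exactly when the tensor ideal $\mathcal N$ of negligible morphisms vanishes, with an identification of $\mathcal N$ in terms of the radicals of the Gram forms $G^{(k)}$. The first step is a routine reduction from the Karoubi envelope to the free objects $[k]$: since trace and composition on $\RepCt$ are inherited from $\uRep_0(\cC,t)$ and $\mathcal N$ is a tensor ideal, a morphism $f=e'fe\colon ([k],e)\to([l],e')$ is negligible in $\RepCt$ iff it is negligible as a morphism $[k]\to[l]$ — for arbitrary $g\colon[l]\to[k]$ one has $\tr(gf)=\tr(g\,e'fe)=\tr\big((ege')f\big)$ by cyclicity, and $ege'$ is an allowed morphism of retracts — and in the additive completion a matrix is negligible iff each of its entries is (isolate entries by choosing $g$ supported on a single block). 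Hence $\mathcal N=0$ iff $\mathcal N\cap\Homm([i],[j])=0$ for all $i,j\in\NN_0$.

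Second, I would use self-duality $[k]^\vee=[k]$ together with $[i]\otimes[j]=[i+j]$ to bend the upper legs of every partition to the bottom, giving isomorphisms $\Homm([i],[j])\cong\Homm([0],[i+j])$, $f\mapsto \hat f:=(\id_{[i]}\otimes f)\circ\mathrm{coev}_i$, that send a partition to the same partition with all points on one row. Because $\mathcal N$ is a tensor ideal it is closed under tensoring with identities and under composition with the fixed maps $\mathrm{ev}_i,\mathrm{coev}_i$; together with the snake identities (which recover $f$ from $\hat f$ by the inverse bending) this shows $f\in\mathcal N$ iff $\hat f\in\mathcal N$. As $i+j$ runs through all of $\NN_0$, we conclude $\mathcal N=0$ iff $\mathcal N\cap\Homm([0],[k])=0$ for every $k$.

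Third, I would identify $\mathcal N\cap\Homm([0],[k])$ with $\ker G^{(k)}$. Writing $\Homm([0],[k])=\CC\cC(k)$, every $c\colon[k]\to[0]$ is a linear combination of the $p^*$ with $p\in\cC(k)$, and for $a=\sum_q\lambda_q q$ one computes $\tr(p^*a)=\sum_q\lambda_q\,\tr(p^*q)=\sum_q\lambda_q\,t^{l(p^*,q)}=(G^{(k)}\lambda)_p$, using $p^*\cdot q=\id_{[0]}$. Thus $a$ is negligible iff $G^{(k)}\lambda=0$, i.e. $\mathcal N\cap\Homm([0],[k])=\ker G^{(k)}$; this is a finite matrix, since $\cC(k)\subseteq P(0,k)$ is finite, and it is symmetric because $l(p^*,q)=l(q^*,p)$ under flipping the diagram upside down. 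Its kernel is trivial iff $\det G^{(k)}\neq0$. Combining the three reductions yields the claim, and since $G^{(0)}=(1)$ is always nonsingular only $k\in\NN$ is needed.

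I expect the main obstacle to be the second step: making the invariance of negligibility under leg-bending fully rigorous. One must verify that the mate $\hat f$ really is obtained from $f$ by tensoring with identities and composing with the fixed (co)evaluations, and that the snake identities invert this construction, so that the tensor-ideal property of $\mathcal N$ applies in both directions. The nonstandard pairing convention for $\mathrm{ev}_i,\mathrm{coev}_i$ recorded after the trace diagram has to be tracked carefully throughout. Once this equivalence is in place, the remaining steps are purely formal.
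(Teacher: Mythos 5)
Your proposal is correct and follows essentially the same route as the paper's proof: reduce negligibility from the Karoubi envelope to $\uRep_0(\cC,t)$, then to morphisms $[0]\to[k]$, and identify the negligible ones with the kernel of $G^{(k)}$ via $\tr(p^*a)=(G^{(k)}\lambda)_p$. The only difference is that where the paper says ``comparing diagrams,'' you spell out the leg-bending argument rigorously via the (co)evaluations, snake identities, and the tensor-ideal property of $\mathcal{N}$ -- a welcome elaboration, not a deviation.
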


\begin{proof}
By \Cref{cor-criterion}, $\RepCt$ is semisimple if and only if it does not contain any non-trivial negligible morphisms. 
Now $\RepCt$ is constructed as a Karoubi envelope, that is, an idempotent completion of an additive completion, but we claim that negligibility can be traced back to the original category, $\uRep_0(\cC,t)$ in this case. First, as any negligible morphism from or to a direct summand extends trivially to a negligible morphism from or to the full object, it suffices to consider the additive completion. We can think of its morphisms as matrices whose entries are morphisms in the original category. As each entry of such a matrix can be recovered by compositions with suitable row and column matrices consisting of zero or identity morphisms, such a matrix is negligible, if and only if all of its entries are negligible. Hence, $\RepCt$ is semisimple if and only if there are no non-trivial negligible morphism $f\in \Homm_{\uRep_0(\cC,t)} ([k],[l])$ for all $k,l\in \NN_0$.

We observe that, for two morphisms $f\in\Homm_{\uRep_0(\cC,t)} ([k],[l])$ and $g\in\Homm_{\uRep_0(\cC,t)} ([l],[k)$, the trace of $f\circ g$ is given by the composition $f'\circ g'$, where $f'\in\Homm_{\uRep_0(\cC,t)} ([k+l],[0])$, $g'\in\Homm_{\uRep_0(\cC,t)} ([0],[k+l])$ are compositions of $f$ and $g$ with the categorical evaluation and coevaluation map, respectively. Hence, $\uRep_0(\cC,t)$ has non-zero negligible morphisms if and only if there is a non-zero negligible morphism $f\in \Homm ([0],[k])$ for some $k\in \NN_0$. In other words, $\RepCt$ is semisimple if and only if the form
$$ \Homm([0],[k]) \times \Homm([0],[k]) \to \CC, ~(p,q)\mapsto t^{\ell(q^*,p)} $$
is non-degenerate. The Gram matrix of this form is exactly $G^{(k)}$, and hence, the form is non-degenerate if and only if $G^{(k)}$ has a trivial kernel. Thus the claim follows (note that $\det G^{(0)}$ is $1$).
\end{proof}

\begin{corollary} For any category of partitions $\cC$ and any transcendental $t\in\CC$, $\RepCt$ is semisimple.
\end{corollary}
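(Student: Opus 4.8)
The plan is to combine \Cref{lem::semisimple_determinant_general} with the observation that each Gram determinant $\det(G^{(k)})$ is a polynomial in $t$ with integer coefficients which does not vanish identically. By \Cref{lem::semisimple_determinant_general}, the category $\RepCt$ is semisimple if and only if $\det(G^{(k)}) \neq 0$ for all $k \in \NN$. First I would note that the entries of $G^{(k)}$ are powers $t^{l(p^*,q)}$ with nonnegative integer exponents, so $\det(G^{(k)}) \in \ZZ[t]$ for every $k$. If I can show that none of these polynomials is the zero polynomial, then for transcendental $t$ no integer polynomial can vanish at $t$ (a transcendental number is by definition not a root of any nonzero polynomial with rational, hence integer, coefficients), and therefore $\det(G^{(k)}) \neq 0$ for all $k$, giving semisimplicity.

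So the crux is to verify that $\det(G^{(k)})$ is not identically zero as a polynomial in $t$. The cleanest way I would argue this is to exhibit a single value of the parameter at which the determinant is nonzero, since a polynomial vanishing at no such value cannot be the zero polynomial. The natural choice is to take $t = n$ for a sufficiently large integer $n$. At such an integer the fiber functor $\mathcal F$ of \Cref{lem::fiber-functor} realises $\RepCt[n]$ inside $\Rep(G_n(\cC))$, which is an honest semisimple representation category; for $n$ large enough the partitions in $\cC(k)$ map to linearly independent vectors under $\mathcal F$, so the associated form is nondegenerate and $\det(G^{(k)})|_{t=n} \neq 0$. Concretely, the standard fact (going back to \cite{BS09}) is that the maps $\mathcal F(p)$ for $p \in \cC(k,l)$ become linearly independent once $n \geq k+l$ (or once $n$ exceeds the number of blocks appearing), which forces the Gram determinant to be nonzero at that integer and hence forces the polynomial to be nonzero.

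The main obstacle, and the only point requiring care, is justifying the nonvanishing of $\det(G^{(k)})$ as a polynomial rather than just at a point: I must ensure that for each fixed $k$ there genuinely is an integer $n$ at which the vectors $\{\mathcal F(p) : p \in \cC(k)\}$ are linearly independent, so that the Gram form is nondegenerate there. This linear independence for large $n$ is precisely the statement that distinct partitions yield distinct (and independent) morphisms in the honest representation category, which is part of the Banica--Speicher setup and underlies the very definition of easy quantum groups. With that in hand, the argument is purely formal: each $\det(G^{(k)})$ is a nonzero element of $\ZZ[t]$, a transcendental $t$ is a root of no such element, so all Gram determinants are nonzero at transcendental $t$, and \Cref{lem::semisimple_determinant_general} yields semisimplicity.

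Alternatively, and perhaps more in keeping with the self-contained spirit of the paper, one could avoid invoking the representation-theoretic realisation altogether and instead expand $\det(G^{(k)})$ directly: viewing it as a polynomial in $t$, I would identify a leading or lowest-order term with nonzero coefficient. For instance, the diagonal entries of $G^{(k)}$ contribute $t^{l(p^*,p)} = t^{\#p}$ (the number of blocks of $p$), and a degree count on the permanent-like expansion, or a triangularity argument after ordering the partitions by a suitable refinement statistic, shows that the monomial of extremal degree survives with coefficient $\pm 1$. Either route establishes $\det(G^{(k)}) \not\equiv 0$, which is all that is needed.
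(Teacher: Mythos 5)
Your proposal is correct and follows the same route as the paper: reduce semisimplicity to the nonvanishing of the Gram determinants via \Cref{lem::semisimple_determinant_general}, note $\det(G^{(k)})\in\ZZ[t]$, and use that a transcendental number is a root of no nonzero integer polynomial. In fact the paper's entire proof is the single observation that $\det(G^{(k)})$ depends on $t$ polynomially, leaving implicit exactly the point you isolate as the crux, namely that these polynomials are not identically zero. Both of your justifications for this are valid: specialising $t$ to a large integer $n$, where $G^{(k)}$ becomes the Gram matrix of the $0$--$1$ tensors $\mathcal{F}(p)$, $p\in\cC(k)$, which are linearly independent for $n$ large and real-valued, so the standard form is positive definite on their real span and the determinant is nonzero; or the leading-term analysis, where $l(p^*,q)=\#(p\wedge q)\leq\min(\#p,\#q)$, with equality forcing $p=q$ once block numbers agree, shows that the identity permutation contributes the unique top-degree monomial $t^{\sum_{p\in\cC(k)}\#p}$ with coefficient $1$. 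So your write-up is, if anything, more complete than the paper's own argument.
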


\begin{proof} The determinant of the Gram matrix $\det(G^{(k)})$ depends on $t$ polynomially for any $k\in \NN_0$.
\end{proof}

Let us contrast this with the case $t=0$.

\newcommand\cF{\mathcal{F}}
\newcommand\cD{\mathcal{D}}
\newcommand\cN{\mathcal{N}}
\newcommand\cQ{\mathcal{Q}}
\newcommand\cG{\mathcal{G}}

\begin{lemma}\label{lem::semisimplification-t0} For any category of partitions $\cC$, $\widehat{\uRep(\cC,0)}$ is equivalent to the category of finite-dimensional complex vector spaces.
\end{lemma}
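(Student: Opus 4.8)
The plan is to show that, at $t=0$, every object $[k]$ with $k\geq 1$ collapses to the zero object in the semisimplification, leaving $\mathbf{1}=[0]$ as the only surviving simple object, with $\End(\mathbf{1})\cong\CC$.

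First I would record the key computation on traces. For a partition $p\in\cC(k,k)$ the categorical trace $\tr(p)$ is obtained by closing up the diagram of $p$ with the (co)evaluation maps, which yields an element of $\End([0])\cong\CC$ of the form $t^{\ell(p)}$, where $\ell(p)$ is the number of loops of the closed-up diagram. When $k\geq 1$ this closed diagram involves at least one point and has no free boundary endpoints, so it consists of at least one loop; hence $\ell(p)\geq 1$ and $\tr(p)=t^{\ell(p)}=0$ at $t=0$. By linearity, $\tr(g)=0$ for every $g\in\End([k])=\CC\cC(k,k)$ whenever $k\geq 1$.

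Next I would deduce that $\id_{[k]}$ is negligible for every $k\geq 1$: for any $g\colon[k]\to[k]$ we have $\tr(\id_{[k]}\circ g)=\tr(g)=0$, which is exactly the negligibility condition. Consequently $[k]$ is a zero object in $\widehat{\uRep(\cC,0)}$ for all $k\geq 1$. On the other hand, $\cC(0,0)$ consists only of the empty partition, so $\End([0])=\CC$ and $\dim([0])=\tr(\id_{[0]})=t^0=1\neq 0$; thus $[0]$ is a nonzero indecomposable object whose identity is not negligible. I would then invoke the structure theory: by \Cref{lem-trace-nilpotent} and \Cref{rem::negl_morphisms}, $\widehat{\uRep(\cC,0)}$ is semisimple and its simple objects correspond to the indecomposable objects of $\uRep(\cC,0)$ of nonzero dimension. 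Since $\uRep(\cC,0)$ is Karoubian with finite-dimensional morphism spaces over $\CC$, it is Krull--Schmidt, so every indecomposable is a direct summand of some $[k]$. If $k\geq 1$, the identity of such a summand is an idempotent $e\in\End([k])$ whose dimension is $\tr(e)=0$ by the trace computation; if $k=0$, the summand is $[0]$ itself. Hence $[0]$ is the unique indecomposable of nonzero dimension, so $\widehat{\uRep(\cC,0)}$ is a semisimple category with a single simple object $\mathbf{1}=[0]$ satisfying $\End(\mathbf{1})=\CC$, which is equivalent to the category of (finite-dimensional) complex vector spaces.

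I expect the only delicate point to be the trace computation, namely the claim that closing up any nonempty partition diagram produces at least one loop, so that $\tr$ vanishes identically on $\End([k])$ for $k\geq 1$. Making this combinatorial statement precise, by tracking the loop count through the two compositions defining the categorical trace, is the crux; once it is established, the remainder is a formal application of the semisimplification machinery already developed.
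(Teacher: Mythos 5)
Your proof is correct and follows essentially the same route as the paper's: the paper's (very terse) argument likewise observes that at $t=0$ every morphism touching an object $[k]$ with $k\geq 1$ is negligible because all relevant traces acquire a positive power of $t$, while $\id_0$ survives, so only $[0]$ remains in the semisimplification. Your write-up merely fills in the details the paper leaves implicit (the loop-counting for the categorical trace, and the appeal to \Cref{rem::negl_morphisms} together with the Krull--Schmidt property).
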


\begin{proof} The morphism space $\Homm([k],[l])$ in $\uRep(\cC,0)$ consists of negligible morphisms if $k>0$ or $l>0$, while the non-zero endomorphism $\id_0$ of the object $[0]$ is not negligible.
\end{proof}

Deligne showed that $\uRep(S_t)$ is semisimple if and only if $t\notin \NN_0$, see \cite[Thm.~2.18]{De07}. We will show in \Cref{ssec:group-theoretical} that this is also the case for all group-theoretical categories of partitions, including $\uRep(H_t)$.

Let us discuss semisimplicity for some examples of non-crossing partition categories. We will explain a way to study their semisimplifications in \Cref{prop::semisimplification-nc}.

\begin{remark} \label{rem::TL_semisimple} 
The category $\uRep(O_t^+)$ is exactly the (Karoubian envelope of) the Temperley--Lieb category $TL(q)$ with $t=q+q^{-1}$ (introduced in \cite[Def. 2.1]{GL98}). It is well-known to be semisimple if and only if $q$ is not a $2l$-th root of unity, i.e. $q\notin \{ e^{\frac{j \pi}{l}} \mid l\in \NN_{\geq 2}, j\in \{1,\ldots,l-1\}\}$, see for instance \cite[Cor.~3.2,~Thm.~3.3]{GW02}.
% this follows from results in \cite{GW02} 
This implies that the category $\uRep(O_t^+)$ is semisimple if and only if 
 \[ t\notin \{2\cdot \cos \left(\frac{j \pi}{l}\right) \mid l\in \NN_{\geq 2}, j\in \{1,\ldots,l-1\}\}. \]
\end{remark}

\begin{proposition} \label{lem::St+_semisimple}
 The category $\uRep(S_t^+)$ is semisimple if and only if 
 \[ t\notin \{4\cdot \cos \left(\frac{j \pi}{l} \right)^2 \mid l\in \NN_{\geq 2}, j\in \{1,\ldots,l-1\}\}. \]
\end{proposition}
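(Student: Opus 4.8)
The plan is to reduce the statement to the known semisimplicity criterion for the Temperley--Lieb category $\uRep(O_t^+)=\uRep(NC_2,t)$ recorded in \Cref{rem::TL_semisimple}, via the Gram-determinant criterion of \Cref{lem::semisimple_determinant_general}. By that proposition, $\uRep(S_t^+)=\uRep(NC,t)$ is semisimple exactly when $\det G^{(k)}\neq 0$ for all $k$, where $G^{(k)}=(t^{l(p^*,q)})_{p,q\in NC(k)}$ and $NC(k)=NC(0,k)$ is the set of non-crossing partitions of $k$ points. Overlaying $p$ and $q$ on their $k$ points identifies exactly the blocks connected through $p$ and through $q$, so the exponent $l(p^*,q)$ equals the number $\#(p\vee q)$ of blocks of the common coarsening $p\vee q$; thus the entries of $G^{(k)}$ are $t^{\#(p\vee q)}$.

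The decisive step is a diagrammatic comparison with Temperley--Lieb. I would use the classical fattening bijection $p\mapsto\hat p$ between $NC(k)$ and the non-crossing pair partitions $NC_2(0,2k)$ of $2k$ points (both enumerated by the Catalan number $C_k$), obtained by replacing each of the $k$ points by two points and connecting them so as to trace the boundaries of the blocks of $p$. Writing $t=\tau^2$, I claim this bijection realises the congruence
\begin{equation*}
G^{(k)}_{NC}(\tau^2)=\tau^{-k}\,E_k\,G^{(2k)}_{NC_2}(\tau)\,E_k,\qquad E_k=\operatorname{diag}\big(\tau^{\#p}\big)_{p\in NC(k)},
\end{equation*}
between the Gram matrix of $\uRep(NC,\tau^2)$ at level $k$ and that of $\uRep(NC_2,\tau)$ at level $2k$. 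Concretely this reduces to the single combinatorial identity
\begin{equation*}
l(\hat p^*,\hat q)=2\,\#(p\vee q)-\#p-\#q+k\qquad(p,q\in NC(k)),
\end{equation*}
which I would establish by an Euler-characteristic count of the connected components of the glued fattened diagram $\hat p^*\cdot\hat q$. Since $E_k$ has integer powers of $\tau$ on the diagonal and $\tau^{-k}$ is a global scalar, the matrix $\tau^{-k/2}E_k$ is invertible for every $\tau\neq 0$, so $\det G^{(k)}_{NC}(\tau^2)$ is a non-zero monomial in $\tau$ times $\det G^{(2k)}_{NC_2}(\tau)$; in particular the two determinants have the same zero locus away from $\tau=0$.

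With this identification the proof finishes by bookkeeping. The value $t=0=4\cos^2(\pi/2)$ lies in the exceptional set, and $\uRep(S_0^+)$ fails to be semisimple by \Cref{lem::semisimplification-t0}, so assume $t\neq 0$ and fix $\tau$ with $\tau^2=t$. Because $NC_2(0,m)=\emptyset$ for odd $m$, every odd-level Temperley--Lieb Gram matrix is empty with determinant $1$, so all Temperley--Lieb degeneracies occur at even levels $2k$; hence
\begin{equation*}
\uRep(S_t^+)\text{ not semisimple}\iff \det G^{(2k)}_{NC_2}(\tau)=0\text{ for some }k\iff \uRep(O_\tau^+)\text{ not semisimple}.
\end{equation*}
By \Cref{rem::TL_semisimple} the last condition is $\tau\in\{2\cos(j\pi/l)\mid l\geq 2,\ 1\leq j\leq l-1\}$. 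This set is invariant under $\tau\mapsto-\tau$, since $-2\cos(j\pi/l)=2\cos((l-j)\pi/l)$, so it contains $\tau$ if and only if it contains $-\tau$; therefore $\tau$ lies in it if and only if $t=\tau^2\in\{4\cos^2(j\pi/l)\mid l\geq 2,\ 1\leq j\leq l-1\}$, independently of the chosen square root. This is precisely the claimed exceptional set.

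The main obstacle is the diagrammatic comparison in the second paragraph: pinning down the fattening bijection and verifying the loop-count identity $l(\hat p^*,\hat q)=2\#(p\vee q)-\#p-\#q+k$ together with the exact diagonal gauge $E_k$. Everything afterwards is formal, using only \Cref{lem::semisimple_determinant_general}, \Cref{rem::TL_semisimple}, \Cref{lem::semisimplification-t0}, and the elementary observation that odd-level Temperley--Lieb Gram matrices are empty, so that the parity mismatch between the two sides never causes trouble.
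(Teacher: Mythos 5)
Your proposal is correct, but it takes a genuinely different route from the paper. The paper's proof of \Cref{lem::St+_semisimple} is essentially a citation: after reducing semisimplicity to the non-vanishing of the Gram determinants via \Cref{lem::semisimple_determinant_general}, it quotes \cite{Tu93} or \cite[Prop.~5.37.]{Ju19} for the exact vanishing locus of the determinants of $(t^{\#(p\vee q)})_{p,q\in NC(k)}$ (Tutte's ``chromatic join'' determinants). You instead re-derive that input from the Temperley--Lieb side: the fattening bijection $NC(k)\to NC_2(0,2k)$ and the loop-count identity $l(\hat p^*,\hat q)=2\#(p\vee q)-\#p-\#q+k$ give a diagonal congruence between $G^{(k)}_{NC}(\tau^2)$ and $G^{(2k)}_{NC_2}(\tau)$, so for $t=\tau^2\neq 0$ the two families of determinants vanish simultaneously, and \Cref{rem::TL_semisimple}, together with the vacuous odd levels and the invariance of $\{2\cos(j\pi/l)\}$ under $\tau\mapsto-\tau$, finishes the proof. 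I checked your key identity against the paper's conventions (where $l(p^*,q)=\#(p\vee q)$) and in examples; it is correct, and your proposed Euler-characteristic proof is sound: the glued surface retracts onto the bipartite block-intersection graph, whose components are the blocks of $p\vee q$, and non-crossingness of $p$ and $q$ is exactly what forces genus zero, so the number of boundary loops is $2c-\chi$ with $\chi=\#p+\#q-k$. What your route buys: it avoids the external determinant formulas entirely, using only the Temperley--Lieb fact the paper already treats as known, and it is the Gram-matrix shadow of the fattening equivalence $\uRep(S_{t^2}^+)\simeq\mathcal{D}(t)\subset\uRep(O_t^+)$ that the paper itself constructs later (\Cref{lem::equivalence_St+_Ot+}), so it fits naturally into the paper's Section 5. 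What the paper's route buys is brevity and, through the cited works, closed product formulas for the determinants rather than just their zero set. Two small points to tighten: the Euler-characteristic identity is asserted rather than carried out (and it is the only nontrivial step, so it should be written down, with the genus-zero role of non-crossingness made explicit); and for $t=0$, \Cref{lem::semisimplification-t0} by itself only identifies the semisimplification, so combine it with \Cref{cor-criterion} --- for instance, $\id_{[1]}$ is a non-zero negligible endomorphism in $\uRep(NC,0)$ --- to conclude non-semisimplicity.
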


\begin{proof} By \cite{Tu93} or \cite[Prop.~5.37]{Ju19}, the determinants described in \Cref{lem::semisimple_determinant_general} are non-zero if and only if $t$ is of the asserted form. This implies the assertion with \Cref{lem::semisimple_determinant_general}.
\end{proof}

%The endomorphism algebras $\End_{\uRep(H_t^+)}([k])$ of the category $\uRep(H_t^+)$ are called Fuss-Catalan algebras and were first considered by Bisch and Jones \cite{BJ97}. Ahmed calculated the parameters $t\in \CC$ for which they are semisimple, see \cite[Thm.~3.9.16]{Ah16}. Together with \Cref{lem::semisimple_endo} this implies the following lemma.
\begin{proposition} \label{lem::Ht+_semisimple}
 The category $\uRep(H_t^+)$ is semisimple if and only if 
 \[ t\notin \{4\cdot \cos \left(\frac{j \pi}{l}\right)^2 \mid l\in \NN_{\geq 2}, j\in \{1,\ldots,l-1\}\}. \]
\end{proposition}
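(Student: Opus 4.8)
The plan is to apply the Gram determinant criterion of \Cref{lem::semisimple_determinant_general}: the category $\uRep(H_t^+)=\uRep(NC_{even},t)$ is semisimple if and only if $\det(G^{(k)})\neq 0$ for all $k\in\NN$, where $G^{(k)}$ is the Gram matrix of the trace form $(p,q)\mapsto t^{l(q^*,p)}$ on the space $\CC\,NC_{even}(0,k)$ spanned by the non-crossing even partitions with $k$ lower and no upper points. A partition all of whose blocks have even size involves an even number of points, so $NC_{even}(0,k)=\emptyset$ whenever $k$ is odd; in that case $G^{(k)}$ is the empty matrix with $\det(G^{(k)})=1$, and it is harmless. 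Thus it suffices to analyse $\det(G^{(2m)})$ for $m\in\NN$.

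The key step is to relate these Gram matrices to those of the Temperley--Lieb category $\uRep(O_t^+)=\uRep(NC_2,t)$, whose degeneracy locus is already recorded in \Cref{rem::TL_semisimple}. I would carry this out through the Fuss--Catalan (two-colour) structure of $NC_{even}$: every non-crossing partition with even blocks on $2m$ points can be resolved, by two-colouring its strands, into a pair of interlocking non-crossing pair partitions, so that the form on $\CC\,NC_{even}(0,2m)$ is identified with a two-colour Temperley--Lieb Gram form in which each colour carries loop parameter $\sqrt{t}$ (a full bicoloured loop still contributing $\sqrt{t}\cdot\sqrt{t}=t$, consistently with the original loop weight). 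Block-diagonalising this form according to the through-strand data then exhibits $\det(G^{(2m)})$, up to a nonzero power of $t$, as a product of Temperley--Lieb Gram determinants evaluated at loop value $\sqrt{t}$.

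Granting this identification, the conclusion is immediate. By \Cref{rem::TL_semisimple} a Temperley--Lieb Gram determinant at loop value $\delta$ vanishes precisely when $\delta\in\{2\cos(j\pi/l)\mid l\in\NN_{\geq 2},\,j\in\{1,\dots,l-1\}\}$; since this set is symmetric under $\delta\mapsto-\delta$, the branch of $\sqrt{t}$ is immaterial, and substituting $\delta=\sqrt{t}$ gives vanishing of some $\det(G^{(2m)})$ exactly when $t=4\cos^2(j\pi/l)$. Hence $\det(G^{(k)})\neq 0$ for all $k$ if and only if $t\notin\{4\cos^2(j\pi/l)\}$, as asserted. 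I expect the main obstacle to be precisely the combinatorial heart of the second step: making the two-colouring bijection precise and controlling the resulting factorisation (in particular the bookkeeping that splits the single loop weight $t$ into two colours of weight $\sqrt{t}$), together with checking that the spurious powers of $t$ introduce no zeros other than $t=0=4\cos^2(\pi/2)$. A robust alternative, avoiding Fuss--Catalan, is to compute $\det(G^{(2m)})$ directly by the recursion obtained from removing the block through the last point and to verify that its irreducible factors are exactly the minimal polynomials of the numbers $4\cos^2(j\pi/l)$; the smallest case $\det(G^{(4)})=t^3(t-1)^2$, which vanishes precisely at $t\in\{0,1\}=\{4\cos^2(\pi/2),4\cos^2(\pi/3)\}$, already exhibits the expected pattern.
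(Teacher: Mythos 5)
Your first reduction is exactly the paper's: by \Cref{lem::semisimple_determinant_general}, semisimplicity of $\uRep(H_t^+)=\uRep(NC_{even},t)$ is equivalent to non-vanishing of all Gram determinants $\det(G^{(k)})$, and your observation that odd $k$ is vacuous is correct. The paper, however, then simply invokes the literature (\cite{Ah16}) for the statement that these determinants are non-zero precisely for $t$ outside the asserted set. Your proposal replaces that citation by a sketched derivation, and the sketch has a genuine gap: the claimed identification of the form $(p,q)\mapsto t^{l(p^*,q)}$ on $\CC\, NC_{even}(0,2m)$ with a two-colour Temperley--Lieb Gram form with both loop weights $\sqrt t$ is false as stated. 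Take $m=2$, $p=\{\{1,2\},\{3,4\}\}$ and $q=\{\{1,2,3,4\}\}$: the partition pairing is $t^{l(p^*,q)}=t$ (the three blocks merge into a single loop), whereas after two-colouring/fattening the composition produces one loop of one colour and two of the other, giving $\sqrt t\cdot(\sqrt t)^2=t^{3/2}$. In general a partition-loop obtained by gluing $V$ blocks along $E$ points fattens into $E-V+2$ coloured circles (a genus-zero Euler characteristic count), not into $2$; the two forms agree only after conjugation by the diagonal matrix with entries $(\sqrt t)^{\,m-\#p}$, since the total discrepancy is $(\sqrt t)^{(m-\#p)+(m-\#q)}$. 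Your ``bookkeeping'' remark gestures at this but does not carry it out, and as written the identification step fails.

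Even granting the repaired (diagonally rescaled) identification, the second half of your key step --- that the coloured Gram determinant ``block-diagonalises according to through-strand data'' into a product of ordinary Temperley--Lieb Gram determinants at loop value $\sqrt t$ --- is asserted rather than proven, and it is not formal: the colour-preserving diagrams span a proper subspace of $\CC\, NC_2(0,4m)$, and for general complex $t$ the form is not definite, so non-degeneracy of the ambient Temperley--Lieb form neither restricts to this subspace nor is implied by non-degeneracy on it. One genuinely needs a determinant formula (equivalently, the semisimplicity locus) for this Fuss--Catalan-type Gram matrix, and that is precisely the content the paper outsources to \cite{Ah16}. So while your target identity is correct and your sanity check $\det(G^{(4)})=t^3(t-1)^2$ is right (its zeros $t=0=4\cos^2(\pi/2)$ and $t=1=4\cos^2(\pi/3)$ do fit the pattern), the proposal reduces the proposition to another unproven claim of comparable difficulty rather than proving it.
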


\begin{proof}
By \cite[Thm.~3.9.16]{Ah16}, the determinants described in \Cref{lem::semisimple_determinant_general} are non-zero if and only if $t$ is of the asserted form. This implies the assertion with \Cref{lem::semisimple_determinant_general}.
\end{proof}

%%%%%%%%%%%%%%%%%%%%%%%

\subsection{Semisimplifications}

%\td{@JF Ich finde Subsection 3.1. ist sehr durchmischt mit sehr allgemeinen Aussagen bis sehr konkreten Beispielen. Ich würde gerne 3.11. bis 3.14. in eine eigene Subsection (dann 3.2.) schieben und Semisimplifaction oder so nennen. Was denkst du?}
To compute the semisimplification of a given interpolating partition category practically, it suffices to find a suitable monoidal functor to a semisimple category.

\begin{lemma} \label{lem::maximality-N} For any category of partitions $\cC$ and $t\in\CC$, the ideal of negligible morphisms $\cN$ is the unique maximal tensor ideal in $\RepCt$.
\end{lemma}

\begin{proof} Assume $f$ is a morphism in $\RepCt$ which does not lie in $\cN$. So we can find $g$, a morphism in $\RepCt$ such that $f\circ g$ is an endomorphism with non-zero trace. By the definition of the trace of $f\circ g$, it is given by a non-zero endomorphism of the object $[0]$ in $\RepCt$ which is obtained from $f$ by tensor products and compositions (see \Cref{eq::trace}). Hence, any tensor ideal in $\RepCt$ containing $f$ also contains the identity morphism of the object $[0]$. But as tensoring with this identity morphism leaves any morphism in $\RepCt$ invariant, any tensor ideal which does not lie in $\cN$ contains all morphisms in $\RepCt$.
\end{proof}

We recall that a \emph{$*$-operation} on a $\CC$-linear monoidal category is a contravariant involutive antilinear monoidal endofunctor $*$ which is the identity on objects. 
%Let us call categories with this additional structure \emph{$*$-categories}. Examples of $*$-operations are given by the horizontal reflection of partitions in any $\RepCt$, or by adjoints in the category of finite-dimensional Hilbert spaces. 
A $*$-operation is called \emph{positive} if $f^*\circ f=0$ implies $f=0$ for any morphism $f$. The relevance of this notion in our context lies in the following observation (variations of which are well-known; see \cite[Thm.~2.1]{Mue}, for instance).

\begin{lemma} \label{lem::Cstar-cat} Assume a $\CC$-linear Karoubian monoidal category has finite-dimensional morphism spaces and admits a positive $*$-operation. Then it is semisimple (and hence, abelian).
\end{lemma}

\begin{proof} As such a category is Karoubian with finite-dimensional $\operatorname{Hom}$-spaces, it has a Krull--Schmidt property (every object is the direct sum of finitely many indecomposable objects in an essentially unique way) and we only need to show that morphisms between indecomposable objects are isomorphisms or zero morphisms.

The endomorphism algebras of objects in our category are finite-dimensional complex algebras with a positive involution given by the endofunctor $*$. Hence, these algebras are semisimple complex $C^*$-algebras. In particular, the endomorphism algebras of indecomposable objects, which contain no non-trivial idempotents, are one-dimensional.

Consider a non-zero morphism $f$ between indecomposable objects. Then $f^*\circ f$ and $f\circ f^*$ are non-zero endomorphisms of indecomposable objects, hence, scalars. By simplifying $f\circ f^*\circ f$ in two different ways we see that the scalars have to coincide. So $f$ is an isomorphism. 
\end{proof}

%\td{@JF: Umformulieren, da der Begriff $C^*$-category ja existert? Normalerweise allerdings etwas allgemeiner, wenn man nicht im endlich dimensionalen Fall ist. @LM: okay so: ``hom-finite $C^*$-categories''?}
Let us call categories as in \Cref{lem::Cstar-cat} \emph{hom-finite $C^*$-categories}, they include the category of finite-dimensional Hilbert spaces. An important feature of hom-finite $C^*$-categories is the fact that any $*$-subcategory of a hom-finite $C^*$-category is automatically semisimple. 

Let us note, however, that the $*$-operation on $\uRep(S_t)=\uRep(\cC,t)$ given by horizontal reflections is not positive. Hence, even if $t$ is chosen such that $\uRep(S_t)$ is semisimple (i.e.~$t\not\in\NN_0$), $\uRep(S_t)$ may contain $*$-subcategories which are not semisimple, as illustrated for example by the non-crossing partition categories we discussed earlier (see \Cref{lem::St+_semisimple}, for example).

A functor between two $*$-categories is called a \emph{$*$-functor} if it preserves the $*$-operation. 
\begin{proposition} \label{lem-semisimplification} For any category of partitions $\cC$ and any complex number $t$, assume $\cF\colon\RepCt$ $\to\cD$ is a (non-zero) monoidal $*$-functor, where $\cD$ is a hom-finite $C^*$-category. Then $\cF$ induces an equivalence between the semisimplification of $\RepCt$ and the image of $\cF$.
\end{proposition}

\begin{proof} The image of $\cF$ is a hom-finite $C^*$-(sub)category of $\cD$ and, in particular, semisimple. As a semisimple category cannot have negligible morphisms, the kernel of $\cF$ contains all negligible morphisms. Thus, it is exactly the ideal $\cN$ by \Cref{lem::maximality-N}.

Note that in $\RepCt$, all nilpotent morphisms have trace $0$ by \Cref{lem-trace-nilpotent}. Hence, the image of $\cF$ is indeed equivalent to the semisimplification of $\RepCt$.
\end{proof}

As a consequence, we see that for any category of partitions $\cC$, the semisimple quotient categories $\widehat{\uRep(\cC,t)}$, $t\in \CC$, interpolate the representation categories of the corresponding easy quantum groups $\Rep(G_n(\cC))$, $n\in \NN_0$, in the following sense (for $\cC=P$ compare with \cite[Thm.~6.2]{De07}, for $\cC=P_2$ compare with \cite[Thm.~9.6]{De07}):

\begin{proposition} \label{prop::fiber-functor} For any category of partitions $\cC$ and $t\in\NN_0$, the functor $\cG$ (see \Cref{subsec::easyQG}) induces an equivalence between the semisimplification of $\RepCt$ and $\Rep(G_t(\cC))$.
\end{proposition}

\begin{proof} This follows directly from \Cref{lem-semisimplification} with $\cD$ the $C^*$-category of finite-dimensional Hilbert spaces. 
\end{proof}

\subsection{Semisimplicity in the group-theoretical case} \label{ssec:group-theoretical}
In this section we show our first main theorem, namely that any category $\RepCt$ associated to a group-theoretical category of partitions $\cC$ is semisimple if and only if $t\notin \NN_0$. In 2007, Knop \cite{Kn07} studied tensor envelopes of regular categories and Deligne's category $\uRep (S_t)$ is a special case in his setting. Using the semilattice structure of subobjects, he gives a criterion for semisimplicity for most of the tensor categories he is considering, including $\uRep(S_t)$. We will mimic his proof by studying it in the special case of $\uRep(S_t)$, and then generalising it to all categories $\RepCt$ associated to group-theoretical categories of partitions.

The key observation which allows us to use Knop's idea is the following. If we consider Knop's work in the special case of $\uRep(S_t)$, the semilattice of subobjects of $[k]$ corresponds to the meet-semilattice on partitions of $k$ points given by the refinement order. It is well-known that the (reversed) refinement order induces a lattice structure on partitions on $k$ points or non-crossing partitions on $k$ points, see for instance \cite[Prop.~9.17, Rem.~9.19]{NS06}. In the following, we will use that group-theoretical categories of partitions are closed under common coarsening of partitions, the meet with respect to the refinement order, and hence we also obtain a semilattice structure.

Let us start by briefly recalling some basics on partially ordered sets and semilattices, see \cite[Sec.~9]{NS06} and \cite[Sec.~7]{Kn07}.

\begin{definition}[{\cite[Def.~9.15]{NS06}}]
Let $(L,\leq)$ be a finite partially ordered set (poset). For two elements $u,v\in L$ we consider the set $\{ w\in L \mid w\leq u, w\leq v\}$. If the maximum of this set exists, it is called the \emph{meet of $u$ and $v$} and denoted by $u\wedge v$. If any two elements of $L$ have a meet, then $(L,\wedge)$ is called the \emph{meet-semilattice of $L$}.
\end{definition}

\begin{remark}[{\cite[Rem.~10.2]{NS06}}]
Let $L$ be a finite poset and let $L=\{u_1,\ldots,u_{|L|}\}$ be a listing. We consider the $|L|\times |L|$-matrix $M$ with
 \begin{align*}
  M_{ij}= 
  \begin{cases}
   1 & \text{if } u_i\leq u_j , \\
   0 & \text{otherwise} .
  \end{cases}
 \end{align*}
 Then $M$ is invertible in $\ZZ^{|L|\times |L|}$ and the function
 \[ \mu: L\times L \to \ZZ,~ (u_i,u_j)\mapsto (M^{-1})_{ij} \]
 is independent of the choice of the listing. 
\end{remark}

\begin{definition}[{\cite[Def.~10.5]{NS06}}]
Let $L$ be a finite poset. Then the above-noted function, $\mu_L: L\times L \to \ZZ$, is called the \emph{M\"obius function of $L$}.
\end{definition}

As usual, we write $u<v$ if $u\leq v$ and $u\neq v$ for $u,v\in L$. 

\begin{lemma} \label{lem::cover}
 Let $L$ be a finite poset and let $u,v\in L$.
 \begin{enumerate}[label=(\roman*)]
     \item Then $\mu_L (u,u)=1$.
     \item If $v$ covers $u$, i.e. $u<v$ and there is no element $w\in L$ with $u< w < v$, then $\mu_L (u,v)=-1$.
 \end{enumerate}
\end{lemma}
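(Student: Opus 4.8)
The plan is to prove both statements directly from the defining property of the Möbius function, namely the matrix identity $\mu = M^{-1}$ where $M_{ij}$ is the incidence matrix of the poset. Concretely, I would work from the characterisation that $\mu$ is the unique function satisfying $\sum_{u \le w \le v} \mu(u,w) = \delta_{u,v}$ for all $u \le v$ in $L$ (equivalently, the upper-triangular version summing over $w$ with $u \le w \le v$). This follows immediately from the remark preceding the definition: since $M$ is invertible and $\mu(u_i, u_j) = (M^{-1})_{ij}$, the relation $M M^{-1} = \mathrm{Id}$ unpacks entrywise to exactly such a summation identity, because $M_{ij} = 1$ precisely when $u_i \le u_j$.

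For part (i), I would specialise the defining identity to the case $u = v$. The sum $\sum_{u \le w \le u} \mu(u,w)$ collapses to the single term $\mu(u,u)$ (the only $w$ with $u \le w \le u$ is $w = u$ itself), and this must equal $\delta_{u,u} = 1$, giving $\mu(u,u) = 1$ at once.

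For part (ii), suppose $v$ covers $u$. I would apply the defining identity to the pair $u < v$, obtaining $\sum_{u \le w \le v} \mu(u,w) = 0$. The covering hypothesis means there is no $w$ strictly between $u$ and $v$, so the only elements $w$ with $u \le w \le v$ are $w = u$ and $w = v$. Hence the sum reduces to $\mu(u,u) + \mu(u,v) = 0$, and combining this with part (i) yields $\mu(u,v) = -1$.

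I expect no real obstacle here: this is a standard and elementary unwinding of the matrix inverse, and the only point requiring care is to make explicit the passage from the matrix equation $M^{-1}M = \mathrm{Id}$ (or $M M^{-1} = \mathrm{Id}$) to the pointwise recursion for $\mu$, and to confirm that the summation ranges collapse as claimed under the covering assumption. The mild subtlety is keeping track of which convention (left versus right inverse, and whether one sums over the first or second argument) matches the stated definition $\mu(u_i,u_j) = (M^{-1})_{ij}$; I would simply pick the convention consistent with the remark and verify that the diagonal and covering cases come out as stated, since both parts concern pairs $u \le v$ where the relevant interval $[u,v]$ is either a point or a two-element chain.
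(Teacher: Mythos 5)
Your overall route --- deriving the convolution identity $\sum_{u\le w\le v}\mu(u,w)=\delta_{u,v}$ from the matrix definition and then specialising to an interval with one or two elements --- is genuinely different from the paper's proof, which instead chooses a listing extending the partial order so that $M$ is unitriangular (giving (i) at once) and, for (ii), additionally places $u$ and $v$ adjacently so that $\mu(u,v)$ can be read off from the inverse of a $2\times 2$ diagonal block of $M$. Your endpoint computations are fine, but the step you describe as immediate contains a real gap.

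With the paper's convention $M_{kj}=1$ if and only if $u_k\le u_j$, expanding $M^{-1}M=\mathrm{Id}$ entrywise gives $\sum_{w\le v}\mu(u,w)=\delta_{u,v}$, a sum over the entire down-set of $v$, not over the interval $[u,v]$; likewise $MM^{-1}=\mathrm{Id}$ gives a sum over the up-set of $u$ in the first argument of $\mu$. To collapse either sum to $[u,v]$ you must know that $\mu(x,y)=0$ whenever $x\not\le y$, i.e.\ that $M^{-1}$ is again supported on the order relation. This is true, but it is not a matter of choosing conventions and it does not follow from invertibility alone. Note that even part (i) is affected: without the support fact the diagonal identity only says $\mu(u,u)+\sum_{w<u}\mu(u,w)=1$. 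Two standard patches: (a) write $M=I+N$ with $N$ supported on strict relations $x<y$; since the poset is finite, $N$ is nilpotent, so $M^{-1}=\sum_{m\ge 0}(-N)^m$, and each power $N^m$ is again supported on pairs $x<y$, which proves the support claim; or (b) define $\mu'$ by the usual recursion $\mu'(u,u)=1$ and $\mu'(u,v)=-\sum_{u\le w<v}\mu'(u,w)$, check that the matrix it defines is a left inverse of $M$ (the sum $\sum_{u_i\le u_k\le u_j}\mu'(u_i,u_k)$ equals $\delta_{ij}$, and is empty when $u_i\not\le u_j$), and conclude $\mu=\mu'$ by uniqueness of the inverse. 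With either patch, your two specialisations go through exactly as written; what your approach buys, once repaired, is the full recursion for $\mu$ rather than just the two values needed here.
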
 

\begin{proof}
 We can choose a listing of $L$ such that the matrix $M$, which defines the M\"obius function, is unitriangular, see \cite[Ex.~10.25]{NS06}. Hence $\mu_L (u,u)=(M^{-1})_{uu}=1$. If $v$ covers $u$, we can additionally assume that $u$ and $v$ appear one after the other in the listing of $L$. Then 
 \[ N:=
   \begin{pmatrix}
   M_{uu} & M_{vu} \\
   M_{uv} & M_{vv} 
   \end{pmatrix}
  =\begin{pmatrix}
   1& 0 \\
   1& 1
  \end{pmatrix} \]
  is a block on the diagonal of $M$. Since $M$ is unitriangular, we have
  \[
   \begin{pmatrix}
   (M^{-1})_{uu} & (M^{-1})_{vu} \\
   (M^{-1})_{uv} & (M^{-1})_{vv} \\
  \end{pmatrix}
  = N^{-1}=
  \begin{pmatrix}
   1& 0 \\
   -1& 1
  \end{pmatrix}.
 \]
 It follows that $\mu_L (u,v)=(M^{-1})_{uv}=-1$.
\end{proof}

The M\"obius function can be helpful for computing certain determinants derived from a meet-semilattice: 

\begin{lemma}[{\cite[Lem.~7.1]{Kn07}}] \label{lem::det_semilattice}
 Let $\phi:L\to \CC$ be a function on a finite poset $L$ which is a meet-semilattice. Then, with $\mu_L$ the M\"obius function of $L$,  
 \[ \det(\phi(u\wedge v))_{u,v\in L}) = \prod_{x\in L} \Big( \sum_{\substack{y\in L \\ y\leq x}} \mu_L(y,x) \cdot \phi(y) \Big) .\]
\end{lemma}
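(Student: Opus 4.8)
The plan is to exhibit the meet matrix $G:=(\phi(u\wedge v))_{u,v\in L}$ as a symmetric factorisation $M^{T}DM$ with $D$ diagonal, after which the determinant factorises by inspection. The engine is Möbius inversion combined with the defining property of the meet.

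First I would introduce the function $g(x):=\sum_{y\le x}\mu(y,x)\,\phi(y)$, so that the claimed right-hand side is precisely $\prod_{x\in L}g(x)$. By the Remark defining $\mu$, the Möbius function is literally the inverse of the incidence matrix $M$ with $M_{xy}=1$ iff $x\le y$; hence, viewing $\phi$ and $g$ as row vectors indexed by $L$, the definition of $g$ reads $g=\phi M^{-1}$, and inverting gives $\phi=gM$, that is
\[
\phi(u)=\sum_{x\in L} g(x)\,M_{xu}=\sum_{x\le u}g(x)\qquad\text{for every }u\in L.
\]
This step is pure linear algebra and uses no order-theoretic hypothesis.

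The meet-semilattice structure enters at exactly one point: in a meet-semilattice $x\le u\wedge v$ holds if and only if $x\le u$ and $x\le v$, which is immediate from the definition of $u\wedge v$ as the maximum of the common lower bounds (transitivity gives one direction; membership in the set of common lower bounds gives the other). Substituting the inverted formula and splitting this condition yields
\[
\phi(u\wedge v)=\sum_{x\le u\wedge v}g(x)=\sum_{x\in L}M_{xu}\,g(x)\,M_{xv}=(M^{T}DM)_{u,v},
\]
where $D=\operatorname{diag}\big(g(x)\big)_{x\in L}$. Thus $G=M^{T}DM$, and taking determinants gives $\det G=(\det M)^{2}\prod_{x\in L}g(x)$. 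As recalled in the proof of \Cref{lem::cover}, a listing of $L$ by a linear extension makes $M$ unitriangular, so $\det M=1$, and therefore $\det G=\prod_{x\in L}g(x)=\prod_{x\in L}\big(\sum_{y\le x}\mu(y,x)\,\phi(y)\big)$, as asserted.

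I do not expect a genuine obstacle here; the content is conceptual rather than computational. The only points needing care are bookkeeping: matching the index/transpose conventions between $M$, its inverse $\mu$, and the congruence $M^{T}DM$, and noting that the factorisation identity is basis-independent, so the listing chosen to realise $\det M=1$ is compatible with it. The single essential use of the hypothesis is the equivalence $x\le u\wedge v\iff x\le u\text{ and }x\le v$, which is exactly what would fail for a poset that is not a meet-semilattice; this is the crux of the statement.
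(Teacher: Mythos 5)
Your proof is correct, and it is essentially the argument the paper relies on: the paper gives no proof of its own here but cites Knop's Lemma~7.1, whose proof is exactly this classical factorisation (Möbius inversion to write $\phi(u\wedge v)=\sum_{x\le u,\,x\le v}g(x)$, hence $G=M^{T}DM$ with $M$ the zeta matrix, unitriangular under a linear extension). The one step carrying real content, the equivalence $x\le u\wedge v \iff x\le u$ and $x\le v$, is handled correctly, so there is nothing to add.
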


Now, we recall the definition of the refinement order on partitions and show that partitions of $k$ lower points in a  group-theoretical category of partitions have a meet-semilattice structure with respect to this partial order. Note that Nica and Speicher are considering the reversed refinement order in \cite{NS06}; however, to be consistent with the conventions in Knop's article \cite{Kn07}, our definition is dual to theirs. 

\begin{definition}[{\cite[Ch.~9]{NS06}}] 
Let $k,l\in \NN_0$ and partitions $p,q\in P(k,l)$ on $k+l$ points.
We write $p\leq q$ if and only if each block of $q$ is completely contained in one of the blocks of $p$. The induced partial order is called the \emph{refinement order}.
\end{definition}

Note that $p\leq q$, if $p$ can be obtained by coarsening the block structure of $q$ and we say that $p$ is \emph{coarser} than $q$. Moreover, the meet $p\wedge q$ of $p$ and $q$ exists in $P(k,l)$ and is the \emph{common coarsening}, i.e. the finest partition which is coarser than both $p$ and $q$.

\begin{lemma} \label{lem::common_coarsening}
Let $\cC$ be a category of partitions. Then $\cC$ is closed under common coarsenings if and only if $\cC$ is group-theoretical.
\end{lemma}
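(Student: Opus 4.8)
The plan is to characterise group-theoretical categories via the presence of the partition $\primarypart$ and to translate ``closure under common coarsenings'' into the ability to merge two distinguished through-blocks of a partition. The key is to find a single local move --- represented by composing or tensoring with $\primarypart$ and the basic partitions already available in any category of partitions --- which, applied to a partition $p$, produces exactly the partition obtained from $p$ by fusing two of its blocks. Since the common coarsening $p \wedge q$ is obtained from $p$ (or from $q$) by iteratively merging blocks, closure under such a block-merging operation is equivalent to closure under common coarsenings.

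For the forward direction, I would assume $\cC$ is closed under common coarsenings and show $\primarypart \in \cC$, whence $\cC$ is group-theoretical by the classification recalled after Remark~\ref{rem::uparrow_in_C}. The idea is that $\primarypart$ itself should arise as a common coarsening of two partitions that are guaranteed to lie in $\cC$: one builds two partitions in $\cC$ --- using only the generators $\Uaa$, $\Paa$, $\Laa$ and the closure operations (tensor product, involution, composition) available in every category of partitions --- whose finest common coarsening is precisely $\primarypart$. One must check carefully that both of these auxiliary partitions genuinely belong to every category of partitions, so that the hypothesis on common coarsenings forces $\primarypart \in \cC$.

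For the converse, I would assume $\cC$ is group-theoretical, so $\primarypart \in \cC$, and show directly that $\cC$ is closed under taking the common coarsening of any two partitions $p,q\in\cC(k,l)$. The main technical step is the \textbf{block-merging lemma}: given $p\in\cC$ and two blocks of $p$, the partition $p'$ obtained by fusing those two blocks into one again lies in $\cC$. This is proved by exhibiting $p'$ as a composite of $p$ with a partition built from $\primarypart$, identities $\Paa$, and the evaluation/coevaluation type partitions $\Uaa,\Laa$ placed at the appropriate two positions --- the role of $\primarypart$ being exactly to route two strands into a common block while leaving all other strands untouched (handling the through-strands and the purely-upper or purely-lower blocks separately, possibly after rotating points across the bottom using duality). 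Once block-merging is available, one reaches $p\wedge q$ from $p$ by a finite sequence of merges dictated by the block structure of $q$, so $p\wedge q\in\cC$.

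The hard part will be the block-merging step in the converse, and correspondingly the explicit construction in the forward direction: one must produce concrete partition diagrams that implement ``merge exactly these two blocks, change nothing else'' using only $\primarypart$ and the universally available generators, and verify that the bookkeeping of loops and of singleton versus paired points does not accidentally introduce or destroy blocks. Care is also needed because merging must work uniformly whether the two blocks to be fused are both through-blocks, both non-through (upper-only or lower-only) blocks, or a mixture; reducing all cases to a single normal form via the pivotal duality maps $\mathrm{ev}_k,\mathrm{coev}_k$ is the cleanest way to keep the argument finite and avoid a proliferation of cases.
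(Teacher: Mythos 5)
There is a genuine gap in your forward direction, and it sits exactly where you flag the ``hard part'': the two auxiliary partitions you want to build do not exist. Suppose $p\wedge q=\primarypart$ for some partitions $p,q$. Since the common coarsening is coarser than both of its arguments, every block of $p$ and every block of $q$ must be contained in a block of $\primarypart=\{\{1,3'\},\{2,3,1',2'\}\}$, i.e.\ both $p$ and $q$ refine $\primarypart$. The partitions available in \emph{every} category of partitions --- those built from $\Uaa,\Paa,\Laa$ by tensor products, involution and composition --- are exactly the non-crossing pair partitions $NC_2$ (the minimal category). An element of $NC_2$ refining $\primarypart$ must keep $\{1,3'\}$ as a block and must split the four-block $\{2,3,1',2'\}$ into two pairs; the three possible splittings give $\{\{1,3'\},\{2,3\},\{1',2'\}\}=\twoblockspart$, $\{\{1,3'\},\{2,1'\},\{3,2'\}\}$, and $\{\{1,3'\},\{2,2'\},\{3,1'\}\}=\starpart$, and of these only $\twoblockspart$ is non-crossing. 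So the \emph{only} universal partition refining $\primarypart$ is $\twoblockspart$, and $\twoblockspart\wedge\twoblockspart=\twoblockspart\neq\primarypart$. Hence $\primarypart$ is not the common coarsening of any two partitions lying in every category of partitions, and your forward direction cannot be completed as described.

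The paper's own proof takes a different and much simpler route, which sidesteps this obstruction: it obtains $\primarypart$ as a coarsening of the \emph{single} universal partition $\twoblockspart$ (merge the blocks $\{2,3\}$ and $\{1',2'\}$); in other words, it uses the hypothesis in the form ``$\cC$ is closed under merging blocks of one of its partitions'', under which the forward implication is a one-line observation. This also exposes the unjustified step in your opening paragraph: you assert that closure under block-merging is \emph{equivalent} to closure under common coarsenings, but only one implication (merging closure $\Rightarrow$ meet closure, since $p\wedge q$ is obtained from $p$ by merging blocks) is clear; the converse is precisely what a meet-based construction of $\primarypart$ would need, and the computation above shows it cannot be had cheaply --- at size $(3,3)$ no single meet of universal partitions yields $\primarypart$, so one would have to combine iterated meets at larger sizes with compositions, which your sketch does not address. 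Your converse direction, by contrast, is fine in outline: it is a plan to reprove the block-merging property of categories containing $\primarypart$, which the paper does not reprove but cites from \cite[Lemma 2.3.]{RW14}.
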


\begin{proof} If $\cC$ is closed under coarsening, then it contains $\primarypart$, since this partition is a coarsening of the partition $\twoblockspart$, which is contained in any category of partitions. See \cite[Lem.~2.3]{RW14} for the opposite implication.
\end{proof}

\begin{corollary}
Let $\cC$ be a group-theoretical category of partitions $\cC$ and $k\in \NN_0$. Then the poset $\cC(k)=\cC(0,k)$ has a meet-semilattice structure with respect to the refinement order.
\end{corollary}

Let us denote the M\"obius functions of all the posets $\cC(k)$ by $\mu_\cC$ (omitting the $k$). This allows us to give a condition for the semisimplicity of $\RepCt$, see \cite[Lem.~8.2]{Kn07}.

\begin{lemma} \label{lem::semisimple_determinant}
 Let $\cC$ be a group-theoretical category of partitions. Then $\RepCt$ is semisimple if and only 
 \[ \Omega_k := \prod_{p\in \cC(k)} \Big( \sum_{\substack{q\in \cC(k) \\ q\leq p}} \mu_\cC(q,p) \cdot t^{\#q} \Big) \neq 0 \quad \text{for all } k\in \NN.\]
\end{lemma}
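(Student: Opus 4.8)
The plan is to combine the two key results already assembled in the excerpt: the semisimplicity criterion of \Cref{lem::semisimple_determinant_general}, which says that $\RepCt$ is semisimple if and only if $\det(G^{(k)})\neq 0$ for all $k\in\NN$, and the determinant formula for meet-semilattices of \Cref{lem::det_semilattice}. The bridge between them is the observation that the Gram matrix entry $G^{(k)}_{p,q}=t^{l(p^*,q)}$ can be expressed in terms of the meet-semilattice structure on $\cC(k)$ with respect to the refinement order. First I would verify that, for $p,q\in\cC(k)=\cC(0,k)$ (partitions with no upper points), the number of loops $l(p^*,q)$ produced by composing $p^*$ with $q$ equals $\#(p\wedge q)$, the number of blocks of the common coarsening. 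Geometrically, stacking the upside-down diagram $p^*$ on top of $q$ connects the lower points of $p$ to the lower points of $q$; the resulting closed loops are precisely the connected components of the union of the block structures of $p$ and $q$, which is exactly the common coarsening $p\wedge q$. Hence $G^{(k)}_{p,q}=t^{\#(p\wedge q)}$.

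With this identification in hand, I would apply \Cref{lem::det_semilattice} to the function $\phi:\cC(k)\to\CC$ given by $\phi(p)=t^{\#p}$. Since $\cC$ is group-theoretical, the preceding lemma guarantees that $\cC(k)$ is a meet-semilattice under the refinement order, so the hypotheses of \Cref{lem::det_semilattice} are met. The formula then yields
\[
\det(G^{(k)})=\det\big((t^{\#(p\wedge q)})_{p,q\in\cC(k)}\big)=\prod_{p\in\cC(k)}\Big(\sum_{\substack{q\in\cC(k)\\ q\leq p}}\mu_\cC(q,p)\cdot t^{\#q}\Big)=\Omega_k,
\]
where $\mu_\cC$ is the Möbius function of the poset $\cC(k)$. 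Combining this with \Cref{lem::semisimple_determinant_general} gives immediately that $\RepCt$ is semisimple if and only if $\Omega_k\neq 0$ for all $k\in\NN$, which is the claim.

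I expect the main obstacle to be the careful justification of the loop-count identity $l(p^*,q)=\#(p\wedge q)$, since this is the one genuinely combinatorial step and it is where the group-theoretical hypothesis implicitly matters: one must know that $p\wedge q$ actually lies in $\cC(k)$ so that the meet-semilattice is the correct ambient poset, and this is exactly what \Cref{lem::common_coarsening} provides. A secondary point to check is that the Möbius function appearing in \Cref{lem::det_semilattice} is computed within the poset $\cC(k)$ rather than the full partition lattice $P(0,k)$; since $\cC(k)$ is closed under common coarsenings it is a genuine sub-meet-semilattice, so the determinant formula applies intrinsically to $\cC(k)$ and $\mu_\cC$ denotes its own Möbius function. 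Once these two bookkeeping points are settled, the proof is essentially a direct substitution, so I would keep it short and cite \Cref{lem::semisimple_determinant_general}, \Cref{lem::det_semilattice}, and the preceding meet-semilattice lemma explicitly.
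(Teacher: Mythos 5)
Your proposal is correct and follows essentially the same route as the paper: apply \Cref{lem::semisimple_determinant_general}, identify $G^{(k)}_{p,q}=t^{\#(p\wedge q)}$ via the loop-count identity $l(p^*,q)=\#(p\wedge q)$, and then evaluate the determinant with \Cref{lem::det_semilattice} using $\phi(p)=t^{\#p}$ on the meet-semilattice $\cC(k)$. In fact, you supply two details the paper leaves implicit -- the geometric justification of the loop-count identity and the remark that the M\"obius function must be that of $\cC(k)$ itself (valid since $\cC(k)$ is closed under common coarsenings) -- both of which are correct.
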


\begin{proof} By \Cref{lem::semisimple_determinant_general}, $\RepCt$ is semisimple if and only if the matrices
$$
 G^{(k)} = (t^{\ell(u^*,v)})_{u,v\in\cC(k)}
$$
have non-zero determinants for all $k\in\NN$. We define the map $\phi:\cC(k)\to \CC,~ p\mapsto t^{\#p}$ and since $\#(u\wedge v)=\ell(u^*,v)$ for all $u,v\in \cC(k)$, \Cref{lem::det_semilattice} implies that 
 \begin{align*}
  \det(G^{(k)}) 
  &= \det(\phi(u\wedge v))_{u,v\in \cC(k)}) \\ 
  &= \prod_{p\in \cC(k)} \Big( \sum_{\substack{q\in \cC(k) \\ q\leq p}} \mu_\cC(q,p) \cdot \phi(q) \Big) \\
  &= \prod_{p\in \cC(k)} \Big( \sum_{\substack{q\in \cC(k) \\ q\leq p}} \mu_\cC(q,p) \cdot t^{\#q} \Big) \\
  &= \Omega_k
 \end{align*}
\end{proof}

To compute the above-noted determinant, we will further factorise it. For this purpose we recall a definition of Knop's in the special case of $\uRep(S_t)$. For any $k\in \NN$, we set $\underline{k}:=\{1,\ldots,k\}$. By definition, the set $P(k)=P(0,k)$ is the set of partitions of $\underline{k}$. We denote by $s_k\in P(k)$ the finest partition in $P(k)$, where each block is of size one. Moreover, we set $\underline{0}:=\emptyset$ and $s_0:=\emptyset \in P(0)$.

\begin{definition}[See {\cite[Sec.~8]{Kn07}}]
Let $k,l\in \NN_0$ with $k\leq l$ and let $e:\underline{k}\hookrightarrow \underline{l}$ be an injective map.
We define two maps 
\[ e_*:P(l) \to P(k) \text{ and } e^*:P(k) \to P(l)\] 
as follows. For any $p\in P(l)$, let $e_*(p)$ be the partition of $\underline{k}$ whose blocks are the preimages of the blocks of $p$ under $e$. For any $q\in P(k)$, we define $e^*(q)$ as the partition of $\underline{l}$ whose blocks are the images of the blocks of $p$ under $e$ and additionally blocks of size one containing the elements not in the image of $e$.

Moreover, we define a scalar
 \[ w_e := \sum_{\substack{q\in P(l) \\ e_*(q)=s_k}} \mu_P(q,s_l) \cdot t^{\#q-k} \in \CC.\]
\end{definition}
Before we go on, we consider this definition in two special cases.

\begin{remark} \label{rem::k=0}
We consider the case $k=0$ and $l\in \NN_0$. Then there is just one map $e:\underline{0} \to \underline{l}$, since $\underline{0}=\emptyset$. Moreover, the set $P(0)$ consists of only one partition $s_0=\emptyset$ and $\# s_0 = 0$. Thus it follows from the definition that
\begin{align*}
    & e_*:P(l) \to P(0),~ p\mapsto s_0,\\
    & e^*:P(0) \to P(l),~ s_0 \mapsto s_l,\\
    & w_e = \sum_{q\in P(l)} \mu_P(q,s_l) \cdot t^{\#q}.
\end{align*}
\end{remark}

\begin{lemma} \label{lem::w_e=1}
 Let $l\in \NN_0$ and let $e:\underline{l}\to \underline{l}$ be a bijection. Then $w_e=1$.
\end{lemma}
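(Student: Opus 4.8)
The plan is to show that when $e$ is a bijection the sum defining $w_e$ collapses to the single term indexed by $s_l$, which then evaluates to $1$. Since $e\colon\underline{l}\to\underline{l}$ is a bijection we are in the case $k=l$, so that $s_k=s_l$ and the exponent $\#q-k$ appearing in the definition of $w_e$ becomes $\#q-l$.

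First I would unwind the map $e_*\colon P(l)\to P(l)$ in this situation. Because $e$ is a bijection, $e(\underline{l})=\underline{l}$, so $e_*$ is not a genuine restriction to a proper subset of points but merely a relabeling of all $l$ points along $e$; in particular it is a bijection of $P(l)$ that preserves the multiset of block sizes of any partition. The key step is then to determine which $q\in P(l)$ satisfy $e_*(q)=s_l$: since $s_l$ consists solely of singleton blocks and $e_*$ preserves block sizes, any such $q$ must consist solely of singletons as well, and as $e_*$ is injective there is exactly one such $q$, namely $q=s_l$. Hence the index set of the sum reduces to $\{s_l\}$.

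It remains to evaluate this single surviving term. By \Cref{lem::cover}~(i) we have $\mu_P(s_l,s_l)=1$, and since $s_l$ has exactly $l$ blocks we have $\#s_l=l$, so that the exponent $\#s_l-l$ vanishes and $t^{\#s_l-l}=1$. Therefore $w_e=\mu_P(s_l,s_l)\cdot t^{\#s_l-l}=1$, as claimed. I expect no serious obstacle here; the only point requiring genuine care is the correct reading of $e_*$ in the bijective case as a block-size-preserving bijection of $P(l)$, which is precisely what makes the uniqueness of the preimage of $s_l$ transparent and forces the product to be trivial.
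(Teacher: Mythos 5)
Your proof is correct and takes essentially the same route as the paper's: both arguments collapse the sum defining $w_e$ to the single term $q=s_l$ and evaluate it as $\mu_P(s_l,s_l)\cdot t^{\#s_l-l}=1$. If anything, your reading of $e_*$ as a block-size-preserving relabeling bijection is slightly more careful than the paper's shorthand claim that $e_*=e^*=\id_{P(l)}$ (literally true only for $e=\id$); the unique-preimage property of $s_l$ is exactly what both arguments need.
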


\begin{proof}
 It follows from the definition that both $e_*$ and $e^*$ preserve block sizes in this case, and hence the only partition $q\in P(l)$ with $e_*(q)=s_l$ is the partition $s_l$ itself. Hence, we have
 \[ w_e = \sum_{\substack{q\in P(l) \\ e_*(q)=s_l}} \mu_P(q,s_l) \cdot t^{\#q-l} = \mu_P(s_l,s_l) \cdot t^{l-l} = 1. \]
\end{proof}

\begin{lemma} \label{lem::determinant_in_we}
 Let $\cC$ be a group-theoretical category of partitions. Then
 \[ \Omega_k = \prod_{p\in \cC(k)} w_{\emptyset \hookrightarrow \underline{\#p}} \quad \text{for all } k\in \NN.\]
\end{lemma}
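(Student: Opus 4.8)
The plan is to prove the identity factor by factor. Since both $\Omega_k$ and the right-hand side are products indexed by $p\in\cC(k)$, it suffices to show that for each fixed $p\in\cC(k)$, writing $m:=\#p$, one has
\[
\sum_{\substack{q\in\cC(k)\\ q\leq p}} \mu_\cC(q,p)\,t^{\#q}
\;=\;
w_{\emptyset\hookrightarrow\underline{m}}.
\]
By \Cref{rem::k=0} the right-hand side equals $\sum_{q'\in P(m)}\mu_P(q',s_m)\,t^{\#q'}$, a sum over the \emph{full} partition lattice on $m$ points. So the task is to identify the down-set of $p$ inside $\cC(k)$ with $P(m)$ in a way that simultaneously matches block counts and Möbius functions.

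First I would identify the down-set $D_p:=\{q\in\cC(k)\mid q\leq p\}$ with the set of all coarsenings of $p$ inside $P(k)$. Here I use that $\cC$ is group-theoretical: by \Cref{lem::common_coarsening} (cf.\ \cite{RW14}) it is closed under coarsening, so every partition obtained by merging blocks of $p$ again lies in $\cC(k)$. Consequently $D_p$ is exactly the interval of coarsenings of $p$. Encoding a coarsening $q\leq p$ by the partition $\bar q\in P(m)$ of the index set $\underline m$ of blocks of $p$ (indices $i,j$ lying in the same block of $\bar q$ precisely when the corresponding blocks of $p$ are merged in $q$) yields an order isomorphism $D_p\xrightarrow{\ \sim\ } P(m)$ sending $p$ itself to the finest partition $s_m$. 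This isomorphism satisfies $\#q=\#\bar q$, since merging the blocks of $p$ according to $\bar q$ leaves exactly $\#\bar q$ blocks.

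Because $D_p$ is an order ideal of $\cC(k)$ and the isomorphism $D_p\cong P(m)$ is an isomorphism of posets, the Möbius function is transported along it: for $q\leq p$ the interval $[q,p]$ computed in $\cC(k)$ coincides with the one computed in $D_p$ (all intermediate partitions are coarsenings of $p$, hence in $\cC$), and under the isomorphism it corresponds to $[\bar q,s_m]$ in $P(m)$. Since the Möbius value at the endpoints of an interval depends only on that interval as an abstract poset, $\mu_\cC(q,p)=\mu_P(\bar q,s_m)$. Substituting this and $\#q=\#\bar q$ into the left-hand side gives
\[
\sum_{\substack{q\in\cC(k)\\ q\leq p}} \mu_\cC(q,p)\,t^{\#q}
=\sum_{\bar q\in P(m)}\mu_P(\bar q,s_m)\,t^{\#\bar q}
=w_{\emptyset\hookrightarrow\underline{m}},
\]
which is the desired factor identity; taking the product over all $p\in\cC(k)$ yields $\Omega_k=\prod_{p\in\cC(k)}w_{\emptyset\hookrightarrow\underline{\#p}}$.

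The main obstacle is the first step, namely justifying that $D_p$ is the \emph{entire} interval of coarsenings of $p$: the semilattice structure of \Cref{lem::common_coarsening} directly provides closure under binary common coarsenings, whereas here I need $\cC(k)$ to contain every coarsening of $p$ (equivalently, to be coarsening-closed, hence convex, in $P(k)$). I expect this to follow from the group-theoretical hypothesis via the closure-under-coarsening property underlying \Cref{lem::common_coarsening}, but it is the point that genuinely uses more than the abstract semilattice formalism and should be verified carefully. Everything after it is bookkeeping: the poset isomorphism $D_p\cong P(m)$, the matching of block counts, and the intrinsic invariance of the Möbius function under poset isomorphism.
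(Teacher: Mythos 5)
Your proof is correct and follows essentially the same route as the paper's: identify the down-set of $p$ in $\cC(k)$ with the full partition lattice $P(\#p)$ via coarsening-closure, transport block counts and the M\"obius function along this poset isomorphism, and invoke \Cref{rem::k=0} before taking the product over $p\in\cC(k)$. The obstacle you flag at the end is settled exactly as you expect: the paper appeals to the fact (from \cite{RW14}, which underlies \Cref{lem::common_coarsening}) that group-theoretical categories are closed under \emph{arbitrary} coarsenings, not merely under binary common coarsenings.
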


\begin{proof}
 Let $p\in \cC(k)=\cC(0,k)$. We can view $p$ as an element in $P(k)=P(0,k)$, i.e., a partition of $\underline{k}$. Since $\cC$ is a group-theoretical category of partitions, any coarsening of $p$ lies again in $\cC$. Thus there is a natural bijection of posets
 \[ f: \cC_{\leq p} :=\{q\in \cC(k)\mid q\leq p\} \to P(\#p) \]
 mapping a coarsening of $p$ to the partition indicating the fusion of the blocks of $p$ in the following way: We can sort the blocks of $p$ according to the smallest elements they contain and label them using the numbers $1,\dots,\#p$. Then the coarsenings of $p$ are exactly those partitions $q$ whose blocks are unions of blocks of $p$. Hence, they correspond exactly to partitions of the set of blocks of $p$, and hence, the set $\underline{\#p}$, and this correspondence is compatible with the respective refinement order. It now follows directly that
 \begin{itemize}
  \item $\mu_\cC(q,q')=\mu_P(f(q),f(q'))$ for all $q,q'\in \cC_{\leq p}$,
  \item $\#q=\#(f(q))$ for all $q\in \cC_{\leq p}$ and
  \item $f(p)=s_{\#p}$.
 \end{itemize}
Together with \Cref{rem::k=0} it follows that
\begin{align*}
 \Omega_k &= \prod_{p\in \cC(k)} \Big( \sum_{\substack{q\in \cC(k) \\ q\leq p}} \mu_\cC(q,p) \cdot t^{\#q} \Big) \\
 &= \prod_{p\in \cC(k)} \Big( \sum_{q\in P(\#p)} \mu_P(q,s_{\#p}) \cdot t^{\#q} \Big) \\
 &= \prod_{p\in \cC(k)} w_{\emptyset \hookrightarrow \underline{\#p}}
\end{align*}
\end{proof}

\Cref{lem::semisimple_determinant} and \Cref{lem::determinant_in_we} imply the following corollary.
\begin{corollary} \label{cor::fac_determinant}
Let $\cC$ be a group-theoretical category of partitions. Then $\RepCt$ is semisimple if and only if $w_{\emptyset \hookrightarrow \underline{\#p}} \neq 0$ for all $k\in \NN$ and $p\in \cC(k)$.
\end{corollary}

In the following, we factorise the elements $w_{\emptyset \hookrightarrow \underline{\#p}}$ with $p\in \cC(k)$. As they are independent of $\cC$, we can apply \cite[Lem.~8.4]{Kn07} in the special case of $\uRep(S_t)$ to shows that the mapping $e\mapsto w_e$ turns compositions into products, as explained in the following.

\begin{lemma}[See {\cite[Lem.~8.4]{Kn07}}]
 Let $k,l\in \NN_0$ with $k\leq l$ and let $e:\underline{k}\hookrightarrow \underline{l}$ be an injective map. Then the pair $(e_*,e^*)$ is a Galois connection between $P(l)$ and $P(k)$, i.e.~$e_*(p)\leq q$ if and only if $p\leq e^*(q)$ for all $p\in P(l)$ and $q\in P(k)$.
\end{lemma}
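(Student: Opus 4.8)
The plan is to reduce everything to the level of equivalence relations, where the Galois connection becomes transparent. Writing $\sim_\pi$ for the equivalence relation on the point set determined by a partition $\pi$, the refinement order fixed in the excerpt reads $\pi \leq \pi' \iff \sim_{\pi'} \subseteq \sim_\pi$ (the coarser partition has the larger relation). I would first record the two maps in these terms: unravelling the definition of $e_*$ gives, for $i,j \in \underline{k}$, the equivalence $i \sim_{e_*(p)} j \iff e(i) \sim_p e(j)$; and unravelling $e^*$ gives, for $a,b \in \underline{l}$, that $a \sim_{e^*(q)} b$ holds precisely when either $a=b$, or both $a,b \in e(\underline{k})$ with $e^{-1}(a) \sim_q e^{-1}(b)$ --- that is, $e^*(q)$ transports the relation $\sim_q$ across $e$ and leaves every point outside $e(\underline{k})$ a singleton. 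The existence and uniqueness of such an $e^*(q)$ is exactly what the defining condition $e_*(e^*(q))=q$ together with the singleton requirement guarantees.

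With these two dictionaries in hand, both implications are short relation chases. For the forward direction I would assume $e_*(p)\leq q$, i.e.\ $\sim_q \subseteq \sim_{e_*(p)}$, and take $a,b\in\underline{l}$ with $a\sim_{e^*(q)}b$; the case $a=b$ is trivial, and otherwise $a=e(i)$, $b=e(j)$ with $i\sim_q j$, whence $i\sim_{e_*(p)}j$ by hypothesis, which by the description of $e_*$ means $a=e(i)\sim_p e(j)=b$, so $\sim_{e^*(q)}\subseteq\sim_p$, i.e.\ $p\leq e^*(q)$. For the reverse direction I would assume $p\leq e^*(q)$, i.e.\ $\sim_{e^*(q)}\subseteq\sim_p$, and take $i,j\in\underline{k}$ with $i\sim_q j$; by the description of $e^*$ we get $e(i)\sim_{e^*(q)}e(j)$, hence $e(i)\sim_p e(j)$ by hypothesis, hence $i\sim_{e_*(p)}j$, so $\sim_q\subseteq\sim_{e_*(p)}$, i.e.\ $e_*(p)\leq q$.

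I do not expect a genuine obstacle here; the argument is essentially the standard adjunction between a restriction map and its extension-by-singletons. The one place demanding care is the order convention: because the excerpt uses the refinement order dual to that of Nica--Speicher, ``$\leq$'' means ``coarser'' and corresponds to containment of relations in the direction $\sim_{\mathrm{finer}}\subseteq\sim_{\mathrm{coarser}}$, so I would double-check that each inclusion points the right way before concluding. It is also worth noting that monotonicity of $e_*$ and $e^*$ need not be verified separately: the equivalence $e_*(p)\leq q \iff p\leq e^*(q)$ is itself the defining property of a Galois connection and already forces both maps to be order-preserving, so establishing the displayed bi-implication completes the proof.
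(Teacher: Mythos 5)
Your proof is correct and follows essentially the same route as the paper's: both arguments unwind the definitions of $e_*$ (restriction to $e(\underline{k})$) and $e^*$ (extension by singletons) and chase pairs of points lying in a common block through the two implications, your $\sim_\pi$ notation being just a rephrasing of the paper's ``lie in the same block.'' Your explicit handling of the case $a=b$ and the remark on the order convention are fine points the paper treats implicitly, but the substance is identical.
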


\begin{proof}
First, assume $e_*(p)\leq q$. We consider two distinct points $x,y\in \underline{l}$ which lie in the same block of $e^*(q)$. As all points in $\underline{l}\setminus e(\underline{k})$ are in singleton blocks of $e^*(q)$, we have $x,y\in e(\underline{k})$ and their unique preimages $e^{-1}(x)$ and $e^{-1}(y)$ lie in the same block of $q$. As $e_*(p)\leq q$, they also lie in the same block of $e_*(p)$. It follows that $x$ and $y$ lie in the same block of $p$ and thus $p\leq e^*(q)$.

Let $p\leq e^*(q)$. We consider two distinct points $x,y\in \underline{k}$ which lie in the same block of $q$. Thus $e(x)$ and $e(y)$ lie in the same block of $e^*(q)$ and as $p\leq e^*(q)$, they lie in the same block of $p$. It follows that $x$ and $y$ lie in the same block of $e_*(p)$ and thus $e_*(p)\leq q$.
\end{proof}

In the following, let us extend the coarsening operation $\wedge$ $\ZZ$-linearly to $\ZZ$-linear combinations of partitions.

\begin{lemma}[See {\cite[Lem.~8.4]{Kn07}}] \label{lem::we_multiplicative}
 Let $j,k,l\in \NN_0$ with $j\leq k\leq l$ and let $\underline{j} \overset{\bar{e}}{\hookrightarrow} \underline{k}\overset{e}{\hookrightarrow} \underline{l}$ be injective maps. Then we have
 \[ w_{e\bar{e}} = w_e w_{\bar{e}} .\]
\end{lemma}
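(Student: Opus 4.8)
The plan is to prove the identity entirely within the partition lattices, since $w_e$, $w_{\bar e}$ and $w_{e\bar e}$ are defined through sums over $P(l)$, $P(k)$, $P(j)$ and do not involve $\cC$. The one structural input I would use at the outset is that restriction is functorial: for the composite $e\bar e:\underline j\hookrightarrow\underline l$ one has $(e\bar e)_*=\bar e_*\circ e_*$ as maps $P(l)\to P(j)$, because restricting a partition to $e(\bar e(\underline j))$ is the same as first restricting it to $e(\underline k)$ and then to $\bar e(\underline j)$. Consequently, in the sum defining $w_{e\bar e}$ the condition $(e\bar e)_*(q)=s_j$ becomes $\bar e_*(e_*(q))=s_j$, which suggests organising the sum by the intermediate restriction $p:=e_*(q)\in P(k)$.

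Concretely, I would split the sum over $q\in P(l)$ according to $p=e_*(q)$ (which must satisfy $\bar e_*(p)=s_j$) and factor the exponent as $\#q-j=(\#q-\#p)+(\#p-j)$, obtaining
\[ w_{e\bar e}=\sum_{\substack{p\in P(k)\\ \bar e_*(p)=s_j}} t^{\#p-j}\Bigg(\sum_{\substack{q\in P(l)\\ e_*(q)=p}}\mu_P(q,s_l)\,t^{\#q-\#p}\Bigg). \]
Everything then reduces to the inner sum over a single fibre of $e_*$, i.e.\ to the fibre-factorisation identity $(\star)$:
\[ \sum_{\substack{q\in P(l)\\ e_*(q)=p}}\mu_P(q,s_l)\,t^{\#q-\#p}=\mu_P(p,s_k)\cdot w_e\qquad\text{for every }p\in P(k). \]
Granting $(\star)$, the factor $w_e$ is independent of $p$ and may be pulled out, leaving $w_e\sum_{p:\,\bar e_*(p)=s_j}\mu_P(p,s_k)\,t^{\#p-j}=w_e\,w_{\bar e}$, which is the assertion. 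As a sanity check, taking $p=s_k$ in $(\star)$ recovers the definition of $w_e$ verbatim, since $\mu_P(s_k,s_k)=1$.

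The main obstacle is thus $(\star)$. The difficulty is that the block-product formula $\mu_P(q,s_l)=\prod_{B}(-1)^{|B|-1}(|B|-1)!$ for the Möbius function of the partition lattice (the product ranging over the blocks $B$ of $q$) mixes marked and unmarked points inside a block, so $\mu_P(p,s_k)$ does not split off term by term. To handle it I would first use the Galois connection $(e_*,e^*)$ from the preceding lemma to describe the fibre over $p$ explicitly: the condition $e_*(q)=p$ forces each block of $p$ to lie inside one block of $q$ and prevents any block of $q$ from meeting two distinct blocks of $p$, so every $q$ in the fibre arises by distributing the $l-k$ unmarked points among and around the blocks of $p$. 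I would then evaluate $(\star)$ by a generating-function computation in the number of unmarked points, tracking how each unmarked point attaches to the block structure; this is the exponential-formula bookkeeping already seen in the $k=0$ case of \Cref{rem::k=0}, now carried out relative to the fixed partition $p$. It is this fibrewise Möbius computation, rather than the reindexing of the first two paragraphs, that I expect to be the delicate step, and it is where the adjunction established in the previous lemma does the real work.
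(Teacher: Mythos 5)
Your reindexing is sound: $(e\bar e)_* = \bar e_*\circ e_*$, the splitting of the sum over the fibres of $e_*$, and the factorisation of the exponent are all correct, and your identity $(\star)$ is in fact a true statement; granting it, the lemma follows exactly as you say. The gap is that $(\star)$ is never proved. All of the difficulty of the lemma is concentrated in $(\star)$ --- as you yourself note, the weights $\prod_B(-1)^{|B|-1}(|B|-1)!$ do not split block-by-block into a marked and an unmarked part --- and your proposal ends by announcing that you would handle this by a generating-function computation ``relative to $p$'' without carrying it out. A plan for the delicate step is not a proof of it, so as written the argument is incomplete precisely where the content lies. For comparison, the paper does not fibre at all: it imports Knop's factorisation identity \cite[Lemma 7.2]{Kn07} in $\ZZ P(l)$,
\[ \sum_{q\in P(l)}\mu_P(q,s_l)\,q \;=\; \Bigl(\sum_{r\in P(k)}\mu_P(r,s_k)\,e^*(r)\Bigr)\wedge\Bigl(\sum_{\substack{s\in P(l)\\ e_*(s)=s_k}}\mu_P(s,s_l)\,s\Bigr), \]
applies the linear functional sending a partition $q$ to $t^{\#q-j}$ if $(e\bar e)_*(q)=s_j$ and to $0$ otherwise, and finishes with the block count $\#(e^*(r)\wedge s)=\#r+\#s-k$.

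The gap can be closed in two ways. (a) Your $(\star)$ is an immediate consequence of the same identity of Knop: apply instead the functional supported on the fibre $\{q: e_*(q)=p\}$ with weight $t^{\#q}$; on the right-hand side only the terms with $r=p$ survive, because $e_*(e^*(r)\wedge s)=r\wedge e_*(s)=r\wedge s_k = r$, and the block count above then yields exactly $\mu_P(p,s_k)\,t^{\#p}\,w_e$. With this input your argument becomes a repackaging of the paper's proof. (b) Alternatively, your own generating-function plan does go through, and it is worth recording why: a $q$ in the fibre is obtained by attaching disjoint (possibly empty) sets of unmarked points to the blocks $B_1,\dots,B_m$ of $p$ and partitioning the remaining unmarked points into free blocks; attaching $u$ points to a block of size $a$ contributes $(-1)^{a+u-1}(a+u-1)!$, with exponential generating function
\[ \sum_{u\ge0}(-1)^{a+u-1}(a+u-1)!\,\frac{x^u}{u!} \;=\; (-1)^{a-1}(a-1)!\,(1+x)^{-a}, \]
while the free blocks contribute $\exp\bigl(t\log(1+x)\bigr)=(1+x)^t$ by the exponential formula, each free block carrying one factor of $t$. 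Multiplying, the fibre sum equals $\prod_i(-1)^{|B_i|-1}(|B_i|-1)!$ times $(l-k)!$ times the coefficient of $x^{l-k}$ in $(1+x)^{t-k}$, i.e.\ $\mu_P(p,s_k)$ times a quantity independent of $p$; taking $p=s_k$ identifies that quantity as $w_e$, proving $(\star)$ and giving the closed form $w_e=(t-k)(t-k-1)\cdots(t-l+1)$ as a bonus (consistent with $w_e=t-k$ for $l=k+1$ in the paper's proof of the semisimplicity theorem). Had you executed (b), your proof would have been a correct and genuinely self-contained alternative, independent of \cite{Kn07}; as submitted, it is a correct reduction with its key step missing.
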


\begin{proof}
 By \cite[Lem.~7.2]{Kn07} we have
 \[ \sum_{\substack{q\in P(l) \\ q\leq p}} \mu_P(q,p) q = \Big(\sum_{\substack{r\in P(k) \\ r\leq e_*(p)}} \mu_P (r,e_*(p)) e^*(r)\Big) \wedge \Big(\sum_{\substack{s\in P(l) \\ s\leq p \\ e_*(s)=e_*(p)}} \mu_P(s,p) s\Big) \]
 for all $p\in P(l)$. For $p=s_l$ we obtain
 \[ \sum_{q\in P(l)} \mu_P(q,s_l) q = \Big( \sum_{r\in P(k)} \mu_P (r,s_k) e^*(r)\Big) \wedge \Big( \sum_{\substack{s\in P(l) \\ e_*(s)=s_k}} \mu_P(s,s_l) s\Big) .\]
 We define a $\CC$-linear map as follows:
 \[ \varphi: \CC P(l) \to \CC, ~ q \mapsto 
  \begin{cases}
   t^{\#q-j} & (e\bar{e})_*(q)=s_j, \\
   0 & \text{otherwise},\\
  \end{cases}
  \quad\text{for all }q\in P(l).
 \]
 We apply $\varphi$ on both sides of the equation and obtain
 \[ \sum_{\substack{q\in P(l) \\ (e\bar{e})_*(q)=s_j}} \mu_P(q,s_l) t^{\#q-j} = \sum_{r\in P(k)} \sum_{\substack{s\in P(l) \\ e_*(s)=s_k}} \mu_P(r,s_k) \mu_P(s,s_l) \varphi(e^*(r)\wedge s) .\]
 As the left-hand side is $w_{e\bar{e}}$, to prove that $w_{e\bar{e}} = w_e w_{\bar{e}}$ it suffices to show
 \[ \varphi(e^*(r)\wedge s) = 
  \begin{cases}
   (t^{\#r-j}) (t^{\#s-k})& \bar{e}_*(r)=s_j, \\
   0 & \text{otherwise},\\
  \end{cases}  \]
 for all $r\in P(k),s\in P(l)$ with $e_*(s)=s_k$. Now $e_*(s)=s_k$ implies that
 \[ (e\bar{e})_*(e^*(r)\wedge s) = \bar{e}_*(r\wedge e_*(s)) = \bar{e}_*(r\wedge s_k) =  \bar{e}_*(r), \]
 so we have $(e\bar{e})_*(e^*(r)\wedge s)=s_j$ if and only if $\bar{e}_*(r)=s_j$. Since all blocks of $e^*(r)$ involving the elements of $l\setminus e(\underline{k})$ are singletons and since $e_*(s)=s_k$, the common coarsening $e^*(r)\wedge s$ has exactly $\#r$ blocks involving only the elements of $e(\underline{k})$ and another $\#s-k$ blocks involving only the elements of $\underline{l}\setminus e(\underline{k})$. It follows that $\#(e^*(r)\wedge s)=\#r+\#s-k$ and hence 
 \[ \varphi(e^*(r)\wedge s) = t^{\#r+\#s-k-j} = (t^{\#r-j}) (t^{\#s-k}).\]
\end{proof}

Let us illustrate the lemma above with an example.
\begin{example}
 Let $\cC$ be a group-theoretical category of partitions, $m\in \NN$ and $p\in \cC(m)$. We set $l=\# p$ and consider an arbitrary injective map $e:\underline{1} \hookrightarrow \underline{\# p}$. Then $\emptyset \hookrightarrow \underline{l}$ decomposes into 
 \[ \emptyset \hookrightarrow \{1\} \overset{e}{\hookrightarrow} \underline{l}.\] 
 We have 
  \begin{align*}
      w_{\emptyset \hookrightarrow \{1\}} &= \sum_{q\in P(1)} \mu_P(q,s_1) \cdot t^{\#q} = \mu_P(s_1,s_1) \cdot t^{1} = t \\
      \text{and }w_e &= \sum_{\substack{q\in P(l) \\ e_*(q)=s_1}} \mu_P(q,s_l) \cdot t^{\#q-1} = \sum_{q\in P(l)} \mu_P(q,s_l) \cdot t^{\#q -1}
  \end{align*}
 and hence 
 \[ w_{\emptyset \hookrightarrow \underline{l}} = \sum_{q\in P(l)} \mu_P(q,s_l) \cdot t^{\#q} = w_{\emptyset \hookrightarrow \{1\}} \cdot w_e.\]
\end{example}

Now, we are ready to prove our first main theorem, see \Cref{thm::main_thm_1}.

\begin{theorem}\label{thm-grouptheo-semisimple}
 Let $\cC$ be a group-theoretical category of partitions and $t\in \CC$. Then $\RepCt$ is semisimple if and only if $t\notin \NN_0$.
\end{theorem}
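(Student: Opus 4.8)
The plan is to apply \Cref{cor::fac_determinant}, which reduces the semisimplicity of $\RepCt$ to the non-vanishing of the scalars $w_{\emptyset \hookrightarrow \underline{\#p}}$ for every $p\in\cC(k)$ and every $k\in\NN$. Since these scalars are independent of $\cC$, the heart of the matter is to evaluate $w_{\emptyset\hookrightarrow\underline{l}}$ for each $l\in\NN_0$, and I claim that it equals the falling factorial $\prod_{i=0}^{l-1}(t-i)=t(t-1)\cdots(t-l+1)$.

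To prove this claim I would factor the inclusion $\emptyset\hookrightarrow\underline{l}$ as the chain of one-point extensions $\emptyset\hookrightarrow\underline{1}\hookrightarrow\cdots\hookrightarrow\underline{l}$ and apply \Cref{lem::we_multiplicative} iteratively to obtain $w_{\emptyset\hookrightarrow\underline{l}}=\prod_{i=1}^{l} w_{\underline{i-1}\hookrightarrow\underline{i}}$. It then remains to compute the one-step factor $w_{e_i}$ for the standard inclusion $e_i\colon\underline{i-1}\hookrightarrow\underline{i}$. By definition $w_{e_i}$ sums over those $q\in P(i)$ whose restriction to the first $i-1$ points is $s_{i-1}$, i.e.\ the first $i-1$ points lie in pairwise distinct blocks of $q$; such a $q$ is then determined by the role of the $i$-th point. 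Either it forms its own block, forcing $q=s_i$ and contributing $\mu_P(s_i,s_i)\,t^{\#s_i-(i-1)}=t$ by \Cref{lem::cover}(i); or it joins exactly one of the $i-1$ earlier points, producing a partition covered by $s_i$ with $\#q=i-1$, each contributing $\mu_P(q,s_i)\,t^{0}=-1$ by \Cref{lem::cover}(ii). Summing the $i-1$ possibilities in the second case gives $w_{e_i}=t-(i-1)$, whence $w_{\emptyset\hookrightarrow\underline{l}}=\prod_{i=1}^{l}(t-i+1)=\prod_{i=0}^{l-1}(t-i)$, as claimed.

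With the claim in hand both implications follow quickly. If $t\notin\NN_0$, then no factor $t-i$ with $i\in\NN_0$ vanishes, so every $w_{\emptyset\hookrightarrow\underline{\#p}}$ is non-zero and $\RepCt$ is semisimple by \Cref{cor::fac_determinant}. Conversely, if $t=n\in\NN_0$, I would exhibit a partition in $\cC$ with more than $n$ blocks: the tensor power $\Laa^{\ot(n+1)}\in\cC(2(n+1))$ lies in $\cC$ (which is closed under tensor products and contains $\Laa$) and has exactly $n+1$ blocks. Taking $p=\Laa^{\ot(n+1)}$, so that $\#p=n+1$, the product $w_{\emptyset\hookrightarrow\underline{n+1}}=\prod_{i=0}^{n}(t-i)$ contains the factor $t-n=0$; hence this scalar vanishes and $\RepCt$ is not semisimple.

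The only genuine work is in the middle paragraph; the rest is bookkeeping. The step I expect to require the most care is the evaluation of the one-step factor $w_{e_i}$—specifically, correctly identifying which partitions $q$ satisfy the constraint $(e_i)_*(q)=s_{i-1}$ and reading off the two Möbius values from the cover relations, since a sign error there would corrupt the entire falling-factorial formula. Once $w_{\emptyset\hookrightarrow\underline{l}}=\prod_{i=0}^{l-1}(t-i)$ is secured, the chain factorization and the two divisibility arguments (for $t\notin\NN_0$ versus $t\in\NN_0$) are routine.
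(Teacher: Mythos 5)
Your proof is correct and follows essentially the same route as the paper's: both reduce semisimplicity via \Cref{cor::fac_determinant} and the multiplicativity of \Cref{lem::we_multiplicative} to the one-point extensions $\underline{k}\hookrightarrow\underline{k+1}$, compute each such factor as $t-k$ using the cover relations and \Cref{lem::cover}, and thereby obtain $w_{\emptyset\hookrightarrow\underline{l}}=t(t-1)\cdots(t-l+1)$. The only (welcome) difference is that you make explicit the witness $\Laa^{\ot(n+1)}\in\cC$ with $n+1$ blocks for the non-semisimplicity direction when $t=n\in\NN_0$, a detail the paper leaves implicit when it asserts that every one-step factor $w_e$ actually occurs in some $w_{\emptyset\hookrightarrow\underline{\#p}}$ with $p\in\cC$.
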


\begin{proof} By \Cref{cor::fac_determinant}, the category $\RepCt$ is semisimple if and only if $w_{\emptyset \hookrightarrow \underline{\#p}} \neq 0$ for all $m\in \NN$, $p\in \cC(m)$. Hence \Cref{lem::we_multiplicative} implies that $\RepCt$ is semisimple if and only if $w_e\neq 0$ for any map $e:\underline{k}\hookrightarrow \underline{l}$, $k,l\in\NN_0$, which does not have a factorisation $e=e_1e_2$ with $e_1,e_2$ injective and not bijective maps, i.e.~for all $e:\underline{k}\hookrightarrow \underline{k+1}$, $k\in\NN_0$.

Let us describe $w_e$ for a given injective map $e:\underline{k}\hookrightarrow \underline{k+1}$. Set $l=k+1$. We can assume that $e(i)=i$ for any $i\in \underline{k}$, since this can be achieved by post-composing with an isomorphism $e':\underline{l}\to \underline{l}$ and $w_{e'}=1$ by \Cref{lem::w_e=1}. Thus $\{q\in P(l)\mid e_*(q)=s_k\}$ contains the partition $s_l\in P(l)$ and $X:= \{q\in P(l)\mid e_*(q)=s_k\} \setminus \{s_l\}$ contains exactly those $k$ partitions which consist of a block $\{i,l\}$ and singleton blocks otherwise, for $1\leq i\leq k$. It follows that
 \[ w_e = \mu_P(s_l,s_l) t^{l-k} + \sum_{q\in X} \mu_P(q,s_l) t^{k-k} .\]
Since $s_l$ covers every partition $q\in X$, we can apply \Cref{lem::cover} and conclude that 
 \[ w_e = 1\cdot t^{1} + \sum_{q\in X} (-1)t^{0} = t-k .\]
This proves our assertion that $\RepCt$ is semisimple if and only if $t\not\in\NN_0$.
\end{proof}

Together with \Cref{cor-criterion}, \Cref{thm-grouptheo-semisimple} implies that there are negligible morphisms in $\RepCt$ as soon as $t\in\NN_0$. To better understand negligible morphisms, we discuss some examples.

\begin{definition} For any group-theoretical category of partitions $\cC$, any $k,l\in \NN_0$ and any partition $p\in\cC(k,l)$, we define recursively
$$
x_p := p - \sum_{q\lneq p} x_q
\quad \in \Homm_\RepCt([k],[l]) .
$$
\end{definition}

\begin{remark}\label{rem-negligible-mor} 
If $t\in\NN_0$, then by \cite[Rem.~3.22]{CO11}, $x_p$ is negligible in $\uRep(S_t)$ if $p$ is a partition with more than $t$ parts (and in fact, those span the ideals of negligible morphisms in $\uRep_0(S_t)$). This implies that such $x_p$ are negligible in $\RepCt$ for any group-theoretical $\cC$. Subtracting such negligible morphisms we can see that modulo the tensor ideal of negligible morphisms, any morphism in $\RepCt$ is equivalent to a morphism which consists of partitions with at most $t$ parts each.
\end{remark}

\begin{example} \label{expl-xid} If $t\in \NN_0$ and $\cC$ is group-theoretical, then $x_{\id_{t+1}}$ is a non-trivial negligible endomorphism in $\RepCt$.
\end{example}
\section{Indecomposable objects}

In this section, we take a look at indecomposable objects in $\RepCt$ for any category of partitions $\cC$. Notions like $\End$ and $\Homm$ are meant with respect to the category $\RepCt$. We prove a classification result for indecomposable objects in $\RepCt$, \Cref{thm::main_thm_2}, which is uniform in $\cC$: for each category of partitions $\cC$, a distinguished set of projective partitions $\cP$ will be considered which defines a system of finite groups, the union of whose irreducible complex representations will be shown to correspond to the indecomposable objects in $\RepCt$. We also consider and derive results on the Grothendieck ring and the semisimplification of $\RepCt$.

\subsection{From indecomposable objects to primitive idempotents}\label{ssec::nuk}
In the following, we provide a strategy which reduces the problem of classifying indecomposable objects in $\RepCt$ to a classification of primitive idempotents in certain quotient algebras. Recall the following definitions.
\begin{definition}
Let $R$ be a ring. Two elements $a,b\in R$ are said to be \emph{conjugate} if there exists an invertible element $c\in R$ such that $a=cbc^{-1}$.

An element $e\in R$ is called \emph{idempotent} if $e^2=e$. Two idempotents $e_1,e_2\in R$ are said to be \emph{orthogonal}, if $e_1e_2=e_2e_1=0$. An idempotent $e\in R$ is called \emph{primitive} if it is non-zero and can not be decomposed as a sum of two non-zero orthogonal idempotents.
\end{definition} 

For $\cC=P$, the following statements are discussed in \cite[Prop.~2.20]{CO11}. They follow in our more general situation from the fact that $\RepCt$ is a Karoubian category with finite-dimensional morphism spaces. 

For any object $A\in \RepCt$ and any idempotent $e\in \End(A)$ we denote the image of $e$ by $(A,e)$.

\begin{lemma} \label{lem::lem_Krull-Schmidt}
Let $\cC$ be a category of partitions and $t\in \CC$. 
\begin{enumerate}[label=(\roman*)]
    \item Let $k\in \NN_0$ and let $e\in \End([k])$ be an idempotent. Then $([k],e)$ is indecomposable in $\RepCt$ if and only if $e$ is primitive. 
    \item For any two idempotents $e,e'\in \End([k])$ the objects $([k],e)$ and $([k],e')$ are isomorphic if and only if $e$ and $e'$ are conjugate in $\End([k])$.
    \item For any indecomposable object $X$ of $\RepCt$ there exist a $k\in \NN_0$ and a primitive idempotent $e\in \End([k])$ such that $X\cong ([k],e)$.
    \item (Krull--Schmidt property) Every object in $\RepCt$ is isomorphic to a direct sum of indecomposable objects, and this decomposition is unique up to the order of the indecomposables. 
\end{enumerate}
\end{lemma}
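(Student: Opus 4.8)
The plan is to treat the four statements as a single package of standard Krull--Schmidt theory, using only that $\RepCt$ is $\CC$-linear, Karoubian, and has finite-dimensional morphism spaces, together with the explicit fact that $\End([k])=\CC\cC(k,k)$ is a finite-dimensional associative $\CC$-algebra. For statement (i) I would first record the dictionary between idempotents and summands: in any Karoubian category, an isomorphism $([k],e)\cong A\oplus B$ is always induced by an orthogonal pair $e=e_1+e_2$ of idempotents lying in $e\End([k])e$, with $A\cong([k],e_1)$ and $B\cong([k],e_2)$, while $([k],f)=0$ exactly when $f=0$. Hence $([k],e)$ admits a nontrivial direct-sum decomposition if and only if $e$ can be written as a sum of two nonzero orthogonal idempotents, i.e.\ if and only if $e$ is not primitive; this is precisely (i).

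For statement (iii), which is the one genuinely category-specific step, I would reduce an arbitrary object $X=(\bigoplus_i[k_i],e)$ of the Karoubi envelope to a retract of a single generator $[N]$. Using the always-present partitions $\Laa\in\cC(0,2)$ and $\Uaa\in\cC(2,0)$ one computes that the composite $\Uaa\,\Laa$ equals $t\,\id_{[0]}$, so for $t\neq0$ the morphisms $\iota=\id_{[k]}\ot\Laa$ and $\tfrac1t(\id_{[k]}\ot\Uaa)$ realise $[k]$ as a retract of $[k+2]$ (when $\singleton\in\cC$ one may instead use $\singleton$ to pass from $[k]$ to $[k+1]$). Iterating this, and observing that when $\singleton\notin\cC$ the category splits into blocks according to the parity of the number of points, so that an indecomposable object necessarily has carrier $\bigoplus_i[k_i]$ with all $k_i$ of one parity, one sees that this carrier is a retract of a single $[N]$ of large enough index, since the relevant multiplicities grow with $N$. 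Thus $X$ is a retract of $[N]$, i.e.\ $X\cong([N],e')$, and by (i) the idempotent $e'$ is primitive whenever $X$ is indecomposable.

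For statement (iv) I would show that every object is a finite direct sum of indecomposables by induction on the number of summands, which is bounded by $\dim\End(X)$ and so terminates, and that each indecomposable summand has a \emph{local} endomorphism ring: its endomorphism algebra is a finite-dimensional $\CC$-algebra whose only idempotents are $0$ and $1$, hence local. Uniqueness of the decomposition then follows from the classical Krull--Schmidt theorem (Fitting's lemma together with Azumaya's uniqueness theorem) for additive categories in which indecomposables have local endomorphism rings. Finally, for statement (ii), conjugate idempotents obviously have isomorphic images via the conjugating unit; conversely, given an isomorphism $([k],e)\cong([k],e')$, the cancellation property coming from (iv), applied to $[k]=([k],e)\oplus([k],1-e)=([k],e')\oplus([k],1-e')$, yields an isomorphism $([k],1-e)\cong([k],1-e')$ of the complementary summands, and assembling the two isomorphisms produces an invertible $c\in\End([k])$ with $cec^{-1}=e'$.

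I expect the main obstacle to be statement (iii): this is where one must leave the purely formal Karoubian framework and realise each indecomposable on a single generating object $[k]$ rather than on a formal direct sum, which forces the explicit retraction above and hence the hypothesis $t\neq0$ (for $t=0$ the composite $\Uaa\,\Laa$ vanishes and the single-object realisation must be reconsidered). Keeping track of the parity blocks and of the growth of the multiplicities in $[N]$ is the fiddly bookkeeping here. By contrast, finite-dimensionality of the morphism spaces makes the termination in (iv) and the cancellation feeding (ii) routine, and (i) is immediate from the idempotent--summand dictionary.
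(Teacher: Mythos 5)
Your treatments of (i), (ii) and (iv) are correct and coincide with the standard argument the paper itself invokes (it just cites \cite[Prop.~2.20]{CO11} together with the fact that $\RepCt$ is $\CC$-linear, Karoubian, and has finite-dimensional morphism spaces). The genuine problem is (iii). Your key step --- that the carrier $\bigoplus_i[k_i]$ of an indecomposable object is a retract of a single $[N]$ ``since the relevant multiplicities grow with $N$'' --- is not proved, and it is in fact false for parameters the lemma covers. Take $\cC=P$ and $t=1$ (so $t\neq 0$): for any partitions $q\in P(N,0)$ and $p\in P(0,N)$ the composite is $qp=t^{l(q,p)}\,\id_0=\id_0$, so the pairing $\Homm([N],[0])\times\Homm([0],[N])\to\CC$ takes the value $1$ on every pair of partitions and therefore has rank one for every $N$; writing candidate morphisms as linear combinations of partitions, one sees that the matrix $(p_iu_j)_{i,j}$ always has rank at most one, so there are no $u_1,u_2\colon[0]\to[N]$ and $p_1,p_2\colon[N]\to[0]$ with $p_iu_j=\delta_{ij}\id_0$. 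Hence $[0]\oplus[0]$ is not a retract of any $[N]$, although it does occur as the carrier of indecomposable objects (any rank-one idempotent in $\End([0]\oplus[0])\cong M_2(\CC)$ gives one). The same degeneration of the Gram pairings occurs for every group-theoretical $\cC$ at $t\in\NN_0$, i.e.~precisely on the non-semisimple locus of \Cref{thm-grouptheo-semisimple}. On top of this, the lemma is stated for all $t\in\CC$, and your retraction $\Uaa\circ\Laa=t\,\id_0$ is unavailable at $t=0$, as you yourself note without resolving it.

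The repair is to run the argument in the opposite order and never mention a single $[N]$: prove the decomposition and uniqueness statements first, then deduce (iii) by the exchange property. Concretely: by (i) and finite-dimensionality of $\End([k_i])$, each $[k_i]$ decomposes into finitely many indecomposables, all of the form $([k_i],e)$ with $e$ primitive (summands of $[k_i]$ are exactly the $([k_i],e)$). Each such indecomposable has endomorphism algebra $e\End([k_i])e$, a finite-dimensional $\CC$-algebra without nontrivial idempotents, hence local; Azumaya's theorem then gives (iv). Now an indecomposable $X=(\bigoplus_i[k_i],f)$ is a direct summand of $\bigoplus_i[k_i]$, so by the uniqueness in (iv) it is isomorphic to one of the indecomposable summands $([k_i],e)$ already produced --- which is (iii), with no multiplicity estimates, no parity bookkeeping, and uniformly in $t$, including $t=0$. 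Your derivations of (ii) from cancellation and of the existence half of (iv) go through unchanged once the order of the steps is fixed.
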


\begin{proof}
This is known for general Karoubian categories with finite-dimensional morphism spaces; see for instance \cite[Prop.~2.7.1]{Comes_Wilson}.
\end{proof}

For any conjugacy class $c$ of idempotents in $\End([k])$, we denote by $([k],c)$ the corresponding isomorphism class of objects in $\RepCt$. However, we frequently identify a primitive idempotent with its conjugacy class and an object with its isomorphism class. 

The following well-known lemmas allow us to classify primitive idempotents inductively.
\begin{definition}
For any algebra $B$, we denote by $\Lambda(B)$ the set of conjugacy classes of primitive idempotents of $B$.
\end{definition}

\begin{lemma}[{\cite[Lem.~3.3]{CO11}}] \label{lem::idempotent_corr} 
Let $A$ be a finite-dimensional $\CC$-algebra, $\xi \in A$ an idempotent and $(\xi)=A\xi A$ the two-sided ideal of $A$ generated by $\xi$. 
Then there is a bijective correspondence
\[ \Lambda(A) \overset{bij.}{\longleftrightarrow} \Lambda(\xi A \xi) \sqcup \Lambda(A/(\xi)) ;\]
a primitive idempotent in $A$ corresponds to a primitive idempotent in the subalgebra $\xi A \xi$ as soon as it lies in $(\xi)$, otherwise, its image under the quotient map $A\to A/(\xi)$ is a primitive idempotent in $A/(\xi)$, and for each primitive idempotent in $A/(\xi)$, there is a unique lift (up to conjugation) in $A$.
\end{lemma}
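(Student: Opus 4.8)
The plan is to reduce the statement to the standard dictionary, valid for a finite-dimensional algebra over the algebraically closed field $\CC$, between conjugacy classes of primitive idempotents and isomorphism classes of simple modules. Concretely, to each primitive idempotent $e\in A$ one assigns the simple top $S_e:=Ae/\mathrm{rad}(Ae)$ of the indecomposable projective $Ae$; this gives a bijection $\Lambda(A)\cong\{\text{simple }A\text{-modules}\}/{\cong}$, and likewise for $A/(\xi)$ and for $\xi A\xi$. With this dictionary in place it suffices to partition the simple $A$-modules into two families, one matching the simples of $A/(\xi)$ and one matching the simples of $\xi A\xi$.

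First I would treat the quotient side, which is formal: the simple $A/(\xi)$-modules are exactly the simple $A$-modules annihilated by $(\xi)=A\xi A$, and since $A$ is unital one checks $(\xi)S=0$ if and only if $\xi S=0$. Hence the $A/(\xi)$-simples are precisely the simple $A$-modules $S$ with $\xi S=0$. Next I would analyse the corner algebra via the functor $F\colon A\text{-mod}\to\xi A\xi\text{-mod}$, $M\mapsto\xi M$, which is exact because $\xi M\cong\Homm_A(A\xi,M)$ with $A\xi$ projective. The substantive input — and the step I expect to be the main obstacle — is the standard but nontrivial corner-algebra correspondence: $F$ sends a simple $S$ either to $0$ (exactly when $\xi S=0$) or to a simple $\xi A\xi$-module, and $S\mapsto\xi S$ induces a bijection between the simple $A$-modules with $\xi S\neq 0$ and all simple $\xi A\xi$-modules. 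Granting this, the simple $A$-modules split as $\{\xi S=0\}\sqcup\{\xi S\neq 0\}$, matching the simples of $A/(\xi)$ and of $\xi A\xi$ respectively, which is the asserted bijection $\Lambda(A)\cong\Lambda(\xi A\xi)\sqcup\Lambda(A/(\xi))$.

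Finally I would translate this module bijection back into the explicit idempotent language of the statement. The one point to verify is that, for a primitive idempotent $e$, one has $e\in(\xi)$ if and only if $\xi S_e\neq 0$: indeed $e\in(\xi)$ is equivalent to $Ae\subseteq(\xi)$, and since $(\xi)=A\xi A$ is precisely the trace ideal $\mathrm{tr}_{A\xi}(A)$, the indecomposable projective $Ae$ lies in it if and only if some map $A\xi\to Ae$ is onto its top, i.e. if and only if $\Homm_A(A\xi,S_e)\cong\xi S_e\neq 0$. Thus an $e\in(\xi)$ corresponds to a simple of $\xi A\xi$, realized by a primitive idempotent conjugate to one lying inside $\xi A\xi$, whereas for $e\notin(\xi)$ the image $\bar e\in A/(\xi)$ is a nonzero, hence primitive, idempotent; the asserted "unique lift up to conjugacy'' is then just the inverse of $e\mapsto\bar e$ restricted to this family. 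The remaining checks are routine and I would carry them out directly: that conjugacy is preserved in both directions (extending a unit $u\in\xi A\xi$ to the unit $u+(1-\xi)$ of $A$), and that primitivity transfers, using for $f\in\xi A\xi$ with $\xi f=f\xi=f$ the identity $fAf=f(\xi A\xi)f$, so that $f$ is primitive in $A$ exactly when $f(\xi A\xi)f$ is local. The genuine content lies in the corner-algebra simple-module correspondence; everything else is bookkeeping.
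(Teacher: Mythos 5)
The paper offers no proof of this lemma at all --- it is imported verbatim from \cite{CO11}, Lem.~3.3 --- so there is no internal argument to compare against; your proposal has to stand on its own, and it does. Your route is the module-theoretic one: identify $\Lambda(A)$ with isomorphism classes of simple $A$-modules via $e\mapsto S_e=\mathrm{top}(Ae)$, observe that the simples of $A/(\xi)$ are exactly the simples killed by $\xi$, and invoke the corner-algebra (recollement) correspondence $S\mapsto \xi S$ to match the simples with $\xi S\neq 0$ with the simples of $\xi A\xi$. The pivot between the two pictures, $e\in(\xi)$ if and only if $\xi S_e\neq 0$, is argued correctly via the trace-ideal/projective-cover mechanism (a lift $A\xi\to Ae$ of a nonzero map $A\xi\to S_e$ is surjective by Nakayama; conversely $e=\sum_i a_i\xi b_i$ produces finitely many maps $A\xi\to Ae$ whose images sum to $Ae$, so one of them hits the top). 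This buys a conceptual proof in which completeness and disjointness of the two families are automatic, at the cost of treating the corner correspondence as a black box --- which is acceptable, since it is genuinely standard (Green's idempotent theory, Curtis--Reiner) and the paper itself treats the entire lemma as standard.

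Two points in your write-up need one extra line each. First, ``the image $\bar e\in A/(\xi)$ is a nonzero, hence primitive, idempotent'' is not a valid implication as phrased: nonzero idempotents need not be primitive. What is true, and what you need, is that the image of a \emph{primitive} idempotent under a quotient map is zero or primitive, because $\bar e\,(A/(\xi))\,\bar e\cong eAe/\bigl(eAe\cap(\xi)\bigr)$ is a quotient of the local ring $eAe$, hence local or zero --- the same local-ring criterion you already use for $fAf=f(\xi A\xi)f$. Second, to conclude that a primitive $e\in(\xi)$ is actually \emph{conjugate in $A$} to a primitive idempotent $f$ lying inside $\xi A\xi$ (which is how the paper uses the lemma, e.g.\ in the proof of \Cref{lem::surjection}), you need the compatibility $\xi\cdot\mathrm{top}_A(Af)\cong\mathrm{top}_{\xi A\xi}\bigl((\xi A\xi)f\bigr)$ for $f$ primitive in $\xi A\xi$; this follows from $\Homm_A(Af,S)\cong fS$ together with $fS=\xi fS\subseteq\xi S$, and then injectivity of $S\mapsto\xi S$ forces $\mathrm{top}_A(Af)\cong S_e$, i.e.\ $Af\cong Ae$ and $f$ is conjugate to $e$. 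With these two additions the argument is complete.
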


\begin{lemma}[Rosenberg's lemma {\cite[Lem.~3.3a]{Gr62}}] \label{lem::Rosenberg}
Let $A$ be a finite-dimensional $\CC$-algebra and $e\in A$ a primitive idempotent. If $I$ and $J$ are two-sided ideals of $A$ such that $e\in I+J$, then $e\in I$ or $e\in J$. 
\end{lemma}

In the following, we will exhibit the objects $[k]\in\cC$ for $k\geq0$ as subobjects of $[k+1]$ or $[k+2]$ (up to isomorphisms), depending on $\cC$. More precisely, we will distinguish the cases $\singleton \in \cC$ and $\singleton \notin \cC$, as in the latter case 
we have the following useful feature. Recall that the composition $\cdot$ in $\cC$ does not depend on the interpolation parameter $t$, in contrast to the composition in the interpolation categories $\RepCt$.

\begin{lemma} \label{lem::hom_different_parity} If $\singleton \notin \cC$, then $\cC(k,l)=\emptyset$ whenever $k \not\equiv l \mod 2$.
\end{lemma}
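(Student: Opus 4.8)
The plan is to argue by contraposition. Since $\Homm([k],[l])=\CC\cC(k,l)$ is nonzero exactly when $\cC(k,l)\neq\emptyset$, it suffices to show: if some $\cC(k,l)$ is nonempty with $k\not\equiv l\pmod 2$, then $\singleton\in\cC$. So I would fix a partition $p\in\cC(k,l)$ with $k+l$ odd and manufacture $\singleton$ from it using only the defining closure operations (tensor product, involution, composition).

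The key idea is to track the total number of points $k+l$ and reduce it two at a time while keeping it odd. First I would record that all three operations interact additively with $k+l$ modulo $2$: involution preserves $k+l$; the tensor product adds the two totals; and for a composition $q\cdot p$ with $p\in P(k,l)$, $q\in P(l,m)$ one has $k+m\equiv (k+l)+(l+m)\pmod 2$. Next, since $\cC$ contains $\Uaa$, $\Laa$, $\Paa$ (hence all identities $\id_{[j]}=\Paa^{\ot j}$) and is closed under tensor products and composition, it contains the \emph{capping} morphisms $\id_{[l-2]}\ot\Uaa\in\cC(l,l-2)$ and $\id_{[k-2]}\ot\Laa\in\cC(k-2,k)$. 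Composing $p$ with these yields $(\id_{[l-2]}\ot\Uaa)\cdot p\in\cC(k,l-2)$ or $p\cdot(\id_{[k-2]}\ot\Laa)\in\cC(k-2,l)$, each of which lowers the total by exactly $2$ and hence preserves its oddness.

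Then I would iterate this reduction. Whenever the total $k+l$ is odd and at least $3$, at least one of $k,l$ is $\geq 2$ (otherwise $k+l\le 2$), so one of the two capping moves applies and keeps us inside $\cC$ with an odd total. Repeating drives the total down to $1$, landing in $\cC(1,0)$ or $\cC(0,1)$. But there is a unique partition on a single point, the one-block partition; in $P(0,1)$ this is precisely $\singleton$, while the element of $\cC(1,0)$ is its involution, so closure under involution gives $\singleton\in\cC$ in either case. This contradicts the hypothesis $\singleton\notin\cC$ and completes the contrapositive.

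I expect the argument to be essentially routine; the only points needing care are the bookkeeping that each capping move genuinely stays inside $\cC$ and lowers the point count by two (loops created during composition are simply deleted, so the composite is still an honest partition in $\cC$), and the observation that an odd total of size at least $3$ always admits a legal move. As an aside, the statement also drops out of the classification via \Cref{rem::uparrow_in_C}: the only categories containing $\singleton$ are $P$, $NC$, $\langle\cross,\singleton\rangle$ and $\langle\singleton\rangle$, and every other category is generated by partitions of even total, whence all of its partitions have even total; but the self-contained capping argument above has the advantage of avoiding the classification entirely.
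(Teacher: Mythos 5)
Your proof is correct and follows essentially the same route as the paper: the paper's argument also takes a hypothetical $p\in\cC(k,l)$ with $k\not\equiv l \bmod 2$ and composes it successively with $\Paa\ot\cdots\ot\Paa\ot\Laa$ and $\Paa\ot\cdots\ot\Paa\ot\Uaa$ (your capping morphisms) until only one point remains, producing $\singleton$ or $\upsingleton$ and hence a contradiction. Your version just spells out the parity bookkeeping and the termination of the iteration, which the paper leaves implicit.
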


\begin{proof} 
Assume that there exists a partition $p\in \cC(k,l)$ with $k \not\equiv l \mod 2$. By successive composition with $\Paa\ot \cdots \ot \Paa \ot \Laa$ and $\Paa\ot \cdots \ot \Paa \ot \Uaa$ we would obtain the partition $\singleton\in \cC(0,1)$ or $\upsingleton \in \cC(1,0)$ and hence $\singleton \in \cC$. 
\end{proof}

\begin{definition}
For $t\neq 0$ we define the idempotents
\begin{align*}
    \nu_0 := 0,
    \quad 
    \nu_1 := \begin{cases} \frac1t \bbar & \singleton \in \cC , \\
    0 & \text{else}
    \end{cases}  ,
    \quad
    \nu_k := \begin{cases} \frac1t ~ \id_{k-1} \ot \bbar & \singleton \in \cC , \\
    \frac1t ~ \id_{k-2} \ot \twoblocks & \text{else} 
    \end{cases} ,
    \quad\text{for all }k\geq 2
\end{align*}
in $\End_{\RepCt}([k])$, $k\in\NN_0$.
\end{definition}

\begin{lemma} \label{lem::chi_isomorphisms}
Set $d:=1$ if $\singleton\in\cC$, and $d:=2$ otherwise. Then for $t\neq0$ and $k\geq d$,
$$ ([k],\nu_k) \cong [k-d] \quad\text{in }\RepCt.$$
More generally, for any $0\leq l<k$ with $k\equiv l \mod d$, there exists a partition $\nu \in \cC(l,k)$ such that for any idempotent $e\in\End([l])$, $t^{(l-k)/d} \nu e \nu^*$ is an idempotent in $\End([k])$ and $([l],e)\cong ([k],t^{(l-k)/d} \nu e \nu^*)$.
\end{lemma}

\begin{proof}
For $l=k-d$, we set
\[
\scalebox{.8}{\begin{tikzpicture}
\coordinate [label=left:{\scalebox{1.25}{$\nu:=$}}](O) at (0,0.45);
\coordinate [label=right:{\scalebox{1.25}{$\in\cC(l,k),$}}](O) at (3,0.45);
\coordinate [label=right:{$\ldots$}](O) at (0.35,0.5);
\coordinate (A1) at (0,0);
\coordinate (A2) at (1.5,0);
\coordinate (A3) at (2,0);
\coordinate (A4) at (2.5,0);
\coordinate (B1) at (0,1);
\coordinate (B2) at (1.5,1);
\coordinate (B3) at (2,1);

\fill (A1) circle (2.5pt);
\fill (A2) circle (2.5pt);
\fill (A3) circle (2.5pt);
\fill (A4) circle (2.5pt);
\fill (B1) circle (2.5pt);
\fill (B2) circle (2.5pt);
\fill (B3) circle (2.5pt);

\draw (A1) -- (B1);
\draw (A2) -- (B2);
\draw (2.5,0.4) -- (A4);
\draw (A3) -- (B3);
\end{tikzpicture}} 
\]
if $\singleton\in\cC$, or otherwise
\[
\scalebox{.8}{\begin{tikzpicture}
\coordinate [label=left:{\scalebox{1.25}{$\nu:=$}}](O) at (0,0.45);
\coordinate [label=right:{\scalebox{1.25}{$\in\cC(l,k).$}}](O) at (3.5,0.45);
\coordinate [label=right:{$\ldots$}](O) at (0.35,0.5);
\coordinate (A1) at (0,0);
\coordinate (A2) at (1.5,0);
\coordinate (A3) at (2,0);
\coordinate (A4) at (2.5,0);
\coordinate (A5) at (3,0);
\coordinate (B1) at (0,1);
\coordinate (B2) at (1.5,1);
\coordinate (B3) at (2,1);

\fill (A1) circle (2.5pt);
\fill (A2) circle (2.5pt);
\fill (A3) circle (2.5pt);
\fill (A4) circle (2.5pt);
\fill (A5) circle (2.5pt);
\fill (B1) circle (2.5pt);
\fill (B2) circle (2.5pt);
\fill (B3) circle (2.5pt);

\draw (A1) -- (B1);
\draw (A2) -- (B2);
\draw (A3) -- (B3);
\draw (A5) -- (3,0.4) -- (2.5,0.4) -- (A4);
\end{tikzpicture}} 
\]
Then $\nu \nu^* = t~ \nu_k$ and $\nu^* \nu = t~\id_l$ and thus
\begin{align*}
\nu: [l] \to ([k],\nu_k),\qquad
\frac1t \nu^*: ([k],\nu_k) \to [l]
\end{align*}
define mutually inverse isomorphisms, which also restrict to subobjects. An iterative application yields the second claim.

%We define two $\CC$-linear maps by their action on partitions
%\begin{align*}
%&\psi: \End([k]) \to \nu_{l}\End([l])\nu_{l},~ q\mapsto p q p' ,\\
%&\phi:\nu_{l}\End([l])\nu_{l}\to \End([k]),~ q \mapsto p' q p .
%\end{align*}

%It can be checked that $\psi$ and $\phi$ give mutually inverse linear maps between the algebras $\End([k])$ and $\nu_{l}\End([l])\nu_{l}$ which preserve idempotents, since $pp'=\nu_{l}$ and $p'p$ is the identity morphisms in $\End([k])$. Hence, they can be restricted to become bijections between the respective sets of primitive idempotents. 

%Now for any idempotent $e\in\End([k])$, the partitions $(pep')pe=pe$ and $ep'(pep')=ep'$ define mutually inverse isomorphisms between $([k],e)$ and $([l],pep')$.
\end{proof}

\begin{remark}
The previous lemma implies that every object is isomorphic to a subobject of $[k]$ for some sufficiently large $k$, if $\singleton \in \cC$ (see \cite[Pf.~of.~Lem.~3.6]{CO11} for the case $\cC=P$), while if $\singleton\not\in\cC$, then any object $X$ in $\RepCt$ is isomorphic to a subobject of $[k]\oplus[k+1]$ for some sufficiently large $k$. As there are no non-zero morphisms between $[k]$ and $[k+1]$, the endomorphism algebra of $X$ is a direct summand in $\End([k]\oplus[k+1])$. So in particular, in both cases $\End(X)$ is semisimple if $\End([k])$ is semisimple for all $k\in \NN_0$, which can be checked by verifying that $G^{(2k)}\neq0$ for all $k\in \NN_0$ (see the proof of \Cref{lem::semisimple_determinant_general}). 
\end{remark}

With \Cref{lem::chi_isomorphisms} we are now able to decide whether a given subobject of $[k]$ is isomorphic to a subobject of $[l]$ with $l\leq k$.
\begin{lemma} \label{lem::Rk}
Let $t\neq 0$, $k\in \NN_0$ and $e\in \End ([k])$ a primitive idempotent. Then $([k],e)$ is isomorphic to a subobject of $[l]$ for some $l<k$ if and only if $e\in (\nu_k)$.
\end{lemma}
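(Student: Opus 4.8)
The plan is to prove both implications by passing between subobjects of $[l]$ with $l<k$ and idempotents lying in the two-sided ideal $(\nu_k)\subseteq\End([k])$, using \Cref{lem::chi_isomorphisms} as the central bridge and the conjugacy criterion of \Cref{lem::lem_Krull-Schmidt}(ii) to detect isomorphism of images. Throughout I set $d=1$ if $\singleton\in\cC$ and $d=2$ otherwise. A preliminary observation I would record first is a parity constraint: if $([k],e)\cong([l],e')$ with $e$ primitive, so that the object is non-zero, then necessarily $k\equiv l\bmod d$. For $d=1$ this is vacuous, and for $d=2$ it follows from \Cref{lem::hom_different_parity}, since an isomorphism between the two subobjects is built from morphisms in $\Homm([k],[l])$, which vanishes when $k\not\equiv l\bmod 2$, forcing $e=0$ and contradicting primitivity.

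For the forward implication, suppose $([k],e)\cong([l],e')$ is a subobject of $[l]$ with $l<k$; by the above I may assume $l\equiv k\bmod d$, so that the ``more precisely'' part of \Cref{lem::chi_isomorphisms} is applicable. It supplies a partition $\nu\in\cC(l,k)$ with $([l],e')\cong([k],\hat e)$, where $\hat e:=t^{(l-k)/d}\nu e'\nu^*$ is an idempotent already contained in $(\nu_k)$. Composing isomorphisms gives $([k],e)\cong([k],\hat e)$, whence by \Cref{lem::lem_Krull-Schmidt}(ii) the idempotents $e$ and $\hat e$ are conjugate in $\End([k])$. Since $(\nu_k)$ is a two-sided ideal it is stable under conjugation, and therefore $e\in(\nu_k)$.

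For the reverse implication, suppose $e\in(\nu_k)$; I would first note this forces $k\geq d$, since otherwise $\nu_k=0$ and $e$ would be a non-zero idempotent in the zero ideal. Writing $e=\sum_i a_i\nu_k b_i$ and factoring each occurrence of $\nu_k$ through its image $([k],\nu_k)$, one sees that $e$ factors through $([k],\nu_k)^{\oplus n}$; the usual retract computation then exhibits $([k],e)$ as a direct summand of $([k],\nu_k)^{\oplus n}$. By \Cref{lem::chi_isomorphisms} we have $([k],\nu_k)\cong[k-d]$, so $([k],e)$ is a summand of $[k-d]^{\oplus n}$. Finally the Krull--Schmidt property \Cref{lem::lem_Krull-Schmidt}(iv), together with indecomposability of $([k],e)$ (as $e$ is primitive), lets me conclude that $([k],e)$ is isomorphic to an indecomposable summand of a single copy $[k-d]$, that is, to a subobject of $[k-d]$ with $k-d<k$. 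Alternatively, this direction can be packaged directly through the idempotent correspondence of \Cref{lem::idempotent_corr} applied with $\xi=\nu_k$, identifying $\nu_k\End([k])\nu_k\cong\End([k-d])$.

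The individual steps are routine once the bridge lemma is invoked; the only genuine subtlety I anticipate is the bookkeeping of the divisibility condition $k\equiv l\bmod d$, needed so that \Cref{lem::chi_isomorphisms} may be applied, and the separate treatment of the degenerate boundary cases $k<d$ where $\nu_k=0$, so that neither implication is used outside its range of validity.
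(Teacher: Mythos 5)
Your proposal is correct and follows essentially the same route as the paper: both directions hinge on \Cref{lem::chi_isomorphisms}, with the forward implication (which the paper phrases contrapositively) using \Cref{lem::hom_different_parity} for the parity constraint, the conjugacy criterion of \Cref{lem::lem_Krull-Schmidt}(ii), and stability of the ideal $(\nu_k)$ under conjugation. The only variation is in the reverse implication, where you exhibit $([k],e)$ as a retract of $([k],\nu_k)^{\oplus n}$ and invoke Krull--Schmidt, whereas the paper conjugates $e$ into the corner algebra $\nu_k\End([k])\nu_k$ via \Cref{lem::idempotent_corr} --- precisely the alternative you mention at the end --- and both mechanisms are routine and valid.
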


\begin{proof} Let $e\in (\nu_k)$. Then \Cref{lem::idempotent_corr} implies that $e$ is conjugated to some primitive idempotent in $\nu_k \End ([k]) \nu_k$ and hence we can assume that $e \in \nu_k \End ([k]) \nu_k$. Then $([k],e)$ is isomorphic to a subobject of $([k],\nu_k)$ and \Cref{lem::chi_isomorphisms} tells us that $([k],e)$ is isomorphic to a subobject of $[k-d]$. 

Now, let $e\notin (\nu_k)$. Consider an object of the form $([l],f)$ with $l<k$ and we assume that it is isomorphic to $([k],e)$. Then \Cref{lem::chi_isomorphisms} together with \Cref{lem::hom_different_parity} implies that there exists an idempotent $f'\in \End([k])$ with $([l],f)\cong ([k],f')$ and $f'\in (\nu_k)$. But since $e\notin (\nu_k)$, the idempotents $f'$ and $e$ are not conjugate. Hence $([l],f)$ is not isomorphic to $([k],e)$, which is a contradiction. 
\end{proof}

We obtain our first general description of the indecomposable objects in interpolating partition categories.

\begin{definition} \label{def::Lambda} 
For $t\neq 0$ we set 
$$ \Lambda_k := \Lambda(\End([k]) / (\nu_k)),$$
so $\Lambda_k$ is the set of conjugacy classes of primitive idempotents in the quotient algebras defined by the idempotents $\nu_k,k\in \NN_0$. For any $e\in\Lambda_k$, we denote its unique (primitive idempotent) lift in $\Lambda(\End([k]))$ by $L_e$ (see \Cref{lem::idempotent_corr}).
\end{definition}

Note that $\Lambda_0=\{\id_0\}$ and
$
\Lambda_1 = \begin{cases}
    \{ \id_1 - \frac{1}{t}\bbar, \frac{1}{t}\bbar \} 
    & \text{if }\bbar\in\cC(1,1) ,
    \\ 
    \{ \id_1 \} & \text{else} .
\end{cases}
$

\begin{proposition} \label{thm::indecomp_obj} 
For any category of partitions $\cC$ and $t\in \CC \backslash \{0\}$ there is a bijection
\begin{align*}
    \phantom{\qquad \qquad} \phi: \bigsqcup_{k\in \NN_0} \Lambda_k \to \left\{ \begin{matrix} \text{isomorphism classes of non-zero} \\ \text{indecomposable objects in } \RepCt \end{matrix} \right\}, ~\Lambda_k\ni e \mapsto ([k], L_e).
    \end{align*}
\end{proposition}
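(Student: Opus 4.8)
The plan is to combine the Krull--Schmidt structure of \Cref{lem::lem_Krull-Schmidt} with the inductive idempotent correspondence of \Cref{lem::idempotent_corr} and the ``minimal level'' criterion of \Cref{lem::Rk}. First I would record the meaning of $\Lambda_k$: applying \Cref{lem::idempotent_corr} to the algebra $A=\End([k])$ and the idempotent $\xi=\nu_k$ gives a bijection $\Lambda(\End([k]))\leftrightarrow \Lambda(\nu_k\End([k])\nu_k)\sqcup\Lambda_k$, under which a conjugacy class of primitive idempotents of $\End([k])$ lies in the $\Lambda_k$-part precisely when its members do \emph{not} lie in the ideal $(\nu_k)$, the correspondence being $e\mapsto L_e$. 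Thus $\Lambda_k$ is a bookkeeping device for the primitive idempotents of $\End([k])$ that are \emph{not} in $(\nu_k)$, and in particular every lift satisfies $L_e\notin(\nu_k)$.

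For well-definedness, note each $L_e$ is a primitive (hence non-zero) idempotent in $\End([k])$, so by \Cref{lem::lem_Krull-Schmidt}(i) its image $([k],L_e)$ is a non-zero indecomposable object, and $\phi$ lands in the correct target. For surjectivity I would take any non-zero indecomposable $X$ and, using \Cref{lem::lem_Krull-Schmidt}(iii), write $X\cong([k],e)$ with $e$ primitive, choosing $k$ minimal among all such presentations (this minimum exists since the admissible $k$ are bounded below by $0$). I claim $e\notin(\nu_k)$: otherwise \Cref{lem::Rk} would exhibit $X$ as isomorphic to a subobject $([l],f)$ of some $[l]$ with $l<k$, and since $X$ is indecomposable, $f$ is primitive by \Cref{lem::lem_Krull-Schmidt}(i), contradicting minimality of $k$. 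Hence the image $\bar e$ of $e$ defines a class in $\Lambda_k$ whose lift $L_{\bar e}$ is conjugate to $e$, so by \Cref{lem::lem_Krull-Schmidt}(ii) we get $X\cong([k],L_{\bar e})=\phi(\bar e)$.

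For injectivity, suppose $\phi(e)\cong\phi(e')$ with $e\in\Lambda_k$, $e'\in\Lambda_{k'}$, and without loss of generality $k\leq k'$. If $k<k'$, then $([k'],L_{e'})\cong([k],L_e)$ realizes $([k'],L_{e'})$ as a subobject of $[k]$ with $k<k'$, so \Cref{lem::Rk} forces $L_{e'}\in(\nu_{k'})$, contradicting $L_{e'}\notin(\nu_{k'})$ established in the first step. Hence $k=k'$, and $([k],L_e)\cong([k],L_{e'})$ gives, via \Cref{lem::lem_Krull-Schmidt}(ii), that $L_e$ and $L_{e'}$ are conjugate in $\End([k])$; since both avoid $(\nu_k)$, applying the quotient map $\End([k])\to\End([k])/(\nu_k)$ to conjugacy classes and invoking the bijection of \Cref{lem::idempotent_corr} yields $e=e'$.

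The conceptual heart is the stratification used in the surjectivity step: one must be certain that assigning to each indecomposable its minimal realization level $k$ is consistent with the algebraic decomposition furnished by $\nu_k$. This hinges on the precise equivalence in \Cref{lem::Rk} (membership in $(\nu_k)$ is equivalent to appearing at a strictly lower level) together with the fact that subobjects of $[l]$ in a Karoubian category are exactly images of idempotents in $\End([l])$, so that the minimal-level idempotent is automatically primitive and outside $(\nu_k)$. Once these compatibilities are in place, the three properties of $\phi$ follow by routine application of Lemmas~\ref{lem::lem_Krull-Schmidt},~\ref{lem::idempotent_corr} and~\ref{lem::Rk}.
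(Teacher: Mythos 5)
Your proposal is correct and follows essentially the same route as the paper's proof: both combine \Cref{lem::lem_Krull-Schmidt} (Krull--Schmidt structure), \Cref{lem::Rk} (membership in $(\nu_k)$ is equivalent to appearing at a strictly lower level) and \Cref{lem::idempotent_corr} (lifting primitive idempotents through the quotient by $(\nu_k)$), stratifying indecomposables by their minimal realization level $k$. Your write-up merely spells out the well-definedness, surjectivity and injectivity steps that the paper compresses into three sentences.
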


\begin{proof} 
By \Cref{lem::lem_Krull-Schmidt} the isomorphism classes of non-zero indecomposable objects in $\RepCt$ are in bijection with the conjugacy classes of primitive idempotents in $\End ([k])$ for which $([k],e)$ is not a subobject of $[l]$ for any $l<k$. By \Cref{lem::Rk} these are exactly the conjugacy classes of idempotents in $\End([k]) \backslash (\nu_k)$. Now, \Cref{lem::idempotent_corr} implies that these coincide with $\Lambda_k$.   
\end{proof}

\begin{remark}
The case $t=0$ can be treated analogously by adjusting the definition of $\nu_k$ as follows
\begin{align*}
    \nu_0 := \nu_1 := 0,
    \quad
    \nu_2:= \begin{cases} \vierpart & \singleton, \vierpart \in \cC, \\ 0 & \text{else}, 
    \end{cases}
    \quad 
    \nu_k := \begin{cases} \id_{k-2} \ot \vierpart & \singleton, \vierpart \in \cC  ,\\
    \id_{k-2} \ot \twoblockspart & \text{else}
    \end{cases},
    \quad\text{for all }k\geq 3.
\end{align*}
Check for instance that every composition $([0],\id_0) \to ([l],e) \to ([0],\id_0)$ is a non-zero power of $t$, and hence zero, if $t=0$ and $l>0$. Thus, for $t=0$, we have to set $\nu_1=0$ and $\nu_2=0$, if $\singleton \notin \cC$. An analogous argument shows that the statement of \Cref{thm::indecomp_obj} is still true in the case $t=0$ with the given modifications for $\nu_k$.
\end{remark}

\subsection{Projective partitions} \label{ssec::projectives}
In the previous subsection we reduced the problem of classifying indecomposable objects in $\RepCt$ to a classification of primitive idempotents in (certain quotients of) the endomorphism algebras. We will now provide a strategy which reduces the problem further to a combinatorial problem of computing equivalence classes of certain distinguished partitions. 

For the rest of this article we will assume that $t\neq 0$. Recall that we denote by $q\cdot p$ the partition obtained by the composition of $p$ and $q$ for two compatible partitions $p,q$, while we denote by $qp= t^{\ell(q,p)} q\cdot p$ the multiplication in $\RepCt$, where $\ell(p,q)$ is the number of connected components concentrated in the ``middle row'' of the vertical concatenation of $p$ and $q$. By assuming $t\neq 0$ we have $q\cdot p= t^{-\ell(q,p)} qp$. Note also that $p\cdot q$ is the composition in $\uRep(\cC,1)$.

We fix some $k\in \NN_0$ and denote $E:=\End_\cC([k])=\CC\cC(k,k)$. 
%We start by computing primitive idempotents in $E$, see \Cref{lem::proj_part_final_corr}, before we derive a description of the primitive idempotents in $\Lambda_k = \Lambda(E/ (\nu_k))$ for all $k\in \NN_0$, see \Cref{cor::proj_part_final_corr}. For this purpose 
We will use methods of \cite{FW16} and we start by recalling some definitions:

\begin{definition} \label{def::through-blocks}
A block (= connected component) of a partition $p\in P(k,l)$ is called a \emph{through-block} if it contains upper points as well as lower points. We denote the number of through-blocks by $t(p)$. Moreover, we denote by
$$
E_T := E_T(k) := \bigoplus_{q\in\cC(k,k): t(q)< T} \CC q
\quad\subset E
$$
the subspace by all partitions with less than $T$ through-blocks, for any $T\in \NN_0$.
\end{definition}

We will often omit the symbol $k$ and just write $E_T$ for $E_T(k)$.

\begin{lemma} \label{lem::ET} The subspaces $E_0\subset\dots\subset E_k$ form an (exhaustive ascending) filtration of the algebra $E$ by two-sided ideals.
\end{lemma}

\begin{proof} We can view the endomorphism algebra $E$ as a subalgebra of the endomorphism algebra $\End_P([k])$ of $[k]$ in $\uRep(P,t)$. Then for any $T\geq1$, $E_T$ is the subspace of endomorphisms in $E$ which factor through the object $[T-1]$ in $\uRep(P,t)$. In particular, this makes it clear that $E_T$ is an ideal for $T\geq 1$, and the same is true for $E_0=\{0\}$. 

The containments $E_0\subset\dots\subset E_k$ follow directly from the definition. Clearly, $E_k=E$, as no partition in with $k$ upper and lower points can have $t(p)>k$.
\end{proof}

We define projective partitions in the spirit of \cite[Def.~2.7]{FW16}.

\begin{definition} \label{def::projPart}
A partition $p\in P(k,k)$ is called \emph{projective}, if there exists a partition $p_0\in P(k,t(p))$ such that $p=p_0^*p_0$. For any category of partitions $\cC$, we denote by $\Projk$ the set of all projective partitions in $\cC(k,k)$. 
\end{definition}

\begin{remark}
Note that for a projective partition $p=p_0^*p_0\in \cC$, $p_0$ is a partition in $P(k,t(p))$, but not necessarily in $\cC(k,t(p))$. Moreover, by the structure of $p_0$, there can not be any loops in the composition $p_0^* \cdot p_0$ and hence $p_0^*p_0$ is indeed a partition, not a scalar multiple of one.

By \cite[Lem.~2.11]{FW16}, a partition $p\in \cC(k,k)$ is projective if and only if $p=p^*$ and $p=p\cdot p$. Thus, $t^{-\ell(p,p)} p$ is an idempotent in $\End_{\RepCt}([k])=\CC \cC(k,k)$. Also, for any $q\in\cC(k,k)$ the two partitions $q\cdot q^*$ and $q^*\cdot q$ are projective partitions.
\end{remark}

\begin{example}
The partitions $\vierpart \in P(2,2)$ and $\twoblocks \in P(2,2)$ are projective, but $\twoblockspart \in  P(3,3)$ is not.
\end{example}

The following lemma shows that we can use projective partitions to decompose the ring $E$ as a sum of ideals, which will then help us to compute primitive idempotents in $E$.
\begin{lemma} \label{lem-ideals-p} For any $T\in \NN_0$
$$ 
  E_T = \sum_{p\in \Projk, t(p)< T} (p)
$$
and, in particular, 
$$
  E = \sum_{p\in \Projk} (p).
$$
\end{lemma}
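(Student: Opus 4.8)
The plan is to prove the set equality $I_T = \sum_{p \in \Projk,\, t(p) < T}(p)$ by two inclusions, and then obtain $E = \sum_{p \in \Projk}(p)$ as the special case $T = k+1$, using that every partition in $\cC(k,k)$ has at most $k$ through-blocks. The first thing I would record is a monotonicity property of the through-block count: for any composable partitions $a,b$ one has $t(a\cdot b) \le \min(t(a),t(b))$, since each through-block of the composite must pass through a through-block of $a$ and one of $b$, and this assignment is injective. This has two consequences I will use repeatedly. First, the spanning set of $I_T$, namely the partitions with fewer than $T$ through-blocks, is closed under left and right multiplication, so $I_T$ is exactly their linear span and in particular has a partition basis. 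Second, for a projective $p$ with $t(p) < T$, every element $apb$ of $(p)$ again has fewer than $T$ through-blocks.

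The inclusion $\sum_{p \in \Projk,\, t(p) < T}(p) \subseteq I_T$ is then immediate from the second consequence. For the reverse inclusion the crucial tool is a ``partial isometry'' relation for partitions: for any $q \in \cC(k,k)$ one has $q \cdot q^* \cdot q = q$ after removing loops, hence $q q^* q = t^{a}\, q$ in $\RepCt$ for some integer $a \ge 0$ counting the loops created in the two compositions. I would verify this by tracking blocks: composing $q$ with its source projection $q^* \cdot q$ reconnects each through-block of $q$ to itself and only produces loops from the non-through blocks, returning $q$ unchanged. Since we assume $t \neq 0$, this yields $q = t^{-a}(q q^*)\, q$.

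With this in hand the hard inclusion $I_T \subseteq \sum_{p \in \Projk,\, t(p) < T}(p)$ follows: because $I_T$ is spanned by partitions $q$ with $t(q) < T$, it suffices to place each such $q$ in the right-hand side. Setting $p := q q^*$, which is projective by the Remark following \Cref{def::projPart} (citing \cite{FW16}) and satisfies $t(p) \le t(q) < T$ by monotonicity, the relation above gives $q = t^{-a}\, p\, q \in (p)$, so $q$ lies in the sum. I expect the main obstacle to be a careful justification of the identity $q q^* q = t^{a}\, q$ — in particular checking that composing with $q^* q$ recovers exactly the block structure of $q$ and that the extra identifications contribute only loops (nonnegative powers of $t$) — since everything else reduces to the elementary monotonicity of $t(\cdot)$ together with the already-established fact that $q q^*$ is projective.
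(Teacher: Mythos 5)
Your proposal is correct and takes essentially the same route as the paper: both prove the two inclusions exactly as the paper does, the easy one from monotonicity of the through-block count under composition, and the hard one by writing $q = (q\cdot q^*)\cdot q$ so that any partition $q$ with $t(q)<T$ lies in the ideal generated by the projective partition $q\cdot q^*$, which satisfies $t(q\cdot q^*)\le t(q)<T$. The only differences are cosmetic: you verify the key facts (the regularity identity $q\cdot q^*\cdot q=q$ and the projectivity of $q\cdot q^*$) by hand where the paper simply cites \cite[Lemma 2.11]{FW16}, and your $p:=qq^*$ should strictly be the partition $q\cdot q^*$ rather than its nonzero scalar multiple in $\RepCt$ (harmless, since the generated ideals coincide for $t\neq 0$).
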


\begin{proof}
Consider $q\in \cC(k,k)$ with $t(q)< T$. We set $p:=q\cdot q^* \in \cC(k,k)$. By \cite[Lem.~2.11]{FW16} the partition $p$ is projective, $q=p\cdot q$, and $t(p)\leq t(q)< T$. It follows that $q=p\cdot q = t^{-\ell(p,q)} pq \in (p)$.

This proves the inclusions of the left-hand sides in the right-hand sides. The opposites inclusions follow from the fact that the number of through-blocks of a product is limited by the number of through-blocks of each factor. 
\end{proof}

In \cite[Def.~4.1]{FW16}, Freslon and Weber associated to every projective partition a representation of the corresponding easy quantum group using the functor $\mathcal{F}$ described in \Cref{subsec::easyQG}. They observe that this representation is far from being irreducible, and go on to determine its irreducible components. Similarly, the ideals $(p)$ contain a lot of primitive idempotents with a complicated structure. Thus, using \Cref{lem::idempotent_corr}, we will break these sets up into smaller sets of primitive idempotents, which we understand.
\begin{definition}
For any $p\in\cC(k,k)$ we denote by 
$$
I_p := p E p \cap E_{t(p)} = p E_{t(p)} p
$$ 
the ideal in $p E p$ which is spanned by all partitions with less than $t(p)$ through-blocks.
\end{definition}

\begin{proposition} \label{lem::surjection}
For any primitive idempotent $e\in\Lambda(pEp/I_p)$, there is a unique primitive idempotent lift $\L_p(e)\in\Lambda(pEp)\subset\Lambda(E)$, and the mapping
\[ \L: \bigsqcup_{p\in \Projk} \Lambda(p E p / I_p ) \to \Lambda(E),~ e\mapsto \L_p(e),
\]
is surjective.
\end{proposition}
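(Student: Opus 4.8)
The plan is to deduce both assertions from \Cref{lem::idempotent_corr}, organising the projective partitions by their number of through-blocks via \Cref{lem-ideals-p}. Throughout I write $\bar p:=t^{-l(p,p)}p$ for the idempotent attached to a projective partition $p$ (as in \Cref{def::projPart}), so that $pEp=\bar pE\bar p$ is a unital algebra with unit $\bar p$ and the ideal generated by $p$ is $(p)=E\bar pE$. First I would settle the well-definedness of $\L$. Applying \Cref{lem::idempotent_corr} with $A=E$ and $\xi=\bar p$ identifies $\Lambda(pEp)$ with the set of classes in $\Lambda(E)$ that lie in $(p)$; in particular a primitive idempotent of the corner algebra $pEp$ stays primitive in $E$, which is the inclusion $\Lambda(pEp)\subset\Lambda(E)$. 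A second application of \Cref{lem::idempotent_corr}, now to the algebra $pEp$ and its ideal $I_p$, yields for each $e\in\Lambda(pEp/I_p)$ a unique primitive idempotent lift $\L_e\in\Lambda(pEp)$. To license this application one uses that $I_p$ is generated by an idempotent: $I_p$ is the ideal of $pEp$ spanned by the partitions with fewer than $t(p)$ through-blocks, and one checks, as is standard for such through-block filtrations, that it is an idempotent ideal and hence generated by a single idempotent. Thus $\L$ is well defined with values in $\Lambda(E)$.

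For surjectivity I would first record one standard fact about a finite-dimensional algebra $A$: a primitive idempotent $f$ lies in a finite sum $\sum_i Ae_iA$ of ideals generated by idempotents $e_i$ if and only if it lies in one of the summands. This follows by looking at the simple module $S=\operatorname{top}(Af)$: one has $f\in Ae_iA$ precisely when $e_iS\neq0$, while $f\in\sum_iAe_iA$ precisely when $S$ is not a module over $A/\sum_iAe_iA$, that is, when $e_iS\neq0$ for some $i$.

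Now I would take a primitive idempotent $f\in E$. By \Cref{lem-ideals-p} we have $E=\sum_{p\in\Projk}(p)$, so the fact above gives $f\in(p)$ for at least one projective partition $p$; since $\Projk$ is finite, I would choose such a $p$ with $t(p)$ minimal. Because $f\in(p)$, \Cref{lem::idempotent_corr} (with $\xi=\bar p$) shows that $f$ is conjugate to a primitive idempotent $g\in pEp$, so $[f]\in\Lambda(pEp)$. I then claim $g\notin I_p$. Indeed, otherwise $g\in I_{t(p)}=\sum_{q\in\Projk,\,t(q)<t(p)}(q)$ by \Cref{lem-ideals-p}, so the fact above yields $g\in(q)$ for some projective $q$ with $t(q)<t(p)$; as $(q)$ is a two-sided ideal and $g$ is conjugate to $f$, this forces $f\in(q)$, contradicting the minimality of $t(p)$. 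Hence, by the second application of \Cref{lem::idempotent_corr} to $(pEp,I_p)$, the image $e$ of $g$ in $pEp/I_p$ is a primitive idempotent whose unique lift is the class of $g$, so $\L_e=[g]=[f]$ and $[f]$ lies in the image of $\L$.

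The main obstacle is the surjectivity step, and within it the reduction ``membership in $\sum_i(p_i)$ forces membership in a single $(p_i)$'': this is exactly what allows a single projective partition to be singled out, after which the minimality of $t(p)$ guarantees that the corresponding idempotent does not collapse into $I_p$. A subordinate but necessary technical point, used only to justify the second invocation of \Cref{lem::idempotent_corr}, is the verification that the ideal $I_p$ is generated by an idempotent.
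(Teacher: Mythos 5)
Your proof is correct, and its skeleton is the same as the paper's: both halves are deduced from \Cref{lem::idempotent_corr}, organised by the decomposition of \Cref{lem-ideals-p} and a minimality argument on the number of through-blocks. But you execute the two technical steps genuinely differently. For the lift through $I_p$, the paper iterates \Cref{lem::idempotent_corr} over the summands of $I_{t(p)}$, whereas you assert that $I_p$ is an idempotent ideal of $pEp$, hence generated by a single idempotent. That assertion is true, but it is the one point you wave at rather than prove, and it is not formal: cutting an idempotent-generated ideal to a corner can destroy idempotency (in the upper-triangular $3\times 3$ matrices with corner idempotent $e_{11}+e_{33}$ and ideal generated by $e_{22}$, the cut ideal is $\CC e_{13}$, which squares to zero). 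Here one needs the partition-specific factorisation $r=r\cdot(r^*\cdot r)$ with $r^*\cdot r$ projective and $t(r^*\cdot r)=t(r)$, which shows each spanning partition $r$ of $I_p$ satisfies $r=\lambda\, r\,\bigl(\bar p\,(r^*\cdot r)\,\bar p\bigr)\in I_p^2$; to be fair, the paper's phrase ``repeat this process'' glosses over the same corner-versus-ambient issue. For surjectivity your route is tighter than the paper's: the paper chooses $p$ minimal with the exact membership $f\in pEp$, a condition that interacts awkwardly with the fact that lifts and corner transfers are only defined up to conjugacy, while you choose $t(p)$ minimal with the conjugation-invariant condition $f\in(p)$ and then justify the key reduction with your auxiliary fact --- a primitive idempotent lying in a finite sum of idempotent-generated ideals lies in one summand, proved correctly via the simple top of $Af$. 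That auxiliary fact has no counterpart in the paper and is exactly what makes your contradiction step ($g\in I_p$ would force $f\in(q)$ with $t(q)<t(p)$) airtight; the price is the extra lemma, the gain is an argument that survives the conjugacy ambiguity without further comment.
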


\begin{proof} 
Recall from \Cref{lem::idempotent_corr} that we can uniquely lift conjugacy classes of (primitive) idempotents modulo any ideal which is generated by an idempotent. Since by \Cref{lem-ideals-p}, $E_{t(p)}$ is a sum of principal ideals in $E$ generated by idempotents, $I_p=pE_{t(p)}p$ is a sum of principal ideals in $pEp$ generated by idempotents and we can lift any conjugacy  class of (primitive) idempotents from $pEp/I_p$ to a unique conjugacy class of primitive idempotents $\L_p(e)$ in $pEp$.

Let $f\in E$ be a primitive idempotent. Let $T\in \NN$ be minimal such that $f\in E_{T+1}$, i.e.~any summand of $f$ has at most $T$ through-blocks and $f$ has a summand with $T$ through-blocks. By \Cref{lem-ideals-p}, $f$ lies in the sum of ideals generated by projective partitions in $E_{T+1}$ and, by Rosenberg's Lemma \ref{lem::Rosenberg}, $f$ lies in one of these ideals, i.e.~$f\in (p)$ for some $p\in \Projk$ with $t(p)\leq T$. Since $E_{t(p)+1}$ is an ideal, $(p)\subseteq E_{t(p)+1}$, and hence, by minimality of $T$, $t(p)=T$. In particular, it follows that $f\notin I_p$. If we apply \Cref{lem::idempotent_corr} inductively for all projective partitions in $I_p$, it follows, together with \Cref{lem-ideals-p}, that there exists a primitive idempotent $e\in p E p / I_p$ such that its lift $\L_p(e)$ is conjugate to $f$.

Note, in particular, that idempotents made up of partitions with at most $T$ through-blocks can be obtained as lifts of idempotents in $(p)$ for a projective partition $p$ with the same number of through-blocks $T$, for any $T\geq0$.
\end{proof}

Thus, in order to understand indecomposables in $\RepCt$, we have to describe the primitive idempotents in the quotients $pEp/I_p$. It turns out that this can be achieved using combinatorial ideals explained in \cite[Sec.~4.2]{FW16}. In particular, we will need a certain subgroup $S(p)$ of a symmetric group which we associate to any projective partition $p$.

%\td{@JF Es gibt in FW die sogenannte through-block decomposition (FW Prop. 2.9 und auch Def. 4.7). Da wird allerdings ein spezielles $p_0$ gewählt. Sollten wir das hier noch erwähnen?}
\begin{definition}[{cf.~\cite[Def.~4.7]{FW16}}] \label{def::GroupsSp}
Let $p\in\Projk$ be a projective partition with $T:=t(p)$ through-blocks and with a decomposition $p=p_0^*p_0$ with $p_0\in P(k,T)$ (we call such a decomposition \emph{through-block factorisation} of $p$). For any $\sigma\in S_T$ we define $p_\sigma := p_0^* \sigma p_0$ in $P(k,k)$ and $S(p) := \{ \sigma\in S_T \mid p_\sigma \in\cC(k,k) \}$.
\end{definition}
In contrast to the definition in \cite{FW16}, our definition of a through-block decomposition does not result in a canonical choice of $p_0$ for a given projective partition $p$, as this will not be necessary for our purposes.

Note that $p=p\cdot p=p_0^*(p_0\cdot p_0^*)p_0$ implies that $p_0 \cdot p_0^*\in P(T,T)$ is a partition with at least $T$ through-blocks, hence, it is a permutation. Due to its symmetric factorisation, we even get $p_0\cdot p_0^*=\id$.
This implies that $p_\sigma \cdot p_\tau= p_{\sigma\tau}$ for $\sigma,\tau\in S_T$. As also $p_\id=p$, $S(p)$ is a subgroup of $S_T$. In fact, the subgroup is the same up to conjugation in $S_T$ for all choices of $p_0$.

\begin{example}\label{ex::FW}
If $\cC=P$ is the category of all partition, we have $S(p)=S_{t(p)}$ for all $p\in \ProjC$. It is easy to check that the same holds for $\cC=P_2$, the category of partitions with only blocks of size two, and $\cC=\langle \cross, \singleton \rangle$, the category of partitions with blocks of size one or two.

If $\cC \subseteq NC$ is a category of partitions in which all partitions are non-crossing, then $S(p)=\{ \id \}$ for all $p\in \Projk$. 
\end{example}

We will compute the groups $S(p)$ for some examples below (see \Cref{lem::Sp-concrete}).

The next lemma is an abstraction of Proposition 4.15 in \cite{FW16}. 
\begin{lemma}\label{lem::FW}
 Let $p\in\Projk$ be a projective partition. Then the map $\CC S(p)\to p E p, ~\sigma\mapsto t^{-\ell(p,p)} p_\sigma$, induces an algebra isomorphism between $\CC S(p)$ and $p E p / I_p$.
\end{lemma}

\begin{proof} 
Due to the observed multiplicativity, the map is an algebra homomorphism.

Now $p E p/I_p$ is spanned by $p\cdot q\cdot p+I_p$, where $p\cdot q\cdot p$ is a partition with $T:=t(p)$ through-blocks. As $p\cdot q \cdot p =  p_0^* (p_0\cdot q\cdot p_0^*) p_0$, this means $p_0\cdot q\cdot p_0^*\in P(T,T)$ has at least $T$ through-blocks. Hence it is a permutation, and $p\cdot q\cdot p = p_{p_0\cdot q\cdot p_0^*}$ lies in the image of our map.

We claim that $p_\sigma \neq p$ for any $\id\neq\sigma\in S_T$. Indeed, assume $p_\sigma=p$, then
$$ \sigma = (p_0 \cdot p_0^*) \sigma (p_0 \cdot p_0^*) = p_0 \cdot (p_0^* \sigma p_0) \cdot p_0^* = p_0 \cdot p \cdot p_0^* = p_0 \cdot p_0^* \cdot p_0 \cdot p_0^* = \id_T,
$$
as $p_0\cdot p_0^*=\id_T$. This implies that the $p_\sigma$ form a set of distinct partitions with exactly $T$ through-blocks. Hence, they are linearly independent even modulo $I_p$, and our map is bijective.
\end{proof}

In particular, the group algebra of the group $S(p)$ encodes the relevant information on primitive idempotents in the quotient $pEp/I_p$ for any fixed projective $p$. 

To investigate how primitive idempotents stemming from different projective idempotents $p$ and $q$ interact in $E$, let us make the following definition:

%The previous corollary reduces the problem of classifying the primitive idempotents in $\Lambda_k$ to classifying the primitive idempotents in $\Lambda_k^{(p)}$. Now, we show that they always correspond the primitive idempotents of the group algebra of a subgroup of the symmetric group $S_{t(p)}$. For this purpose we use the techniques developed in \cite[Sec. 4.2]{FW16}.

\begin{definition}
Let $p\in \Projk$ be a projective partition. We denote by 
$$\Lambda_k^{(p)}=\{ \L_p(e) \mid e \in \Lambda(pEp / I_p)\}$$ 
the set of conjugacy classes of (primitive idempotent) lifts of all idempotents in $\Lambda(pEp / I_p)$ into $E$. 
\end{definition}

Now, we want to study under which conditions $\Lambda_k^{(p)}\cap \Lambda_k^{(q)}\neq \emptyset$ for projective partitions $p,q\in \Projk$. It turns out that this is exactly the case if $p$ and $q$ are equivalent in the sense of \cite[Def.~4.17]{FW16} and then we have $\Lambda_k^{(p)}=\Lambda_k^{(q)}$.

\begin{definition} \label{def::equivprojpart}
Two projective partitions $p,q\in \Projk$ are \emph{equivalent in $\cC$}, denoted by $p\sim q$, if there exists a partition $r\in \cC(k,k)$ such that $r\cdot r^*=p$ and $r^*\cdot r=q$. We denote the set of equivalence classes by $\Projk / \sim$.
\end{definition}

Note that $p$ and $q$ being equivalent implies $t(p)=t(q)$ by \cite[Lem.~4.19]{FW16}.

\begin{lemma} \label{lem::part_equiv}
 Two projective partitions $p,q\in \Projk$ are equivalent if and only if the ideals $(p),(q)\unlhd E$ coincide.
\end{lemma}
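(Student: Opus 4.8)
The plan is to prove the two inclusions of ideals separately. Throughout I would work with the renormalised idempotents $\tilde p := t^{-l(p,p)}p$ and $\tilde q$, recalling that a projective partition satisfies $p = p^* = p\cdot p$, that $(p)$ denotes the two-sided ideal generated by $p$ in $E=\End_\cC([k])$, and that since $t\neq 0$ this ideal is unchanged if the algebra multiplication is replaced by the partition composition $\cdot$ (the two differ only by nonzero powers of $t$).

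\emph{The easy direction} ($p\sim q\Rightarrow (p)=(q)$). Suppose $r\in\cC(k,k)$ realises the equivalence, i.e.\ $r\cdot r^*=p$ and $r^*\cdot r=q$. Then, using projectivity of $q$,
\[
r^*\cdot p\cdot r=r^*\cdot(r\cdot r^*)\cdot r=(r^*\cdot r)\cdot(r^*\cdot r)=q\cdot q=q ,
\]
so $q\in(p)$ and hence $(q)\subseteq(p)$; the symmetric computation $r\cdot q\cdot r^*=p$ gives $(p)\subseteq(q)$, and thus $(p)=(q)$.

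\emph{The hard direction} ($(p)=(q)\Rightarrow p\sim q$). First I would pin down the number of through-blocks. Since every product $a\cdot p\cdot b$ has at most $t(p)$ through-blocks, $(p)\subseteq I_{t(p)+1}$, so $q\in(p)$ forces $t(q)\le t(p)$; by symmetry $t(p)=t(q)=:T$. The crucial rigidity, absent in a general algebra, is that the filtration by the ideals $I_T$ is a filtration by \emph{honest subspaces} (partitions form a basis of $E$ and composition never increases the through-block number). Writing $q=\sum_i\lambda_i\,a_i\cdot p\cdot b_i$ with $a_i,b_i\in\cC(k,k)$ and reducing modulo $I_T$, only the summands with exactly $T$ through-blocks survive; since these are linearly independent and $q$ is a single partition not lying in $I_T$, I conclude $q=a\cdot p\cdot b$ for some $a,b\in\cC(k,k)$, the composition preserving all $T$ through-blocks, and symmetrically $p=c\cdot q\cdot d$. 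Setting $\hat x:=q\cdot a\cdot p$ and $\hat y:=p\cdot b\cdot q$ in $\cC(k,k)$ gives $\hat x\cdot\hat y=q\cdot(a\cdot p\cdot b)\cdot q=q$, so left multiplication by $\hat x,\hat y$ realises $\tilde qE$ as a direct summand of the right ideal $\tilde pE$; the reverse factorisation gives the opposite inclusion, and finite-dimensionality of $E$ forces $\tilde pE\cong\tilde qE$, i.e.\ an honest algebraic equivalence $\hat x\cdot\hat y=q$, $\hat y\cdot\hat x=p$ by the single partitions $\hat x,\hat y\in\cC$.

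\textbf{The main obstacle} is the final step: upgrading this algebraic equivalence to the \emph{symmetric} (partial-isometry) equivalence required by \Cref{def::equivprojpart}, namely producing a single $r\in\cC$ with $r\cdot r^*=p$ and $r^*\cdot r=q$. Formally the candidate is canonical: writing $p=p_0^*p_0$, $q=q_0^*q_0$ with $p_0,q_0\in P(k,T)$ and $p_0\cdot p_0^*=q_0\cdot q_0^*=\id_T$, the partition $r:=p_0^*\cdot q_0$ automatically satisfies $r\cdot r^*=p_0^*\cdot(q_0\cdot q_0^*)\cdot p_0=p$ and $r^*\cdot r=q$, so the whole problem reduces to showing $r\in\cC$. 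I expect this to be the genuinely combinatorial heart of the argument, since algebraic equivalence does not imply $*$-equivalence in an arbitrary $*$-algebra: here the ambiguity is governed, via \Cref{prop::FW}, by the group $S(q)$ acting in the corner algebra $qEq/I_q\cong\CC S(q)$, and I would show that the through-block-preserving partition $\hat x$ can be corrected by a suitable permutation in $S(q)$ so that $\hat x\cdot\hat x^*=q$ and $\hat x^*\cdot\hat x=p$, equivalently that the canonical $r=p_0^*\cdot q_0$ lies in $\cC$. Tracking how the $T$ surviving through-blocks of $\hat x$ are matched to those of $p$ and of $q$, and checking that this correction stays inside $\cC$, is the step I anticipate requiring the most care.
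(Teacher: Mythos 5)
Your easy direction and the first half of your hard direction coincide with the paper's argument: from $(p)=(q)$ you correctly deduce $t(p)=t(q)=:T$ and, using that distinct partitions are linearly independent and that composition cannot increase the number of through-blocks, a factorisation by \emph{single} partitions $q=a\cdot p\cdot b$ with $a,b\in\cC(k,k)$ (and symmetrically for $p$). From there, however, the proposal has a genuine gap, which you yourself flag as ``the main obstacle'': you never prove that an equivalence-realising partition $r$ with $r\cdot r^*=p$ and $r^*\cdot r=q$ actually lies in $\cC$. Your canonical candidate $r:=p_0^*\cdot q_0$ cannot work as stated, because it makes no use of the hypothesis $(p)=(q)$ at all: such an $r$ exists in $P$ for \emph{any} pair of projective partitions with $T$ through-blocks, whereas, for example, in $\cC=NC_b^{\#}=\langle \singleton\ot\singleton\rangle$ the projective partitions $\id_l\ot\bbar\ot\id_{k-1-l}$, $0\leq l\leq k-1$, all have $k-1$ through-blocks and are pairwise \emph{inequivalent} (this is exactly what the paper exploits to parametrise the indecomposables of $\uRep(B_t^{\#+})$). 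Your intermediate module-theoretic step is also overstated: the Schr\"oder--Bernstein argument for finite-dimensional modules yields $xy=\tilde p$, $yx=\tilde q$ for \emph{some} $x,y\in E$, not for your specific partitions $\hat x,\hat y$ (indeed $\hat y\cdot\hat x=p\cdot b\cdot q\cdot a\cdot p$ is not visibly $p$), and in any case algebraic equivalence of idempotents in $E$ is strictly weaker than the $*$-equivalence by a single partition demanded by \Cref{def::equivprojpart}.

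The paper closes precisely this gap with a short symmetrisation trick that bypasses both your module-theoretic detour and your unproven $S(q)$-correction plan. Starting from $p=a\cdot q\cdot b$ with partitions $a,b\in\cC(k,k)$, one may arrange $b=a^*$ (using $p=p^*=p\cdot p$), and then sets $r:=a\cdot q$, which is \emph{automatically} in $\cC$ since $a,q\in\cC$ and $\cC$ is closed under composition. Then $r\cdot r^*=a\cdot q\cdot q\cdot a^*=a\cdot q\cdot a^*=p$ is immediate, and $r^*\cdot r=q$ follows from a rigidity fact: writing $q=q_0^*q_0$ with $q_0\in P(k,T)$, the partition $q_0\cdot a^*\cdot a\cdot q_0^*\in P(T,T)$ has at least $T$ through-blocks (because $p=(a\cdot q_0^*)\cdot(q_0\cdot a^*\cdot a\cdot q_0^*)\cdot(q_0\cdot a^*)$ has $T$ of them), hence is a permutation, and it admits a symmetric factorisation $s\cdot s^*$ with $s=q_0\cdot a^*$; but in any composition $s\cdot s^*$ the $i$-th upper point is connected to the $i$-th lower point whenever it lies in a through-block, so a permutation of this form must be $\id_T$. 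Therefore $r^*\cdot r=q_0^*\cdot(q_0\cdot a^*\cdot a\cdot q_0^*)\cdot q_0=q$, which is the step your proposal leaves open.
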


\begin{proof}
If $p$ and $q$ are equivalent, then $p=p\cdot p=r\cdot r^*\cdot r\cdot r^*=r\cdot q\cdot r^* = t^{-2\ell(r,q)} rqr^* \in (q)$. Similarly, we have $q\in (p)$ and hence $(p)=(q)$.

Now, let $(p)=(q)$. Then we have $t(p)=t(q)$, which is largest number of through-blocks of any partition contained in the ideal, and there exist elements $a,b\in \CC \cC(k,k)$ with $p=aqb$. Since $p$ and $q$ are both partitions, we can assume that $a,b$ are partitions as well and $p=a\cdot q\cdot b$. Moreover, as $p$ and $q$ are symmetric partitions, we have $b=a^*$. Then $p=a\cdot q\cdot a^*=a\cdot q\cdot q\cdot a^*=(a\cdot q)\cdot (a\cdot q)^*$.

Let $T:=t(p)=t(q)$, and write $q=q_0^*q_0$ for some $q_0\in P(k,T)$. As $p=p^*=p\cdot p$, we have
$$
p = (a\cdot q\cdot a^*)\cdot (a\cdot q\cdot a^*)^* = (a \cdot q_0^* q_0 \cdot a^*) \cdot (a\cdot q_0^* q_0\cdot a^*) = (a \cdot q_0^*)(q_0\cdot a^* \cdot a\cdot q_0^*)(q_0\cdot a^*)
. 
$$
Here, $q_0 \cdot a^* \cdot a \cdot q_0^*$ is a partition in $P(T,T)$ with at least $T$ through-blocks, so all blocks contain exactly one upper and one lower point. Moreover, it has a symmetric factorisation as $(q_0 \cdot a^*)\cdot (q_0 \cdot a^*)^*$, so it must be the identity partition. This means $(a\cdot q)^*\cdot (a\cdot q) = q_0^* (q_0 \cdot a^* \cdot a \cdot q_0^*) \cdot q_0 = q$, showing that $p$ and $q$ are equivalent, as desired.
\end{proof}

\begin{lemma} \label{lem::equiv_idemp_equiv_proj}
 Let $p,q\in \Projk$ be two projective partitions. 
 \begin{enumerate}[label=(\roman*)]
    \item If $p$ and $q$ are equivalent, then $\Lambda_k^{(p)}=\Lambda_k^{(q)}$.
    \item If $p$ and $q$ are not equivalent, then $\Lambda_k^{(p)}\cap \Lambda_k^{(q)}=\emptyset$.
 \end{enumerate}
\end{lemma}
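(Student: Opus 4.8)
The plan is to give an intrinsic, ideal-theoretic description of the set $\Lambda_k^{(p)}$ and then read off both halves from the ideal criterion of \Cref{lem::part_equiv}. Write $E=\End([k])$ and, for a projective partition $p$, set $T=t(p)$. The key claim, which I call $(\star)$, is that for every primitive idempotent $f\in E$ one has $[f]\in\Lambda_k^{(p)}$ if and only if $f\in(p)$ and $f\notin I_{t(p)}$. For the forward direction, any lift $\L_e$ lies in $pEp\subseteq(p)$, and $\L_e\notin I_p=pEp\cap I_{t(p)}$ forces $\L_e\notin I_{t(p)}$. For the converse, since the class $[f]$ lies in $(p)$, I would apply \Cref{lem::idempotent_corr} to the idempotent $t^{-l(p,p)}p$ generating $(p)$ to replace $f$ by a conjugate lying in $pEp$; because $f\notin I_{t(p)}$ its image in $pEp/I_p$ is a nonzero, hence primitive, idempotent $e$, and by uniqueness of lifts $[f]=\L_e\in\Lambda_k^{(p)}$.

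Part (i) is then immediate: if $p\sim q$, then $(p)=(q)$ by \Cref{lem::part_equiv} and $t(p)=t(q)$ as recorded after \Cref{def::equivprojpart}, so the right-hand side of $(\star)$ is literally the same condition for $p$ and $q$, whence $\Lambda_k^{(p)}=\Lambda_k^{(q)}$.

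For part (ii) I would argue by contradiction. A common class $[f]\in\Lambda_k^{(p)}\cap\Lambda_k^{(q)}$ satisfies, by $(\star)$, $f\in(p)\cap(q)$ with $f\notin I_{t(p)}$ and $f\notin I_{t(q)}$; since $(p)\subseteq I_{t(p)+1}$ this forces $t(p)=t(q)=:T$ and produces a nonzero element of $(p)\cap(q)$ outside $I_T$. Thus it suffices to prove that for \emph{inequivalent} projectives with $t(p)=t(q)=T$ one has $(p)\cap(q)\subseteq I_T$. I would work in the layer $W:=I_{T+1}/I_T$, whose basis is the set of images $\bar w$ of partitions $w\in\cC(k,k)$ with exactly $T$ through-blocks. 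Each such $w$ has a bottom projective $w^{*}\cdot w\in\Projk$ with $t(w^{*}\cdot w)=T$ and satisfies $w=w\cdot w^{*}\cdot w$, so $w\mapsto[w^{*}\cdot w]$ assigns a $\sim$-class to every basis vector of $W$. The heart of the argument is the identification
\[
\overline{(p)}=\operatorname{span}\{\,\bar w : t(w)=T,\ w^{*}\cdot w\sim p\,\}.
\]
The inclusion $\supseteq$ holds because $w^{*}\cdot w\sim p$ gives $(w^{*}\cdot w)=(p)$ by \Cref{lem::part_equiv}, so $w=w\cdot(w^{*}\cdot w)\in(p)$. For $\subseteq$, $\overline{(p)}$ is spanned by the images $\overline{a\cdot p\cdot b}$ of through-block number $T$, and for such a diagram the bottom projective $(a\cdot p\cdot b)^{*}\cdot(a\cdot p\cdot b)$ is a projective partition lying in $(p)$ with exactly $T$ through-blocks; here I invoke the stronger fact extractable from the proof of \Cref{lem::part_equiv}, namely that a projective inside $(p)$ with the same through-block number as $p$ is already equivalent to $p$. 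Hence $\overline{(p)}$ and $\overline{(q)}$ are spanned by disjoint subsets of the partition basis of $W$ whenever $p\not\sim q$, so they intersect only in $0$, i.e. $(p)\cap(q)\subseteq I_T$, contradicting the existence of $f$.

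The routine inputs (the diagram identity $w\cdot w^{*}\cdot w=w$, the projectivity and invariance of the bottom projective, and that $(p)\subseteq I_{T+1}$) I would cite from the remarks after \Cref{def::projPart}. The main obstacle is exactly the layerwise "disjoint support" statement for $\overline{(p)}$, and in particular its inclusion $\subseteq$: this is where one must upgrade $a\cdot p\cdot b\in(p)$ to the conclusion that its bottom projective is $\sim$-equivalent to $p$, i.e. use \Cref{lem::part_equiv} in the sharpened one-directional form. Once this is in hand, both parts of the lemma drop out of $(\star)$ with no further computation.
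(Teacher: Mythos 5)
Your proposal is correct, but part (ii) takes a genuinely different route from the paper's. Your criterion $(\star)$ and part (i) match the paper: it likewise identifies $\Lambda_k^{(p)}$ with the conjugacy classes of primitive idempotents lying in $(p)$ but not in $I_{t(p)}$, using \Cref{lem::idempotent_corr} iteratively over the idempotent generators of $I_p$ supplied by \Cref{lem-ideals-p} (this iteration is also what justifies your ``nonzero, hence primitive'' step), and then (i) follows from \Cref{lem::part_equiv}. For (ii), however, the paper stays inside $pEp$: it conjugates a common primitive idempotent into $pEp$, expands it in the partition basis, picks a partition $r$ with $t(p)$ through-blocks, nonzero coefficient, and $r\in(q)$, and invokes the isomorphism $pEp/I_p\cong\CC S(p)$ of \Cref{prop::FW} to deduce $r=p_\sigma$ for some $\sigma\in S(t(p))$; then $p=r\cdot p_{\sigma^{-1}}\in(q)$, symmetrically $q\in(p)$, so $(p)=(q)$ and \Cref{lem::part_equiv} concludes. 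You instead prove the stronger ideal-theoretic fact $(p)\cap(q)\subseteq I_T$ for inequivalent $p,q$ with $T=t(p)=t(q)$, via a disjoint-support argument in the layer $I_{T+1}/I_T$, avoiding \Cref{prop::FW} entirely. The one point you must make honest is the ``sharpened'' form of \Cref{lem::part_equiv}: as stated it assumes $(p)=(q)$, while $w\in(p)$ only gives $(w^*\cdot w)\subseteq(p)$, so it cannot be cited as a black box and its proof must be rerun under the weaker hypothesis. This does work, and in fact cleanly: writing $p=p_0^*p_0$ and $w=x\cdot p\cdot y$ with $t(w)=T$, both $p_0\cdot x^*\cdot x\cdot p_0^*$ and $p_0\cdot y\cdot y^*\cdot p_0^*$ are projective partitions in $P(T,T)$ with exactly $T$ through-blocks, hence both equal $\id_T$; then $r:=p\cdot y\in\cC(k,k)$ satisfies $r\cdot r^*=p$ and $r^*\cdot r=y^*\cdot p\cdot y=w^*\cdot w$, which is precisely the claim that the bottom projective of $w$ is equivalent to $p$. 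With that computation written out your argument is complete; what it buys is independence from the group-algebra structure of $pEp/I_p$ together with a finer structural statement (the top layer of $(p)$ is spanned by exactly those partitions whose bottom projective is equivalent to $p$), whereas the paper's route is shorter once \Cref{prop::FW} is available.
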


\begin{proof} (i) By \Cref{lem::idempotent_corr} the set $\Lambda_k^{(p)}$ contains the conjugacy classes of primitive idempotents in $(p)$ but not in $I_p$. If $p$ and $q$ are equivalent, then $(p)=(q)$ and $t(p)=t(q)$.
\smallskip

(ii)  Let $e$ be a primitive idempotent in $(p)\cap (q)$, but not in $I_p$ or $I_q$. Then we can assume that $e\in pEp$ and write
\[ e = \sum_{r\in p\cC(k,k)p\cap(q)} a_r r  \]
with $a_r\in \CC$ for all $r\in \cC(k,k)$. Here we use that $(q)$ is spanned by the partitions it contains. Since $e\notin I_p$, there exists a partition $r$ with $a_r\neq0$ and $t(p)$ through-blocks.

By \Cref{lem::FW}, $r$ lies in the span of partitions of the form $p_\sigma$ modulo $I_p$, but as both $r$ and $p_\sigma$ are partitions with $t(p)$ through-blocks, and as sets of distinct partitions are linearly independent, $r=p_{\sigma}$ for a permutation $\sigma \in S(t(p))$. This yields $p=p_{\id_{t(p)}}=p_{\sigma} \cdot p_{\sigma^{-1}} = r \cdot p_{\sigma^{-1}} = t^{-\ell(r,p_{\sigma^{-1}})}~ rp_{\sigma^{-1}} \in (q)$. Similarly, one can check that $q\in (p)$ and hence $(p)=(q)$. By \Cref{lem::part_equiv} this implies that $p$ and $q$ are equivalent. 
\end{proof}

The previous lemma together with \Cref{lem::surjection} gives the following description of the primitive idempotents in $E$.
\begin{lemma} \label{lem::proj_part_final_corr}
The following mapping is a bijection
\[ \L: \bigsqcup_{[p]\in \Projk/ \sim} \Lambda(p E p / I_p ) \to \Lambda(E),
~ e\mapsto \L_p(e)
 .
\]
\end{lemma}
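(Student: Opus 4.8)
The plan is to recognise the map $\L$ in the statement as nothing but the restriction of the surjection of \Cref{lem::surjection} to a set of representatives of the equivalence classes $\Projk/\!\sim$, and then to verify that passing to representatives upgrades this surjection to a bijection. Concretely, fix one projective partition $p$ in each class $[p]\in\Projk/\!\sim$; the domain in the lemma is then $\bigsqcup_{[p]}\Lambda(pEp/I_p)$ with $p$ the chosen representative, and the map is $e\mapsto\L_e$ exactly as in \Cref{lem::surjection}. Given that proposition together with \Cref{lem::equiv_idemp_equiv_proj}, what remains is essentially bookkeeping, which I would organise into surjectivity and injectivity.

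For surjectivity, \Cref{lem::surjection} already gives that $\bigsqcup_{p\in\Projk}\Lambda(pEp/I_p)\to\Lambda(E)$ is onto, with total image $\bigcup_{p\in\Projk}\Lambda_k^{(p)}$. Since \Cref{lem::equiv_idemp_equiv_proj}(i) shows $\Lambda_k^{(p)}=\Lambda_k^{(q)}$ whenever $p\sim q$, the set $\Lambda_k^{(p)}$ depends only on the class $[p]$, so the union over all projective partitions coincides with the union over the chosen representatives. Hence the restricted map is still surjective, and the same observation shows that the construction is independent of the chosen representatives, so that the statement is well posed.

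For injectivity I would argue in two steps. Within a single piece $\Lambda(pEp/I_p)$ the assignment $e\mapsto\L_e$ is injective: by \Cref{lem-ideals-p} the ideal $I_p$ is a sum of ideals generated by idempotents, so the lift $\L_e$ is assembled by iterating the correspondence of \Cref{lem::idempotent_corr}, and each such correspondence is a \emph{bijection} between the conjugacy classes of the quotient and those of the ambient algebra not meeting the ideal; thus distinct classes in $pEp/I_p$ yield distinct $\L_e$. Across two distinct pieces, coming from non-equivalent representatives $p\not\sim q$, \Cref{lem::equiv_idemp_equiv_proj}(ii) gives $\Lambda_k^{(p)}\cap\Lambda_k^{(q)}=\emptyset$; as $\Lambda_k^{(p)}$ is by definition the image of the piece $\Lambda(pEp/I_p)$ under $\L$, the images of two distinct pieces are disjoint and no collisions occur between pieces. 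Combining the two steps gives injectivity, and with surjectivity the bijection follows.

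Given the strength of the two cited lemmas, I do not expect a genuinely hard step; the only point deserving care is the identification $\Lambda(pEp)\subset\Lambda(E)$ tacitly used throughout, namely that for idempotents lying in the corner $pEp$, conjugacy in $pEp$ agrees with conjugacy in $E$. This follows from the module-theoretic reading of conjugacy (two primitive idempotents are conjugate if and only if their associated projective modules are isomorphic) together with the identity $p(Ef)=(pEp)f$ for $f=pfp$, which realises passage to the corner as the exact functor $M\mapsto pM$ and lets one transport a conjugation in $E$ into $pEp$. Checking this compatibility is the one place I would slow down; everything else is an exercise in combining \Cref{lem::surjection} and \Cref{lem::equiv_idemp_equiv_proj}.
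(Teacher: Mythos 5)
Your proposal is correct and follows essentially the same route as the paper, which obtains this lemma directly by combining \Cref{lem::surjection} (surjectivity of the lifting map) with \Cref{lem::equiv_idemp_equiv_proj} (equality of the sets $\Lambda_k^{(p)}$ for equivalent $p$, disjointness for inequivalent $p$), exactly as you do. Your additional care about conjugacy in the corner algebra $pEp$ versus conjugacy in $E$ is a legitimate detail that the paper leaves implicit in the inclusion $\Lambda(pEp)\subset\Lambda(E)$ of \Cref{lem::surjection}, and your sketch of it (via $\Hom_E(Ef,Ef')\cong fEf'\cong\Hom_{pEp}((pEp)f,(pEp)f')$ and Krull--Schmidt) is sound.
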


\subsection{Parametrising indecomposable objects}
The previous subsection resulted in a description of all primitive idempotents in the endomorphism algebra $\End ([k])$ up to conjugation, and hence a description of the indecomposable objects of the form $([k],e)$ up to isomorphism, for a fixed $k\in \NN_0$. Now, in order to describe all indecomposable objects in $\RepCt$ up to isomorphism, we apply the results of \Cref{ssec::nuk} to determine those primitive idempotents $e\in \End ([k])$ which do not yield subobjects of $[l]$ for some $l<k$.

\begin{definition} \label{def::P} Let us define the subset
$$
\cP := \{ p\in\Projk: k\in \NN_0, p\not\in b^* \cdot \Projl \cdot b \text{ for all } 0\leq l<k, b\in \cC(k, l) \} 
$$
of $\ProjC$. 
\end{definition}

Note that the equivalence relation $\sim$ induces one on $\cP$, because if $r\cdot r^*=b^*\cdot a\cdot b$ for some $r\in\cC(k,k)$, $b\in\cC(k,l)$, $a\in\Projl$, then 
$$
r^*\cdot r = r^*\cdot (r\cdot r^*)\cdot r = r^* \cdot (b^* \cdot a\cdot b)\cdot r = (b\cdot r)^* \cdot a \cdot (b\cdot r) 
 .
$$

\begin{lemma} \label{lem::Projk_Prokl}
Let $k\in \NN_0$ and let $p\in \Projk$ be a projective partition. Then the following are equivalent:
\begin{enumerate}
    \item $p\in \Projk \backslash \cP$,
    \item there is an $l<k$ such that $([k],\L_p(e))$ is isomorphic to a direct summand of $[l]$ for all $e\in \Lambda_k^{(p)}$,
    \item there are $e\in \Lambda_k^{(p)}$ and $l<k$ such that $([k],\L_p(e))$ is isomorphic to a direct summand of $[l]$.
\end{enumerate}
\end{lemma}

\begin{proof}
$(1) \Rightarrow (2)$. Let $p\in \Projk \backslash \cP$. Then there exist $l<k$, $b\in\cC(k, l)$, and $q\in\Projl$ such that $p=b^*\cdot q\cdot b$. Replacing $q$ by $b\cdot b^*\cdot q\cdot b\cdot b^*$, we can assume $b\cdot b^*\cdot q=q=q\cdot b\cdot b^*$. Then we have $\ell(p,p)=\ell(q,q)=\ell(b,b^*)$ and we set $\alpha =t^{-\ell(p,p)}$, so $\alpha p$ and $\alpha q$ are idempotent endomorphisms of the objects $[k]$ and $[l]$, respectively. As in the proof of \Cref{lem::chi_isomorphisms}, we see that $q\cdot b:([k],\alpha p) \to ([l],\alpha q)$ and $b^*\cdot q:([l],\alpha q) \to ([k],\alpha p)$ yield isomorphisms between the objects $([k],\alpha p)$ and $([l],\alpha q)$ in $\RepCt$, and hence for arbitrary subobjects.

$(2) \Rightarrow (3)$, clearly.

$(3) \Rightarrow (1)$. We assume that $([k],\L_p(e))$ is isomorphic to a direct summand of $[l]$ for some $l<k$ and some $e\in \Lambda_k^{(p)}$. Then by \Cref{lem::surjection} there exists a projective partition $q\in \Projl$ such that $([k],\L_p(e))\cong ([l],\L_f)$ for some $f\in \Lambda_l^{(q)}$ and by \Cref{lem::chi_isomorphisms} we have $([l],\L_f) \cong ([k],t^{-m} \nu \L_f \nu^*)$ for some partition $\nu \in \cC (l,k)$ and $m\in\NN_0$. But $t^{-m} \nu f \nu^* \in \Lambda_k^{(\nu q\nu^*)}$, hence $\Lambda_k^{(p)} \cap \Lambda_k^{(\nu q\nu^*)} \neq \emptyset$. Thus $p$ and $\nu q\nu^*$ are equivalent by \Cref{lem::equiv_idemp_equiv_proj}, so as $\nu^* q\nu \notin \cP$, it follows that $p\notin \cP$.
\end{proof}

\begin{remark}
Recall that we used distinguished idempotents $\nu_k\in \End ([k])$, $k\in \NN_0$, to establish a correspondence between indecomposables in $\RepCt$ and primitive idempotents in $\Lambda_k = \Lambda(E/ (\nu_k))$, see \Cref{thm::indecomp_obj}. The previous lemma together with \Cref{lem::Rk} implies
$$\cP = \{ p\in\Projk : k\in \NN_0, p\not\in(\nu_k) \}.$$
%For all $k\in \NN_0$ the following mapping is a bijection
%\[ \L: \bigsqcup_{[p]\in \cP \cap \Projk / \sim} \Lambda(p E p / I_p ) \to \Lambda_k,
%\quad e\mapsto \L_p(e) + (\nu_k).\]
\end{remark}

We are ready to prove \Cref{thm::main_thm_2}, which reduces the computation of indecomposable objects in $\RepCt$ to the computation of equivalence classes of projective partitions. Let us denote the isomorphism classes of irreducible complex representations of a group $G$ by $\Irr(G)$.

\begin{definition} \label{def::LpV} For $k\in \NN_0$, $p\in\ProjC(k)$, and $V\in\Irr(S(p))$, let $e'$ be an idempotent in $\CC S(p)$ whose generated left ideal is in the isomorphism class $V$ of $S(p)$-modules. Let $e\in p\End([k])p/I_p$ be the image of $e'$ under the isomorphism $\CC S(p)\cong p\End([k])p/I_p$ from \Cref{lem::FW} and let $\L_p(V)$ be the isomorphism class of the object $([k],\L_p(e))$ in $\RepCt$.
\end{definition}

We note that in the definition, the idempotents $e'$ and $e$ are defined only up to conjugation in the respective algebras, so $\L_p(V)$ is defined only as an isomorphism class of objects in $\RepCt$.

\begin{theorem} \label{thm::indecompsable_obj_by_A_k}
 Let $\cC$ be a category of partitions and $t\in \CC\backslash \{0\}$. Then the mapping $\Irr(S(p))\ni V\mapsto\L_p(V)$ induces a bijection
 \begin{align*}
    \bigsqcup_{[p]\in \cP/ \sim} \Irr(S(p)) \longleftrightarrow \left\{ \begin{matrix} \text{isomorphism classes of non-zero} \\ \text{indecomposable objects in }  \RepCt \end{matrix} \right\}.
    \end{align*}
\end{theorem}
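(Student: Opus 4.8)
The plan is to assemble the theorem from the chain of reductions established in \Cref{ssec::nuk} and \Cref{ssec::projectives}, reading each link as an identification of sets of conjugacy classes of primitive idempotents. Fix $k\in\NN_0$ and write $E=\End([k])$. First I would recall from \Cref{thm::indecomp_obj} that the non-zero indecomposables are in bijection with $\bigsqcup_{k\geq 0}\Lambda_k$ via $e\mapsto([k],L_e)$, and that $\Lambda_k=\Lambda(E/(\nu_k))$ is, through \Cref{lem::idempotent_corr}, exactly the set of conjugacy classes of primitive idempotents of $E$ whose lift does not lie in $(\nu_k)$; by \Cref{lem::Rk} these are precisely the classes whose associated object fails to be isomorphic to a subobject of any $[l]$ with $l<k$.

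Next I would invoke \Cref{lem::proj_part_final_corr}, which provides a bijection $\L:\bigsqcup_{[p]\in\Projk/\sim}\Lambda(pEp/I_p)\to\Lambda(E)$ describing all primitive idempotents of $E$ up to conjugacy in terms of the projective partitions. The crucial compatibility step is to match, inside $\Lambda(E)$, the subset $\Lambda_k$ with the part of the source indexed by $\cP$. This is supplied by \Cref{lem::Projk_Prokl}: for $p\in\Projk$ one has $p\in\cP$ precisely when $([k],\L_e)$ is not a subobject of any smaller $[l]$ for $e\in\Lambda_k^{(p)}$, equivalently (by the remark following that lemma) when $p\notin(\nu_k)$. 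Combined with \Cref{lem::equiv_idemp_equiv_proj} --- equivalent projectives yield identical idempotent-class sets and inequivalent ones yield disjoint sets --- this identifies $\Lambda_k$ with $\bigsqcup_{[p]\in(\cP\cap\Projk)/\sim}\Lambda(pEp/I_p)$. Taking the disjoint union over $k$, and using that $\sim$ relates only partitions at the same level so that $\cP/\!\sim\,=\bigsqcup_k(\cP\cap\Projk)/\!\sim$, yields a bijection between the indecomposables and $\bigsqcup_{[p]\in\cP/\sim}\Lambda(pEp/I_p)$.

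Finally I would translate the algebra side into representation theory. By \Cref{prop::FW} the assignment $\sigma\mapsto p_\sigma$ induces an algebra isomorphism $\CC S(p)\cong pEp/I_p$, so $\Lambda(pEp/I_p)\cong\Lambda(\CC S(p))$. Since $S(p)$ is a finite group, $\CC S(p)$ is semisimple and splits as a product of matrix algebras indexed by $\Irr(S(p))$; the conjugacy classes of primitive idempotents of such an algebra are in canonical bijection with its simple factors, hence with $\Irr(S(p))$. Composing all the identifications gives the asserted bijection, and by construction the composite sends an irreducible representation of $S(p)$ to the object $([k],\L_e)$ obtained by transferring a primitive idempotent from $\CC S(p)$ to $pEp/I_p$ and lifting it to $E$, as the phrase ``transferring and lifting idempotents'' indicates.

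The bulk of the argument is bookkeeping: all the genuinely new content lives in the earlier lemmas. The one point demanding care is the compatibility in the middle paragraph --- ensuring that the subset $\Lambda_k$ singled out by $\nu_k$ matches the projective partitions in $\cP$ under the bijection $\L$, so that each indecomposable is produced exactly once at its minimal level $k$ and no object is counted both as a ``new'' object at level $k$ and as a descendant from a smaller level. This is exactly what the equivalences $(1)\Leftrightarrow(2)\Leftrightarrow(3)$ of \Cref{lem::Projk_Prokl} guarantee, so once they are in hand the remaining assembly is routine.
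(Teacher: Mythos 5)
Your proposal is correct and follows essentially the same route as the paper's proof: both combine \Cref{lem::proj_part_final_corr}, \Cref{lem::Projk_Prokl} and \Cref{prop::FW} with the standard correspondence between primitive idempotents of $\CC S(p)$ and $\Irr(S(p))$. Your extra detour through \Cref{thm::indecomp_obj} and the sets $\Lambda_k$ is only a repackaging of the paper's direct appeal to \Cref{lem::lem_Krull-Schmidt}, since that proposition is itself proved from the same lemmas.
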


\begin{proof}
By \Cref{lem::proj_part_final_corr} we have a bijection 
\[ \L: \bigsqcup_{[p]\in \Projk/ \sim} \Lambda(p E p / I_p ) \to \Lambda(E),
~ e\mapsto \L_p(e), \]
where $E:=\End([k])$, as before. The isomorphisms classes of non-zero indecomposable objects in $\RepCt$ are in bijection with the conjugacy classes of primitive idempotents in $E$ for which $([k],e)$ is not a subobject of $[l]$ for any $l<k$ by \Cref{lem::lem_Krull-Schmidt}. Thus by \Cref{lem::Projk_Prokl} we have a bijection
 \begin{align*}
    \bigsqcup_{[p]\in \cP/ \sim} \Lambda(p E p / I_p ) \longleftrightarrow \left\{ \begin{matrix} \text{isomorphism classes of non-zero} \\ \text{indecomposable objects in }  \RepCt \end{matrix} \right\}.
    \end{align*}
By \Cref{lem::FW} the algebra $p E p / I_p $ is isomorphic to the group algebra $\CC S(p)$ for any $p\in \Projk$. Finally, the primitive idempotents of a complex group algebra up to conjugation correspond to the irreducible complex representations of the group, where any primitive idempotent generates an irreducible subrepresentation inside the (semisimple) regular representation.
\end{proof}

\begin{example} \label{ex::P}
In $\uRep(P,t)=\uRep(S_t)$, we have the decomposition $p = p_0^* \id_{t(p)} p_0$ for any projective partition $p=p_0^*p_0\in\operatorname{Proj}_P(k)$. Thus $p\in \cP$ if and only if $p=\id_k$. Now $S(\id_k)=S_k$, the full symmetric group, and the indecomposables are parametrised by Young diagrams of arbitrary size. This reproduces the known results from \cite{De07, CO11} (see also Halverson and Ram's survey on partition algebras \cite{HR04}) in this case.
\end{example}

More examples will be considered in \Cref{sec-examples}.

\subsection{Grothendieck rings}

\Cref{thm::indecompsable_obj_by_A_k} yields a description of the (additive) Grothendieck group of the additive category $\RepCt$. Since the latter category also has a monoidal structure, we want to extend this to a description of the Grothendieck ring.

\newcommand{\Ind}{\operatorname{Ind}}
\newcommand{\Res}{\operatorname{Res}}
\newcommand{\Hom}{\operatorname{Hom}}

Let $\ProjC:=\bigsqcup_{k\geq0}\Projk$ be the set of projective partitions in $\cC$. We observe that $\ProjC$ is a semigroup with the operation $\otimes$ and the identity element being the empty partition $p_0\in\cC(0,0)$. We also observe that for any $p,q\in\ProjC$, we have an embedding $S(p)\times S(q)\to S(p\otimes q)$. For each $p\in\ProjC$, let us denote the Grothendieck group of $\Rep(S(p))$ by $K(S(p))$, that is, $K(S(p))$ is the abelian group whose elements are isomorphism classes $[V]$ of (complex) virtual $S(p)$ representations with the operation $[V]+[W]=[V\oplus W]$ for any two $S(p)$ representations $V,W$.

\newcommand\simdot{\mathrel{\dot\sim}}
\begin{remark} 
Recall that the equivalence relation $\sim$ induces an equivalence relations on $\ProjC$ such that two projective partitions can be equivalent only if they are both elements in $\Projk$ for some $k\in \NN_0$. We can drop this restriction and define an equivalence relation on $\ProjC$ by setting $p\simdot q$ if there is a partition $r\in\cC$ such that $r\cdot r^*=p$ and $r^*\cdot r=q$ (see \cite[Def.~4.17]{FW16}). Then the equivalence classes $\cP/{\sim}$ are in bijection with the equivalence classes $\ProjC/{\simdot}$. This follows from \Cref{lem::equivalent-equivalences} as in \Cref{prop::equivalence-half}. Note that the semigroup operation $\otimes$ induces one on the equivalence classes $\ProjC/{\simdot}$ and hence on the equivalence classes $\cP/{\sim}$.
\end{remark}

\begin{definition} We define the ring
$$R
:=\bigoplus_{[p]\in \cP/{\sim}} K(S(p))
$$
with the multiplication
$$
[V]\cdot[W] := \Ind_{S(p)\times S(q)}^{S(p\otimes q)} (V\boxtimes W)
$$
for all $V\in\Rep(S(p))$ and $W\in\Rep(S(q))$, with the identity element corresponding to the one-dimensional representation of the trivial group $S(p_0)$.
\end{definition}

Recall that we can associate to any partition $p$ the number of through-blocks $t(p)$, which we have used to define filtrations $(E_T(k))_{T\geq0}$ on the endomorphism spaces $\End([k])$ in $\RepCt$ (\Cref{def::through-blocks}). These filtrations in turn can be used to define a filtration on the Grothendieck ring $K(\cC,t)$, as we will see in the following.

\begin{definition} \label{def::filtration} For any indecomposable object $X$ in $\RepCt$, let us define
$$
t(X) := \min \{T\geq0: \exists k\geq0, e\in E_T(k): e^2=e, X\cong([k],e) \} - 1.
$$
\end{definition}

By definition, $t(X)$ is constant across isomorphism classes of indecomposable objects. 

Let us denote the Grothendieck ring of $\RepCt$ by $K(\cC,t)$. As it has a $\ZZ$-basis given by the isomorphism classes of indecomposable objects in $\Rep(\cC,t)$, the numbers $t(X)$ for indecomposable objects $X\in\Rep(\cC,t)$ uniquely induce a filtration of $K(\cC,t)$ as a free $\ZZ$-module. 

\begin{lemma} The degrees defined in
\Cref{def::filtration} induce a filtration of $K(\cC,t)$ as a ring.
\end{lemma}

\begin{proof} Let $X_1$ and $X_2$ be two indecomposable objects in $\RepCt$, let $e_i\in E_{T_i}(k_i)$ be an idempotent such that $([k_i],e_i)\cong X_i$ for $i=1,2$. Then $e_1\otimes e_2$ is an idempotent in $E_{T_1+T_2}(k_1+k_2)$ such that $([k_1+k_2],e_1\otimes e_2)\cong X_1\otimes X_2$.
\end{proof}

We obtain the following analogue of \cite[Prop.~5.11]{De07}, a description of the associated graded of the Grothendieck ring for $\uRep(S_t)$.

\newcommand\gr{\operatorname{gr}}
\begin{proposition} \label{Grothendieck-ring} Let $\cC$ be a category of partitions and $t\in \CC \backslash \{0\}$. Then the mapping $\L$ induces a ring isomorphism between $R$ and the associated graded ring $\gr K(\cC,t)$.
\end{proposition}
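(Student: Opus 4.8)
The plan is to realise both $R$ and $\gr K(\cC,t)$ as one and the same graded abelian group by means of Theorem~\ref{thm::indecompsable_obj_by_A_k}, and then to verify that the induced isomorphism respects products degree by degree; the crux will be a single tensor-product computation carried out modulo lower-order terms. For the additive picture, note that by the Krull--Schmidt property (Lemma~\ref{lem::lem_Krull-Schmidt}) the group $K(\cC,t)$ is free abelian on the isomorphism classes of non-zero indecomposable objects, while each $K(S(p))$ is free abelian on $\Irr(S(p))$; hence Theorem~\ref{thm::indecompsable_obj_by_A_k} already supplies an isomorphism of abelian groups $R\to K(\cC,t)$ sending the class of $V\in\Irr(S(p))$ to $([k],\L_e)$. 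First I would check that this map is graded for the filtration defined above: an indecomposable $([k],\L_e)$ arising from $p\in\cP\cap\Projk$ with $t(p)=T$ has filtration degree exactly $(k,T)$. Indeed, the first coordinate is $k$ because $p\in\cP$ forces, via Lemma~\ref{lem::Projk_Prokl}, that $([k],\L_e)$ is not isomorphic to a subobject of any $[l]$ with $l<k$; and the second coordinate is $T$ because $\L_e\in p\End([k])p\setminus I_p$ has leading part of exactly $T$ through-blocks while $p$ is minimal in the sense of Proposition~\ref{lem::surjection}. Consequently $\L$ matches the degree-$(k,T)$ summand $\bigoplus_{[p],\,p\in\Projk,\,t(p)=T}K(S(p))$ of $R$ with $\gr_{(k,T)}K(\cC,t)$.

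The heart of the argument is multiplicativity. Fix $p\in\cP\cap\Projk$ and $q\in\cP\cap\operatorname{Proj}_\cC(k')$ with $t(p)=T$, $t(q)=T'$, and primitive idempotents $e,e'$ corresponding under Lemma~\ref{prop::FW} to $V\in\Irr(S(p))$ and $W\in\Irr(S(q))$. Then $([k],\L_e)\otimes([k'],\L_{e'})=([k+k'],\L_e\otimes\L_{e'})$, and I want the image of its class in $\gr_{(k+k',T+T')}K(\cC,t)$. Writing $p=p_0^*p_0$ and $q=q_0^*q_0$, the partition $p\otimes q=(p_0\otimes q_0)^*(p_0\otimes q_0)$ is projective with $T+T'$ through-blocks, and a direct check gives $p_\sigma\otimes q_\tau=(p\otimes q)_{\sigma\times\tau}$; thus the embedding $S(p)\times S(q)\hookrightarrow S(p\otimes q)$ is the block-sum inclusion, and under the isomorphisms of Lemma~\ref{prop::FW} the tensor product of partitions induces precisely the group-algebra map $\CC S(p)\otimes\CC S(q)\to\CC S(p\otimes q)$. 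Since every term of $\L_e\otimes\L_{e'}$ with fewer than $T+T'$ through-blocks lies in $I_{p\otimes q}$, modulo $I_{p\otimes q}$ the idempotent $\L_e\otimes\L_{e'}$ becomes the image of $e\boxtimes e'$ under this map, and the standard dictionary between idempotents and induced modules identifies the corresponding $\CC S(p\otimes q)$-module with $\Ind_{S(p)\times S(q)}^{S(p\otimes q)}(V\boxtimes W)$. Lifting the resulting idempotents via Proposition~\ref{lem::surjection}, the degree-$(k+k',T+T')$ part of $[([k+k'],\L_e\otimes\L_{e'})]$ equals $[\Ind_{S(p)\times S(q)}^{S(p\otimes q)}(V\boxtimes W)]$, matching the multiplication in $R$ whenever $p\otimes q\in\cP$.

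Finally I would treat the vanishing case: if $p\otimes q\notin\cP$, then by Lemma~\ref{lem::Projk_Prokl} every indecomposable summand of the $T+T'$-through-block part of $([k+k'],\L_e\otimes\L_{e'})$ is isomorphic to a subobject of some $[l]$ with $l<k+k'$, hence has strictly smaller first filtration coordinate and so vanishes in $\gr_{(k+k',T+T')}K(\cC,t)$, in agreement with the definition of the product in $R$. I expect the principal difficulty to be the multiplicativity step, namely pinning down $\L_e\otimes\L_{e'}$ modulo $I_{p\otimes q}$ as the induced idempotent: this hinges both on the compatibility of the isomorphism of Lemma~\ref{prop::FW} with $\otimes$ (the identity $p_\sigma\otimes q_\tau=(p\otimes q)_{\sigma\times\tau}$) and on the idempotent/induction correspondence, and, more routinely, on first verifying that the filtration of the preceding lemma really is multiplicative, so that $\gr K(\cC,t)$ is a well-defined ring at all.
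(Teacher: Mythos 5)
Your proposal is correct and follows essentially the same route as the paper: the additive bijection from Theorem~\ref{thm::indecompsable_obj_by_A_k}, the vanishing of the product in the associated graded when $p\otimes q\notin\cP$ via a drop in filtration degree, and the identification of $\L_e\otimes\L_{e'}$ modulo lower-order terms with $e\boxtimes e'\in\CC S(p)\otimes\CC S(q)\subset\CC S(p\otimes q)$, matched to $\Ind_{S(p)\times S(q)}^{S(p\otimes q)}(V\boxtimes W)$. The only (cosmetic) difference is that you invoke the idempotent/induced-module dictionary $\CC G e\cong\Ind_H^G(\CC H e)$ where the paper spells out the multiplicities via Frobenius reciprocity, and you make explicit some compatibilities (the grading of $\L$ and the identity $p_\sigma\otimes q_\tau=(p\otimes q)_{\sigma\times\tau}$) that the paper leaves implicit.
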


\begin{proof} \Cref{thm::indecompsable_obj_by_A_k} means that $\L$ induces a bijection of abelian groups. 

Consider $V_i\in\Irr(S(p_i))$ for $i\in\{1,2\},p_i\in\ProjC$. Let $e_i$ be the primitive idempotents in $\CC S(p_i)$ corresponding to $V_i$. Then the tensor product of the objects corresponding to $V_i$ in $\RepCt$ are the image of the tensor product of the idempotent lifts of the $e_i$. Modulo lower order terms in the filtration, they correspond to the idempotent 
$$
e:= e_1\otimes e_2 \in \CC S(p_1)\otimes \CC S(p_2) \subset \CC S(p_1\otimes p_2)
.
$$
Let $(V_\lambda)_\lambda$ be a set of isomorphism classes of irreducible complex representations for $S(p_1\otimes p_2)$, with corresponding primitive idempotents $(e_\lambda)_\lambda$ in the group algebra. Then $e$ decomposes as a linear combination
$e = \sum_\lambda n_\lambda e_\lambda$
with multiplicities $(n_\lambda)_\lambda$, where 
\begin{align*}
    n_\lambda
 &= \dim \Hom_{S(p_1)\times S(p_2)}(\Res_{S(p_1)\times S(p_2)} V_\lambda, V_1\boxtimes V_2)
 \\
 &= \dim \Hom_{S(p_1\otimes p_2)}(V_\lambda, \Ind_{S(p_1)\times S(p_2)}^{S(p_1\otimes p_2)} V_1\boxtimes V_2)
.
\end{align*}
This shows that the structure constants of the multiplication coincide in the two rings considered.
\end{proof}

We note that the ring $R$ does not depend on $t$ and the Grothendieck ring of $\RepCt$ can be viewed as a filtered deformation of $R$ with deformation parameter $t$.

\begin{remark} We also note that the the operation $p\mapsto p\otimes\Paa$ for a projective partition $p$ defines a partial order and yields an embedding $S(p)\to S(p\otimes\Paa)$ which turns the groups $(S(p))_{p\in\cP}$ into an inverse system (whose underlying poset, however, might not be directed in general). For $\cC=P$, this is the system of all symmetric groups $S_0\subset S_1\subset S_2\subset \dots$.
\end{remark}

\newcommand\wRepCt{\widehat{\RepCt}}

\subsection{Semisimplification}
Let us consider now a group-theoretical category of partitions $\cC$, and let us recall (\Cref{thm-grouptheo-semisimple}) that $\RepCt$ is not semisimple if and only if $t\in\NN_0$. In the non-semisimple case, the semisimplification functor is given by \Cref{prop::fiber-functor}. For $t=0$, the semisimplification is trivial by \Cref{lem::semisimplification-t0}, so let us consider $t\geq 1$. In this case, we record some general observations about the semisimplification $\wRepCt$. For any $k\geq0$, $p\in\Projk$, and $V$ in $\Irr(S(p))$, let us denote the primitive idempotent in $\cC(k,k)$ corresponding to the indecomposable object $\L_p(V)$ according to \Cref{thm::indecompsable_obj_by_A_k} by $e_{k,p,V}$.

\begin{lemma} If $t\in\NN$, then $\L$ together with the quotient functor $\RepCt\to\wRepCt$ yields a bijection
 \begin{align*}
    \mathcal{V}\longleftrightarrow \left\{ \begin{matrix} \text{isomorphism classes of non-zero} \\ \text{indecomposable objects in } \widehat{\RepCt} \end{matrix} \right\},
    \end{align*}
where $\mathcal{V}$ is the set of isomorphism classes of those $V\in\Irr(S(p))$ for $k\geq0$, $[p]\in\Projk/{\sim}$, $p\not\in(\nu_k)$, whose associated idempotent $e_{k,p,V}$ decomposes into a sum of primitive idempotents $(e_i)_i$ in $P(k,k)\supset \cC(k,k)$ at least one of which has non-zero trace. 
\end{lemma}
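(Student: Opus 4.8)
The plan is to apply the semisimplification criterion of \Cref{rem::negl_morphisms} both to $\RepCt$ and to the ambient Deligne category $\uRep(S_t)$ (recall $\RepCt$ is a subcategory of $\uRep(S_t)$ with the same objects), and to compare categorical dimensions across this inclusion. By \Cref{rem::negl_morphisms}, $\wRepCt$ is semisimple and its simple objects — equivalently, its non-zero indecomposable objects — are exactly the images under the quotient functor of those indecomposable objects of $\RepCt$ of non-zero dimension, the assignment being bijective. Combining this with the parametrisation of the non-zero indecomposables of $\RepCt$ by $\bigsqcup_{[p]\in\cP/{\sim}}\Irr(S(p))$ from \Cref{thm::indecompsable_obj_by_A_k}, together with the identification $\cP=\{p\in\Projk:p\not\in(\nu_k)\}$ recorded after \Cref{lem::Projk_Prokl} (which is precisely the index set used to define $\mathcal V$), the statement reduces to the following: for $V\in\Irr(S(p))$ the object $([k],e_{k,p,V})$ has non-zero dimension if and only if $V\in\mathcal V$, i.e.\ $\tr(e_{k,p,V})\neq0$ exactly when some primitive summand of $e_{k,p,V}$ in $P(k,k)$ has non-zero trace.

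The first step is to transport the trace computation into the ambient category. The idempotent $e:=e_{k,p,V}$ lies in $\cC(k,k)\subset P(k,k)$, and since the pivotal (trace) structure on $\RepCt$ is the restriction of the one on $\uRep(S_t)$ — as already exploited in \Cref{lem-trace-nilpotent} — we have $\tr_\RepCt(e)=\tr_{\uRep(S_t)}(e)$. Writing $e=\sum_i e_i$ as a sum of pairwise orthogonal primitive idempotents in the larger algebra $\CC P(k,k)=\End_{\uRep(S_t)}([k])$ — which is exactly the decomposition appearing in the definition of $\mathcal V$ — additivity of the trace yields $\tr(e)=\sum_i\tr(e_i)$. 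Each summand trace $\tr(e_i)$ is well defined independently of the chosen decomposition, since by the Krull--Schmidt property the $e_i$ are determined up to conjugacy and the trace is conjugation-invariant; hence the membership condition for $\mathcal V$ is unambiguous.

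The decisive point, and the main obstacle, is to exclude cancellation in the sum $\sum_i\tr(e_i)$, for which I must show that every summand trace is non-negative. Here I use that $\uRep(S_t)$ with $t\in\NN$ has semisimplification $\widehat{\uRep(S_t)}\cong\Rep(S_t)$, the case $\cC=P$ of \Cref{prop::fiber-functor}. Each $([k],e_i)$ is indecomposable in $\uRep(S_t)$, as $e_i$ is primitive, so by \Cref{rem::negl_morphisms} its image under the quotient functor is either zero or a simple object of $\Rep(S_t)$; moreover the quotient functor preserves traces. Therefore $\tr(e_i)$ equals the dimension of an object of $\Rep(S_t)$, which is a non-negative integer (it is $0$ if $([k],e_i)$ is negligible and the positive dimension of an irreducible $S_t$-representation otherwise). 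Consequently $\tr(e)=\sum_i\tr(e_i)$ is a sum of non-negative terms, so it is non-zero if and only if at least one $\tr(e_i)$ is non-zero. This is exactly the defining condition of $\mathcal V$, establishing the required equivalence.

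Finally, I assemble the pieces. The quotient functor $\RepCt\to\wRepCt$ restricts, by \Cref{rem::negl_morphisms}, to a bijection from the non-negligible (i.e.\ non-zero dimensional) indecomposables of $\RepCt$ onto the indecomposables of $\wRepCt$. The former are parametrised via \Cref{thm::indecompsable_obj_by_A_k} by those $V$ with $\tr(e_{k,p,V})\neq0$, and by the trace computation above this set is precisely $\mathcal V$. Composing these bijections yields the asserted correspondence $\mathcal V\longleftrightarrow\{\text{isomorphism classes of non-zero indecomposable objects in }\wRepCt\}$.
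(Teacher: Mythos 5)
Your proposal is correct and follows essentially the same route as the paper: invoke the general semisimplification result (\Cref{rem::negl_morphisms}) to identify non-zero indecomposables of $\wRepCt$ with non-zero-dimensional indecomposables of $\RepCt$, then compute $\tr(e_{k,p,V})$ by decomposing the idempotent in $\uRep(S_t)$ and using that each summand trace is a non-negative integer (a dimension in $\Rep(S_t)=\widehat{\uRep(S_t)}$), so no cancellation can occur. Your additional remarks (trace compatibility with the ambient category, well-definedness of $\mathcal{V}$ via Krull--Schmidt) merely make explicit what the paper leaves implicit.
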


\begin{proof} By general results on the semisimplification (see \cite[Thm.~2.6]{EO18} or \Cref{rem::negl_morphisms}), the quotient functor induces a bijection between the isomorphism classes of indecomposable objects of non-zero dimension in the original category and the isomorphism classes of non-zero indecomposable objects in the semisimplification. 

By \Cref{thm::indecompsable_obj_by_A_k}, the isomorphism classes of indecomposable objects in $\RepCt$ are given by the isomorphism classes of the objects $\L_p(V)$ for isomorphism classes $V$ of complex $S(p)$-representations, for $k\geq0$ and $[p]\in\Projk/{\sim}$ such that $p\neq(\nu_k)$. Hence, the indecomposable objects in $\wRepCt$ are given by the subset of those $V$ for which $\L_p(V)$ has non-zero dimension.

The dimension of $\L_p(V)$ in $\RepCt$ can be computed by decomposing the relevant idempotent $e_{k,p,V}$ as a sum of primitive idempotents in the containing category $\uRep(P,t)=\uRep(S_t)$,
$$
 e_{k,p,V} = \sum_i e_i .
$$
Then the dimension of $\L_p(V)$ is the trace of $e_{k,p,V}$ in $\RepCt$ or, equivalently, in $\uRep(P,t)$, but the latter is the sum of the traces of the idempotents $e_i$. 

However, if an idempotent $e_i$ has a non-zero trace, then the indecomposable object given by its image corresponds to an indecomposable object in $\Rep(S_t)$, which is the semisimplification of $\uRep(P,t)$, hence such a trace is the dimension of an irreducible complex $S_t$-module, and in particular, a positive integer. So the traces of the idempotents $e_i$ can only be non-negative integers.
\end{proof}

This allows us to describe at least a part of the semisimplification $\wRepCt$ uniformly for all group-theoretical $\cC$.

\begin{proposition} If $t\in\NN$, then there is a unique isomorphism class of non-zero indecomposable objects in $\wRepCt$ for each isomorphism class in $\Irr(S(p))$ for all $p\in\cP$ with $t(p)\leq t/2$, i.e.~$p$ has at most $t/2$ through-blocks.
\end{proposition}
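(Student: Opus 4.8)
The plan is to deduce the Proposition directly from the preceding lemma, which already identifies the non-zero indecomposables of $\wRepCt$ with the set $\mathcal{V}$. Concretely, it suffices to prove that every $V\in\Irr(S(p))$ with $p\in\cP$ and $t(p)\le t/2$ lies in $\mathcal{V}$; that is, that the idempotent $e_{k,p,V}\in\CC\cC(k,k)\subseteq\CC P(k,k)=\End_{\uRep(S_t)}([k])$ decomposes in the larger partition algebra into primitive idempotents $(e_i)_i$ at least one of which has non-zero trace. Since each $([k],e_i)$ is an indecomposable $X_{\lambda_i}$ of $\uRep(S_t)$ whose trace is its non-negative categorical dimension, and since $\dim\L(V)=\sum_i\dim X_{\lambda_i}$, it is enough to exhibit one summand of positive dimension.

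First I would bound the through-blocks of the summands. The idempotent $e_{k,p,V}$ is a lift of a primitive idempotent of $pEp/I_p$, hence it involves only partitions with at most $T:=t(p)$ through-blocks. Let $J\unlhd\CC P(k,k)$ be the two-sided ideal spanned by all partitions with at most $T$ through-blocks; then $e_{k,p,V}\in J$, and since $e_i=e_{k,p,V}\,e_i$ for each orthogonal summand, we get $e_i\in J$ as well. Thus every primitive idempotent $e_i$ involves only partitions with at most $T$ through-blocks, and using that the number of through-blocks of a primitive idempotent representing an indecomposable $X_\mu$ of $\uRep(S_t)$ equals $|\mu|$ (this is how the Comes--Ostrik labels arise from the propagating filtration of the partition algebra, cf.\ \cite{CO11,HR04}), I conclude $|\lambda_i|\le T$ for all $i$.

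The final step is the dimension estimate. By \cite{CO11} (see also \cite{De07}), for $t\in\NN$ the indecomposable $X_\lambda$ has non-zero categorical dimension in $\uRep(S_t)$ precisely when $(t-|\lambda|,\lambda_1,\lambda_2,\dots)$ is a Young diagram, i.e.\ when $|\lambda|+\lambda_1\le t$. Since $|\lambda_i|\le T\le t/2$, we obtain $|\lambda_i|+(\lambda_i)_1\le 2|\lambda_i|\le 2T\le t$, so in fact \emph{every} summand $X_{\lambda_i}$ has strictly positive dimension; hence $\dim\L(V)>0$ and $V\in\mathcal{V}$. Restricting the bijection of the previous lemma to these $V$ then yields, for each isomorphism class in $\Irr(S(p))$ with $p\in\cP$ and $t(p)\le t/2$, a distinct non-zero indecomposable object of $\wRepCt$, as claimed. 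The main obstacle is the middle step: one must argue carefully that passing from the smaller algebra $\CC\cC(k,k)$ to $\CC P(k,k)$ cannot raise the number of through-blocks of the idempotent, and that this through-block count is exactly the size $|\lambda_i|$ of the Comes--Ostrik label controlling the dimension; once this is pinned down, everything else is bookkeeping.
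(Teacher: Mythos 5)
Your proof is correct and follows essentially the same route as the paper: both arguments observe that the idempotent $e_{k,p,V}$ lies in the two-sided ideal of $\CC P(k,k)$ spanned by partitions with few through-blocks, that any orthogonal decomposition in the larger partition algebra therefore stays in that ideal, and that the resulting Comes--Ostrik labels $\lambda_i$ satisfy $|\lambda_i|\le t(p)\le t/2$, hence $|\lambda_i|+(\lambda_i)_1\le t$, so the summands are non-negligible. The only difference is cosmetic: where the paper compresses the last step into a citation of \cite[Rem.~3.25]{CO11}, you unpack it into the explicit dimension criterion, which is exactly what that remark provides.
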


\begin{proof} We record that if an idempotent $e$ in any ring lies in an ideal $I$, then any orthogonal decomposition consists of idempotents which are divisible by $e$, and hence, also contained in $I$.

Taking $I$ to be the ideal spanned by all partitions with at most $t/2$ through-blocks in $P(k,k)$ implies that decomposing the idempotent for some $V\in\Irr(S(p))$ in $P(k,k)$ results in a sum of primitive idempotents all of which have at most $t/2$ through-blocks. Such primitive idempotents have non-zero traces by the description of the negligible primitive idempotents in $\uRep(S_t)$ in \cite[Rem.~3.25]{CO11}. 
\end{proof}

\newcommand\SurC{\operatorname{Sur}_\cC}

\subsection{An alternative description} Instead of using projective partitions, we note that one could alternatively consider their ``upper halves'', that is, the partitions $p_0$ appearing in through-block factorisations $p=p_0^*p_0$ (\Cref{def::GroupsSp}) of projective partitions $p$. Let us explain how this yields an equivalent description of indecomposable objects in interpolation partition categories.

Let $\cC$ be any category of partitions.

\begin{definition} A partition $q\in P(k,l)$ is called \emph{surjective} if $t(q)=l$, i.e.~$q$ has exactly $l$ through-blocks. For $k\in\NN_0$, we set
$$
\SurC(k) := \{ q\in P(k,l): l\in\NN_0, t(q)=l, q^*q\in\cC \}
 .
$$
\end{definition}

Note that $k\geq t(q)=l$ for any surjective partition $q\in\cC(k,l)$.

Surjective partitions are helpful in generalising the concept of equivalence for partitions (see \Cref{def::equivprojpart}) even across endomorphism algebras $\End([k])$ for varying $k$. 

\begin{lemma} \label{lem::equivalent-equivalences}  Consider projective partitions $p,q\in\cC$ and surjective partitions $p_0,q_0\in P$ such that $t(p_0)=t(p)$, $t(q)=t(q_0)$, $p=p_0^*\cdot p_0$, and $q=q_0^*\cdot q_0$. Then the following are equivalent: \\
(1) There is a partition $r\in\cC$ such that $p=r\cdot r^*$ and $q=r^*\cdot r$. \\
(2) There is a partition $r\in\cC$ such that $p=r\cdot q\cdot r^*$ and $q=r^*\cdot p\cdot r$. \\
(3) There is a partition $r\in\cC$ and a permutation $s$ (regarded as a partition) such that $p_0 = s\cdot q_0\cdot r^*$ and $q_0 = s^*\cdot p_0\cdot r$.
\end{lemma}

\begin{proof} (1) $\Rightarrow$ (2): As $p$ is projective,
$$ p = p\cdot p = r\cdot r^*\cdot r\cdot r^* = r\cdot q\cdot r^* ,
$$
and similarly for $q$.

(2) $\Rightarrow$ (3): As the number of through-blocks of a product is at most that of any factor, $t(p)=t(q)$. Now
$$ p_0^* \cdot p_0 = p = r\cdot q\cdot r^* = (r\cdot q_0^*)\cdot (q_0\cdot r^*)
$$
are two different factorisations of the partition $p$ through $t(p)$ points. So there is a permutation $s$ such that $p_0 = s\cdot q_0\cdot r^*$. Similarly, there is a permutation $s'$ such that $q_0 = s'\cdot p_0\cdot r$. But then 
$$ p_0 = s\cdot s'\cdot p_0\cdot r\cdot r^*
\Rightarrow (s\cdot s')^{-1} = (p_0\cdot r)\cdot (p_0\cdot r)^*
$$
which shows that $s\cdot s'$ is the identity permutation, due to the horizontal symmetry of the right-hand side.

(3) $\Rightarrow$ (1): We have
$$
 p = p_0^*\cdot p_0 = r\cdot q_0^*\cdot s^*\cdot s\cdot q_0\cdot r^* = r\cdot q\cdot r^*,
$$
so
$$ (p\cdot r\cdot q)\cdot(p\cdot r\cdot q)^* 
= p\cdot r\cdot q\cdot q^*\cdot r^*\cdot p^*
= p\cdot p\cdot p = p ,
$$
and similarly $(p\cdot r\cdot q)^*\cdot(p\cdot r\cdot q)=q$.
\end{proof}

\begin{definition} A surjective partition $q\in\SurC(k)$ is called \emph{indecomposable surjective} if $q\notin \SurC(k')\cdot b$ for all $k'<k$ and all $b\in\cC(k,k')$. We set
$$
\cQ := \{ q\in\SurC(k): k\in\NN_0, q\text{ indecomposable}\}
 .
$$
\end{definition}

\begin{definition} \label{def::equivalence-surjectives} Two surjective partitions $q,q'\in P(k,l)\cap\SurC(k)$ are called \emph{equivalent} if there are partitions $r\in\cC(k,k)$, $s\in P(l,l)$ such that
$$
q = s\cdot q'\cdot  r^*
\quad\text{and}\quad
q' = s^*\cdot  q\cdot  r
 .
$$
\end{definition}

We observe that any $s$ as in the definition must have $t(s)=l$ through-blocks, so it must be a permutation and $s^*=s^{-1}$.

\begin{lemma} \Cref{def::equivalence-surjectives} defines equivalence relations $\sim$ on $\SurC(k)$ for all $k\in \NN_0$ and on $\cQ$.
\end{lemma}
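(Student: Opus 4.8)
The relation $\sim$ is given by a pair of auxiliary partitions $(r,s)$ with $r\in\cC(k,k)$ and $s\in P(l,l)$ subject to $q=s\cdot q'\cdot r^*$ and $q'=s^*\cdot q\cdot r$. The plan is simply to verify reflexivity, symmetry and transitivity, and the whole argument rests on three structural facts that I would state up front: the composition $\cdot$ is associative (it is the composition in $\uRep(\cC,1)$, so stacking diagrams and deleting loops is independent of bracketing); the involution is an involutive anti-homomorphism, i.e.\ $(a\cdot b)^*=b^*\cdot a^*$ and $a^{**}=a$; and $\cC$ is closed under both $\cdot$ and $*$ and contains every identity partition $\id_k=\id_k^*$. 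I would also record at the outset that two equivalent surjective partitions necessarily lie in the same $P(k,l)$, since $l$ is forced to equal $t(q')$.

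For reflexivity I would take $(r,s)=(\id_k,\id_l)$: using $\id_k^*=\id_k$ and $\id_l^*=\id_l$ both defining equations reduce to $q=q$, and $\id_k\in\cC(k,k)$, $\id_l\in P(l,l)$ as required. For symmetry, if $(r,s)$ witnesses $q\sim q'$, I claim $(r^*,s^*)$ witnesses $q'\sim q$. Here $r^*\in\cC(k,k)$ by closure under involution and $s^*\in P(l,l)$; and using $a^{**}=a$ the two required identities $q'=s^*\cdot q\cdot (r^*)^*=s^*\cdot q\cdot r$ and $q=(s^*)^*\cdot q'\cdot r^*=s\cdot q'\cdot r^*$ are exactly the two original equations, read in the other order. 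No calculation beyond applying $a^{**}=a$ is needed.

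For transitivity, given witnesses $(r,s)$ for $q\sim q'$ and $(\tilde r,\tilde s)$ for $q'\sim q''$ (all three in the same $P(k,l)$), I would substitute $q'=\tilde s\cdot q''\cdot\tilde r^*$ into $q=s\cdot q'\cdot r^*$ and use associativity to get $q=(s\cdot\tilde s)\cdot q''\cdot(\tilde r^*\cdot r^*)=(s\cdot\tilde s)\cdot q''\cdot(r\cdot\tilde r)^*$, and symmetrically $q''=(s\cdot\tilde s)^*\cdot q\cdot(r\cdot\tilde r)$; thus $(r\cdot\tilde r,\,s\cdot\tilde s)$ is the sought witness, with $r\cdot\tilde r\in\cC(k,k)$ by closure under composition and $s\cdot\tilde s\in P(l,l)$. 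Finally, since related partitions share the same $(k,l)$, the relation on $\bigsqcup_k\SurC(k)$ is the disjoint union over all $(k,l)$ of the relations on $P(k,l)\cap\SurC(k)$, so it is an equivalence relation on each $\SurC(k)$ and restricts to one on $\cQ$. The argument is entirely routine; the only points needing care are the bookkeeping that involution reverses the order of composition (so that $(r\cdot\tilde r)^*=\tilde r^*\cdot r^*$ matches the nested substitution) and that every auxiliary partition produced lands in $\cC$ or $P(l,l)$. I would also remark that the earlier observation forcing $s$ to be a permutation (so $s^*=s^{-1}$) guarantees the data is consistent but is not actually required for the three axioms.
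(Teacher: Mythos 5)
Your proof is correct. For $\SurC(k)$ it is precisely the direct verification the paper omits---the paper's entire proof of that half is that it ``can be verified directly''---and your witnesses $(\id_k,\id_l)$, $(r^*,s^*)$ and $(r\cdot\tilde r,\,s\cdot\tilde s)$, together with associativity of $\cdot$ and anti-multiplicativity of $*$, are exactly what is needed. For $\cQ$, however, you take a genuinely different route: you invoke the fact that an equivalence relation restricts to an equivalence relation on any subset, which indeed suffices for the literal statement. The paper instead proves that $\cQ$ is \emph{saturated}: if $q\in\SurC(k)\cap\cQ$ and $q'\in\SurC(k)$ is equivalent to $q$, then $q'\in\cQ$. (Argument: if $q'=q''\cdot b$ with $q''\in\SurC(k'')$, $b\in\cC(k,k'')$, $k''<k$, then $q=s\cdot q'\cdot r^*=(s\cdot q'')\cdot(b\cdot r^*)$; since $s$ is a permutation, $s\cdot q''\in\SurC(k'')$, contradicting the indecomposability of $q$.) The stronger statement guarantees that classes in $\cQ/{\sim}$ are full $\sim$-classes of $\SurC(k)$ rather than truncated ones, which is what the subsequent proposition relating $\cQ/{\sim}$ to $\cP/{\sim}$ implicitly relies on. Note that this is exactly where the observation you set aside as unnecessary---that any witness $s$ must be a permutation, so $s^*\cdot s=\id_l$---does get used: it is irrelevant for the three axioms, as you say, but it is what makes $s\cdot q''$ land in $\SurC(k'')$. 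So your argument establishes the lemma as stated; to make it serve its purpose in the paper, add the saturation observation.
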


\begin{proof} For the sets $\SurC(k)$, this can be verified directly. Moreover, for $q\in\SurC(k)\cap\cQ$ and an equivalent $q'\in\SurC(k)$, we see that $q'\in\cQ$, as well.
\end{proof}

\begin{definition} For any $q\in \SurC(k)$, we define the set
$$
S_{1/2}(q) := \{\sigma\in S_{t(q)}: q^* \sigma q\in\cC \}
 .
$$
\end{definition}

\begin{lemma} $S_{1/2}(q)$ is a subgroup of $S_l$ which only depends on the equivalence class of a surjective partition $q$.
\end{lemma}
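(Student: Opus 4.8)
The plan is to reduce both assertions to facts already established for the groups $S(p)$ of \Cref{def::GroupsSp}, exploiting that $S_{1/2}(q)$ is literally one of them. Since $q\in\SurC(k)$ means $q\in P(k,l)$ with $t(q)=l$ and $p:=q^*q\in\cC$, the partition $p$ is projective with $t(p)=l$ through-blocks and factorisation $p=q^*q$ with $q\in P(k,l)$; taking $p_0:=q$ in \Cref{def::GroupsSp} gives $p_\sigma=q^*\sigma q$, so $S_{1/2}(q)=\{\sigma\in S_l:q^*\sigma q\in\cC\}=S(q^*q)$. The subgroup claim then follows immediately from the corresponding statement for $S(p)$ proved in the discussion after \Cref{def::GroupsSp}. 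For a self-contained argument one uses $q\cdot q^*=\id_l$ (established there, as $t(q)=l$): the identity lies in $S_{1/2}(q)$ because $q^*\,\id_l\,q=q^*q\in\cC$; closure holds since $q^*\sigma q\cdot q^*\tau q=q^*\sigma(q\cdot q^*)\tau q=q^*(\sigma\tau)q$ and $\cC$ is closed under composition; and $\sigma\in S_{1/2}(q)$ forces $\sigma^{-1}\in S_{1/2}(q)$ because $(q^*\sigma q)^*=q^*\sigma^{-1}q$ and $\cC$ is closed under the involution.

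For the dependence on the equivalence class, I would take $q,q'\in P(k,l)\cap\SurC(k)$ equivalent, so that there are $r\in\cC(k,k)$ and a permutation $s\in S_l$ (recall that $t(s)=l$ forces $s^*=s^{-1}$) with $q=s\cdot q'\cdot r^*$ and $q'=s^*\cdot q\cdot r$. Conjugating through the involution gives
\[
q^*\sigma q \;=\; r\cdot (q')^*\,(s^{-1}\sigma s)\,q'\cdot r^*
\qquad(\sigma\in S_l).
\]
Hence, if $s^{-1}\sigma s\in S_{1/2}(q')$, i.e.\ $(q')^*(s^{-1}\sigma s)q'\in\cC$, then pre- and post-composing with $r^*,r\in\cC(k,k)$ and using closure of $\cC$ under composition shows $q^*\sigma q\in\cC$, i.e.\ $\sigma\in S_{1/2}(q)$. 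The symmetric computation starting from $q'=s^*\cdot q\cdot r$ gives the reverse implication, and together they yield $S_{1/2}(q)=s\,S_{1/2}(q')\,s^{-1}$. Thus equivalent surjective partitions have conjugate, in particular isomorphic, stabiliser groups inside $S_l$, so $S_{1/2}(q)$ is a well-defined invariant of the equivalence class up to conjugacy in $S_l$—exactly as for $S(p)$ and its factorisations.

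The only real care needed is bookkeeping: checking that no loops are created in the relevant compositions (which is why $q\cdot q^*=\id_l$ is the crucial input), that the partition $s$ occurring in the equivalence is genuinely a permutation so that conjugation by $s$ makes sense, and that \emph{depends only on the equivalence class} is correctly understood up to this conjugation rather than as literal equality of subgroups. None of these is a serious obstacle once the identification $S_{1/2}(q)=S(q^*q)$ has been recorded.
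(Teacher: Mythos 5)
Your proof is correct and is essentially the paper's own argument: the paper's proof consists of the single pointer ``this can be checked as in the section on projective partitions,'' and you carry out exactly those checks --- the subgroup axioms via $q\cdot q^*=\id_l$ together with closure of $\cC$ under composition and involution (mirroring $p_\sigma\cdot p_\tau=p_{\sigma\tau}$ for $S(p)$), the identification $S_{1/2}(q)=S(q^*q)$ with $p_0:=q$ (which the paper records in the proposition immediately following), and conjugation by the permutation $s$ for invariance under equivalence. Your caveat that ``depends only on the equivalence class'' must be read as equality up to conjugation in $S_l$, rather than literal equality of subgroups, is the correct (and intended) reading, consistent with the paper's remark that $S(p)$ itself is only well defined up to conjugation in $S_T$.
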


\begin{proof} This can be checked as in \Cref{ssec::projectives}.
\end{proof}

\begin{proposition} \label{prop::equivalence-half} The mapping $q\mapsto q^*q$ induces a bijection between $\cQ/{\sim}$ and $\cP/{\sim}$, and $S_{1/2}(q) = S(q^*q)$ for each $q\in\cQ$.
\end{proposition}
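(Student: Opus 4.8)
The plan is to study the map $\psi\colon q\mapsto q^*q$ one level $k$ at a time, show it descends to a bijection $\SurC(k)/{\sim}\to\Projk/{\sim}$, and then check that this bijection matches the two indecomposability conditions. First I would record that $\psi$ lands in projective partitions: for $q\in\SurC(k)$ with $l:=t(q)$, the partition $q^*q$ is self-adjoint, and since $q$ is surjective we have $q\cdot q^*=\id_{l}$ (as in the discussion following \Cref{def::GroupsSp}), so $(q^*q)\cdot(q^*q)=q^*(q\cdot q^*)q=q^*q$; thus $q^*q$ is self-adjoint and idempotent, hence projective (by the characterisation recalled after \Cref{def::projPart}), and $q$ itself is a surjective through-block factorisation of $q^*q$. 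Conversely the canonical surjective half $p_0\in P(k,t(p))$ of any $p\in\Projk$ lies in $\SurC(k)$ and satisfies $\psi(p_0)=p$, so $\psi$ hits every projective. The identity $S_{1/2}(q)=S(q^*q)$ is then immediate: evaluating the definition of $S(q^*q)$ on the factorisation $p_0=q$ gives $S(q^*q)=\{\sigma\in S_{l}:q^*\sigma q\in\cC\}$, which is exactly $S_{1/2}(q)$.

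Next I would show $\psi$ is compatible with both equivalence relations. If $q\sim q'$, witnessed by $r\in\cC(k,k)$ and a permutation $s\in P(l,l)$ with $q=s\cdot q'\cdot r^*$ and $q'=s^*\cdot q\cdot r$, then using $s^*\cdot s=\id_l$ one finds $q^*q=r\cdot(q')^*q'\cdot r^*$ and symmetrically $(q')^*q'=r^*\cdot q^*q\cdot r$, so the ideals generated by $q^*q$ and $(q')^*q'$ in $\cC(k,k)$ coincide and \Cref{lem::part_equiv} gives $q^*q\sim(q')^*q'$. For the converse (injectivity on classes), suppose $r\in\cC(k,k)$ satisfies $r\cdot r^*=q^*q$ and $r^*\cdot r=(q')^*q'$, and set $s:=q\cdot r\cdot(q')^*\in P(l,l)$. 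Using $(q')^*q'=r^*r$, the idempotency of $q^*q=rr^*$, and $q\cdot q^*=\id_l$, a direct computation gives $s\cdot q'\cdot r^*=q$, and dually $s^*\cdot q\cdot r=q'$; hence $q\sim q'$. Together these yield a bijection $\SurC(k)/{\sim}\to\Projk/{\sim}$.

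It then remains to see that this bijection carries the indecomposable parts onto each other, i.e.\ that $q\in\cQ\iff q^*q\in\cP$ (both sides being $\sim$-saturated). One direction is easy: if $q=v\cdot b$ with $v\in\SurC(k')$, $b\in\cC(k,k')$, $k'<k$, then $q^*q=b^*\cdot(v^*v)\cdot b$ with $v^*v\in\ProjC(k')$, so $q^*q\notin\cP$. The reverse implication is the step I expect to require the most care. Assume $q^*q\notin\cP$, so $q^*q=b^*\cdot a\cdot b$ for some $k'<k$, $b\in\cC(k,k')$, $a\in\ProjC(k')$; writing $a=a_0^*a_0$ with $a_0\in\SurC(k')$ and $w:=a_0\cdot b$, we get $q^*q=w^*w$. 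The naive hope that the surjective half of $b^*ab$ factors through $b$ \emph{with left factor in} $\SurC(k')$ is not clear, because surjectivising $w$ coarsens its lower points and $\cC$ need not be closed under that coarsening. The clean way around this is to use $q\cdot(q^*q)=q$ (from $q\cdot q^*=\id_l$): then $q=q\cdot(w^*w)=(q\cdot w^*)\cdot w=u\cdot b$ with $u:=(q\cdot w^*)\cdot a_0$. Counting through-blocks in $q=u\cdot b$ forces $t(u)=l$, so $u$ is surjective, and the decisive computation
\[
u^*u=a_0^*\,w\,(q^*q)\,w^*\,a_0=a_0^*\,(w w^*)(w w^*)\,a_0=a\cdot(b b^*)\cdot a\cdot(b b^*)\cdot a,
\]
using $q^*q=w^*w$, $w=a_0\cdot b$ and $a_0^*a_0=a$, exhibits $u^*u$ as a composition of the $\cC$-morphisms $a$ and $b b^*$; hence $u^*u\in\cC$ and $u\in\SurC(k')$. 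Thus $q=u\cdot b\in\SurC(k')\cdot b$ with $k'<k$, so $q\notin\cQ$.

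Finally I would assemble the pieces: the class-level bijection of the second paragraph restricts, via the $\sim$-saturated equivalence $q\in\cQ\iff q^*q\in\cP$, to the asserted bijection $\cQ/{\sim}\to\cP/{\sim}$, $[q]\mapsto[q^*q]$, and the identity $S_{1/2}(q)=S(q^*q)$ from the first paragraph finishes the statement. The only genuinely delicate point is the reverse implication in the third paragraph; everything else is bookkeeping with $q\cdot q^*=\id$, the idempotency of projective partitions, and the ideal criterion \Cref{lem::part_equiv}.
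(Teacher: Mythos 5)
Your proof is correct, and it is organised quite differently from the paper's, which is essentially a three-line argument: the paper declares that ``most of the assertion follows directly from the definitions'' and only writes out injectivity on equivalence classes, and even there it first reduces (without detailing the reduction) to the case where $q,q'$ define the \emph{same} projective partition $p$, so that $p=p\cdot p=q^*q\cdot q'^*q'$ forces $\sigma:=q\cdot q'^*$ to be a permutation and $q=q\cdot(q^*\sigma q')=\sigma q'$. Your injectivity argument dispenses with that reduction: you take an arbitrary witness $r$ of the projective equivalence $q^*q\sim q'^*q'$ and exhibit $s:=q\cdot r\cdot (q')^*$ directly, checking $s\cdot q'\cdot r^*=q$ and $s^*\cdot q\cdot r=q'$ from the same two identities the paper uses ($q\cdot q^*=\id_{t(q)}$ for surjective $q$, and idempotency of projectives under $\cdot$); this is equally short and self-contained. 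The more substantial difference is your third paragraph: the implication $q^*q\notin\cP\Rightarrow q\notin\cQ$, which the paper leaves inside ``follows from the definitions'', genuinely needs an argument, and yours is correct and pinpoints the real issue---one must verify that the left factor $u$ of the induced decomposition $q=u\cdot b$ lies in $\SurC(k')$, i.e.\ that $u^*\cdot u\in\cC$, which you establish via $u^*\cdot u=a\cdot(b\cdot b^*)\cdot a\cdot(b\cdot b^*)\cdot a$, a composite of morphisms of $\cC$. Two cosmetic remarks: your $u=(q\cdot w^*)\cdot a_0$ simplifies to $q\cdot b^*\cdot a$, so the auxiliary partitions $a_0$ and $w$ are unnecessary; and your appeal to \Cref{lem::part_equiv} for the forward compatibility $q\sim q'\Rightarrow q^*q\sim q'^*q'$ imports the standing assumption $t\neq 0$ of that section, which can be avoided by noting that $\rho:=q^*\cdot s\cdot q'=r\cdot(q'^*q')\in\cC(k,k)$ already satisfies $\rho\cdot\rho^*=q^*q$ and $\rho^*\cdot\rho=q'^*q'$, keeping the proposition purely combinatorial. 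In sum, you prove everything the paper proves, partly by different computations, and in addition you fill in the $\cQ$/$\cP$ correspondence that the paper asserts without proof.
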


\begin{proof} This follows from the definitions and \Cref{lem::equivalent-equivalences}.
\end{proof}

From \Cref{thm::indecompsable_obj_by_A_k} we obtain immediately:
\begin{corollary} The indecomposables in $\RepCt$ are parametrised by the irreducible complex representations of the system of finite groups $(S_{1/2}(q))_{q\in\cQ}$. 
\end{corollary}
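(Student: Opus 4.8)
The plan is to obtain the statement as a direct consequence of \Cref{thm::indecompsable_obj_by_A_k} together with the immediately preceding Proposition, so essentially no new work is required beyond a reindexing. First I would recall that \Cref{thm::indecompsable_obj_by_A_k} already furnishes a bijection
$$
\bigsqcup_{[p]\in \cP/ \sim} \Irr(S(p)) \longleftrightarrow \left\{ \text{iso.\ classes of non-zero indecomposable objects in }\RepCt \right\},
$$
obtained by transferring and lifting idempotents. Thus the whole task reduces to re-expressing the left-hand parametrising set in terms of indecomposable surjective partitions.

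For this I would invoke the preceding Proposition, which says that $q\mapsto q^*q$ induces a bijection $\cQ/{\sim}\;\to\;\cP/{\sim}$ and that $S_{1/2}(q)=S(q^*q)$ for every $q\in\cQ$. Using the first part to reindex the disjoint union over $\cP/{\sim}$ by $\cQ/{\sim}$, and the second part to replace each group $S(p)=S(q^*q)$ by $S_{1/2}(q)$, yields the identification
$$
\bigsqcup_{[q]\in \cQ/ \sim} \Irr\bigl(S_{1/2}(q)\bigr) \;=\; \bigsqcup_{[p]\in \cP/ \sim} \Irr(S(p)).
$$
Composing this equality with the bijection of \Cref{thm::indecompsable_obj_by_A_k} gives the asserted parametrisation of the indecomposables of $\RepCt$ by $\Irr$ of the system $(S_{1/2}(q))_{q\in\cQ}$.

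The only points one should make sure of are that $\Irr$ is a genuine isomorphism invariant of a finite group, so that the equality $S_{1/2}(q)=S(q^*q)$ transports irreducible representations faithfully, and that the reindexing is well defined on equivalence classes; but both of these are already secured by the Proposition, which establishes the bijection on the level of $\sim$-classes and the equality of the associated groups. Consequently there is no real obstacle here: the substance of the argument lies entirely in \Cref{thm::indecompsable_obj_by_A_k} and in the Proposition, and the corollary follows by assembling them, which is why it is stated as an immediate consequence.
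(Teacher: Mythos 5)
Your proposal is correct and matches the paper's own argument: the paper derives this corollary immediately from \Cref{thm::indecompsable_obj_by_A_k} together with the preceding Proposition (the bijection $\cQ/{\sim}\to\cP/{\sim}$ via $q\mapsto q^*q$ and the equality $S_{1/2}(q)=S(q^*q)$), which is exactly the reindexing you describe.
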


Compared to the set of projective partitions $\cP$, the set $\cQ$ contains their possible (upper) halves, so the partitions in $\cQ$ are potentially smaller. However, various ``upper halves'' can produce the same projective partition, which is reflected in the slightly more complicated equivalence relation.

Beyond providing an alternative approach to the description of indecomposable objects in interpolating partition categories, the set $\cQ$ can be interpreted naturally in the more general framework of Knop's tensor envelopes (\cite{Kn07}). Such a generalisation is part of an ongoing research project.
\section{Indecomposable objects for some concrete examples} \label{sec-examples}

In this section, we compute concrete parameterisations for the indecomposable objects in $\RepCt$ up to isomorphism for all categories of partitions $\cC$ which either contain the partition $\cross$ or in which all partitions are non-crossing. Recall \Cref{ssec::categories-of-partitions} for the classification of these categories of partitions; for the corresponding easy quantum groups see for instance \cite[Sec.~2]{We13}. We conclude the section with a comparison of our results with the well-known theory of Temperley--Lieb categories.

\subsection{Indecomposable objects in 13 interpolating partition categories}

The categories of partitions $\cC$ with $\cross \in \cC$ are exactly the following six categories and the corresponding easy quantum groups $G_n(\cC)$, $n\in \NN_0$, are all given by compact matrix groups. Note that for these categories, we have an embedding of the symmetric group $S_k$, for all $k\geq0$, into the endomorphism algebra of the object $[k]\in\RepCt$ by sending the transpositions $(i, i+1)$ to $\Paa^{\otimes(i-1)}\otimes\cross\otimes\Paa^{\otimes(k-1-i)}$ for all $1\leq i<k$. Then for all $k,l\geq0$, the image of the permutation $(1, k+1)\dots(l, k+l)$ defines a symmetric braiding for the objects $[k]$ and $[l]$ in $\RepCt$, which extends uniquely to a symmetric braiding of $\RepCt$. 

\begin{enumerate}[label=(\roman*)]
    \item $P$ is the category of all partitions and corresponds to the symmetric groups $G_n(P)=S_n$.
    \item $P_\even$ is the category of partitions with even block size and corresponds to the hyperoctahedral groups $G_n(P_\even)=H_n=S_2 \wr S_n$.
    \item $P_2$ is the category of partitions with only blocks of size two and corresponds to the orthogonal groups $G_n(P_2)=O_n$. $\RepCt$ is the Karoubian envelope of the Brauer category with parameter $t$ in this case.
    \item $P':=\langle \cross, \singleton\ot \singleton, \vierpart \rangle$ is the category of partitions with an even number of blocks of odd size and corresponds to the modified symmetric groups $G_n(\langle \cross, \singleton\ot \singleton, \vierpart \rangle)=S_n \times S_2=:S_n'$.
    \item $P_b:=\langle \cross, \singleton \rangle$ is the category of partitions with blocks of size one or two and corresponds to the bistochastic groups $G_n(\langle \cross, \singleton \rangle)=B_n$.
    \item $P'_b:=\langle \cross, \singleton\ot \singleton \rangle = \langle \cross, \legpart \rangle$ is the category of partitions with an arbitrary number of blocks of size two and an even number of blocks of size one and corresponds to the modified bistochastic groups $G_n(\langle \cross, \singleton\ot \singleton \rangle)=B_n \times S_2=:B_n'$.
\end{enumerate}

The categories of partitions $\cC$ which contain only non-crossing partitions are precisely the following seven categories and all of them correspond to so-called free quantum groups.
\begin{enumerate}[label=(\roman*)]
    \item $NC$ is the category of all non-crossing partitions and corresponds to the free symmetric quantum groups $G_n(NC)=:S_n^+$.
    \item $NC_\even$ is the category of non-crossing partitions with even block size and corresponds to the hyperoctahedral quantum groups $G_n(NC_\even)=:H_n^+$.
    \item $NC_2$ is the category of non-crossing partitions with only blocks of size two and corresponds to the free orthogonal quantum groups $G_n(NC_2)=:O_n^+$.
    \item $NC':=\langle \singleton\ot \singleton, \vierpart \rangle$ is the category of non-crossing partitions with an even number of blocks of odd size and corresponds to the modified symmetric quantum groups $G_n(\langle \singleton\ot \singleton, \vierpart \rangle)=:S_n'^+$.
    \item $NC_b:=\langle \singleton \rangle$ is the category of non-crossing partitions with blocks of size one or two and corresponds to the bistochastic quantum groups $G_n(\langle \singleton \rangle)=B_n^+$,
    \item $NC^\#_b:=\langle \singleton\ot \singleton \rangle$ is the category of non-crossing partitions with an arbitrary number of blocks of size two and an even number of blocks of size one such that the number of points between any two connected points is even, considering all points naturally arranged in a circle, i.e. for instance the upper left point is next to the second left upper point and the lower left point. The corresponding quantum groups $G_n(\langle \legpart \rangle)=:B_n^{\#+}$ are called the freely modified bistochastic quantum groups.
    \item $NC'_b:=\langle \legpart \rangle$ is the category of non-crossing partitions with an arbitrary number of blocks of size two and an even number of block of size one and corresponds to the modified bistochastic quantum groups $G_n(\langle \legpart \rangle)=:B_n'^+$.
\end{enumerate}

In the following we will apply \Cref{thm::indecompsable_obj_by_A_k} to derive an explicit parametrisation of the indecomposable objects in $\RepCt$ up to isomorphism for all these categories. Recall that we have to determine equivalence classes of projective partitions in 
$$
\cP := \{ p\in\Projk: k\geq0, p\not\in b^* \Projl b \text{ for all } 0\leq l<k, b\in \cC(k, l) \} 
$$
and we have to describe the irreducible complex representations of the groups $S(p)$ associated to representatives of these equivalence classes. Then we obtain the indecomposable objects in $\RepCt$ by applying the mapping $\Irr(S(p))\ni V\mapsto\L_p(V)$ of \Cref{thm::indecompsable_obj_by_A_k}. For the remainder of this section, we denote the trivial complex representation $\CC$ of the trivial group by $V_{\mathrm{triv}}$.

We already considered the case $\cC=P$ in \Cref{ex::P}. The following lemma shows that some categories of partitions behave similarly. Recall that for $n\in \NN_0$, the inequivalent irreducible complex representations of the symmetric group $S_n$ can be indexed by Young diagrams of size $n$ (see for instance \cite[Sec.~4.2]{FH91}) and for any Young diagram $\lambda$ of size $|\lambda|=n$, we denote the corresponding equivalence class of irreducible complex representations by $V_\lambda \in \Irr (S_n)$.

\begin{lemma} \label{lem::ex0}
Let $\cC \in \{P,P_2,P_b, NC, NC_2, NC_b \}$ and $t\in \CC\backslash \{0\}$. Then $\cP = \{ \id_k \mid k\in \NN_0 \}$.

If $\cC \in \{P, P_2,P_b\}$, then $S(p)=S_{t(p)}$ for all $p\in \ProjC$ and the map
\begin{align*}
 \phi: \left\{ \begin{matrix} \text{Young diagrams } \lambda \\ \text{of arbitrary size} \end{matrix} \right\} \to \left\{ \begin{matrix} \text{isomorphism classes of non-zero} \\ \text{indecomposable objects in }  \RepCt \end{matrix} \right\}, ~ \lambda \mapsto \L_{\id_{|\lambda|}} (V_\lambda) 
\end{align*}
is a bijection.

If $\cC \in \{NC, NC_2, NC_b\}$, then $S(p)=\{\id\}$ for all $p\in \ProjC$ and the map
\begin{align*}
 \phi: \NN_0 \to \left\{ \begin{matrix} \text{isomorphism classes of non-zero} \\ \text{indecomposable objects in }  \RepCt \end{matrix} \right\}, ~ k \mapsto \L_{\id_k} (V_{\mathrm{triv}})
\end{align*}
is a bijection.
\end{lemma}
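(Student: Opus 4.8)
The plan is to read off both ingredients of \Cref{thm::indecompsable_obj_by_A_k}, which parametrises the non-zero indecomposables by $\bigsqcup_{[p]\in\cP/{\sim}}\Irr(S(p))$: namely the quotient $\cP/{\sim}$ of equivalence classes of projective partitions that are not reducible to lower objects, and the finite groups $S(p)$. The groups are handled immediately by \Cref{ex::FW}: for $P$, $P_2$ and $P_b=\langle\cross,\singleton\rangle$ one has $S(p)=S_{t(p)}$ for all projective $p$, while $NC,NC_2,NC_b$ are all contained in $NC$, so $S(p)=\{\id\}$ there. Hence the only genuinely new content is the claim $\cP=\{\id_k\mid k\in\NN_0\}$; everything else will be assembled formally.

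To establish $\cP=\{\id_k\mid k\in\NN_0\}$ I would argue in two steps. First, $\id_k\in\cP$ for every $k$: since $t(\id_k)=k$, while any partition of the form $b^*\cdot a\cdot b$ with $a\in\Projl$ and $l<k$ has at most $l<k$ through-blocks (the number of through-blocks cannot increase under composition), $\id_k$ cannot lie in $b^*\,\Projl\, b$. Second, I would exclude every projective $p\in\Projk$ with $p\neq\id_k$. The elementary observation here is that a projective partition with $t(p)=k$ must be a symmetric idempotent permutation, hence $\id_k$; so $p\neq\id_k$ forces $T:=t(p)<k$. Taking a through-block factorisation $p=p_0^*p_0$ with $p_0\in P(k,T)$ surjective gives $p=p_0^*\cdot\id_T\cdot p_0$, and \emph{if} $p_0$ lies in $\cC$ this exhibits $p\in p_0^*\,\ProjC(T)\,p_0$ with $T<k$, so $p\notin\cP$.

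The crux, and the step I expect to be the main obstacle, is therefore verifying that the ``upper half'' $p_0$ stays in $\cC$ for each of the six categories. For $\cC=P$ this is automatic (as already used in \Cref{ex::P}). For the remaining five, $p_0$ inherits the defining combinatorial constraint almost by inspection: each through-block of $p$ contributes a single point (when block sizes are $\leq 2$) or an upper set joined to one fresh lower point, and the non-through upper blocks (caps) are simply retained, so block sizes are preserved ($=2$ for $P_2,NC_2$ and $\leq2$ for $P_b,NC_b$). The delicate point is non-crossingness for $NC,NC_2,NC_b$: I would place the $T$ new lower points of $p_0$ in the left-to-right order of the through-blocks of $p$ and argue that, because $p$ is non-crossing and symmetric, every through-block meets the horizontal midline in a single strand, so cutting $p$ along this midline yields a non-crossing $p_0$. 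Making this cutting argument fully rigorous, via the laminar (circle) description of non-crossing partitions, is where the real work lies.

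Finally I would assemble the bijection. Because $\sim$ relates only projective partitions on the same number of points and $\cP\cap\Projk=\{\id_k\}$, the quotient $\cP/{\sim}$ equals $\{[\id_k]\mid k\geq0\}$, indexed bijectively by $\NN_0$. Feeding this into \Cref{thm::indecompsable_obj_by_A_k} presents the indecomposables as $\bigsqcup_{k\geq0}\Irr(S(\id_k))$. For $P,P_2,P_b$ this is $\bigsqcup_{k\geq0}\Irr(S_k)$, and combining the classical bijection $\Irr(S_k)\leftrightarrow\{\text{Young diagrams with }k\text{ boxes}\}$ over all $k$ yields the asserted bijection $\phi$ with Young diagrams of arbitrary size. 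For $NC,NC_2,NC_b$ each $\Irr(\{\id\})$ is a single point, so $\bigsqcup_{k\geq0}\Irr(\{\id\})\cong\NN_0$, giving the bijection $\phi$ with $\NN_0$.
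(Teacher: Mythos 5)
Your proposal is correct and takes essentially the same route as the paper: its proof likewise rests on choosing the through-block factorisation $p=p_0^*p_0$ with $p_0\in\cC$ (a step the paper dismisses as ``easy to check''), concluding $\cP=\{\id_k\mid k\in\NN_0\}$, and then combining \Cref{ex::FW} with \Cref{thm::indecompsable_obj_by_A_k}. Your elaborations --- that $\id_k\in\cP$ because through-blocks cannot increase under composition, that $t(p)=k$ forces $p=\id_k$, and the verification that $p_0$ inherits the block-size and non-crossingness constraints --- are precisely the details the paper leaves implicit.
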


\begin{proof}
Consider a projective partition $p=p_0^*p_0\in \Projk$. It is easy to check that one can choose $p_0\in \cC$ and hence $p = p_0^* \id_{t(p)} p_0$ yields a decomposition of $p$ in $\cC$. Hence $p\in \cP$ if and only if $p=\id_k$ for some $k\in\NN_0$.

We described the structure of $S(p)$ for all $p\in \ProjC$ in \Cref{ex::FW}. If $\cross \in \cC$, then $S(p)=S_{t(p)}$ for all $p\in \ProjC$ and by \Cref{thm::indecompsable_obj_by_A_k} the indecomposables in $\RepCt$ up to isomorphism are in bijection with $\bigsqcup_{k\in \NN_0} \Irr (S_k)$ and hence with Young diagrams of arbitrary size. 

If $\cross \notin \cC$, then $S(p)=\{\id\}$ for all $p\in \ProjC$ and by \Cref{thm::indecompsable_obj_by_A_k} the indecomposables in $\RepCt$ up to isomorphism are in bijection with $\bigsqcup_{k\in \NN_0} \Irr (\{ \id \})$ and hence with $\NN_0$. 
\end{proof}

\begin{remark}
Our description reproduces the known results for $\uRep (O_t)=\uRep(P_2,t)$ from \cite{De07, CH17} (see also Wenzl's original article on the Brauer algebras \cite{Wenz87-brauer}).

Moreover, our description for the Temperley--Lieb category $\uRep(O^+_t)=\uRep(NC_2,t)$ reproduces known results as the indecomposable objects can be explicitly described using Jones--Wenzl idempotents. In \Cref{subs::Ot+andSt+} we will study this in more detail to obtain an explicit description of the indecomposables of $\uRep(S^+_t)=\uRep(NC,t)$.
\end{remark}

\medskip
Next we consider the categories $\uRep(S_t')=\uRep(P',t)$ and $\uRep (B_t')=\uRep(P'_b,t)$, and their free versions $\uRep(S_t'^+)=\uRep(NC',t)$ and $\uRep (B_t'^+)=\uRep(NC'_b,t)$.

\begin{lemma} \label{lem::ex1}
Let $\cC\in \{P', P_b', NC', NC_b' \}$. Then
\begin{align*}
\cP = \{p \in \Projk \mid~ &k\geq0, ~t(p)\geq k-1\} . 
\end{align*}
\end{lemma}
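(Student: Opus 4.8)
The plan is to prove the two inclusions separately after one preliminary observation. First I would record that none of the four categories contains the singleton: by \Cref{rem::uparrow_in_C} the only categories of partitions with $\singleton\in\cC$ are $P,NC,\langle\cross,\singleton\rangle,\langle\singleton\rangle$, and $P',P_b',NC',NC_b'$ are not among them. Consequently \Cref{lem::hom_different_parity} applies: $\Homm([k],[l])=\{0\}$ whenever $k\not\equiv l\pmod2$, so $\cC(k,l)$ is empty unless $l\equiv k\pmod2$. Thus in any factorisation $p=b^*\cdot q\cdot b$ witnessing $p\notin\cP$ (with $l<k$, $b\in\cC(k,l)$, $q\in\Projl$) the existence of $b$ forces $l\equiv k\pmod2$, hence $l\leq k-2$.

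This already gives the easy inclusion $\{p:t(p)\geq k-1\}\subseteq\cP$. Indeed, if $p\notin\cP$ then $p=b^*\cdot q\cdot b$ with $l\leq k-2$, and since the number of through-blocks cannot increase under composition, $t(p)\leq t(q)\leq l\leq k-2$. Taking the contrapositive, $t(p)\geq k-1$ forces $p\in\cP$; this covers both $t(p)=k$ (where $p=\id_k$) and $t(p)=k-1$.

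For the reverse inclusion I would show that every projective $p\in\Projk$ with $t(p)\leq k-2$ is reducible, i.e.\ admits a factorisation $p=b^*\cdot q\cdot b$ with $q\in\ProjC(k-2)$ and $b\in\cC(k,k-2)$, so that $p\notin\cP$. The governing quantity is the excess $k-t(p)\geq2$, which equals $\sum_i(a_i-1)+\sum_j c_j$, where the $a_i$ are the sizes of the upper halves of the through-blocks and the $c_j$ the sizes of the upper-only blocks; by $p=p^*$ the lower row mirrors the upper row. I would then build $b$ from one of the following local moves, always acting as the identity away from a small window and (for $NC',NC_b'$) on adjacent points so as to stay noncrossing: (a) if $p$ has an upper-only pair, cap it off with $\Uaa$, which lies in every category of partitions; (b) if $p$ has two upper-only singletons, delete them using the two-singleton partition $\singleton\otimes\singleton$ (present in all four categories -- for $NC_b'=\langle\legpart\rangle$ one checks $\singleton\otimes\singleton=(\Uaa\cdot\legpart)^*$) and its involution; (c) for the remaining ``fat'' configurations -- an upper-only block of size $\geq3$, or a through-block whose upper half has size $\geq2$ -- contract some of its points to a single auxiliary point, lowering the point count by an even number while keeping the through-block count controlled. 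In each case the matching $q$ is the evident smaller projective with $p=b^*\cdot q\cdot b$.

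The main obstacle is the parity bookkeeping in move (c) for $P'$ and $NC'$, the two categories admitting blocks of size $\geq3$. There the naive through-factorisation $p=p_0^*p_0$ can fail to have $p_0\in\cC$ by a single unit of parity -- for instance $\vierpart\otimes\id_{k-2}$, with $t(p)=k-1$, is genuinely irreducible precisely because its $p_0$ has one odd block -- and one must pair up two sources of excess to repair the defining condition (an even number of odd blocks for $P',NC'$; only blocks of size $\leq2$ for $P_b',NC_b'$). For example, for a $p$ consisting of one $\vierpart$ through-block together with a single singleton pair and strands one has $t(p)=k-2$, and the correct $b\in\cC(k,k-2)$ carries a size-$3$ through-connector \emph{and} a singleton, so that $b$ has exactly two odd blocks and hence lies in $P'$ (and in $NC'$ after checking noncrossingness). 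Verifying that excess $\geq2$ always supplies enough such sources to reach a valid $b\in\cC$, whereas excess $=1$ never does, is the crux; once this is in place the stated description of $\cP$ follows, and I would finally double-check it against the simplest instances $P_b',NC_b'$, where no fat blocks occur and only moves (a) and (b) are needed.
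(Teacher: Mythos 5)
Your first half is correct, and it is actually cleaner than the paper's own treatment of that inclusion: you rule out $l=k-1$ a priori using \Cref{rem::uparrow_in_C} and \Cref{lem::hom_different_parity}, and then the monotonicity bound $t(p)\le t(q)\le l\le k-2$ for any reducible $p$ finishes the argument. The paper instead permits $l=k-1$, deduces from through-block monotonicity that a hypothetical witness would have $q=\id_{k-1}$ and $b\in\cC(k,k-1)$ with $t(b)=k-1$, and then derives a contradiction by counting blocks: such a $b$ has $k$ or $k-1$ blocks, exactly one of which has odd size, which is impossible in each of $P'$, $P_b'$, $NC'$, $NC_b'$. Both routes are valid; yours is shorter.

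The reverse inclusion, however, contains a genuine gap, and you say so yourself: the crux -- that an excess $k-t(p)\ge 2$ always supplies enough paired parity defects to assemble a valid $b\in\cC$ -- is never verified, and it is precisely the hard content of the lemma. The difficulty is created by your method: you build $b$ from local generators (caps $\Uaa$, double singletons, contractions of fat blocks), each of which must separately, or after ad hoc pairing, satisfy the parity constraint of $\cC$ and, for $NC'$ and $NC_b'$, noncrossingness; this is exactly the bookkeeping you cannot complete. The paper's proof avoids all of it with one uniform move: given $p\in\Projk$ with $t(p)\le k-2$, choose two upper points such that no through-block of $p$ has all of its upper points among the two chosen ones (such a choice exists precisely because $t(p)\le k-2$: either two points lie in non-through blocks, or one such point plus a point of a through-block with at least two upper points, or a through-block with at least three upper points, or two through-blocks with at least two upper points each), and let $b$ be $p$ with these two points deleted. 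Then $b\in\cC$ is automatic, since deleting two points changes the parity of the size of at most two blocks and hence preserves the evenness of the number of odd blocks, while clearly preserving noncrossingness and the size-at-most-two condition; and projectivity of $p$ gives $p=b\cdot b^*$, whence $p=b\cdot q\cdot b^*$ with $q:=b^*\cdot b\in\ProjC(k-2)$, i.e.\ $p\notin\cP$. The idea you are missing is to take $b$ to be a sub-partition of $p$ itself, so that membership in $\cC$ is inherited from $p$ rather than engineered from generators; without this, or a completed version of your case analysis, the hard half of the lemma remains unproven.
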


\begin{proof}
Note that any partition $p\in \cC$ has an even number of blocks of odd size.

Let $p\in \Projk$ with $t(p)<k-1$. Then there exist two upper points of $p$ none of which lies in a through-block of size two. Let $b\in P(k-2,k)$ be the partition that arises from $p$ by removing these points. If both considered points are in the same block of $p$, then removing them does not change the parity of the size of the block. Otherwise, removing them changes the parity of two blocks. In both cases we obtain a partition that still has an even number of blocks of odd size. Moreover, $b$ is a non-crossing partition if $p$ is non-crossing, and $b$ has only blocks of size one or two if $p$ does. Hence $b\in \cC$. As $p$ is projective, we have $p=b \cdot b^*$. We set $q:=b^* \cdot b\in \ProjC(k-2)$ and it follows that $p=p\cdot p=b\cdot b^*\cdot b\cdot b^*=b\cdot q\cdot b^* \notin \cP$.

Now let $p\in \Projk$ with $t(p)\geq k-1$. If $t(p)=k$, then $p=\id_k \in \cP$, so assume $t(p)=k-1$. Let us also assume that $p\notin \cP$. Then $p=b^* q b$ for some $0\leq l<k, b\in \cC(k, l), q\in \Projl$. Since the number of through-blocks of a composition of partitions is less than or equal to the number of through-blocks of each composed partition, it follows that $l=k-1$, $q=\id_{k-1}$, and $t(b)=k-1$. But then $b$ has a total of $k$ or $k-1$ blocks exactly one of which is of odd size, which is a contradiction.
\end{proof}

Both the modified symmetric groups $S_n'=S_n\times S_2$ and the modified bistochastic groups $B_n'=B_n\times S_2$ are direct products involving $S_2$. Hence their irreducible representations are of the form $\rho \times \varphi$, where $\rho$ is an irreducible representation of $S_n$ or $B_n$, respectively, and $\varphi$ is one of the two irreducible representations of $S_2$. In the following we will see that the indecomposables in the corresponding interpolating partition categories have an analogous structure.
 
\begin{lemma} 
Let $\cC\in \{P', P_b'\}$ and $t\in \CC\backslash \{0\}$. Then $S(\id_k)=S(\id_k \ot \bbar)=S_k$ for all $k\in \NN_0$ and the map
\begin{align*}
 &\phi: \left\{ \begin{matrix}  \text{Young diagram} \\
 \text{of arbitrary size} \end{matrix} \right\} \times \{1,-1\}
 \longrightarrow
 \left\{ \begin{matrix} \text{isomorphism classes of non-zero} \\ \text{indecomposable objects in }  \RepCt \end{matrix} \right\}\\
 &\text{with } \phi(\lambda,1) = \L_{\id_{|\lambda|}} (V_\lambda) \text{ and }  
 \phi(\lambda,-1) = \L_{\id_{|\lambda|} \ot \bbar} (V_\lambda) 
\end{align*}
is a bijection.
\end{lemma}

\begin{proof}
We show that $E:=\{\id_k \mid k\in\NN_0\} \cup \{ \id_k \ot \bbar \mid k\in \NN_0 \}$ is a set of representatives for all equivalence classes of projective partitions in $\cP$. Since equivalent projective partitions have the same number of through-blocks, $\id_k$ and $\id_{k-1} \ot \bbar$ are not equivalent for any $k\in \NN$ and hence all partitions of $E$ lie in different equivalence classes. 

Consider a partition $p\in \cP$. By \Cref{lem::ex1} we have $p=\id_k$ or $t(p)=k-1$, let us assume the latter. Since $\cross \in \cC$, it is easy to check that $p$ is either equivalent to $\id_{k-1} \ot \bbar$ or $\id_{k-2} \ot \vierpart$. In the latter case $\cC=P'$, but $\id_{k-1} \ot \bbar=r\cdot r^*$ and $\id_{k-2} \ot \vierpart=r^*\cdot r$ with $r=\id_{k-2} \ot \Paaab$, hence these partitions are equivalent. 

Thus $E$ is a set of representatives for all equivalence classes of projective partitions in $\cP$ and we have $S(\id_k)=S_k$ and $S(\id_k \ot \bbar)=S_k$ for all $k\in \NN_0$. Then by \Cref{thm::indecompsable_obj_by_A_k}, the indecomposables in $\RepCt$ up to isomorphism are in bijection with $\bigsqcup_{k\in \NN_0} (\Irr (S_k) \sqcup \Irr (S_k))$ and hence the claim follows.
\end{proof}

\begin{lemma} 
Let $\cC\in \{NC', NC_b' \}$ and $t\in \CC\backslash \{0\}$. Then the map
\begin{align*}
 &\phi: \NN_0 \times \{1,-1\} 
 \longrightarrow
 \left\{ \begin{matrix} \text{isomorphism classes of non-zero} \\ \text{indecomposable objects in }  \RepCt \end{matrix} \right\}\\
 &\text{with } \phi(k,1) = \L_{\id_k} (V_{\mathrm{triv}}) \text{ and }
 \phi(\lambda,-1) = \L_{\id_k \ot \bbar} (V_{\mathrm{triv}})
\end{align*}
is a bijection.
\end{lemma}

\begin{proof}
We show again that $E:=\{\id_k \mid k\in\NN_0 \} \cup \{ \id_k \ot \bbar \mid k\in \NN_0 \}$ is a set of representatives for all equivalence classes of projective partitions in $\cP$. As in the previous proof all partitions of $E$ lie in different equivalence classes. 

Let $k\in \NN$. We consider the partitions $\id_l \ot \bbar \ot \id_{k-1-l} \in \cP$ with $0\leq l\leq k-1$ and claim that they are all equivalent. Let $0\leq l<l'\leq k-1$ and we define $r:=\id_l \ot \La \ot \id_{l'-l} \ot \Ua \ot \id_{k-1-l'} \in \cC(k,k)$. Then we have $\id_l \ot \bbar \ot \id_{k-1-l}=rr^*$ and $\id_{l'} \ot \bbar \ot \id_{k-1-l'}=r^*r$ and hence these partitions are equivalent. 

Now, if $\cC=NC_b'=\langle \legpart \rangle$, then any partition in $\Projk \cap \cP$ is either $\id_k$ or $\id_l \ot \bbar \ot \id_{k-1-l}$ for some $0\leq l\leq k-1$ by \Cref{lem::ex1}. 
If $\cC=NC'=\langle \singleton\ot \singleton, \vierpart \rangle$, then $\Projk \cap \cP$ contains additionally the partitions $\id_l \ot \vierpart \ot \id_{k-2-l}$ with $0\leq l\leq k-2$. But as in the proof of the previous lemma they are all equivalent to the partitions $\id_l \ot \bbar \ot \id_{k-1-l}$ with $0\leq l\leq k-1$.
Thus, in both cases, $E$ is a set of representatives for all equivalence classes of projective partitions in $\cP$.

By \Cref{ex::FW} we have $S(p)=\{\id\}$ for all $p\in \ProjC$ and the claim follows from \Cref{thm::indecompsable_obj_by_A_k}.
\end{proof}

\medskip

Next, we consider the categories $\uRep(B_t^{\#+})=\uRep(NC_b^\#,t)$, which correspond to the freely modified bistochastic quantum groups.
Recall that $NC^\#_b$ is the category of non-crossing partitions with an arbitrary number of blocks of size two and an even number of blocks of size one such that the number of points between any two connected points is even, considering all points naturally arranged in a circle. 
We start by computing the set $\cP$ using the following definition.
\begin{definition}
For any $k\in \NN_0, l\in \NN_0$ and $m=(m_1,\ldots ,m_l)\in \underline{k}^l$ with $m_{i+1}-m_i\geq 2$ for all $i\in \underline{l-1}$, we define a partition $p_{k,m} \in NC_b^\#(k,k)$ such that any upper point is connected to the opposite lower point except for the $m_i$-th lower points and the $m_i$-th upper points, which are in blocks of size one, i.e. 
\[
p_{k,m} = \Partition{
\Pline (1,0) (1,1)
\Pline (4,0) (4,1)
\Pline (5,0) (5,0.25)
\Pline (5,0.75) (5,1)
\Pline (6,0) (6,1)
\Pline (9,0) (9,1)
\Pline (10,0) (10,0.25)
\Pline (10,0.75) (10,1)
\Pline (11,0) (11,1)
\Pline (16,0) (16,1)
\Pline (17,0) (17,0.25)
\Pline (17,0.75) (17,1)
\Pline (18,0) (18,1)
\Pline (21,0) (21,1)
\Ptext(2.6,0.5){$\cdots$}
\Ptext(7.6,0.5){$\cdots$}
\Ptext(13.5,0.1){$\cdots$}
\Ptext(19.6,0.5){$\cdots$}
\Ptext(5.1,-0.6){\footnotesize{$m_1$}}
\Ptext(10.1,-0.6){\footnotesize{$m_2$}}
\Ptext(17.1,-0.6){\footnotesize{$m_l$}}
} . \]
\end{definition}

\begin{lemma} \label{lem::ex2}
Let $\cC=NC_b^\#$. Then
\begin{align*}
\cP = \left\{ p_{k,m} ~ \middle\vert ~\begin{matrix} k\in \NN_0, l\in \NN_0,
m\in \underline{k}^l,  m_{i+1}-m_i\geq2 \end{matrix} \right\}. 
\end{align*}
%\{p \in \Projk \mid~ &k\geq0, ~ p\text{ has neither through-blocks of size two}\\&\text{nor adjacent (upper or lower) points of size one}\} 
\end{lemma}

\begin{proof}
Let $p\in \Projk$ be a projective partition which is not of the form $p_{k,m}$. Then $p$ has two upper or two lower points that are either in the same non-through-block, or adjacent points in blocks of size one. Replacing $p$ by $p^*$ if necessary, we may assume these points are upper points. Let $b\in P(k-2,k)$ be the partition that arises from $p$ by removing these points. By the above description of the category of partition $NC^\#_b$, we have $b\in NC^\#_b$ in both cases. As $p$ is projective, we have $p=b \cdot b^*$ and hence $p=p\cdot p=b\cdot b^*\cdot b\cdot b^*=b\cdot q\cdot b^* \notin \cP$ with $q:=b^* \cdot b\in \ProjC(k-2)$.

Now let $p=p_{k,m}$ be a partition in the set on the right-hand side of the assertion and we assume that $p\notin \cP$. Then $p=b^* q b$ for some $0\leq l<k, b\in \cC(k, l), q\in \Projl$. As $b\in NC^\#_b$ and $p=b^* q b$, $b$ has to arise from $p$ by removing some lower points of $p$ that are in blocks of size one. Consider the leftmost upper point $x$ of $b$ for which the opposite lower point has been removed from $p$. Then $p$ and $b$ are of the following form: 
$$p = q \ot 
\Partition{
\Pline (1,0) (1,1)
\Pline (2,0.65) (2,1)
\Pline (2,0.35) (2,0)
\Pline (3,0) (3,1) 
} 
\ot \ldots \quad \text{and} \quad
b = q \ot  
\Partition{
\Pline (1,0) (1,1)
\Pline (2,0.65) (2,1)
\Pline (2,0) (3,1) 
\Ptext(2,1.4){\footnotesize{$x$}}
} 
\ot \ldots \quad \text{for some } q\in NC^\#_b.$$
Now, consider the through-block of $b$ on the right of $x$. The number of points of $b$ on the left-hand side of this through-block is odd. Hence we have $b\notin NC^\#_b$, which is a contradiction.
\end{proof}

\begin{lemma} 
Let $t\in \CC\backslash \{0\}$. Then the map
\begin{align*}
 &\phi:  
\left\{ (k,m)~ \middle\vert ~\begin{matrix} k\in \NN_0, l\in \NN_0,
m\in \underline{k}^l, \\ m_{i+1}-m_i\geq 2 \end{matrix} \right\}
 \longrightarrow
 \left\{ \begin{matrix} \text{isomorphism classes of non-zero} \\ \text{indecomposable objects in } \uRep (B_t^{\#+}) \end{matrix} \right\} \\
 &\text{with } \phi((k,m)) = \L_{p_{k,m}} (V_{\mathrm{triv}})
\end{align*}
is a bijection.
\end{lemma}

\begin{proof}
We claim that the partitions in $\cP \cap \Projk$ are pairwise inequivalent for any $k\in \NN_0$. Assume that $p,q \in \cP\cap \Projk$ are equivalent projective partitions, i.e. there exists a partition $r\in NC^\#_b(k,k)$ with $p=r^*\cdot r$ and $q=r\cdot r^*$. By \Cref{lem::ex2}, $p$ and $q$ have the property that any block is either a block of size one or a through-block of size two. This completely determines the structure of $r$: Any block of $r$ is either a block of size one or a through-block of size two. Moreover, an upper point of $r$ is in a block of size one if and only if the corresponding upper point of $p$ is in a block of size one. Similarly, a lower point of $r$ is in a block of size one if and only if the corresponding lower point of $q$ is in a block of size one. Now, since $r\in NC^\#_b$, the number of points between any two connected points of $r$ is even. It is straightforward to check that this implies $p=q$.

Now, since $S(p)=\{\id\}$ for all $p\in \ProjC$ by \Cref{ex::FW}, the claim follows from \Cref{thm::indecompsable_obj_by_A_k}.
\end{proof}

\medskip

To study the structure of the indecomposable objects in $\uRep (H_t)=\uRep (P_\even,t)$ and $\uRep (H_t^+)=\uRep (NC_\even,t)$, again we first determine $\cP$.

\begin{lemma} \label{lem::Ht(p)_Q}
Let $\cC \in \{P_\even ,NC_\even \}$ and $k\in \NN_{\geq 2}$. Then
\begin{align*}
\cP = \{p \in \Projk \mid~ &k\geq0, ~p \text{ has only through-blocks and any block} \\
&\text{has at most $2$ upper and at most $2$ lower points}\} . 
\end{align*}
\end{lemma}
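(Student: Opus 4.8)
The plan is to prove both inclusions directly from the definition of $\cP$: the condition $p\notin\cP$ means exactly that $p=b^*\cdot q\cdot b$ as set partitions for some $l<k$, $b\in\cC(k,l)$ and $q\in\ProjC(l)$ (equivalently, by the remark preceding the lemma, $p\in(\nu_k)$; note that a single partition lies in $(\nu_k)$ iff it equals such a composition, by linear independence of distinct partitions). Throughout I would use the structure of a projective partition $p=p_0^*p_0$: each through-block is symmetric, consisting of a set of upper points together with its mirror set of lower points, while the non-through blocks come in mirror pairs of a purely upper and a purely lower block. Since $\cC\subseteq P_{even}$, every block has even size, so a through-block with $u$ upper points has $u$ lower points, and a purely upper block has even size $\geq 2$.

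First I would prove the inclusion $\supseteq$ (irreducibility): if $p$ has only through-blocks, each with at most two upper points, then $p\in\cP$. Suppose instead $p=b^*\cdot q\cdot b$ with $b\in\cC(k,l)$, $l<k$. Since every upper point of $p$ sits in a through-block, it must be connected to a lower point through the two middle rows; hence every upper point of $b$ already lies in a through-block of $b$, and the partition of the upper points induced by $b$ refines the one induced by $p$. Writing the through-blocks of $b$ as $C_1,\dots,C_s$ with $u_i:=|C_i\cap\text{upper}|\geq1$ and $m_i:=|C_i\cap\text{middle}|\geq1$, evenness forces $m_i\equiv u_i\pmod 2$. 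For a through-block $B$ of $p$ with upper-point set $U_B$: if $|U_B|=1$ it is a single $C_i$ with $u_i=1$, so $m_i$ is odd $\geq1$; if $|U_B|=2$ it is either one $C_i$ with $u_i=2$ (so $m_i\geq2$) or two $C_i$'s each with $u_i=1$ (so $m_i\geq1$ twice). In every case the through-blocks of $b$ meeting $U_B$ use at least $|U_B|$ middle points, whence $\sum_i m_i\geq\sum_B|U_B|=k$, contradicting $\sum_i m_i\leq l<k$. (The same count, via the parity lemma, is exactly why $\id_k$ cannot be reduced.)

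Next I would prove $\subseteq$ by exhibiting an explicit reduction whenever $p$ has a non-through block or a through-block with at least three upper points. In both cases $q$ is obtained from $p$ by deleting points in mirror pairs — so $q$ stays symmetric, idempotent, even, and (for $NC_{even}$) non-crossing — and $b$ reconnects them: if $D$ is a purely upper block, take $q=p$ with $D$ and its mirror removed and let $b$ cap all of $D$ while sending the remaining upper points straight through; if $B$ is a through-block with upper points $u_1<\dots<u_r$, $r\geq3$, delete $u_1,u_2,u_1',u_2'$ and let $b$ join $u_1,u_2,u_3$ to a single middle point (a size-$4$, hence even, block) with straight strings elsewhere. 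A direct composition check gives $b^*\cdot q\cdot b=p$ in each case, and for $\cC=P_{even}$ the constructed $b$ automatically lies in $\cC$, which settles that category.

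The hard part is guaranteeing that $b$ is \emph{non-crossing} when $\cC=NC_{even}$, since both the cap and the merging block can cross the straight strings when the relevant points are not consecutive. I would handle this by induction on $k$, using the standard fact that a non-crossing partition has a block $D$ that is an interval on the circle $1,\dots,k,k',\dots,1'$; being of even size $\geq2$, such $D$ contains cyclically adjacent points. If $D$ is a ``good'' block — a $1{+}1$ or $2{+}2$ through-block, which forces it to straddle a corner — I peel it off as $p=p''\ot\id_1$ or $p=p''\ot\vierpart$ and apply the induction hypothesis to $p''$, which still carries the offending block (peeling a corner block preserves projectivity and does not touch the other blocks). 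If instead $D$ is bad, its points are consecutive, so the cap (for a purely upper $D$) or the merging block applied to three consecutive upper points (for a through $D$ whose upper part is then a consecutive run of length $\geq3$) is local and manifestly planar, yielding a non-crossing reduction. The main technical care thus lies in the interval-block bookkeeping and in checking that the corner-peeling step keeps a bad block present.
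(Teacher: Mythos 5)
Your proof is correct, and one half of it coincides with the paper's argument. For the inclusion asserting that projective partitions with only $1{+}1$ and $2{+}2$ through-blocks lie in $\cP$, your reasoning is essentially the published one: both hinge on the observation that a purely upper block of $b$ would create a non-through block of $b^*\cdot q\cdot b$, and on the parity constraint $u_i\equiv m_i \pmod{2}$ coming from evenness of the blocks of $b\in\cC(k,l)$; your global count $k=\sum_B|U_B|\le\sum_i m_i\le l<k$ is a cleaner packaging of the paper's claim that some block of the composite must gain at least two upper points over the corresponding block of $q$, and it lets you dispense with the paper's choice of $l$ minimal. The genuine divergence is in the other inclusion. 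You take the same $q$ as the paper ($p$ with mirror pairs of points deleted), but you reconstruct $p$ through an \emph{elementary} partition $b$ (a cap, or a block joining three upper points to one middle point, plus identity strings); such a $b$ is planar only when the affected points are consecutive, and this is what forces your corner-peeling induction on interval blocks in the $NC_{even}$ case. The paper avoids that issue entirely by choosing $b$ to be a \emph{restriction of $p$ itself}: delete two upper points of a block with at least three upper points (resp.\ delete a purely upper block $D$) and view the result as $b\in\cC(k-2,k)$ (resp.\ $\cC(k-|D|,k)$); then $p=b\cdot b^*$ and $p=b\cdot q\cdot b^*$ with $q=b^*\cdot b$ projective. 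Since deleting points in this way preserves both evenness and non-crossingness, this $b$ lies in $\cC$ automatically for $P_{even}$ and $NC_{even}$ alike, and the planarity problem -- the part you yourself flag as hard -- never arises. Your induction is sound as outlined (interval blocks exist, good interval blocks sit at corners and peel off as tensor factors compatibly with $\cdot$ and $\ot$, bad interval blocks admit local planar reductions), so this is a matter of economy rather than correctness: your route buys very explicit reduction moves, while the paper's choice of $b$ buys a one-line verification of $b\in\cC$ that is uniform over both categories.
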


\begin{proof}
Let $p\in \Projk$. At first we assume that $p$ has a block with more than $3$ upper points. Let $b\in P(k-2,k)$ be the partition that arises from $p$ by removing two of the upper points in the considered block. As $p$ is projective and the block has more than $3$ upper points, it follows that $p=b \cdot b^*$. Moreover, as $\cC$ contains all partitions or all non-crossing partitions with even block size, $b$ lies also in $\cC$. We set $q:=b^* \cdot b\in \ProjC(k-2)$ and thus we have $p=p^2=b \cdot b^* \cdot b \cdot b^*=b  \cdot q \cdot b^* \notin \cP$.

Now we assume that $p$ has an upper non-through block, say of size $l\in \NN$. Let $b\in P(k-l,k)$ be the partition that arises from $p$ by removing the considered block. As $p$ is projective, it follows that $p=b \cdot b^*$ and again by the structure of $\cC$ the partition $b$ lies also in $\cC$. We set $q:=b^*  \cdot b\in \ProjC(k-2)$ and thus we have $p=p^2=b \cdot b^* \cdot b \cdot b^*=b  \cdot q \cdot b^* \notin \cP$.

It remains to show that any partition $p\in \Projk$ with only through-blocks that has only blocks with at most $2$ upper and at most $2$ lower points lies in $\cP$. Consider such a partition and assume that $p\notin \cP$. Then $p=b^*\cdot q\cdot b$ for some $0\leq l<k, b\in \cC(k, l), q\in \Projl$, and we assume that $ l$ is chosen minimally. By the above considerations this implies that also $q$ has only through-blocks and only blocks with at most $2$ upper and at most $2$ lower points. But as $l<k$ and as all blocks have even size, the composition of partitions $b^*\cdot q\cdot b$ has to have a block that contains at least two more upper points then the corresponding block of $q$. This contradicts the assumptions on $p=b^*\cdot q\cdot b$.
\end{proof}

For our next examples, let us record a useful observation regarding certain projective partitions in a class of group-theoretical categories of partitions.

\begin{definition} \label{def::hki}
For any sequence $k_1,\dots,k_s\in \NN_0$, let us define a projective partition $h_{k_1,k_2,\dots,k_s}$ by setting
$$
q := \{\{1,1'\}\}^{\ot k_1} \ot  \{\{1,2,1'\}\}^{\ot k_2} \ot \dots \ot \{\{1,\dots,s,1'\}\}^{\ot k_s}
\qquad \in P(k, T)
$$
and $h_{k_1,\dots,k_s}:= q^*q \in P_\even(k,k)$
\end{definition}

For instance, $h_{k_1,k_2}=\id_{k_1}\ot (\vierpart)^{\ot k_2}=\id_{k_1}\ot \vierpart\dots\vierpart$.

\begin{lemma} \label{lem::Sp-concrete}
Let $\cC \subseteq P_\even$ be a group-theoretical category of partitions and $k_1,\dots,k_s\in \NN_0$. Then $p:=h_{k_1,\dots,k_s}$ is equivalent to $h_{k_1,0,k_3,0,\dots}\ot h_{0,k_2,0,k_4,\dots}$, and $S(p)\cong S(\id_{k_1+k_3+\dots})\times S_{k_2+k_4+\dots}$.

In particular, for $\cC = P_\even$ we get $S(p)\cong S_{k_1+k_3+\ldots} \times S_{k_2+k_4+\ldots}$.
\end{lemma}

\begin{proof} Throughout the proof, we will use that every group-theoretical category is closed under coarsening (see \cite[Lem.~2.3]{RW14}).

\newcommand\s\sigma
Consider two integers $a,b\geq 1$ and assume one of them is even. We claim that the partition 
$$ r_{ab}:=
\{ \{ 1,\dots,a,(a+1)',\dots,(a+b)'\}, \{1',\dots,a',a+1,\dots,a+b\} \}
$$
lies in $\cC$. Indeed, we may assume $a$ is even, as $r_{ab}=r_{ba}^*$, and in this case, $r_{ab}$ is a coarsening of $(\Uaa)^{\ot (a/2)}\ot\id_b\ot(\Laa)^{\ot (a/2)}$, so in $\cC$. 
Now let $q$ be as in \Cref{def::hki}, so $p=q^*q$, and let us define
\begin{align*}
    q_1 := \{\{1,1'\}\}^{\ot k_1} \ot  \{\{1,2,3,1'\}\}^{\ot k_3} \ot \dots ,\quad p_1:=q_1^*q_1 ,
    \\
    q_2 := \{\{1,2,1'\}\}^{\ot k_2} \ot  \{\{1,2,3,4,1'\}\}^{\ot k_4} \ot \dots ,\quad p_2:=q_2^*q_2 .
\end{align*}
Then $q_1\ot q_2=q r$ and $q=(q_1\ot q_2)r'$, where $r,r'$ are compositions of morphisms of the form $r_{ab}$ as above. Hence by the claim, $r$ and $r'$ are in $\cC$, and $p=q^*q\sim (q_1\ot q_2)^*(q_1\ot q_2)=h_{k_1,0,k_3,0,\dots}\ot h_{0,k_2,0,k_4,\dots}$.

\newcommand\keven{{k_{\text{even}}}}
\newcommand\kodd{{k_{\text{odd}}}}
Set $\kodd:=k_1+k_3+\dots$ and $\keven:=k_2+k_4+\dots$. Now if $\s\in S_k$ is not in (the naturally embedded subgroup) $S_{\kodd}\times S_{\keven}$, then $(q_1\ot q_2)^*\s(q_1\ot q_2)$ has a block of odd size, so $\s\not\in S(p_1\ot p_2)$. Hence, $S(p_1\ot p_2)\cong S(p_1)\times S(p_2)$.

For any odd $m\geq 1$, the partition $\{\{1,\ldots ,m,1'\}\}$ is a coarsening of $(\Uaa)^{\ot(m-1)/2}\ot\id$, and hence, in $\cC$. This immediately implies $q_1\in\cC$, so $S(p_1)=S(\id_\kodd)$. Finally, as any partition $p_2^*\sigma p_2$ with $\sigma \in S_{\keven}$ is a coarsening of $(\twoblocks)^{\ot k'}$, for some $k'$, it follows that $S(p_2)=S_\keven$.
\end{proof}

Now we compute all indecomposable objects in $\uRep(H_t)$ up to isomorphism for $t\in \CC\backslash \{0\}$. It is well-known that for $n\in \NN_0$, inequivalent irreducible complex representations of the hyperoctahedral group $H_n$ can be indexed by bipartitions of size $n$, i.e.~pairs $(\lambda_1,\lambda_2)$ of partitions of some $n_1\leq n$ and $n_2\leq n$, respectively, with $n=n_1+n_2$ (\cite[Sec.~II]{GK76}, see also \cite{orellana}). We show that this description extends to a description of the non-isomorphic indecomposable objects in $\uRep(H_t)=\uRep(P_\even,t)$ by bipartitions of arbitrary size.

Recall the definition of the partitions 
$$
    h_{k_1,k_2}:= \text{id}_{k_1} \ot \vierpart \ot \ldots \ot \vierpart \in P_\even(k_1+2k_2,k_1+2k_2),
$$
for any $k_1,k_2\in \NN_0$ in \Cref{lem::Sp-concrete}.

\begin{proposition} \label{thm::indecomp_obj_Hn} \label{indecomposables_H}
Let $t\in \CC\backslash \{0\}$. Then $S(h_{k_1,k_2})=S_{k_1}\times S_{k_2}$ for all $k_1,k_2\in \NN_0$ and the map 
\begin{align*}
 &\phi: \left\{ \begin{matrix} \text{bipartitions } \lambda=(\lambda_1,\lambda_2) \\ \text{of arbitrary size} \end{matrix} \right\} \longrightarrow \left\{ \begin{matrix} \text{isomorphism classes of non-zero} \\ \text{indecomposable objects in }  \uRep(H_t) \end{matrix} \right\} \\
 &\text{with } \phi(\lambda) = \L_{h_{|\lambda_1|,|\lambda_2|}} (V_{\lambda_1} \ot V_{\lambda_2}) 
\end{align*}
is a bijection.
\end{proposition}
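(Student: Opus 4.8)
The plan is to feed the general parametrisation of \Cref{thm::indecompsable_obj_by_A_k} with the concrete combinatorial data for $\cC = P_{even}$. That theorem reduces everything to two tasks: determining the equivalence classes $\cP/{\sim}$ of distinguished projective partitions, and identifying the finite groups $S(p)$ attached to them. Once both are known, the non-zero indecomposables are parametrised by $\bigsqcup_{[p]\in\cP/\sim}\Irr(S(p))$, and it only remains to recognise this disjoint union as the set of bipartitions.

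First I would pin down $\cP$ using \Cref{lem::Ht(p)_Q}: for $P_{even}$ a projective partition lies in $\cP$ iff it has only through-blocks, with at most two upper and at most two lower points per block. Since a projective partition is symmetric ($p=p^*$), a through-block's lower points are the mirror images of its upper points, so every block has equally many upper and lower points; combined with the evenness of all block sizes, the only possibilities are blocks of type $(1,1)$ (size $2$) and type $(2,2)$ (a copy of $\vierpart$, size $4$). Hence, after relabelling the points, any $p\in\cP\cap\Projk$ is one of the standard partitions $e_{k_1,k_2}=\id_{k_1}\otimes\vierpart\otimes\cdots\otimes\vierpart$ with $k_1$ blocks of size $2$ and $k_2$ blocks of size $4$, so $k=k_1+2k_2$ and $t(e_{k_1,k_2})=k_1+k_2$.

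Next I would compute $\cP/{\sim}$. Two partitions of distinct type $(k_1,k_2)\neq(k_1',k_2')$ are never equivalent: equivalent projective partitions share both their index $k=k_1+2k_2$ and their through-block count $t=k_1+k_2$, and these two quantities determine $(k_1,k_2)$. Conversely, any two projective partitions of the same type in a fixed $\Projk$ are equivalent: since $\cross\in P_{even}$, every permutation partition $\sigma$ belongs to $P_{even}$, and such a partition is exactly a relabelling $p=\sigma\cdot e_{k_1,k_2}\cdot\sigma^{-1}$ of the standard one; setting $r:=\sigma\cdot e_{k_1,k_2}\in\cC(k,k)$ one checks $r\cdot r^*=\sigma\cdot e_{k_1,k_2}\cdot\sigma^{-1}=p$ and $r^*\cdot r=e_{k_1,k_2}$, the data required by \Cref{def::equivprojpart} (the same rearrangement trick as in \Cref{lem::ex1}). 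As the type determines $k$ uniquely, I conclude $\cP/{\sim}\;\cong\;\{(k_1,k_2):k_1,k_2\in\NN_0\}$ with representatives $e_{k_1,k_2}$. I expect this equivalence analysis — producing the intertwiner $r$ and verifying it stays in $P_{even}$ — to be the main (though essentially routine) obstacle.

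Finally, \Cref{lem::FW} with $s=2$ gives $S(e_{k_1,k_2})\cong S_{k_1}\times S_{k_2}$. The irreducible complex representations of a direct product are the external tensor products of irreducibles of the factors, so $\Irr(S_{k_1}\times S_{k_2})$ is indexed by pairs $(\lambda_1,\lambda_2)$ of Young diagrams of sizes $k_1$ and $k_2$. Assembling over all $(k_1,k_2)\in\NN_0^2$, \Cref{thm::indecompsable_obj_by_A_k} yields
\[
\bigsqcup_{(k_1,k_2)\in\NN_0^2}\Irr(S_{k_1}\times S_{k_2})
\;\longleftrightarrow\;
\Big\{\,\text{bipartitions }(\lambda_1,\lambda_2)\text{ of arbitrary size}\,\Big\},
\]
which is precisely the asserted bijection $\phi$.
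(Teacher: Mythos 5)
Your proposal is correct and follows essentially the same route as the paper's proof: it invokes \Cref{lem::Ht(p)_Q} to identify $\cP$, uses permutations (available since $\cross\in P_{even}$) to show every element of $\cP\cap\Projk$ is equivalent to some $e_{k_1,k_2}$ while distinct types are inequivalent because equivalence preserves $k$ and the through-block number, applies \Cref{lem::FW} to get $S(e_{k_1,k_2})\cong S_{k_1}\times S_{k_2}$, and concludes via \Cref{thm::indecompsable_obj_by_A_k}. The only difference is presentational: you spell out the mirror-symmetry argument forcing blocks of type $(1,1)$ or $(2,2)$ and the explicit intertwiner $r=\sigma\cdot e_{k_1,k_2}$, which the paper leaves as ``easy to check.''
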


\begin{proof}
We start by showing that the set $E:=\{ h_{k_1,k_2} \mid k_1,k_2\in \NN_0\}$ is a set of representatives for all equivalence classes of projective partitions in $\cP$.

Let $k\in \NN_0$. Since $t(h_{k_1,k_2})=k_1+k_2$ is different for all $k_1,k_2\in \NN_0$ with $k_1+2k_2=k$, the partitions in $E\cap \Projk=\{ h_{k_1,k_2} \mid k_1,k_2\in \NN_0, k_1+2k_2=k\}$ are pairwise inequivalent. 

Let $p$ be projective partition in $\cP$. Then p has only through-blocks and any block has at most $2$ upper and at most $2$ lower points by \Cref{lem::Ht(p)_Q}. We denote by $k_1$ the number of blocks of $p$ of size $2$ and by $k_2$ the number of blocks of $p$ of size $4$. Since $\cross \in P_\even(k,k)$, it is easy to check that $p$ and $h_{k_1,k_2}$ are equivalent. 

By \Cref{lem::Sp-concrete} we have $S(h_{k_1,k_2})=S_{k_1}\times S_{k_2}$. Thus \Cref{thm::indecompsable_obj_by_A_k} yields a bijection between indecomposables in $\RepCt$ up to isomorphism and $\bigsqcup_{k_1,k_2\in \NN_0} \Irr (S_{k_1}\times S_{k_2})$ and hence with bipartitions of arbitrary size.
\end{proof}

We conclude our discussion by computing all indecomposable objects in the interpolation categories $\uRep(H_t^+)=\uRep(NC_\even,t)$ up to isomorphism for $t\in \CC\backslash \{0\}$. In \cite[Thm.~7.3]{BV09}, Banica and Vergnioux showed that for any $n\in \NN_0$, inequivalent irreducible representations of the free hyperoctahedral quantum group $H_n^+$ are indexed by finite binary sequences (of arbitrary length, independent of $n$). We show that also non-isomorphic indecomposable objects in $\uRep(H^+_t)$ are indexed by finite binary sequences.

For any $b\in \NN_0$ and binary sequence $a=(a_1,\ldots,a_b)\in \{0,1\}^b$, we define a partition $h_a \in NC_\even(k,k)$ with $k:=a_1+\dots+a_b$ by $h_a:=h_{a_1} \ot h_{a_2} \ot \cdots \ot h_{a_b}$ with $h_0=\id_1$ and $h_1=\vierpart$. 

\begin{proposition} \label{indecomposables_Hp}
Let $t\in \CC\backslash \{0\}$. Then the map
\[ \phi: \bigcup_{b\in \NN_0} \{0,1\}^b \longrightarrow \left\{ \begin{matrix} \text{isomorphism classes of non-zero} \\ \text{indecomposable objects in }  \uRep(H_t^+) \end{matrix} \right\}, ~a \mapsto \L_{h_a} (V_{\mathrm{triv}}) \]
is a bijection.
\end{proposition}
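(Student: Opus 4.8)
The plan is to apply \Cref{thm::indecompsable_obj_by_A_k}, so that it suffices to enumerate the equivalence classes $\cP/{\sim}$ of projective partitions and to compute the groups $S(p)$. Since $NC_{even}\subseteq NC$, \Cref{ex::FW} gives $S(p)=\{\id\}$ for every projective $p$, so each equivalence class will contribute exactly one irreducible (the trivial representation of the trivial group) and hence one indecomposable object. Thus the whole content is to establish a bijection $\cP/{\sim}\;\cong\;\bigcup_{b\geq 0}\{1,2\}^b$.

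First I would describe $\cP$ explicitly. By \Cref{lem::Ht(p)_Q}, a projective partition lies in $\cP$ iff it has only through-blocks, each with at most two upper and at most two lower points; since every block has even size and is a through-block, each block has exactly one upper and one lower point, or exactly two upper and two lower points. Next I would show, by a short non-crossing argument, that the upper points of each block form a contiguous interval (a through-block nested strictly between two legs of another through-block would cross it) and that, by projectivity ($p=p^*=p\cdot p$), each block is self-mirror, so its lower points form the mirror interval. Consequently every $p\in\cP$ is literally a tensor product $\varepsilon_{s_1}\otimes\cdots\otimes\varepsilon_{s_b}$ with $\varepsilon_1=\id_1$ and $\varepsilon_2=\vierpart$, reading blocks from left to right, which identifies the underlying set $\cP$ with $\bigcup_{b\ge0}\{1,2\}^b$ via $p\mapsto(s_1,\dots,s_b)$.

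It then remains to determine $\sim$ on $\cP$, and here I expect the main obstacle: I must prove that the left-to-right order of block sizes is an equivalence invariant, i.e.\ that distinct sequences give inequivalent partitions. This is precisely the point where $NC_{even}$ differs from $P_{even}$: in \Cref{thm::indecomp_obj_Hn} the partition $\cross\in P_{even}$ permits reordering of blocks, so only the multiset of sizes survives (yielding bipartitions), whereas in $NC_{even}$ no such reordering is available. Concretely, suppose $p=\varepsilon_{s_1}\otimes\cdots\otimes\varepsilon_{s_b}\sim q=\varepsilon_{s_1'}\otimes\cdots\otimes\varepsilon_{s_b'}$; equivalence forces the same $k$ and the same number $b$ of through-blocks, and provides $r\in NC_{even}(k,k)$ with $r\cdot r^*=p$, $r^*\cdot r=q$ and (as one checks) $t(r)=b$. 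I would argue that $r$ has no non-through blocks, since every upper point of $p$ lies in a through-block, so $r$ consists of $b$ through-blocks connecting the upper intervals $I_1<\cdots<I_b$ (of sizes $s_1,\dots,s_b$) to the lower intervals $J_1<\cdots<J_b$ (of sizes $s_1',\dots,s_b'$) through some bijection $\pi$. The non-crossing property of $r$ forces $\pi=\id$, while the even-block-size condition applied to the $a$-th block forces $s_a+s_a'$ to be even and hence $s_a=s_a'$. Thus $s=s'$, so $\sim$ is trivial on $\cP$ and $\cP/{\sim}\cong\bigcup_{b\ge0}\{1,2\}^b$.

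Finally I would assemble the pieces: combining this bijection with $S(p)=\{\id\}$ and \Cref{thm::indecompsable_obj_by_A_k} yields the asserted bijection between finite binary sequences and isomorphism classes of non-zero indecomposable objects in $\uRep(H_t^+)$, the empty sequence corresponding to the unit object $[0]$. The two steps requiring the most care are the non-crossing/symmetry normal-form argument of the second paragraph and, above all, the order-invariance argument, which is the crux distinguishing the free case from the classical one and which is where both the non-crossing and the even-block-size hypotheses are genuinely used.
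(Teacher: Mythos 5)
Your proof is correct, and it follows the same skeleton as the paper's (\Cref{thm::indecompsable_obj_by_A_k} together with \Cref{lem::Ht(p)_Q} and $S(p)=\{\id\}$ from \Cref{ex::FW}), but it replaces the paper's key external input with a direct argument. The paper cites \cite[Lemma~5.12]{FW16} for the fact that the partitions $e_a=e_{a_1}\ot\cdots\ot e_{a_b}$ (with $e_1=\id_1$, $e_2=\vierpart$) form a complete set of representatives of the equivalence classes of projective partitions in $NC_{even}$, and then only checks via \Cref{lem::Ht(p)_Q} that these representatives lie in $\cP$. You instead prove the classification yourself: first the normal form (every element of $\cP$ is literally such a tensor word --- your interval argument via non-crossing, combined with self-mirroring of blocks coming from the factorisation $p=p_0^*p_0$, is sound), and then the rigidity statement that distinct words are inequivalent, which is indeed exactly the point where $NC_{even}$ diverges from $P_{even}$ in \Cref{thm::indecomp_obj_Hn}. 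What the paper's route buys is brevity, by outsourcing the combinatorics to \cite{FW16}; what yours buys is a self-contained proof, a sharper description of $\cP$ as a set rather than only up to equivalence, and transparency about precisely where non-crossing and even block size are used.

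The only step you leave implicit is the parenthetical ``(as one checks) $t(r)=b$'', and this is where the real content of the rigidity argument sits, so let me record why it holds. All blocks of $r$ are through-blocks, as you argue; moreover, one row of $r$ refines the corresponding row partition of $q=r^*\cdot r$ into intervals of sizes $s_a'$, and the other row refines that of $p=r\cdot r^*$ into intervals of sizes $s_a$, so by evenness every block of $r$ is of type $(1,1)$ or $(2,2)$. If a size-two interval of $q$, say $\{i,i+1\}$, met two distinct blocks of $r$, both blocks would be forced to be $(1,1)$-blocks, and then the block of $r^*\cdot r$ containing $i$ would be $\{i,i'\}$, of size two, contradicting that the block of $q$ containing $i$ has size four. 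Hence the row partitions of $r$ coincide with those of $q$ and $p$, giving $t(r)=b$; your bijection $\pi$, the non-crossing argument forcing $\pi=\id$, and the parity argument forcing $s_a=s_a'$ then go through exactly as you state.
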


\begin{proof}
By \cite[Lem.~5.12]{FW16}, the set $E:=\{ h_a \mid b\in \NN_0, a\in \{1,2\}^b, \sum_{i=1}^b a_i=k\}$ is a set of representatives for all equivalence classes of projective partitions. Moreover, by \Cref{lem::Ht(p)_Q}, all of these partitions lie in $\cP$. 

Since $S(p)=\{ \id\}$ for all $p\in \ProjC$ by \Cref{ex::FW}, \Cref{thm::indecompsable_obj_by_A_k} yields a bijection between the indecomposables in $\RepCt$ up to isomorphism and $\bigsqcup_{e\in E} \Irr (\{ \id \})$ and hence the claim follows.
\end{proof}

\newcommand\cS{\mathcal{S}}

\subsection{Temperley--Lieb categories and other non-crossing partition categories} \label{subs::Ot+andSt+}
In \Cref{lem::ex0} we have seen that the indecomposable objects of the Temperley--Lieb category $\uRep(O^+_t)=\uRep(NC_2,t)$ as well as of the category $\uRep(S^+_t)=\uRep(NC,t)$ are indexed by the non-negative integers $\NN_0$. The indecomposable objects of the Temperley--Lieb category have been studied in various settings and can be described using Jones--Wenzl idempotents, discovered by Jones \cite{Jo83}. Even though this is probably known to experts (as the `fattening' procedure), we give a proof that $\uRep(S_{t^2}^+)$ is equivalent to a full subcategory of $\uRep(O_t^+)$ for any $t\in \CC\backslash \{0\}$. Using this, we can also specify the indecomposable objects in $\uRep(S_{t}^+)$. 

The following inductive definition is due to Wenzl \cite{We87}. Set 
\begin{equation} \label{eq::S}
    \cS :=  \{2\cdot \cos \left(\frac{j \pi}{l}\right) \mid l\in \NN_{\geq 2}, j\in \{1,\ldots,l-1\}\} , 
\end{equation} 
then for any $t\notin\cS$ and any $k\in \NN_0$ the Jones--Wenzl idempotent $e_k\in \End_{\uRep(O_t^+)}([k])$ is recursively defined via:
\begin{align*}
&~e_0 = \text{id}_0,~e_1 = \text{id}_1, \\
&\JWiTikz
\end{align*}
with $a_1=0$ and $a_k=(t-a_{k-1})^{-1}$ for all $k\geq 2$.

\begin{example} For instance, $e_2 = \Paa\Paa - \tfrac 1t \twoblocks$.
\end{example}

Using \Cref{thm::indecompsable_obj_by_A_k}, we recover a known result about the Temperley--Lieb categories.

\begin{proposition} \label{prop::Indec_Otp}
For any $t\in \CC\backslash \cS$ 
\[ \phi: \NN_0 \to \left\{ \begin{matrix} \text{isomorphism classes of non-zero} \\ \text{indecomposable objects in }  \uRep(O_t^+) \end{matrix} \right\},~ k\mapsto ([k],e_k) \]
is a bijection.
\end{proposition}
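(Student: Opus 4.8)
The plan is to match the map $\phi\colon k\mapsto([k],e_k)$ against the bijection already produced in \Cref{lem::ex0}. By that lemma, for $\cC=NC_2$ we have $\cP=\{\id_k\mid k\in\NN_0\}$ and $S(\id_k)=\{\id\}$, so \Cref{thm::indecompsable_obj_by_A_k} puts the isomorphism classes of non-zero indecomposable objects in $\uRep(O_t^+)$ in bijection with $\NN_0$, the class indexed by $k$ being the one attached to the projective partition $\id_k$. It therefore suffices to show that for each $k$ the object $([k],e_k)$ represents the isomorphism class attached to $\id_k$; the asserted bijectivity of $\phi$ then follows from \Cref{lem::ex0}.

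First I would unwind what the ``class attached to $\id_k$'' means concretely. Writing $E=\End([k])=\CC\,\cC(k,k)$ and noting $\id_k E\id_k=E$, the ideal $I_{\id_k}$ equals the span $I_k$ of all partitions in $\cC(k,k)$ with fewer than $k$ through-blocks, and \Cref{prop::FW} gives $E/I_k\cong\CC S(\id_k)=\CC$, with the class of $\id_k$ as generator. Hence, by \Cref{lem::proj_part_final_corr} and \Cref{lem::idempotent_corr}, the object attached to $\id_k$ is $([k],\mathcal{L})$, where $\mathcal{L}$ is the unique-up-to-conjugacy primitive idempotent of $E$ lifting $\id_k+I_k$; equivalently, $\mathcal{L}$ is characterised as a primitive idempotent of $E$ that is congruent to $\id_k$ modulo $I_k$ (equivalently, not contained in $I_k$). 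By \Cref{lem::lem_Krull-Schmidt}, its image is then well defined up to isomorphism.

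It remains to verify that the Jones--Wenzl idempotent $e_k$ is such a lift. Since $t\notin\mathcal{S}$, the scalars $a_k=(t-a_{k-1})^{-1}$ are well defined, so $e_k$ exists, and Wenzl's recursion makes it idempotent. The recursion adds to $\id_k$ only terms factoring through a cap--cup pair, hence terms with strictly fewer than $k$ through-blocks; thus $e_k\equiv\id_k\pmod{I_k}$, and in particular $e_k\notin I_k$. Finally, $e_k$ is primitive: by \Cref{rem::TL_semisimple} the category $\uRep(O_t^+)$ is semisimple for $t\notin\mathcal{S}$, and $e_k$ is the projector onto the one-dimensional top cell module of the Temperley--Lieb algebra, which occurs with multiplicity one, so $e_k E e_k\cong\CC$ has no nontrivial idempotents. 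By uniqueness of the lift, $e_k$ is conjugate to $\mathcal{L}$, whence $([k],e_k)$ lies in the class attached to $\id_k$, and $\phi$ coincides with the bijection of \Cref{lem::ex0}.

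The main obstacle is the identification of $e_k$ as a \emph{primitive} idempotent lying in the correct coset. The congruence $e_k\equiv\id_k\pmod{I_k}$ is an immediate consequence of the recursive definition, but primitivity genuinely uses semisimplicity together with the one-dimensionality and multiplicity-one of the top Temperley--Lieb cell; this is precisely where the hypothesis $t\notin\mathcal{S}$ enters essentially, both to guarantee that the $a_k$ (and hence $e_k$) are defined and that $e_k E e_k\cong\CC$.
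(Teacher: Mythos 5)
Your proof is correct and follows essentially the same route as the paper: the paper's own argument likewise rests on \Cref{lem::ex0} and identifies the Jones--Wenzl idempotent $e_k$ as the lift of $\id_k + I_k$, using that the identity partition occurs in $e_k$ with coefficient $1$ while all other terms of the recursion have fewer than $k$ through-blocks. The only difference is that you make the primitivity of $e_k$ explicit (via semisimplicity and the one-dimensional top cell, i.e.\ $e_k \End([k]) e_k \cong \CC$), a point the paper leaves implicit as a known property of Jones--Wenzl idempotents.
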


\begin{proof} We apply \Cref{lem::ex0}. Now the assertion follows from the observation that the Jones--Wenzl idempotents recursively defined as above are lifts of the idempotents $\id_k +I_k \in \End ([k]) / I_k$.
\end{proof}

\begin{remark} If $t\in\cS$, then only finitely many Jones--Wenzl idempotents are defined, and the last one of them generates the negligible morphisms in $\uRep_0(NC_2, t)$ (see \cite[Prop.~2.1.]{GW02}). Out of the infinitely many indecomposables in $\uRep(O^+_t)$, only finitely many are not isomorphic to the zero object in the semisimplification of $\uRep(O^+_t)$, the category obtained as a quotient by the tensor ideal of negligible morphisms; they correspond to the finitely many Jones--Wenzl idempotents, except the last one (see for instance, \cite[Thm.~5.4.2]{Ch14}). 
%We will observe a similar phenomenon for group-theoretical $\cC$ in \Cref{thm::indecompsable_obj_by_A_k_finite}.
\end{remark}

In the following we show that $\uRep(S_{t^2}^+)$ is equivalent to a full subcategory of $\uRep(O_t^+)$ for any $t\in \CC\backslash \{0\}$.  
\begin{definition}
Let $t\in \CC$. We denote by $\mathcal{D}(t)$ the full subcategory of $\uRep(O_t^+)$ with objects
\[ \{ (A,e)\in \uRep(O_t^+) \mid A=\bigoplus_{i=1}^l [k_i], k_i\in \NN_0 \text{ even, for any } 1\leq i\leq l\} .\]
\end{definition}

Note that $\mathcal{D}(t)$ is the Karoubian envelope of the full subcategory of $\uRep_0(O_t^+)$ with objects $\{ [k]\mid k\in \NN_0 \text{ even} \}$.

\newcommand\relhat{\widehat{\sim}}
\begin{definition}[{\cite[Ex.~9.42]{NS06}}]
Let $k,l\in \NN$. To any partition $p\in NC_2(2k,2l)$ we associate a partition $\widehat{p}\in NC(k,l)$ as follows: First, we define a total order on the set of points $P:=\{1,\dots,2k,1',\dots,(2l)'\}$ of $p$ by setting $1<\dots< 2k< (2l)'\dots< 1'$. Now for any two odd points $x,y\in\widehat P:=\{1,3,\dots,2k-1,1',3',\dots,(2l-1)'\}$ we define the interval
$$
I_{xy} := \{ z\in P: x< z\leq y \text{ or }y<z \leq x\},
$$ and we set $x\relhat y$ if there are no two points $x'\in I_{xy},y'\in P\setminus I_{xy}$ which lie in the same block of $p$. It can be checked that this yields an equivalence relation $\relhat$ on $\widehat P$. We define $\widehat p$  as the partition of $\widehat P$ whose blocks are the equivalence classes of $\relhat$, where we identify $\widehat P$ with the set $\{1,\dots,k,1',\dots,l'\}$ naturally.
%For any odd upper point $m\in \{1,3,\ldots,2k-1\}$, we insert a new point to the right of $m$. Similarly, for any odd lower point $m'\in \{1',3',\ldots,(2l-1)'\}$, we insert a new point on the right of $m'$. Then $\widehat{p}\in NC(k,l)$ is the coarsest partition on all new points which can be represented by a string diagram whose strings do not cross those of $p$. In other words, any two of the newly added points lie in the same block if they can be connected with a string not crossing any string drawn between any two points which lie in the same block of $p$. (Note that this only depends on the positions of the four points involved, so the definition of $\widehat{p}$ is independent of the choice of a string diagram for $p$.) \td{check} Moreover, we set $\widehat{\id_0}=\id_0$.
\end{definition}

$\widehat p$ can be determined graphically by choosing a suitable string diagram representation of $p$ and inserting a new point to the left of any odd point:

\begin{example}
The following diagram shows that, for $p=\Partition{\Pblock 0 to 0.25:3,4 \Pblock 1 to 0.75:2,3 \Pline (1,0) (1,1) \Pline (2,0) (4,1)}$, we have $\widehat{p}=\Partition{\Pblock 1 to 0.6:1,2 \Pline (1,0) (1,0.6) \Psingletons 0 to 0.25:2}$~:

\begin{center} \scalebox{.8}{\begin{tikzpicture}
\coordinate (A1) at (0,0);
\coordinate (A2) at (1,0);
\coordinate (A3) at (2,0);
\coordinate (A4) at (3,0);
\coordinate (B1) at (0,1);
\coordinate (B2) at (1,1);
\coordinate (B3) at (2,1);
\coordinate (B4) at (3,1);
\coordinate (B5) at (0.5,0);
\coordinate (B6) at (2.5,0);
\coordinate (B7) at (0.5,1);
\coordinate (B8) at (2.5,1);

\fill (A1) circle (2.5pt);
\fill (A2) circle (2.5pt);
\fill (A3) circle (2.5pt);
\fill (A4) circle (2.5pt);
\fill (B1) circle (2.5pt);
\fill (B2) circle (2.5pt);
\fill (B3) circle (2.5pt);
\fill (B4) circle (2.5pt);
\fill (B5) circle (1.5pt);
\fill (B6) circle (1.5pt);
\fill (B7) circle (1.5pt);
\fill (B8) circle (1.5pt);

\draw (A1) -- (B1);
\draw (A3) -- (2,0.3) -- (3,0.3) -- (A4);
\draw (A2) -- (B4);
\draw (B2) -- (1,0.8) -- (2,0.8) -- (B3);

\draw[dashed] (B5) -- (B7);
\draw (B6) -- (2.5,0.15);
\draw[dashed] (0.5,0.5) -- (1.75,0.5) -- (2.5,0.875) -- (B8);
\end{tikzpicture}}\end{center}
\end{example}

It is well-known that the map $NC_2(2k,2l)\to NC(k,l),~p\mapsto \widehat{p}$, called \emph{fattening operation}, is a bijection, see \cite[Ex.~9.42]{NS06}. We will now show that, together with a suitable scaling, this map induces an equivalence of monoidal categories between $\mathcal{D}(t)$ and $\uRep(S_{t^2}^+)$.

\begin{definition} Let $t\in \CC \backslash \{0\}$ and let $\sqrt{t}\in \CC$ be any square root of $t$. We denote the trace in $\uRep(O_t^+)$ and $\uRep(S_{t^2}^+)$ by $\text{tr}$ and $\text{tr}_2$, respectively. We set
$$
  \mathcal{G}([2k]) := [k]  ,\quad
   \mathcal{G}(p) := a(p) \widehat{p} \quad \in NC(k,l)
  \qquad\text{for all }k,l\in\NN_0, p\in NC_2(2k,2l) ,
$$
where
\begin{align*}
 p' &:= \begin{cases}
    p\ot \Uaa \ot \cdots \ot \Uaa \in NC_2(2l,2l) & l\geq k , \\
    p\ot \Laa \ot \cdots \ot \Laa \in NC_2(2k,2k) & k\geq l ,
    \end{cases}
    \\
  a(p) &:= \left( \sqrt{t}\right)^{|k-l|} \frac{\text{tr}(p')}{\text{tr}_2(\widehat{p'})} .
\end{align*}
\end{definition}

\begin{lemma} \label{lem::Rechenregeln_ap}
We make the same assumptions as in the above lemma and let $p\in NC_2(2k,2l)$.
\begin{enumerate}[label=(\roman*)]
    \item We have $a(p) = \left( \sqrt{t}\right)^{k-l} a(p')$.
    \item We have $a(p \ot \id_{2m})=a(p)$ for all $m\in \NN_0$.
    \item If $k=l$, then $a(p \ot r) = \frac{1}{t^y} a(p)$ with $r=\twoblocks\ot \cdots \ot \twoblocks \in P(2y,2y)$ for all $y\in\NN_0$.
\end{enumerate}
\end{lemma}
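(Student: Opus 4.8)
The plan is to reduce all three identities to two structural inputs: multiplicativity of the categorical trace over tensor products (which holds because $\uRep(O_t^+)$ and $\uRep(S_{t^2}^+)$ are spherical, so $\tr(f\otimes g)=\tr(f)\tr(g)$ for endomorphisms $f,g$), and monoidality of the fattening operation. From the first input I get $\tr(\id_{[n]})=t^n$ in $\uRep(O_t^+)$ and $\tr_2(\id_{[n]})=(t^2)^n=t^{2n}$ in $\uRep(S_{t^2}^+)$, together with the two elementary values $\tr(\twoblocks)=t$ and $\tr_2(\bbar)=t^2$, each obtained by closing up a single cap-cup into one loop (valued $t$, resp.\ $t^2$). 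From the second input I get $\widehat{p\otimes q}=\widehat p\otimes\widehat q$ (the insertion-and-coarsening procedure is local to each point, and horizontal juxtaposition creates no new crossings), as well as $\widehat{\id_{2m}}=\id_m$ and $\widehat{\twoblocks}=\bbar$. With these in hand, each part becomes bookkeeping.

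Part (i) is immediate from the definition. Since $p'$ has equally many upper and lower points, it is square, so applying the construction $q\mapsto q'$ to $q=p'$ returns $(p')'=p'$; the $\sqrt t$-prefactor in $a(p')$ is then trivial and $a(p')=\tr(p')/\tr_2(\widehat{p'})$. Substituting this into the defining formula for $a(p)$ (whose prefactor is precisely the power of $\sqrt t$ appearing in the claim) gives $a(p)=(\sqrt t)^{\,k-l}a(p')$, which is the assertion.

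For part (ii) I set $\tilde p:=p\otimes\id_{2m}$ and note that $|(k+m)-(l+m)|=|k-l|$, so the $\sqrt t$-prefactor is unchanged. Assuming $l\ge k$ (the other case being symmetric), $(\tilde p)'=p\otimes\id_{2m}\otimes\Uaa^{\otimes(l-k)}$ is an endomorphism of $[2(l+m)]$ whose middle tensor factor $\id_{2m}$ is connected to nothing else; pulling this factor out through the pivotal calculus closes its $2m$ strands into $\dim[2m]=t^{2m}$ and leaves the endomorphism $p\otimes\Uaa^{\otimes(l-k)}=p'$, so $\tr((\tilde p)')=t^{2m}\tr(p')$. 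On the denominator, monoidality yields $\widehat{(\tilde p)'}=\widehat{p'}\otimes\id_m$ (using $\widehat{\id_{2m}}=\id_m$), whence $\tr_2(\widehat{(\tilde p)'})=t^{2m}\tr_2(\widehat{p'})$. The common factor $t^{2m}$ cancels in the ratio, so $a(\tilde p)=a(p)$. For part (iii), $k=l$ makes both $p$ and $p\otimes r$ square, so the prefactors are trivial, $(p\otimes r)'=p\otimes r$ and $p'=p$; thus $a(p\otimes r)=\tr(p\otimes r)/\tr_2(\widehat{p\otimes r})$ and $a(p)=\tr(p)/\tr_2(\widehat p)$. Multiplicativity gives $\tr(p\otimes r)=\tr(p)\,\tr(\twoblocks)^y=\tr(p)\,t^{y}$, while monoidality gives $\widehat{p\otimes r}=\widehat p\otimes\bbar^{\otimes y}$ and hence $\tr_2(\widehat{p\otimes r})=\tr_2(\widehat p)\,\tr_2(\bbar)^y=\tr_2(\widehat p)\,t^{2y}$. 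Dividing, $a(p\otimes r)=a(p)\cdot t^{y}/t^{2y}=t^{-y}a(p)$.

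The only genuinely delicate points are the two structural inputs. The main obstacle is justifying that a middle identity tensor factor may be factored out in the \emph{non-symmetric} category $NC_2$: I would argue this purely through the spherical (pivotal) calculus, since the $\id_{2m}$ strands are unlinked from the rest and can be isotoped into $2m$ disjoint loops contributing $\dim[2m]=t^{2m}$, independently of what sits to either side. Secondary care is needed to verify the monoidality of the fattening and its values $\widehat{\id_{2m}}=\id_m$ and $\widehat{\twoblocks}=\bbar$ directly from the definition of $\widehat{\,\cdot\,}$, checking that horizontal juxtaposition introduces no new connections or crossings. Everything else is routine substitution using $\tr(\id_{[n]})=t^n$, $\tr_2(\id_{[n]})=t^{2n}$, $\tr(\twoblocks)=t$ and $\tr_2(\bbar)=t^2$.
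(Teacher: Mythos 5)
Parts (i) and (iii), and part (ii) in the case $k=l$, are correct and coincide with the paper's own proof: in those cases you only ever take traces of tensor products of \emph{endomorphisms}, where multiplicativity of the trace and monoidality of the fattening are legitimate tools.

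The case $k\neq l$ of part (ii) contains a genuine gap. The step where you ``pull out'' the middle factor $\id_{2m}$, claiming $\tr((\tilde p)')=t^{2m}\tr(p')$, is false: the identity $\tr(f\ot g)=\tr(f)\tr(g)$ is only available when both factors are endomorphisms, and here the factors flanking $\id_{2m}$, namely $p\in NC_2(2k,2l)$ and $\Uaa^{\ot(l-k)}$, are not. Because $p$ has unequal numbers of upper and lower points, the trace closure (which joins the $i$-th upper point to the $i$-th lower point) pairs the tops of the identity strands with lower points of $p$ and their bottoms with points of the added caps; the identity strands are thereby threaded into loops running through $p$, and since $NC_2$ is planar there is no braiding with which to isotope them into $2m$ free loops. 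Concretely, take $p=\Laa$ (so $k=0$, $l=1$) and $m=1$: then $(\tilde p)'=\Laa\ot\id_2\ot\Uaa$, whose closure is a \emph{single} loop, so $\tr((\tilde p)')=t$ and not $t^{2}\,\tr(\twoblocks)=t^{3}$ as your formula predicts. Your denominator claim $\widehat{(\tilde p)'}=\widehat{p'}\ot\id_m$ fails for the same reason: monoidality of the fattening actually gives $\widehat{(\tilde p)'}=\widehat{p}\ot\id_m\ot\widehat{\Uaa}^{\ot(l-k)}$, a different partition (the order of the non-endomorphism factors matters), and in the example $\tr_2(\widehat{(\tilde p)'})=t^{2}$, not $t^{4}$. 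The two errors differ from the correct values by the same factor $t^{2m}$ and cancel in the quotient, so your final identity $a(p\ot\id_{2m})=a(p)$ is true, but the argument given does not establish it. The paper's proof of this case runs differently: it first invokes (i) to reduce to showing $a((\tilde p)')=a(p')$, and then verifies by a diagram comparison that the closures of $(\tilde p)'$ and of $p'$ have exactly the same connected components (the identity strands merely reroute existing loops), yielding the \emph{equalities} $\tr((\tilde p)')=\tr(p')$ and $\tr_2(\widehat{(\tilde p)'})=\tr_2(\widehat{p'})$; these equalities are what your proof would need in place of the two proportionality claims.
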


\begin{proof}
\begin{enumerate}[label=(\roman*)]
\item The claim follows directly from the definition of $\mathcal{G}$, since                 
    $a(p')=\frac{\text{tr}(p')}{\text{tr}_2(\widehat{p'})}$.
\item Let $q=p \ot \id_{2m}$. If $k=l$, then we have $\widehat{q}=\widehat{p}\ot \id_2$ and hence
    \[ a(q)= \frac{\text{tr}(q)}{\text{tr}_2(\widehat{q})} 
    = \frac{\text{tr}(p) \cdot t^{2m}}{\text{tr}_2(\widehat{p}) \cdot (t^2)^{m} } = a(p).\] 
    Now, let $k>l$. The case $k<l$ follows analogously. Without loss of generality we assume that $m=2$. By $(i)$ we have to show that $a(q')=a(p')$. We have
    \begin{center}\scalebox{0.8}{\begin{tikzpicture}
        \coordinate [label=left:{\scalebox{1.25}{$\tr(q')=\tr($}}](O) at (0,1);
        \coordinate [label=right:{\scalebox{1.25}{$)=\tr(p')$.}}](O) at (7,1);
        \coordinate [label=left:{\scalebox{1.25}{$p$}}](O) at (1.5,1);
        \coordinate [label=left:{$\ldots$}](O) at (5.2,0);
        \coordinate (A1) at (0,2);
        \coordinate (A2) at (5,2);
        \coordinate (A3) at (5.5,2);
        \coordinate (A4) at (6,2);
        \coordinate (B1) at (0,0);
        \coordinate (B2) at (2,0);
        \coordinate (B3) at (2.5,0);
        \coordinate (B4) at (3,0);
        \coordinate (B5) at (3.5,0);
        \coordinate (B6) at (4,0);
        \coordinate (B7) at (5.5,0);
        \coordinate (B8) at (6,0);
        
        \fill (A1) circle (2.5pt);
        \fill (A2) circle (2.5pt);
        \fill (A3) circle (2.5pt);
        \fill (A4) circle (2.5pt);
        \fill (B1) circle (2.5pt);
        \fill (B2) circle (2.5pt);
        \fill (B3) circle (2.5pt);
        \fill (B4) circle (2.5pt);
        \fill (B5) circle (2.5pt);
        \fill (B6) circle (2.5pt);
        \fill (B7) circle (2.5pt);
        \fill (B8) circle (2.5pt);
        
        \draw[dashed] (A1) -- (A2) -- (B2) -- (B1) -- (A1);
        \draw (A3) to [bend left=90] (7,2) -- (7,0) to [bend left=90] (B7) ;
        \draw (A4) to [bend left=90] (6.5,2) -- (6.5,0) to [bend left=90] (B8) ;
        \draw (A3) -- (B3);
        \draw (A4) -- (B4);
        \draw (B5) -- (3.5,0.2) -- (4,0.2) -- (B6);
        \draw (B7) -- (5.5,0.2) -- (6,0.2) -- (B8);
    \end{tikzpicture}} \end{center}
    Analogously one can check that $\tr_2 (\widehat{q'}) = \tr_2(\widehat{p'})$ and hence 
    \[ a(q') = \frac{\text{tr}(q')}{\text{tr}_2(\widehat{q'})} = \frac{\text{tr}(q')}{\text{tr}_2(\widehat{q'})} a(p') .\]
\item Since $k=l$, we have $\widehat{p\ot r} = \widehat{p} \ot \widehat{r}$ and $\widehat{r}=\bbar \ot \cdots \ot \bbar \in P(y,y)$. It follows that
    \[ a(p\ot r)= \frac{\text{tr}(p\ot r)}{\text{tr}_2(\widehat{p\ot r})} 
    = \frac{\text{tr}(p) \cdot t^{y}}{\text{tr}_2(\widehat{p}) \cdot (t^2)^{y} } = \frac{1}{t^y} a(p).\] 
\end{enumerate}
\end{proof}

\begin{lemma} \label{lem::equivalence_St+_Ot+} $\mathcal{G}_t:=\mathcal{G}$ defines an equivalence of monoidal categories $\mathcal{D}(t)\to \uRep(S_{t^2}^+)$ for all $t\in \CC \backslash \{0\}$.
\end{lemma}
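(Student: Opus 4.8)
The plan is to verify directly that $\mathcal{G}$ is a strict monoidal functor which is fully faithful and essentially surjective. Since both $\mathcal{D}(t)$ and $\uRep(S_{t^2}^+)$ are the (additive, idempotent) completions of their underlying diagram categories, and $\mathcal{G}$ is defined on the generating morphisms by $p\mapsto a(p)\widehat p$, it suffices to check the functor and monoidality axioms on partitions and then extend formally to the completions (recall that $\mathcal{D}(t)$ is the Karoubi envelope of the even-object subcategory of $\uRep_0(O_t^+)$). On objects, $\mathcal{G}([2k])=[k]$ is strict monoidal. The statement therefore reduces to three points: (a) $\mathcal{G}$ respects identities and tensor products of morphisms, (b) $\mathcal{G}$ respects composition, and (c) the resulting functor is fully faithful and essentially surjective. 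I expect (b) to be the only genuine difficulty.

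For (a) and the formal parts of (c): first $\widehat{\id_{2k}}=\id_k$, and a short trace computation gives $a(\id_{2k})=\text{tr}(\id_{2k})/\text{tr}_2(\id_k)=t^{2k}/(t^2)^{k}=1$, so $\mathcal{G}(\id_{[2k]})=\id_{[k]}$. Using that the fattening operation is compatible with horizontal concatenation, $\widehat{p\ot q}=\widehat p\ot\widehat q$, and that the categorical traces are multiplicative under $\ot$, Lemma~\ref{lem::Rechenregeln_ap}(i) lets me reduce $a(p\ot q)$ to the balanced case and conclude $a(p\ot q)=a(p)a(q)$; hence $\mathcal{G}(p\ot q)=\mathcal{G}(p)\ot\mathcal{G}(q)$. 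Moreover $a(p)$ is a ratio of nonzero powers of $t$ times a power of $\sqrt t$, hence a nonzero monomial in $\sqrt t$ for every $t\neq0$; as $p\mapsto\widehat p$ is a bijection of diagram bases, $p\mapsto a(p)\widehat p$ is then a linear isomorphism $\CC NC_2(2k,2l)\xrightarrow{\sim}\CC NC(k,l)$ on each morphism space. Granting functoriality, this makes $\mathcal{G}$ fully faithful on the diagram categories and hence on their completions, and essential surjectivity is immediate: every $[k]=\mathcal{G}([2k])$ lies in the image, and every object of $\uRep(S_{t^2}^+)$ is a retract of a finite direct sum of such $[k]$.

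The heart of the proof is (b). Because $p\mapsto\widehat p$ intertwines diagrammatic (loop-free) composition, $\widehat{p\cdot q}=\widehat p\cdot\widehat q$ as non-crossing partitions, the identity $\mathcal{G}(pq)=\mathcal{G}(p)\mathcal{G}(q)$ is equivalent to the scalar identity
\begin{equation*}
a(p)\,a(q)\,(t^2)^{l(\widehat p,\widehat q)} \;=\; t^{l(p,q)}\,a(p\cdot q)
\tag{$\star$}
\end{equation*}
for all composable $p\in NC_2(2l,2m)$, $q\in NC_2(2k,2l)$, where the factor $t^2$ on the left and $t$ on the right reflect the differing loop parameters of $\uRep(S_{t^2}^+)$ and $\uRep(O_t^+)$. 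The subtlety is that $(\star)$ is \emph{not} accounted for by a naive loop count: a single loop of $\widehat p\cdot\widehat q$ corresponds under fattening to a doubled (ribbon) loop in the $NC_2$-picture, and the powers of $\sqrt t$ hidden in $a(p),a(q),a(p\cdot q)$ must be tracked against $t^{l(p,q)}$ and $(t^2)^{l(\widehat p,\widehat q)}$. I would prove $(\star)$ by reducing to the case in which $p$ is a single elementary ``layer'' $\id_{2a}\ot\Uaa\ot\id_{2b}$ or $\id_{2a}\ot\Laa\ot\id_{2b}$, so that a general composition is built one cup or cap at a time and $(\star)$ follows by induction. For these elementary layers the change in $a$ is governed exactly by Lemma~\ref{lem::Rechenregeln_ap}: tensoring with an identity leaves $a$ unchanged (part~(ii)), while a contraction creating a non-through pair block $\twoblocks$ multiplies $a$ by $t^{-1}$ (part~(iii)); one then checks that these compensate precisely for the mismatch between the $t$- and $t^2$-loop factors. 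As a sanity check, for $p=\Uaa$, $q=\Laa$ one computes $a(\Uaa)=a(\Laa)=(\sqrt t)^{-1}$ and $l(\Uaa,\Laa)=l(\upsingleton,\singleton)=1$, so $(\star)$ reads $(\sqrt t)^{-2}\cdot t^2 = t\cdot a(\emptyset)=t$, which holds.

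The main obstacle is exactly this loop bookkeeping: setting up the fattening carefully enough that $\widehat{p\cdot q}=\widehat p\cdot\widehat q$ and the induced loop correspondence are rigorous, and then matching the accumulated powers of $\sqrt t$ on the two sides of $(\star)$ through the inductive step. Once $(\star)$ is established, functoriality follows, and with it full faithfulness and essential surjectivity, yielding the monoidal equivalence $\mathcal{D}(t)\simeq\uRep(S_{t^2}^+)$.
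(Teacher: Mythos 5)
Your overall skeleton agrees with the paper's: both reduce the statement to the scalar identity $(\star)$, namely $t^{l(q,p)}a(q\cdot p)=(t^2)^{l(\widehat q,\widehat p)}\,a(p)a(q)$, and both get full faithfulness and essential surjectivity from the bijectivity of the fattening together with $a(p)\neq 0$. The divergence is in how $(\star)$ is proved, and there your argument has a genuine gap. Your induction rests on the claim that every morphism of $\mathcal{D}(t)$ is a composition of layers $\id_{2a}\ot\Uaa\ot\id_{2b}$ and $\id_{2a}\ot\Laa\ot\id_{2b}$, i.e.\ cups and caps occupying the point pairs $(2a+1,2a+2)$. This is false. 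Fattening intertwines diagrammatic composition and sends such a layer to $\id_a\ot\upsingleton\ot\id_b$ (resp.\ $\id_a\ot\singleton\ot\id_b$), so every composite of your layers fattens into the category $NC_b=\langle\singleton\rangle$ of noncrossing partitions with blocks of size at most two. But $\mathcal{D}(t)$ contains, for instance, $\id_1\ot\Uaa\ot\id_1\in NC_2(4,2)$, whose fattening is the three-block $\{\{1,2,1'\}\}\in NC(2,1)$, and $\id_1\ot\twoblocks\ot\id_1\in NC_2(4,4)$, whose fattening is $\vierpart$; since fattening is injective, neither is a composite of your layers. To generate $\mathcal{D}(t)$ you must also allow cups and caps at the point pairs $(2a,2a+1)$, and it is exactly for these that $(\star)$ is nontrivial: their fattenings merge blocks into blocks of size $\geq 3$, and the bookkeeping is not covered by \Cref{lem::Rechenregeln_ap}, which only controls $a$ under tensoring with $\id_{2m}$ and with copies of $\twoblocks$. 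Your sanity check ($p=\Uaa$, $q=\Laa$) is a layer of the restricted kind, so it never probes this case.

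By contrast, the paper does not attempt any such induction: it invokes Kodiyalam--Sunder \cite[Thm.~4.2]{KS08}, which states precisely that $p\mapsto a(p)\widehat p$ is an algebra isomorphism $\End_{\uRep(O_t^+)}([2n])\to\End_{\uRep(S_{t^2}^+)}([n])$. This gives $(\star)$ in the balanced case $k=l=m$; the general case is then obtained by padding $p$ and $q$ with cups/caps into endomorphisms and using \Cref{lem::Rechenregeln_ap} to track the resulting powers of $\sqrt t$ (Steps 1.2--1.4 of the paper), and compatibility with $\ot$ is \emph{derived} from compatibility with composition via $p\ot q=(\id_{2l}\ot q)\circ(p\ot\id_{2m})$ rather than proved directly. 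Note that your direct argument for $a(p\ot q)=a(p)a(q)$ also glosses over a real issue: when the defects $k-l$ and $m-n$ have opposite signs, $(p\ot q)'$ is not $p'\ot q'$, so the ``reduction to the balanced case'' is not immediate. The formal induction scheme you outline would be sound once the generating set is corrected, but verifying $(\star)$ for the even--odd position layers amounts to reproving the Kodiyalam--Sunder theorem; the efficient repair of your proof is to cite it, as the paper does.
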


\begin{proof}
It suffices to show that $\mathcal{G}$ is a monoidal functor since $\mathcal{G}$ is full, faithful and essentially surjective, as $p\mapsto \widehat{p}$ is a bijection.

{Step 1:} We start by showing that $\mathcal{G}(q\circ p)=\mathcal{G}(q)\circ \mathcal{G}(p)$ for all $p\in NC_2(2k,2l)$ and $q\in NC_2(2l,2m)$. Comparing diagrams one can check that $\widehat{qp}=\widehat{q}\widehat{p}$. Together with
\begin{align*}
    &\mathcal{G}(q\circ p) = t^{\ell(q,p)} \mathcal{G}(qp) = t^{\ell(q,p)} a(qp) \widehat{qp},\\
    &\mathcal{G}(q)\circ \mathcal{G}(p) = a(p) q(p) (\widehat{q} \circ \widehat{p}) =  a(p) q(p) (t^2)^{\ell(\widehat{q},\widehat{p})} (\widehat{q}\widehat{p}),
\end{align*}
it follows that it suffices to show that $t^{\ell(q,p)}a(qp)=(t^2)^{\ell(\widehat{q},\widehat{p})} a(p)a(q)$. 

{Step 1.1:}
Kodiyalam and Sunder showed that for any $n\in \NN_0$ the map 
\[ \End_{\uRep(O_t^+)}([2n]) \to \End_{\uRep(S_{t^2}^+)}([n]), ~p\mapsto \mathcal{G}(p) \]
is an algebra isomorphism (see {\cite[Thm.~4.2]{KS08}}). Hence the claim follows for $k=l=m$.

{Step 1.2:}
For arbitrary $k,l,m\in \NN_0$ we set $x:=\text{max}(k,l,m)$ and extend $p$ and $q$ to partitions in $P(x,x)$ as follows:
\begin{align*}
&\Bar{p}:= p\ot \Uaa \ot \cdots \ot \Uaa \ot \Laa \ot \cdots \ot \Laa \in P(x,x), \\
&\Bar{q}:= q\ot \Uaa \ot \cdots \ot \Uaa \ot \Laa \ot \cdots \ot \Laa \in P(x,x).
\end{align*}
Step 1.1 implies that
\begin{equation*}
    t^{\ell(\Bar{q},\Bar{p})} a(\Bar{q}\Bar{p}) = (t^2)^{\ell(\widehat{\Bar{p}},\widehat{\Bar{q}})}~ a(\Bar{q}) a(\Bar{p})
\end{equation*}
 Moreover, by construction we have
\begin{align*}
    &t^{\ell(\Bar{q},\Bar{p})} = t^{\ell(q,p)} t^{x-l} \\
    &(t^2)^{\ell(\widehat{\Bar{p}},\widehat{\Bar{q}})} = (t^2)^{\ell(\widehat{p},\widehat{q})} (t^2)^{x-l}.
\end{align*}
and thus
\begin{equation}
    t^{\ell(q,p)} a(\Bar{q}\Bar{p}) = t^{x-l} (t^2)^{\ell(\widehat{p},\widehat{q})}~ a(\Bar{q}) a(\Bar{p}).
\end{equation}

{Step 1.3:}
We claim that 
\begin{align}
    &a(p) = \left(\sqrt{t}\right)^{x-k} \left(\sqrt{t}\right)^{x-l} a(\Bar{p}),\\
    &a(q) = \left(\sqrt{t}\right)^{x-l} \left(\sqrt{t}\right)^{x-m} a(\Bar{q}),\\
    &a(qp) = \left(\sqrt{t}\right)^{x-k} \left(\sqrt{t}\right)^{x-m} a(\Bar{q}\Bar{p}).
\end{align}
We prove the first equation since the others follow analogously. If $x\in \{k,l\}$, then we have $\Bar{p}=p'$ and hence \Cref{lem::Rechenregeln_ap}(i) implies $a(p)=\left( \sqrt{t}\right)^{|k-l|} a(p') =\left(\sqrt{t}\right)^{x-k} \left(\sqrt{t}\right)^{x-l} a(\Bar{p})$.\\
If $x=m$, then we have $\Bar{p}=p'\ot r$ with $r=\twoblocks\ot \cdots \ot \twoblocks \in P(2y,2y)$. \Cref{lem::Rechenregeln_ap}(iii) implies that $a(p')= t^{y} a(\Bar{p})$ and together with \Cref{lem::Rechenregeln_ap}(i) it follows that $a(p)=\left( \sqrt{t}\right)^{|k-l|} a(p') = \left( \sqrt{t}\right)^{|k-l|} t^{y} a(\Bar{p}) = \left(\sqrt{t}\right)^{x-k} \left(\sqrt{t}\right)^{x-l} a(\Bar{p})$.

{Step 1.4:}
We are ready to show that $t^{\ell(q,p)}a(qp)=(t^2)^{\ell(\widehat{q},\widehat{p})} a(p)a(q)$. We have
\begin{align*}
    &t^{\ell(q,p)} a(qp) \\
    \overset{(4)}{=}~~~& \left(\sqrt{t}\right)^{x-k} \left(\sqrt{t}\right)^{x-m} t^{\ell(q,p)}~ a(\Bar{q} \Bar{p}) \\
    \overset{(1)}{=}~~~& \left(\sqrt{t}\right)^{x-k} \left(\sqrt{t}\right)^{x-m}~t^{x-l}~ (t^2)^{\ell(\widehat{p},\widehat{q})}~ a(\Bar{q}) a(\Bar{p}) \\
    =~~~& (t^2)^{\ell(\widehat{p},\widehat{q})} \left(\left(\sqrt{t}\right)^{x-l} \left(\sqrt{t}\right)^{x-m}  a(\Bar{q})\right) \left(\left(\sqrt{t}\right)^{x-k} \left(\sqrt{t}\right)^{x-l} a(\Bar{p})\right) \\
    \overset{(2),(3)}{=}~& (t^2)^{\ell(\widehat{p},\widehat{q})} a(q) a(p). 
\end{align*}

{Step 2:}
It remains to show that $\mathcal{G}(p\ot q)=\mathcal{G}(p)\ot \mathcal{G}(q)$ for all $p\in NC_2(2k,2l), q\in NC_2(2m,2n)$. Again by comparing diagrams one can check that $\widehat{p\ot q}=\widehat{p}\ot \widehat{q}$ and thus we have to show that $a(p\ot q)=a(p)a(q)$. By Step 1 we have
\begin{align*}
    &a(p\ot q) \\
    =~& a( (\id_{2l} \ot q)(p \ot \id_{2m}) ) \\ 
    =~& a(\id_{2l} \ot q) a(p \ot \id_{2m}).
\end{align*}
and since $a(\id_{2l} \ot q)=a(q)$ and $a(p \ot \id_{2m})=a(p)$ by \Cref{lem::Rechenregeln_ap}(ii), the claim follows. 
\end{proof}

Since there are no non-zero morphisms in $\uRep(O_t^+)$ between subobjects of $[k_1]$ with $k_1$ even and subobjects of $[k_2]$ with $k_2$ odd, \Cref{prop::Indec_Otp} and \Cref{lem::equivalence_St+_Ot+} imply the following:

\begin{proposition} \label{lem::indecomposables_St+}
If $t\in \CC\setminus\{s^2\mid s\in\cS\} = \CC \backslash \{4\cdot \cos \left(\frac{j \pi}{l}\right)^2 \mid l\in \NN_{\geq 2}, j\in \{1,\ldots,l-1\}\}$, then 
\[ \phi: \NN_0 \to \left\{ \begin{matrix} \text{isomorphism classes of non-zero} \\ \text{indecomposable objects in }  \uRep(S_t^+) \end{matrix} \right\},~ k\mapsto ([k],\mathcal{G}(e_{2k})) \]
is a bijection.
\end{proposition}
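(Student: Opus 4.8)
The plan is to deduce the statement from the known description of indecomposables in the Temperley--Lieb category $\uRep(O_s^+)$ (\Cref{prop::Indec_Otp}) by transporting it along the monoidal equivalence $\mathcal{G}\colon\mathcal{D}(s)\to\uRep(S_{s^2}^+)$ of \Cref{lem::equivalence_St+_Ot+}, where I fix a square root $s=\sqrt t$ and regard the $e_{2k}$ as the Jones--Wenzl idempotents of $\uRep(O_s^+)$. First I would check that the hypothesis on $t$ translates correctly across the square root. The set $\mathcal{S}=\{2\cos(j\pi/l)\mid l\geq2,\ 1\leq j\leq l-1\}$ is stable under negation, since $\cos((l-j)\pi/l)=-\cos(j\pi/l)$; hence for either choice of square root one has $s\in\mathcal{S}$ if and only if $t=s^2\in\{4\cos^2(j\pi/l)\}$. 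Moreover $0=4\cos^2(\pi/2)$ already lies in the excluded set, so the assumption $t\notin\{4\cos^2(j\pi/l)\}$ guarantees simultaneously that $t\neq0$ (so $s\neq0$ and \Cref{lem::equivalence_St+_Ot+} applies) and that $s\notin\mathcal{S}$ (so \Cref{prop::Indec_Otp} applies to $\uRep(O_s^+)$).

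Next I would identify the non-zero indecomposable objects of the full subcategory $\mathcal{D}(s)$. By \Cref{prop::Indec_Otp} the non-zero indecomposables of $\uRep(O_s^+)$ are exactly the $([k],e_k)$ for $k\in\NN_0$. Since $\mathcal{D}(s)$ is the Karoubi envelope on direct sums of the even objects $[k]$, and since there are no non-zero morphisms in $\uRep(O_s^+)$ between subobjects of even and of odd objects (the observation recorded just before the statement, itself an instance of \Cref{lem::hom_different_parity} as $\singleton\notin NC_2$), an indecomposable of $\uRep(O_s^+)$ lies in $\mathcal{D}(s)$ precisely when it is isomorphic to a subobject of an even object. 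This selects exactly the even-indexed ones $([2k],e_{2k})$, $k\in\NN_0$: each $([2k+1],e_{2k+1})$ is a subobject of the odd object $[2k+1]$ and therefore cannot be isomorphic to any object of $\mathcal{D}(s)$.

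Finally I would push these through $\mathcal{G}$. An equivalence of categories induces a bijection on isomorphism classes of indecomposable objects, so the non-zero indecomposables of $\uRep(S_t^+)=\uRep(S_{s^2}^+)$ are precisely the images $\mathcal{G}([2k],e_{2k})$. Since $\mathcal{G}([2k])=[k]$ and $\mathcal{G}$ acts on $\End_{\uRep(O_s^+)}([2k])$ by $p\mapsto a(p)\widehat p$ (extended linearly), one has $\mathcal{G}([2k],e_{2k})=([k],\mathcal{G}(e_{2k}))$, and $\mathcal{G}(e_{2k})$ is a primitive idempotent of $\End([k])$ because $\mathcal{G}$ is a fully faithful functor, so $\mathcal{G}(e_{2k})^2=\mathcal{G}(e_{2k}^2)=\mathcal{G}(e_{2k})$. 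This gives the asserted bijection $\phi\colon k\mapsto([k],\mathcal{G}(e_{2k}))$. The only genuinely delicate point is the bookkeeping of the square root together with the negation-symmetry of $\mathcal{S}$, which is what makes the two parameter sets match up; by contrast the separation of even and odd subobjects makes the passage to the subcategory $\mathcal{D}(s)$ essentially automatic.
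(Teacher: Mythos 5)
Your proposal is correct and follows essentially the same route as the paper: it combines \Cref{prop::Indec_Otp} and the equivalence of \Cref{lem::equivalence_St+_Ot+} with the even/odd parity separation of morphisms (\Cref{lem::hom_different_parity}) to identify the indecomposables of the even subcategory and transport them. The only difference is that you spell out details the paper leaves implicit, notably the square-root bookkeeping between the parameter sets via the negation-stability of $\mathcal{S}$ and the argument that the odd-indexed Jones--Wenzl objects cannot be isomorphic to objects of the even subcategory.
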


\newcommand\cT{\mathcal{T}}
We can also use the functor $\cG$ to compute semisimplifications in the non-crossing case. Let us pick $q\in\CC$ such that $q^2$ is a primitive $l$-th root of unity for some $l\geq 2$, and let $\cT_q$ be the semisimplification of the category of tilting modules of the quantum group $U_q(\mathfrak{sl}_2)$, i.e., the fusion category studied by Andersen (see \cite{An}). It is known to be a hom-finite $C^*$-category (\cite[Thm.~3.7]{We-Cstar}, see also \cite{Ki}) in the sense of \Cref{semisimplicity-karoubian}, and there is an equivalence $*$-functor $\cQ\colon \uRep(O^+,-(q+q^{-1}))\to\cT_q$ sending the object $[1]$ to the standard representation (for instance, by \cite[Thm.~2.5.3]{NT13} or \cite[Thm.~2.4]{Os}). 

Let us assume $q=\exp(\tfrac{j\pi}{l})$ with $l\in\NN_{\geq2}$ and $j\in\{1,\dots,\l-1\}$, so $t:=q+q^{-1}=2\cos(\tfrac{j\pi}{l})\in\cS$ (see \Cref{eq::S}).

\begin{proposition} \label{prop::semisimplification-nc} For any category of non-crossing partitions $\cC$ and $t=q+q^{-1}\in\cS$, the semisimplification of $\uRep(\cC,t^2)$ is given by the image of the functor $\cQ\circ\cG_{-t}^{-1}$ in $\cT_q$.
\end{proposition}

\begin{proof} This follows from \Cref{lem-semisimplification} with $\cD=\cT_q$.
\end{proof}

This result can be seen as a non-crossing analogue to \Cref{prop::fiber-functor}.
In particular, the semisimplification of $\uRep(S^+,t^2)$ is equivalent to $\cT^{\text{even}}_q$, the Karoubian subcategory in $\cT_q$ generated by the tensor square of the standard representation. 

\printbibliography

@phdthesis{Ah16,
  author       = {C. A. Ahmed}, 
  title        = {Representation Theory of Diagram Algebras: Subalgebras and Generalisations of the Partition Algebra},
  school       = {University of Leeds},
  year         = {2016},
}

@article {An,
    AUTHOR = {Andersen, Henning Haahr},
     TITLE = {Tensor products of quantized tilting modules},
   JOURNAL = {Comm. Math. Phys.},
  FJOURNAL = {Communications in Mathematical Physics},
    VOLUME = {149},
      YEAR = {1992},
    NUMBER = {1},
     PAGES = {149--159},
      ISSN = {0010-3616},
   MRCLASS = {17B37 (16G10 57N10 81R50 81T40)},
  MRNUMBER = {1182414},
MRREVIEWER = {Jie Du},
       URL = {http://projecteuclid.org/euclid.cmp/1104251142},
}

@article {AK,
    AUTHOR = {Andr{\'e}, Yves and Kahn, Bruno},
     TITLE = {Nilpotence, radicaux et structures mono{\"i}dales},
      NOTE = {With an appendix by Peter O'Sullivan},
   JOURNAL = {Rend. Sem. Mat. Univ. Padova},
  FJOURNAL = {Rendiconti del Seminario Matematico della Universit{\`a} di
              Padova. The Mathematical Journal of the University of Padova},
    VOLUME = {108},
      YEAR = {2002},
     PAGES = {107--291},
      ISSN = {0041-8994},
   MRCLASS = {18D10 (14F42 14L15 16N99 18E40)},
  MRNUMBER = {1956434},
MRREVIEWER = {Julien Bichon},
}

@article {BC07,
    AUTHOR = {Banica, Teodor and Collins, Benoit},
     TITLE = {Integration over compact quantum groups},
   JOURNAL = {Publ. Res. Inst. Math. Sci.},
  FJOURNAL = {Kyoto University. Research Institute for Mathematical
              Sciences. Publications},
    VOLUME = {43},
      YEAR = {2007},
    NUMBER = {2},
     PAGES = {277--302},
      ISSN = {0034-5318},
   MRCLASS = {46L54 (20G42 43A05)},
  MRNUMBER = {2341011},
MRREVIEWER = {Julien Bichon},
       URL = {http://projecteuclid.org/euclid.prims/1201011782},
}

@article{BS09,
    AUTHOR = {Banica, Teodor and Speicher, Roland},
     TITLE = {Liberation of orthogonal {L}ie groups},
   JOURNAL = {Adv. Math.},
  FJOURNAL = {Advances in Mathematics},
    VOLUME = {222},
      YEAR = {2009},
    NUMBER = {4},
     PAGES = {1461--1501},
      ISSN = {0001-8708},
   MRCLASS = {46L89 (05E10 16T05 46L54)},
  MRNUMBER = {2554941},
MRREVIEWER = {Debashish Goswami},
       DOI = {10.1016/j.aim.2009.06.009},
       URL = {https://doi.org/10.1016/j.aim.2009.06.009},
}

@article {BV09,
    AUTHOR = {Banica, Teodor and Vergnioux, Roland},
     TITLE = {Fusion rules for quantum reflection groups},
   JOURNAL = {J. Noncommut. Geom.},
  FJOURNAL = {Journal of Noncommutative Geometry},
    VOLUME = {3},
      YEAR = {2009},
    NUMBER = {3},
     PAGES = {327--359},
      ISSN = {1661-6952},
   MRCLASS = {46L65 (16T20 46L89)},
  MRNUMBER = {2511633},
MRREVIEWER = {Julien Bichon},
       DOI = {10.4171/JNCG/39},
       URL = {https://doi.org/10.4171/JNCG/39},
}

@article {BW99,
    AUTHOR = {Barrett, John W. and Westbury, Bruce W.},
     TITLE = {Spherical categories},
   JOURNAL = {Adv. Math.},
  FJOURNAL = {Advances in Mathematics},
    VOLUME = {143},
      YEAR = {1999},
    NUMBER = {2},
     PAGES = {357--375},
      ISSN = {0001-8708},
   MRCLASS = {18D10 (16W30)},
  MRNUMBER = {1686423},
MRREVIEWER = {Sorin D\u{a}sc\u{a}lescu},
       DOI = {10.1006/aima.1998.1800},
       URL = {https://doi.org/10.1006/aima.1998.1800},
}

@misc{Ch14,
Author = {J. Chen},
Title = {The Temperley-Lieb categories and skein modules},
Year = {2014},
note = {arXiv:1502.06845},
}

@article{CH17,
    AUTHOR = {Comes, Jonathan and Heidersdorf, Thorsten},
     TITLE = {Thick ideals in {D}eligne's category {$\underline{\text{Rep}}(O_\delta)$}},
   JOURNAL = {J. Algebra},
  FJOURNAL = {Journal of Algebra},
    VOLUME = {480},
      YEAR = {2017},
     PAGES = {237--265},
      ISSN = {0021-8693},
   MRCLASS = {20G05 (17B20 18D10 22E47)},
  MRNUMBER = {3633307},
MRREVIEWER = {Volodymyr V. Lyubashenko},
       DOI = {10.1016/j.jalgebra.2017.01.050},
       URL = {https://doi.org/10.1016/j.jalgebra.2017.01.050},
}

@article{CO11,
    AUTHOR = {Comes, Jonathan and Ostrik, Victor},
     TITLE = {On blocks of {D}eligne's category {$\underline{\text{Rep}}(S_t)$}},
   JOURNAL = {Adv. Math.},
  FJOURNAL = {Advances in Mathematics},
    VOLUME = {226},
      YEAR = {2011},
    NUMBER = {2},
     PAGES = {1331--1377},
      ISSN = {0001-8708},
   MRCLASS = {20C30 (18E05)},
  MRNUMBER = {2737787},
MRREVIEWER = {Karin Erdmann},
       DOI = {10.1016/j.aim.2010.08.010},
       URL = {https://doi.org/10.1016/j.aim.2010.08.010},
}

@article{Comes_Wilson,
    AUTHOR = {Comes, Jonathan and Wilson, Benjamin},
     TITLE = {Deligne's category {$\underline{\mathrm{Rep}}(GL_\delta)$} and
              representations of general linear supergroups},
   JOURNAL = {Represent. Theory},
  FJOURNAL = {Representation Theory. An Electronic Journal of the American
              Mathematical Society},
    VOLUME = {16},
      YEAR = {2012},
     PAGES = {568--609},
   MRCLASS = {17B10 (20G05)},
  MRNUMBER = {2998810},
MRREVIEWER = {Dmitry A. Timash\"{e}v},
       DOI = {10.1090/S1088-4165-2012-00425-3},
       URL = {https://doi.org/10.1090/S1088-4165-2012-00425-3},
}

@incollection {De07,
    AUTHOR = {Deligne, P.},
     TITLE = {La cat\'{e}gorie des repr\'{e}sentations du groupe sym\'{e}trique {$S_t$},
              lorsque {$t$} n'est pas un entier naturel},
 BOOKTITLE = {Algebraic groups and homogeneous spaces},
    SERIES = {Tata Inst. Fund. Res. Stud. Math.},
    VOLUME = {19},
     PAGES = {209--273},
 PUBLISHER = {Tata Inst. Fund. Res., Mumbai},
      YEAR = {2007},
   MRCLASS = {20C30 (18D10)},
  MRNUMBER = {2348906},
MRREVIEWER = {Karin Erdmann},
}

@incollection{De90,
    AUTHOR = {Deligne, P.},
     TITLE = {Cat\'{e}gories tannakiennes},
 BOOKTITLE = {The {G}rothendieck {F}estschrift, {V}ol. {II}},
    SERIES = {Progr. Math.},
    VOLUME = {87},
     PAGES = {111--195},
 PUBLISHER = {Birkh\"{a}user Boston, Boston, MA},
      YEAR = {1990},
   MRCLASS = {14A99 (12H05 18A99)},
  MRNUMBER = {1106898},
MRREVIEWER = {James Milne},
}

@inproceedings {Di87,
    AUTHOR = {Drinfel'd, V. G.},
     TITLE = {Quantum groups},
 BOOKTITLE = {Proceedings of the {I}nternational {C}ongress of
              {M}athematicians, {V}ol. 1, 2 ({B}erkeley, {C}alif., 1986)},
     PAGES = {798--820},
 PUBLISHER = {Amer. Math. Soc., Providence, RI},
      YEAR = {1987},
   MRCLASS = {17B50 (16A24 17B65 57T05 58F07 82A05 82A15)},
  MRNUMBER = {934283},
}

@book {EGNO15,
    AUTHOR = {Etingof, Pavel and Gelaki, Shlomo and Nikshych, Dmitri and
              Ostrik, Victor},
     TITLE = {Tensor categories},
    SERIES = {Mathematical Surveys and Monographs},
    VOLUME = {205},
 PUBLISHER = {American Mathematical Society, Providence, RI},
      YEAR = {2015},
     PAGES = {xvi+343},
      ISBN = {978-1-4704-2024-6},
   MRCLASS = {18D10 (16T05)},
  MRNUMBER = {3242743},
MRREVIEWER = {Julien Bichon},
       DOI = {10.1090/surv/205},
       URL = {https://doi.org/10.1090/surv/205},
}

@misc{EO18,
Author = {P. Etingof and V. Ostrik},
Title = {On semisimplification of tensor categories},
Year = {2018},
note = {arXiv:1801.04409},
}

@article{Fr17,
    AUTHOR = {Freslon, Amaury},
     TITLE = {On the partition approach to {S}chur-{W}eyl duality and free
              quantum groups},
   JOURNAL = {Transform. Groups},
  FJOURNAL = {Transformation Groups},
    VOLUME = {22},
      YEAR = {2017},
    NUMBER = {3},
     PAGES = {707--751},
      ISSN = {1083-4362},
   MRCLASS = {46L10},
  MRNUMBER = {3682834},
MRREVIEWER = {Matt McBride},
       DOI = {10.1007/s00031-016-9410-9},
       URL = {https://doi.org/10.1007/s00031-016-9410-9},
}

@book {FH91,
    AUTHOR = {Fulton, William and Harris, Joe},
     TITLE = {Representation theory},
    SERIES = {Graduate Texts in Mathematics},
    VOLUME = {129},
      NOTE = {A first course,
              Readings in Mathematics},
 PUBLISHER = {Springer-Verlag, New York},
      YEAR = {1991},
     PAGES = {xvi+551},
       DOI = {10.1007/978-1-4612-0979-9},
       URL = {https://doi.org/10.1007/978-1-4612-0979-9},
}

@article{FW16,
    AUTHOR = {Freslon, Amaury and Weber, Moritz},
     TITLE = {On the representation theory of partition (easy) quantum
              groups},
   JOURNAL = {J. Reine Angew. Math.},
  FJOURNAL = {Journal f\"{u}r die Reine und Angewandte Mathematik. [Crelle's
              Journal]},
    VOLUME = {720},
      YEAR = {2016},
     PAGES = {155--197},
      ISSN = {0075-4102},
   MRCLASS = {20G42 (05E10 46L65 81R50)},
  MRNUMBER = {3565972},
MRREVIEWER = {Kenny De Commer},
       DOI = {10.1515/crelle-2014-0049},
       URL = {https://doi.org/10.1515/crelle-2014-0049},
}

@article {Gr62,
    AUTHOR = {Green, J. A.},
     TITLE = {Blocks of modular representations},
   JOURNAL = {Math. Z.},
  FJOURNAL = {Mathematische Zeitschrift},
    VOLUME = {79},
      YEAR = {1962},
     PAGES = {100--115},
      ISSN = {0025-5874},
   MRCLASS = {20.80},
  MRNUMBER = {141717},
MRREVIEWER = {C. W. Curtis},
       DOI = {10.1007/BF01193108},
       URL = {https://doi.org/10.1007/BF01193108},
}

@article {GK76,
    AUTHOR = {Geissinger, L. and Kinch, D.},
     TITLE = {Representations of the hyperoctahedral groups},
   JOURNAL = {J. Algebra},
  FJOURNAL = {Journal of Algebra},
    VOLUME = {53},
      YEAR = {1978},
    NUMBER = {1},
     PAGES = {1--20},
      ISSN = {0021-8693},
   MRCLASS = {20C15},
  MRNUMBER = {491917},
MRREVIEWER = {P. M. van den Broek},
       DOI = {10.1016/0021-8693(78)90200-4},
       URL = {https://doi.org/10.1016/0021-8693(78)90200-4},
}

@article {GL98,
    AUTHOR = {Graham, J. J. and Lehrer, G. I.},
     TITLE = {The representation theory of affine {T}emperley-{L}ieb
              algebras},
   JOURNAL = {Enseign. Math. (2)},
  FJOURNAL = {L'Enseignement Math\'{e}matique. Revue Internationale. 2e S\'{e}rie},
    VOLUME = {44},
      YEAR = {1998},
    NUMBER = {3-4},
     PAGES = {173--218},
      ISSN = {0013-8584},
   MRCLASS = {20C99 (16G99 57M25)},
  MRNUMBER = {1659204},
MRREVIEWER = {Richard M. Green},
}

@misc{GW02,
Author = {F. Goodman and H. Wenzl},
Title = {Ideals in the Temperley Lieb category},
Year = {2002},
note = {arXiv:0206301},
}

@misc{GW19,
Author = {D. Gromada and M. Weber},
Title = {Generating linear categories of partitions},
Year = {2019},
note = {arXiv:1904.00166},
}

@article {HR04,
    AUTHOR = {Halverson, Tom and Ram, Arun},
     TITLE = {Partition algebras},
   JOURNAL = {European J. Combin.},
  FJOURNAL = {European Journal of Combinatorics},
    VOLUME = {26},
      YEAR = {2005},
    NUMBER = {6},
     PAGES = {869--921},
      ISSN = {0195-6698},
   MRCLASS = {05E10 (20C30 20G05)},
  MRNUMBER = {2143201},
MRREVIEWER = {Karin Erdmann},
       DOI = {10.1016/j.ejc.2004.06.005},
       URL = {https://doi.org/10.1016/j.ejc.2004.06.005},
}

@article{Ji85,
    AUTHOR = {Jimbo, Michio},
     TITLE = {A {$q$}-difference analogue of {$U(\mathfrak{g})$} and the
              Yang--Baxter equation},
   JOURNAL = {Lett. Math. Phys.},
  FJOURNAL = {Letters in Mathematical Physics. A Journal for the Rapid
              Dissemination of Short Contributions in the Field of
              Mathematical Physics},
    VOLUME = {10},
      YEAR = {1985},
    NUMBER = {1},
     PAGES = {63--69},
      ISSN = {0377-9017},
   MRCLASS = {17B35 (17B67 58F07 82A05)},
  MRNUMBER = {797001},
MRREVIEWER = {Alex Jay Feingold},
       DOI = {10.1007/BF00704588},
       URL = {https://doi.org/10.1007/BF00704588},
}

@article{Jo83,
    AUTHOR = {Jones, V. F. R.},
     TITLE = {Index for subfactors},
   JOURNAL = {Invent. Math.},
  FJOURNAL = {Inventiones Mathematicae},
    VOLUME = {72},
      YEAR = {1983},
    NUMBER = {1},
     PAGES = {1--25},
      ISSN = {0020-9910},
   MRCLASS = {46L35 (10D07)},
  MRNUMBER = {696688},
MRREVIEWER = {Yasuo Watatani},
       DOI = {10.1007/BF01389127},
       URL = {https://doi.org/10.1007/BF01389127},
}

@misc{Ju19,
  Author = {S. Jung},
  Title = {Linear independences of maps associated to partitions},
  Year = {2019},
  note = {arXiv:1906.10533},
}

@article {Ki,
    AUTHOR = {Kirillov, Jr., Alexander A.},
     TITLE = {On an inner product in modular tensor categories},
   JOURNAL = {J. Amer. Math. Soc.},
  FJOURNAL = {Journal of the American Mathematical Society},
    VOLUME = {9},
      YEAR = {1996},
    NUMBER = {4},
     PAGES = {1135--1169},
      ISSN = {0894-0347},
   MRCLASS = {18D15 (17B37 57R57)},
  MRNUMBER = {1358983},
MRREVIEWER = {Amnon Neeman},
       DOI = {10.1090/S0894-0347-96-00210-X},
       URL = {https://doi.org/10.1090/S0894-0347-96-00210-X},
}

@article{Kn07,
    AUTHOR = {Knop, Friedrich},
     TITLE = {Tensor envelopes of regular categories},
   JOURNAL = {Adv. Math.},
  FJOURNAL = {Advances in Mathematics},
    VOLUME = {214},
      YEAR = {2007},
    NUMBER = {2},
     PAGES = {571--617},
      ISSN = {0001-8708},
   MRCLASS = {18E30 (20C15 20G05)},
  MRNUMBER = {2349713},
       DOI = {10.1016/j.aim.2007.03.001},
       URL = {https://doi.org/10.1016/j.aim.2007.03.001},
}

@incollection{KS08,
    AUTHOR = {Kodiyalam, Vijay and Sunder, V. S.},
     TITLE = {Temperley-{L}ieb and non-crossing partition planar algebras},
 BOOKTITLE = {Noncommutative rings, group rings, diagram algebras and their
              applications},
    SERIES = {Contemp. Math.},
    VOLUME = {456},
     PAGES = {61--72},
 PUBLISHER = {Amer. Math. Soc., Providence, RI},
      YEAR = {2008},
   MRCLASS = {46L37 (05A18 16S99)},
  MRNUMBER = {2416144},
MRREVIEWER = {Jan E. Grabowski},
       DOI = {10.1090/conm/456/08884},
       URL = {https://doi.org/10.1090/conm/456/08884},
}

@misc{MR19,
  Author = {L. Man\v{c}inska and D. Roberson},
  Title = {Quantum isomorphism is equivalent to equality of homomorphism counts from planar graphs},
  Year = {2019},
  note = {arXiv:1910.06958},
}

@article {Mue,
    AUTHOR = {M{\"u}ger, Michael},
     TITLE = {Galois theory for braided tensor categories and the modular
              closure},
   JOURNAL = {Adv. Math.},
  FJOURNAL = {Advances in Mathematics},
    VOLUME = {150},
      YEAR = {2000},
    NUMBER = {2},
     PAGES = {151--201},
      ISSN = {0001-8708},
   MRCLASS = {18D10},
  MRNUMBER = {1749250},
       DOI = {10.1006/aima.1999.1860},
       URL = {https://doi.org/10.1006/aima.1999.1860},
}

@book {NS06,
    AUTHOR = {Nica, Alexandru and Speicher, Roland},
     TITLE = {Lectures on the combinatorics of free probability},
    SERIES = {London Mathematical Society Lecture Note Series},
    VOLUME = {335},
 PUBLISHER = {Cambridge University Press, Cambridge},
      YEAR = {2006},
     PAGES = {xvi+417},
      %ISBN = {978-0-521-85852-6; 0-521-85852-6},
   MRCLASS = {46L54 (46L53 60C05 81S25)},
  MRNUMBER = {2266879},
MRREVIEWER = {Todd Kemp},
       DOI = {10.1017/CBO9780511735127},
       URL = {https://doi.org/10.1017/CBO9780511735127},
}

@book {NT13,
    AUTHOR = {Neshveyev, Sergey and Tuset, Lars},
     TITLE = {Compact quantum groups and their representation categories},
    SERIES = {Cours Sp\'{e}cialis\'{e}s [Specialized Courses]},
    VOLUME = {20},
 PUBLISHER = {Soci\'{e}t\'{e} Math\'{e}matique de France, Paris},
      YEAR = {2013},
     PAGES = {vi+169},
      ISBN = {978-2-85629-777-3},
   MRCLASS = {46L65 (17B37 46L89 81-02 81R05 81R10 81R50)},
  MRNUMBER = {3204665},
MRREVIEWER = {Julien Bichon},
}

@article {Os,
    AUTHOR = {Ostrik, Victor},
     TITLE = {Module categories over representations of {${\rm SL}_q(2)$} in
              the non-semisimple case},
   JOURNAL = {Geom. Funct. Anal.},
  FJOURNAL = {Geometric and Functional Analysis},
    VOLUME = {17},
      YEAR = {2008},
    NUMBER = {6},
     PAGES = {2005--2017},
      ISSN = {1016-443X},
   MRCLASS = {20G42 (16W30 18D10)},
  MRNUMBER = {2399090},
MRREVIEWER = {Jan E. Grabowski},
       DOI = {10.1007/s00039-007-0637-4},
       URL = {https://doi.org/10.1007/s00039-007-0637-4},
}

@article{RW14,
    AUTHOR = {Raum, Sven and Weber, Moritz},
     TITLE = {The combinatorics of an algebraic class of easy quantum
              groups},
   JOURNAL = {Infin. Dimens. Anal. Quantum Probab. Relat. Top.},
  FJOURNAL = {Infinite Dimensional Analysis, Quantum Probability and Related
              Topics},
    VOLUME = {17},
      YEAR = {2014},
    NUMBER = {3},
     PAGES = {1450016, 17},
      ISSN = {0219-0257},
   MRCLASS = {46L65 (05E10 16T30 18B99 46L54)},
  MRNUMBER = {3241025},
       DOI = {10.1142/S0219025714500167},
       URL = {https://doi.org/10.1142/S0219025714500167},
}

@article{RW15,
    AUTHOR = {Raum, Sven and Weber, Moritz},
     TITLE = {Easy quantum groups and quantum subgroups of a semi-direct
              product quantum group},
   JOURNAL = {J. Noncommut. Geom.},
  FJOURNAL = {Journal of Noncommutative Geometry},
    VOLUME = {9},
      YEAR = {2015},
    NUMBER = {4},
     PAGES = {1261--1293},
      ISSN = {1661-6952},
   MRCLASS = {46L89 (46L54)},
  MRNUMBER = {3448336},
MRREVIEWER = {Ami Viselter},
       DOI = {10.4171/JNCG/223},
       URL = {https://doi.org/10.4171/JNCG/223},
}

@article{RW16,
    AUTHOR = {Raum, Sven and Weber, Moritz},
     TITLE = {The full classification of orthogonal easy quantum groups},
   JOURNAL = {Comm. Math. Phys.},
  FJOURNAL = {Communications in Mathematical Physics},
    VOLUME = {341},
      YEAR = {2016},
    NUMBER = {3},
     PAGES = {751--779},
      ISSN = {0010-3616},
   MRCLASS = {46L89 (05E10)},
  MRNUMBER = {3452270},
MRREVIEWER = {Anna Wysocza\'{n}ska-Kula},
       DOI = {10.1007/s00220-015-2537-z},
       URL = {https://doi.org/10.1007/s00220-015-2537-z},
}

@book {Timmermann,
    AUTHOR = {Timmermann, Thomas},
     TITLE = {An invitation to quantum groups and duality},
    SERIES = {EMS Textbooks in Mathematics},
      NOTE = {From Hopf algebras to multiplicative unitaries and beyond},
 PUBLISHER = {European Mathematical Society (EMS), Z\"{u}rich},
      YEAR = {2008},
     PAGES = {xx+407},
      ISBN = {978-3-03719-043-2},
   MRCLASS = {46L05 (16W35 46L65 58B32)},
  MRNUMBER = {2397671},
MRREVIEWER = {Stefaan Vaes},
       DOI = {10.4171/043},
       URL = {https://doi.org/10.4171/043},
}

@article {Tu93,
    AUTHOR = {Tutte, W. T.},
     TITLE = {The matrix of chromatic joins},
   JOURNAL = {J. Combin. Theory Ser. B},
  FJOURNAL = {Journal of Combinatorial Theory. Series B},
    VOLUME = {57},
      YEAR = {1993},
    NUMBER = {2},
     PAGES = {269--288},
      ISSN = {0095-8956},
   MRCLASS = {05C50 (05C15 15A15)},
  MRNUMBER = {1207492},
MRREVIEWER = {Norbert Seifter},
       DOI = {10.1006/jctb.1993.1021},
       URL = {https://doi.org/10.1006/jctb.1993.1021},
}

@article{TW17,
    AUTHOR = {Tarrago, Pierre and Weber, Moritz},
     TITLE = {Unitary easy quantum groups: the free case and the group case},
   JOURNAL = {Int. Math. Res. Not. IMRN},
  FJOURNAL = {International Mathematics Research Notices. IMRN},
      YEAR = {2017},
    NUMBER = {18},
     PAGES = {5710--5750},
      ISSN = {1073-7928},
   MRCLASS = {20G42 (05A18 16T20 46L54 46L89)},
  MRNUMBER = {3704744},
MRREVIEWER = {Malte Gerhold},
       DOI = {10.1093/imrn/rnw185},
       URL = {https://doi.org/10.1093/imrn/rnw185},
}

@article{Wa98,
    AUTHOR = {Wang, Shuzhou},
     TITLE = {Quantum symmetry groups of finite spaces},
   JOURNAL = {Comm. Math. Phys.},
  FJOURNAL = {Communications in Mathematical Physics},
    VOLUME = {195},
      YEAR = {1998},
    NUMBER = {1},
     PAGES = {195--211},
      ISSN = {0010-3616},
   MRCLASS = {58B30 (16W30 46L99)},
  MRNUMBER = {1637425},
MRREVIEWER = {Aldo J. Lazar},
       DOI = {10.1007/s002200050385},
       URL = {https://doi.org/10.1007/s002200050385},
}

@article{We13,
    AUTHOR = {Weber, Moritz},
     TITLE = {On the classification of easy quantum groups},
   JOURNAL = {Adv. Math.},
  FJOURNAL = {Advances in Mathematics},
    VOLUME = {245},
      YEAR = {2013},
     PAGES = {500--533},
      ISSN = {0001-8708},
   MRCLASS = {46L89 (05E10)},
  MRNUMBER = {3084436},
MRREVIEWER = {Sutanu Roy},
       DOI = {10.1016/j.aim.2013.06.019},
       URL = {https://doi.org/10.1016/j.aim.2013.06.019},
}

@article{We17,
    AUTHOR = {Weber, Moritz},
     TITLE = {Introduction to compact (matrix) quantum groups and
              {B}anica-{S}peicher (easy) quantum groups},
   JOURNAL = {Proc. Indian Acad. Sci. Math. Sci.},
  FJOURNAL = {Indian Academy of Sciences. Proceedings. Mathematical
              Sciences},
    VOLUME = {127},
      YEAR = {2017},
    NUMBER = {5},
     PAGES = {881--933},
      ISSN = {0253-4142},
   MRCLASS = {20G42 (05A18)},
  MRNUMBER = {3737419},
MRREVIEWER = {Boyka L. Aneva},
       DOI = {10.1007/s12044-017-0362-3},
       URL = {https://doi.org/10.1007/s12044-017-0362-3},
}

@article{We87,
    AUTHOR = {Wenzl, Hans},
     TITLE = {On sequences of projections},
   JOURNAL = {C. R. Math. Rep. Acad. Sci. Canada},
  FJOURNAL = {La Soci\'{e}t\'{e} Royale du Canada. L'Academie des Sciences. Comptes
              Rendus Math\'{e}matiques. (Mathematical Reports)},
    VOLUME = {9},
      YEAR = {1987},
    NUMBER = {1},
     PAGES = {5--9},
      ISSN = {0706-1994},
   MRCLASS = {46L10},
  MRNUMBER = {873400},
MRREVIEWER = {Pierre de la Harpe},
}

@article{Wo87,
    AUTHOR = {Woronowicz, S. L.},
     TITLE = {Compact matrix pseudogroups},
   JOURNAL = {Comm. Math. Phys.},
  FJOURNAL = {Communications in Mathematical Physics},
    VOLUME = {111},
      YEAR = {1987},
    NUMBER = {4},
     PAGES = {613--665},
      ISSN = {0010-3616},
   MRCLASS = {46L99 (22E65 58H05)},
  MRNUMBER = {901157},
MRREVIEWER = {Pierre de la Harpe},
       URL = {http://projecteuclid.org/euclid.cmp/1104159726},
}

@article{Wo88,
    AUTHOR = {Woronowicz, S. L.},
     TITLE = {Tannaka-Krein duality for compact matrix pseudogroups.
              {T}wisted {$SU(N)$} groups},
   JOURNAL = {Invent. Math.},
  FJOURNAL = {Inventiones Mathematicae},
    VOLUME = {93},
      YEAR = {1988},
    NUMBER = {1},
     PAGES = {35--76},
      ISSN = {0020-9910},
   MRCLASS = {22E65 (18D10 22D35 46L55 58B25 58H05)},
  MRNUMBER = {943923},
MRREVIEWER = {Vaughan Jones},
       DOI = {10.1007/BF01393687},
       URL = {https://doi.org/10.1007/BF01393687},
}

@article {We-Cstar,
    AUTHOR = {Wenzl, Hans},
     TITLE = {{$C^*$} tensor categories from quantum groups},
   JOURNAL = {J. Amer. Math. Soc.},
  FJOURNAL = {Journal of the American Mathematical Society},
    VOLUME = {11},
      YEAR = {1998},
    NUMBER = {2},
     PAGES = {261--282},
      ISSN = {0894-0347},
   MRCLASS = {46L89 (17B37 46L05)},
  MRNUMBER = {1470857},
MRREVIEWER = {Andrzej Sitarz},
       DOI = {10.1090/S0894-0347-98-00253-7},
       URL = {https://doi.org/10.1090/S0894-0347-98-00253-7},
}

@article {Wenz87-brauer,
    AUTHOR = {Wenzl, Hans},
     TITLE = {On the structure of {B}rauer's centralizer algebras},
   JOURNAL = {Ann. of Math. (2)},
  FJOURNAL = {Annals of Mathematics. Second Series},
    VOLUME = {128},
      YEAR = {1988},
    NUMBER = {1},
     PAGES = {173--193},
      ISSN = {0003-486X},
   MRCLASS = {20G05 (16A46 20C15 46L35)},
  MRNUMBER = {951511},
MRREVIEWER = {Frederick Goodman},
       DOI = {10.2307/1971466},
       URL = {https://doi.org/10.2307/1971466},
}

@incollection {orellana,
    AUTHOR = {Orellana, Rosa C.},
     TITLE = {On the algebraic decomposition of a centralizer algebra of the
              hyperoctahedral group},
 BOOKTITLE = {Algebraic structures and their representations},
    SERIES = {Contemp. Math.},
    VOLUME = {376},
     PAGES = {345--357},
 PUBLISHER = {Amer. Math. Soc., Providence, RI},
      YEAR = {2005},
   MRCLASS = {20C15 (05E15 20F55)},
  MRNUMBER = {2147034},
MRREVIEWER = {Himmet Can},
       DOI = {10.1090/conm/376/06970},
       URL = {https://doi.org/10.1090/conm/376/06970},
}

\end{document}